\theoremstyle{plain}\newtheorem{thm}{Theorem}[section]
\theoremstyle{plain}\newtheorem{defn}{Definition}[section]
\theoremstyle{plain}\newtheorem{prop}{Proposition}[section]
\theoremstyle{plain}\newtheorem{cor}{Corollary}[section]
\theoremstyle{plain}\newtheorem{lem}{Lemma}[section]
\theoremstyle{plain}
\numberwithin{equation}{section}
\def\rr{\mathbb{R}}
\def\cc{\mathbb{C}}
\def\rn{\mathbb{R}^{n}}
\def\zz{\mathbb{Z}}
\def\nn{\mathbb{N}}
\def\cdd{\mathcal {D}}
\def\cd'{\mathcal{D}'}
\def\cm{\mathcal {M}}
\newcommand{\cp}{{\mathcal P}}
\def\az{\alpha}
\def\bz{\beta}
\def\dz{\delta}
\def\ez{\epsilon}
\def\fz{\infty}
\def\gz{\gamma}
\def\lz{\lambda}
\def\oz{\Omega}
\def\wz{\omega}
\def\pz{\partial}
\def\rz{\rho}
\def\tz{\theta}
\def\sz{\sigma}
\def\vz{\varphi}
\def\vez{\varepsilon}
\def\ls{\lesssim}
\def\v{$V$}
\def\L{\mathcal{L}}
\def\T{\mathcal{T}}
\def\wha{$H_{\L}^{1}(\omega)$}
\def\whp{$h_{\rho,N}^p(\wz)$}
\def\whaf{\|f\|_{H_{\L}^{1}(\omega)}}
\def\l{\left}
\def\r{\right}
\def\supp{{\rm supp}}
\def\loc{{\rm loc}}
\def\wt{\widetilde}
\def\dsum{\displaystyle\sum}
\def\dint{\displaystyle\int}
\def\dfrac{\displaystyle\frac}
\def\dsup{\displaystyle\sup}
\def\dinf{\displaystyle\inf}
\def\fin{{\mathop\mathrm{fin}}}
\newcommand{\bea}{\begin{align}}
\newcommand{\eea}{\end{align}}
\newcommand{\beq}{\begin{equation}}
\newcommand{\eeq}{\end{equation}}
\newcommand{\beqs}{\begin{eqnarray*}}
\newcommand{\eeqs}{\end{eqnarray*}}
\newcommand{\beqn}{\begin{eqnarray}}
\newcommand{\eeqn}{\end{eqnarray}}
\newcommand{\beqa}{\begin{array}}
\newcommand{\eeqa}{\end{array}}
\newcommand{\noi}{\noindent}
\begin{document}

\title{\bf Weighted local Hardy spaces associated to Schr\"{o}dinger operators
\footnotetext{ 2010 Mathematics Subject
Classification. Primary 42B30; Secondary 42B25, 42B20.}}

\author{  Hua Zhu\footnote{Corresponding author}
\ and Lin Tang}

\date{}
\maketitle

\noi {\bf Abstract.}\quad
In this paper, we characterize the weighted local Hardy spaces $h^p_\rho(\wz)$
related to the critical radius function $\rho$ and weights $\wz\in A_{\fz}^{\rz,\,\fz}(\rn)$
which locally behave as Muckenhoupt's weights and actually include them,
by the local vertical maximal function, the local nontangential maximal function
and the atomic decomposition.
By the  atomic characterization,  we also prove the existence of finite
atomic decompositions associated with
$h^{p}_{\rho}(\wz)$.
Furthermore, we establish boundedness in $h^p_\rho(\wz)$ of
quasi- Banach-valued sublinear operators. As their applications,  we
establish the equivalence  of the  weighted local
Hardy space $h^1_\rho(\wz)$  and the weighted Hardy space $H^1_{\cal L}(\wz)$ associated to Schr\"{o}dinger operators $\cal L$  with $\wz \in A_1^{\rz,\fz}(\rn)$.

\bigskip

\section{Introduction} \label{s1}

\noi The theory of classical local Hardy spaces, originally introduced
by Goldberg \cite{Go},
plays an important role in various field of analysis
and partial differential equations;
see \cite{Bu, Sc, St, Tay, Tr1, Tr2} and their references.
In particular, pseudo-difference operators are bounded on
local Hardy spaces $h^p(\rn)$ for $p\in (0,1]$,
but they are not bounded on Hardy spaces $H^p(\rn)$ for $p\in (0,1]$;
see \cite{Go} (also \cite{Tr1, Tr2}).
In \cite{Bu}, Bui studied the weighted local Hardy space $h_{\wz}^p(\rn)$
with $\wz\in A_{\infty}(\rn)$, where and in what follows,
$A_p(\rn)$ for $p\in[1,\infty]$ denotes the
class of Muckenhoupt's weights; see \cite{Du,GR,Gr,St}
for their definition and properties.

In \cite{Ry}, Rychkov introduced and studied some properties of
the weighted Besov-Lipschitz spaces and
Triebel-Lizorkin spaces with weights that are locally in $A_p(\rn)$
but may grow or decrease exponentially,
which contain Hardy spaces. In particular,  Rychkov \cite{Ry}
generalized some of theories
of weighted local Hardy spaces developed by Bui \cite{Bu} to
$A_{\infty}^{loc}(\rn)$ weights,
where $A_{\infty}^{loc}(\rn)$ weights denote local $A_{\infty}(\rn)$
weights which are non-doubling weights,
and $A_{\infty}^{loc}(\rn)$ weights include $A_{\infty}(\rn)$ weights.
Recently, Tang \cite{Ta1} established the weighted
atomic decomposition characterization of the weighted
local Hardy space $h_{\wz}^p(\rn)$ with $\wz\in A_{\infty}^{loc}(\rn)$
via the local grand maximal function,  and gave some criterions about boundedness of $\mathcal{B}_{\bz}-$sublinear
operators on  $h_{\wz}^p(\rn)$ which was first introduced in \cite{YZ}; meanwhile, Tang \cite{Ta1} also proved that pseudo-difference operators are bounded on
local Hardy spaces $h^p_\wz(\rn)$ for $p\in (0,1]$ by using above criterions and main results in \cite{Ta2}.
Furthermore, Yang-Yang \cite{YY} extended the main results in \cite{Ta1} to  the weighted
local Orlicz-Hardy space $h^{\Phi}_{\omega}(\mathbb{R}^n)$ case by applying similar methods in \cite{Ta1}.

On the other hand, the study of schr\"odinger operator $L=-\triangle+V$ recently
attracted much attention; see \cite{BHS1,BHS2,DZ1,DZ2,Sh,Ta3,Ta4,YZ,YZ1,Zh,ZL}.
In particularly,  J. Dziuba\'{n}ski and J. Zienkiewicz \cite{DZ1,DZ2} studied
Hardy space $H^{1}_{\cal L} $ associated to Schr\"{o}dinger operators $\cal L$ with
potential satisfying reverse H\"{o}lder inequality. Recently, Bongioanni, etc. \cite{BHS1} introduced new classes of weights,
 related to Schr\"odinger operators ${\cal L}$, that is,
 $A_p^{\rz,\fz}(\rn)$ weight
 which are in general larger than Muckenhoupt's
 (see Section 2 for notions of $A_p^{\rz,\fz}(\rn)$ weight). Nature, it is a very interesting problem that whether we can give a atomic characterization for
weighted Hardy space $H^{1}_{\cal L}(\wz)$  with $\wz\in A_1^{\rz,\fz}(\rn)$.

The purpose of  this paper is to give a positive answer. More precisely, we first introduce the  weighted local
Hardy spaces $h^p_\rho(\wz)$ with  $A_q^{\rz,\fz}(\rn)$ weights,
and establish
the  atomic characterization of the  weighted local
Hardy spaces $h^p_\rho(\wz)$ with  $\wz\in A_q^{\rz,\fz}(\rn)$ weights. Then, we  establish the equivalence
between the  weighted local
Hardy spaces $h^1_\rho(\wz)$ and the  weighted Hardy space $H^{1}_{\cal L}(\wz)$ associated to Schr\"{o}dinger operator ${\cal L}$ with $\wz\in A_1^{\rz,\fz}(\rn)$.
In particular, it should be pointed out that we can not directly obtain the atomic characterization of  $H^{1}_{\cal L}(\wz)$  with $A_1^{\rz,\fz}(\rn)$ weights by using the methods in \cite{DZ1,DZ2,DZ3}, which forces us to use the  above weighted local
Hardy spaces $h^1_\rho(\wz)$ theory to overcome the difficulty.

The paper is organized as follows. In Section 2,
we review some notions and notations
concerning the weight classes $A_p^{\rz,\tz}(\rn)$
introduced in \cite{BHS1,Ta3,Ta4}.
In Section \ref{s3}, we first introduce the weighted local
Hardy space $h^{p}_{\rho,\,N}(\wz)$ via the local grand
maximal function, and then the weighted atomic local Hardy
space $h^{p,\,q,\,s}_{\rho}(\wz)$ for any admissible triplet
$(p,\,q,\,s)_{\wz}$ (see Definition \ref{d3.4} below), furthermore, we establish the local vertical and the local nontangential maximal
function characterizations of $h^{p}_{\rho,\,N}(\wz)$ via a local
Calder\'on reproducing formula and some useful estimates established
by Rychkov \cite{Ry}.
In Section 4, we establish the Calder\'on-Zygmund
decomposition associated with the grand maximal function.
In Section 5, we prove that for any given admissible  triplet
$(p,\,q,\,s)_\wz$, $h^p_{\rho,\, N}(\wz)=h_{\rho}^{p,q,s}(\wz)$ with
equivalent norms. It is worth pointing out that we obtain Theorem
\ref{t5.1} by a way different from the methods in \cite{Go,Bu},
but close to those in \cite{Bo,Ta1,YY}. For simplicity, in the
rest of this introduction, we denote by $h^{p}_{\rho}(\wz)$ the
 weighted local Hardy space $h^{p}_{\rho,\,N}(\wz)$.
In Section 6, we prove that
$\|\cdot\|_{h_{\rho,\fin}^{p,q,s}(\wz)}$ and
$\|\cdot\|_{h_\rho^p(\wz)}$ are equivalent quasi-norms on
$h^{p,q,s}_{\rho,\fin}(\wz)$ with $q<\fz$, and we obtain criterions
for boundedness of ${\cal B}_\bz$-sublinear operators in $h^p_\rho(\wz)$.
We remark that this extends both the results of
Meda-Sj\"ogren-Vallarino \cite{MSV} and Yang-Zhou \cite{YZ} to
the setting of weighted local Hardy spaces.
In Section 7, we
apply the  atomic characterization of the  weighted local
Hardy spaces $h_{\rho}^1(\wz)$   to establish atomic characterization of weighted Hardy space $H^{1}_{\cal L}(\wz)$ associated to Schr\"{o}dinger operator $\cal L$ with $A_1^{\rz,\fz}(\rn)$ weights.

Throughout this paper, we let $C$ denote  constants that are
independent of the main parameters involved but whose value may
differ from line to line. By $A\sim B$, we mean that there exists a
constant $C>1$ such that $1/C\le A/B\le C$.
The symbol $A\ls B$ means that $A\le CB$.
The  symbol $[s]$ for $s\in\rr$ denotes the maximal integer not more than $s$.
We also set $\nn\equiv\{1,\,2,\, \cdots\}$ and $\zz_+\equiv\nn\cup\{0\}$.
The multi-index notation is usual:
for $\az=(\az_1,\cdots,\az_n)$ and
$\pz^\az=(\pz/\pz_{x_1})^{\az_1}\cdots(\pz/\pz_{x_n})^{\az_n}$.
Given a function $g$ on $\rn$, we let $L_g\in\zz_+$ denote the maximal
number such that $g$ has vanishing moments up to  the order $L_g$, i.e.,
$\int x^{\az}g(x)\,dx=0 $ for all multi-indices $\az$ with $|\az|\le L_g$.
If no vanishing moments of $g$, then we put $L_g=-1$.

\section{Preliminaries}\label{s2}

\noi In this section, we review some notions and notations
concerning the weight classes $A_p^{\rz,\tz}(\rn)$
introduced in \cite{BHS1,Ta3,Ta4}.
Given $B=B(x,r)$ and $\lz>0$, we will write $\lz B$ for
the $\lz$-dilate ball, which is the ball with
the same center $x$ and with radius $\lz r$. Similarly,
$Q(x,r)$ denotes the cube centered at $x$ with
side length $r$ (here and below only cubes with sides
parallel to the axes are considered),
and $\lz Q(x,r)=Q(x,\lz r)$.
Especially, we will denote $2 B$ by $B^*$, and $2Q$ by $Q^*$.

Let $\L=-\Delta+V$ be a  Schr\"{o}dinger operator on $\rn,\ n\geq 3$,
where $V \not\equiv 0$ is a fixed non-negative potential.
We assume that $V$ belongs to the
reverse H\"{o}lder class $RH_s(\rn)$ for some $s \geq n/2$;
that is, there
exists $C = C(s,V) > 0$ such that
\beqs
\l(\dfrac{1}{|B|}\dint_B V(x)^{s}\,dx \r)^{\frac{1}{s}}\leq C \l(\dfrac{1}{|B|}\int_B V(x)\,dx\r),
\eeqs
for every ball $B\subset\mathbb{R}^{n}$.
Trivially, $RH_q(\rn)\subset RH_p(\rn)$ provided $1<p \leq q < \infty$.
It is well known that, if $V\in RH_q(\rn)$ for some $q>1$,
then there exists $\varepsilon>0$, which
depends only on $d$ and the constant $C$ in  above inequality,
such that $V\in RH_{q+\varepsilon}(\rn)$ (see \cite{Ge}).
Moreover, the measure $V(x)\,dx$ satisfies the doubling condition:
\beqs
\dint_{B(y,2r)}V(x)\,dx\leq C\dint_{B(y,r)}V(x)\,dx.
\eeqs

With regard to the Schr\"odinger operator $\L$, we
know that the operators derived from $\L$ behave "locally" quite similar to
those corresponding to the Laplacian (see \cite{DG,Sh}).
The notion of locality is given by the critical radius function
\beq \label{2.1}
\rho(x)=\dfrac 1{m_V(x)}=\dsup_{r>0}\l\{r:\ \dfrac
{1}{r^{n-2}}\dint_{B(x,r)}V(y)dy\le 1\r\}.
\eeq
Throughout the paper we assume that $V\not\equiv0$,
so that $0<\rz(x)<\fz$ (see \cite{Sh}).
In particular, $m_V(x)=1$ with $V=1$
and $m_V(x)\sim (1+|x|)$ with $V=|x|^2$.

\begin{lem} \label{l2.1} {\bf (see \cite{Sh})} \
 There exist $C_0\ge 1$ and $k_0 \geq 1$ so that for all $x,y\in \rn$
\beq \label{2.2}
C_0^{-1}\rz(x)\l(1+\frac{|x-y|}{\rz(x)}\r)^{-k_0}\leq \rz(y)\leq C_0\rz(x)\l(1+\frac{|x-y|}{\rz(x)}\r)^{\frac{k_0}{k_0+1}}.
\eeq
 In particular, $\rz(x)\sim\rz(y)$ when $y\in B(x,r)$
 and $r\leq C\rz(x)$, where $C$ is a positive constant.
\end{lem}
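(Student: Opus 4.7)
The argument is due to Shen; I sketch the strategy. The pivotal object is
\[
\psi(x,r) := \frac{1}{r^{n-2}} \int_{B(x,r)} V(y)\,dy,
\]
so that by continuity of $r \mapsto \psi(x,r)$ and the supremum definition, $\rho(x)$ is characterized (up to absolute constants) by $\psi(x,\rho(x)) \sim 1$. The first task is to extract two complementary scaling properties of $\psi$ from the hypothesis $V \in RH_s$ with $s \ge n/2$. Applying H\"older's inequality together with the reverse H\"older bound yields
\[
\psi(x,r_1) \le C\,(r_1/r_2)^{2-n/s}\,\psi(x,r_2), \qquad 0 < r_1 \le r_2,
\]
which encodes a polynomial lower growth rate of $\psi$ in $r$. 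On the other hand, $V\,dy$ is a doubling measure (since $V \in RH_s \subset A_\infty$), so $\psi(x,2r) \le C\,\psi(x,r)$, and iterating this gives $\int_{B(x,R)} V \le C\,(R/\rho(x))^{\beta}\,\rho(x)^{n-2}$ for $R \ge \rho(x)$, where $\beta$ is the doubling exponent.

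The next task is a crude upper bound. The inclusion $B(x,\rho(x)) \subset B(y,\rho(x)+|x-y|)$ gives $\rho(x)^{n-2} \le \int_{B(y,\rho(x)+|x-y|)} V$, and combined with the doubling estimate centered at $y$ this forces $\rho(y) \le C\,\rho(x)\bigl(1+|x-y|/\rho(x)\bigr)^{N}$ for some exponent $N = N(n,\beta)$. Exchanging the roles of $x$ and $y$ in this crude inequality then mechanically produces a crude lower bound $\rho(y) \ge C^{-1}\rho(x)\bigl(1+|x-y|/\rho(x)\bigr)^{-N'}$.

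The decisive and hardest step is a bootstrap. Substituting the crude lower bound for $\rho(y)$ back into the mechanism that produced the crude upper bound sharpens it, and symmetrization sharpens the lower bound in turn. This iteration acts as a contraction on the pair of exponents whose unique fixed point is $(\gamma,k_0)$ with $\gamma = k_0/(k_0+1)$, producing the subadditive exponent on the upper side and its reciprocal $k_0$ on the lower side. The constant $C_0$ is obtained by accumulating multiplicative factors across the finitely many iteration rounds needed to reach this fixed point. The main obstacle is precisely this self-improvement step: a direct one-shot comparison yields only a linear-in-$|x-y|/\rho(x)$ exponent on the upper side, and it is the iteration that forces the exponent below $1$, thereby encoding the locally Euclidean-like nature of $\rho$.
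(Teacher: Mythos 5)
The paper does not reprove this lemma; it cites Shen's Lemma 1.4 from \cite{Sh}. Your proposal correctly identifies the key ingredients Shen uses: the quantity $\psi(x,r)=r^{2-n}\int_{B(x,r)}V$ with $\psi(x,\rho(x))\sim 1$, the growth estimate $\psi(x,r_1)\le C(r_1/r_2)^{2-n/s}\psi(x,r_2)$ for $r_1\le r_2$ coming from the reverse H\"older hypothesis, and the doubling of $V\,dx$. But the claimed ``bootstrap'' that drives the exponent below $1$ via a contraction on a pair of exponents does not exist, and this is a genuine gap. If you track the exponent dynamics you are proposing, you find the following: if one has the lower bound $\rho(y)\ge c\,\rho(x)(1+|x-y|/\rho(x))^{-\beta}$, then swapping $x\leftrightarrow y$ and elementary case analysis yields the upper bound with exponent $\beta/(\beta+1)$; and if one has the upper bound with exponent $\alpha<1$, swapping yields the lower bound with exponent $\alpha/(1-\alpha)$. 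Composing these two maps gives $\beta\mapsto\beta/(\beta+1)\mapsto\beta$, i.e.\ the identity, so there is no improvement under iteration. Worse, your ``crude upper bound'' from the inclusion $B(x,\rho(x))\subset B(y,\rho(x)+|x-y|)$ together with the growth estimate at $y$ comes out with exponent $1+(n-2)/\delta>1$ (with $\delta=2-n/s$), and the swap step that produces a lower bound \emph{requires} the starting exponent to be $<1$; for exponent $\ge1$ the swapped inequality only produces another upper bound. So the chain crude-upper $\to$ crude-lower cannot even start.

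Shen's actual argument is a one-shot derivation plus a single symmetrization, with no iteration. Writing $m=1/\rho$, $r=\rho(x)$, $R=\rho(y)$, $d=|x-y|$, and assuming $d\ge r$ (the case $d<r$ is the locally-constant regime), one uses the \emph{opposite} inclusion $B(y,r)\subset B(x,2d)$ and doubling at $x$ to get $r^{2-n}\int_{B(y,r)}V\le C(d/r)^{\mu_0}$; then, if $R<r$, the reverse-H\"older growth estimate at $y$ applied to the radius pair $R<r$ gives $1=R^{2-n}\int_{B(y,R)}V\le C(R/r)^{\delta}(d/r)^{\mu_0}$, hence $R\ge c\,r\,(1+d/r)^{-k_0}$ with $k_0=\mu_0/\delta$. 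This proves the $-k_0$ inequality directly. The sublinear $k_0/(k_0+1)$ inequality then follows by swapping $x$ and $y$ in what was just proved and splitting into the cases $R\le r$, $R>r\ge d$, and $R>r$, $d>R$; in the last case one solves $r\le CR(1+d/R)^{k_0}\le C'R^{1-k_0}d^{k_0}\cdot R^{k_0}/R^{k_0}$ -- more precisely $r\le C'R(d/R)^{k_0}$ when $d>R$ -- wait, one should write $r\le CR(2d/R)^{k_0}=C'R^{1-k_0}d^{k_0}$, no: $R(1+d/R)^{k_0}\le R(2d/R)^{k_0}=2^{k_0}R^{1-k_0}d^{k_0}$ -- yes, so $r\le 2^{k_0}C\,R^{1-k_0}d^{k_0}$ would be the wrong direction for $k_0\ge 1$; the correct manipulation starts from $m(x)\le Cm(y)(1+d\,m(y))^{k_0}$, i.e.\ $a\le Cb(1+db)^{k_0}$ with $a=m(x),b=m(y)$, and in the case $b<a$, $db>1$ gives $a\le C'b^{1+k_0}d^{k_0}$, hence $b\ge c\,a^{1/(k_0+1)}d^{-k_0/(k_0+1)}\ge c'a(1+da)^{-k_0/(k_0+1)}$. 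In short: the second inequality with exponent $k_0/(k_0+1)$ is an algebraic consequence of the first with exponent $k_0$, obtained by a single symmetrization; no fixed-point iteration is involved, and the sublinearity $k_0/(k_0+1)<1$ is automatic from this algebra rather than being forced by a contraction.
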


A ball of the form $B(x,\rz(x))$ is called critical, and in what follows we will
call critical radius function to any positive continuous function $\rz$ that
satisfies \eqref{2.1}, not necessarily coming from a potential $V$.
Clearly, if $\rz$ is such a function, so it is $\bz \rz$ for any $\bz>0$.
As the consequence of the above lemma we acquire the following result:

\begin{lem}\label{l2.2} {\bf (see \cite{DZ1})} \
 There exists a sequence of points $x_j\in \rn,\ j\geq 1,$ such that the family
 $B_j=B(x_j,\rz(x_j)),\ j\geq 1$ satisfies:
 \begin{enumerate}
 \item[\rm(a)] \quad $\bigcup_j B_j=\rn$.
 \item[\rm(b)] \quad For every $\sigma\geq 1$ there exist constants $C$ and $N_1$
 such that $\Sigma_j \chi_{\sz B_j}\le C\sz^{N_1}$.
\end{enumerate}
\end{lem}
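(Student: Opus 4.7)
The plan is to construct the family $\{B_j\}$ via a Vitali-type selection applied to the small balls $B(x,\rz(x)/(2C_0))$, and then to derive the bounded-overlap property (b) from the two-sided polynomial growth estimates for $\rz$ in Lemma \ref{l2.1}.

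First, I would extract a maximal pairwise disjoint countable subfamily $\{B(x_j,\rz(x_j)/(2C_0))\}_{j\ge 1}$ from the collection $\{B(x,\rz(x)/(2C_0)):x\in\rn\}$; an exhaustion argument together with the separability of $\rn$ produces such a countable family. For any $x\in\rn$, maximality forces $B(x,\rz(x)/(2C_0))\cap B(x_j,\rz(x_j)/(2C_0))\ne\vn$ for some $j$; combined with the near-point comparison $\rz(x)\sim\rz(x_j)$ from Lemma \ref{l2.1} (applicable since the two points are within a constant multiple of $\rz(x_j)$), this gives $|x-x_j|\le\rz(x_j)$, hence (a).

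For (b), fix $\sz\ge 1$ and $x\in\rn$, and consider the indices $j$ with $x\in\sz B_j$. From $|x-x_j|\le\sz\rz(x_j)$ and the upper bound in \eqref{2.2} one gets
\[
|x-x_j|\le\sz\rz(x_j)\le\sz C_0\rz(x)\bigl(1+|x-x_j|/\rz(x)\bigr)^{k_0/(k_0+1)},
\]
and solving this sublinear inequality in $|x-x_j|$ yields $|x-x_j|\le C\sz^{k_0+1}\rz(x)$ and therefore $\rz(x_j)\le C'\sz^{k_0}\rz(x)$. Substituting the resulting bound on $|x-x_j|$ into the lower estimate in \eqref{2.2} gives $\rz(x_j)\ge C''\sz^{-k_0(k_0+1)}\rz(x)$. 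Consequently, the disjoint small balls $B(x_j,\rz(x_j)/(2C_0))$ indexed by such $j$ all sit inside the fixed ball $B(x,C\sz^{k_0+1}\rz(x))$, while each has radius bounded below by a constant times $\sz^{-k_0(k_0+1)}\rz(x)$; comparing volumes caps their number by $C\sz^{N_1}$ with $N_1=n(k_0+1)^2$, which gives (b).

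The main obstacle is that, in contrast to a Whitney-type cover associated to a fixed metric, the selected balls have widely varying radii and the overlap count cannot be read off a translation-invariant Vitali argument. The two inequalities in \eqref{2.2} must be exploited in tandem: the upper bound is needed to confine the relevant centers $x_j$ to a fixed ball around $x$, while the lower bound is needed to guarantee a uniform lower bound on the small-ball radii; only with both comparisons in hand does the volume-counting argument close and produce the polynomial dependence $\sz^{N_1}$.
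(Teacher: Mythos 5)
The paper itself gives no proof of this lemma: it is stated as a citation to Dziuba\'nski--Zienkiewicz \cite{DZ1}, so there is no ``paper proof'' to compare against. Your Vitali-selection argument is exactly the standard construction used in \cite{DZ1}, and the overall structure (maximal disjoint subfamily of small balls, then a volume comparison for the overlap bound) is right. Your derivation of (b) is correct as written: solving $u\le\sigma C_0(1+u)^{k_0/(k_0+1)}$ gives $|x-x_j|\lesssim\sigma^{k_0+1}\rho(x)$, whence $\rho(x_j)\lesssim\sigma^{k_0}\rho(x)$ and $\rho(x_j)\gtrsim\sigma^{-k_0(k_0+1)}\rho(x)$, and the volume count yields $N_1=n(k_0+1)^2$.

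There is, however, a genuine (though easily repaired) gap in the proof of (a): the shrink factor $1/(2C_0)$ is not small enough in general. If the small balls $B(x,\rho(x)/(2C_0))$ and $B(x_j,\rho(x_j)/(2C_0))$ intersect and $\rho(x)>\rho(x_j)$, you get $|x-x_j|<\rho(x)/C_0$, and \eqref{2.2} then gives $\rho(x)<C_0(1+1/C_0)^{k_0}\rho(x_j)$, hence only $|x-x_j|<(1+1/C_0)^{k_0}\rho(x_j)$, which exceeds $\rho(x_j)$ for $k_0\ge 1$. So the conclusion $x\in B_j$ does not follow from your choice of constant. The fix is to shrink by a factor $\epsilon$ with $2\epsilon C_0(1+2\epsilon)^{k_0}\le 1$; for instance $\epsilon=2^{-(k_0+1)}C_0^{-1}$ works, since then $2\epsilon C_0(1+2\epsilon)^{k_0}\le 2^{-k_0}\cdot 2^{k_0}=1$. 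With that modified $\epsilon$, both cases ($\rho(x)\le\rho(x_j)$ and $\rho(x)>\rho(x_j)$) close, and the rest of your argument, including the $\sigma^{N_1}$ overlap count in (b), goes through unchanged apart from the cosmetic replacement of $1/(2C_0)$ by $\epsilon$.
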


In this paper, we write $\Psi_\tz(B)=(1+r/\rho(x_0))^\tz$, where
 $\tz \ge 0$, $x_0$ and $r$ denotes the center and radius of $B$ respectively.

A weight  always refers to a positive function which is locally
integrable.
As in \cite{BHS1}, we say that a weight $\wz$ belongs to the class
$A_p^{\rho,\tz}(\rn)$ for $1<p<\fz$, if there is a constant $C$ such that for
all balls
  $B$
$$\l(\dfrac 1{\Psi_\tz(B)|B|}\dint_B\wz(y)\,dy\r)
\l(\dfrac 1{\Psi_\tz(B)|B|}\dint_B\wz^{-\frac 1{p
-1}}(y)\,dy\r)^{p-1}\le C.$$
We also
say that a  nonnegative function $\wz$ satisfies the $A_1^{\rho,\tz}(\rn)$
condition if there exists a constant $C$ such that
$$M_{V,\tz}(\wz)(x)\le C \wz(x), \ a.e.\ x\in\rn.$$
 where
$$M_{V,\tz}f(x)\equiv\dsup_{x\in B}\dfrac 1{\Psi_\tz(B)|B|}\dint_B|f(y)|\,dy.$$
When $V=0$, we
denote $M_{0}f(x)$ by $Mf(x)$ (the standard Hardy-Littlewood
maximal function). It is easy to see that $|f(x)|\le M_{V,\tz} f(x)\le
Mf(x)$ for $a.e.\ x\in\rn$ and any $\tz\ge 0$.

Clearly, the classes $A_p^{\rho,\tz}$ are increasing with $\tz$, and
we denote $A_p^{\rz,\fz}=\bigcup_{\tz\ge 0}A^{\rz,\tz}_{p}$.
By H\"older's inequality, we see that
$A^{\rz,\tz}_{p_1} \subset A^{\rz,\tz}_{p_2}$, if $1\le p_1<p_2<\fz$,
and we also denote $A_{\fz}^{\rz,\fz}=\bigcup_{p\ge 1}A^{\rz,\fz}_{p}$.
In addition, for $1\leq p\leq \fz$,
denote by $p'$ the adjoint number of $p$, i.e. $1/p+1/p'=1$.

Since $\Psi_\tz(B)\ge 1$ with $\tz\ge 0$, then
$A_p\subset A_p^{\rho,\tz}$ for $1\le p<\fz$, where $A_p$ denotes
the classical Muckenhoupt weights; see \cite{GR} and \cite{Mu}.
Moreover, the inclusions are proper.
In fact, as the example given in \cite{Ta3},
let $\tz>0$ and $0\le\gz\le\tz$,
it is easy to check that
$\wz(x)=(1+|x|)^{-(n+\gz)}\not\in A_\fz=\bigcup_{p\ge 1}A_p$ and
$\wz(x)dx$ is not a doubling measure, but
$\wz(x)=(1+|x|)^{-(n+\gz)}\in A_1^{\rho,\tz}$
provided that  $V=1$ and
$\Psi_\tz (B(x_0,r))=(1+r)^\tz$.

In what follows, given a Lebesgue measurable set $E$ and a weight $\wz$,
$|E|$ will denote the Lebesgue measure of $E$ and $\wz(E):=\int_E \wz(x)\,dx$.
For any
$\wz\in A^{\rz,\fz}_{\fz}$, the space $L^p_{\wz}(\rn)$
with $p\in(0,\fz)$ denotes the set of all measurable functions $f$
such that
$$\|f\|_{L^p_{\wz}(\rn)}\equiv\left(\int_{\rn}|f(x)|^p
\wz(x)\,dx\r)^{1/p}<\fz,$$
and $L^{\fz}_{\wz} (\rn)\equiv L^{\fz}(\rn)$.
The  symbol $L^{1,\,\fz}_{\wz}(\rn)$ denotes the
set of all measurable functions $f$ such that
$$\|f\|_{L^{1,\,\fz}_{\wz}(\rn)}\equiv\sup_{\lz>0}\left\{\lz
\wz(\{x\in\rn:\,|f(x)|>\lz\})\r\}<\fz.$$
We  define the local Hardy-Littlewood maximal operator by
\beq \label{Mloc}
M^{loc}f(x)\equiv \sup _{\substack {x\in B(x_0,r) \\ r\le \rz(x_0)}}
\frac{1}{|B|}\int_{B}|f(y)|\,dy.
\eeq

 We remark that  balls can be replaced by cubes  in definition of
$A_p^{\rho,\tz}$ and $M_{V,\tz}$, since
$\Psi(B)\le \Psi(2B)\le 2^\tz \Psi(B).$
In fact, for the cube $Q=Q(x_0,r)$, we can also define
$\Psi_\tz(Q)=(1+r/\rho(x_0))^\tz$.
Then  we give the weighted boundedness of $M_{V,\tz}$.

\begin{lem} \label{l2.3} {\bf (see \cite{Ta3})} \
Let $1<p<\fz$, $p'=p/(p-1)$ and assume that $\wz\in A_p^{\rho,\tz}$.
 There
exists a constant $C>0$ such that
$$\|M_{V,p'\tz}f\|_{L^p_{\wz}(\rn)}\le C\|f\|_{L^p_{\wz}(\rn)}.$$
\end{lem}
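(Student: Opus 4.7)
The plan is to reduce the lemma to the boundedness of an auxiliary weighted maximal operator via Hölder's inequality, then upgrade weak-type to strong-type boundedness using the open-endedness (reverse Hölder self-improvement) of $A_p^{\rho,\theta}$ established in \cite{BHS1,Ta3}.

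For the pointwise reduction, I would fix a ball $B \ni x$ and apply Hölder's inequality with conjugate exponents $p$ and $p'$:
$$\int_B|f(y)|\,dy \;\le\; \left(\int_B|f|^p\omega\right)^{1/p}\left(\int_B \omega^{-1/(p-1)}\right)^{1/p'}.$$
The defining $A_p^{\rho,\theta}$-inequality allows me to replace the second factor by $C\,\Psi_\theta(B)|B|\,\omega(B)^{-1/p}$. Dividing by $\Psi_{p'\theta}(B)|B|=\Psi_\theta(B)^{p'}|B|$ and using $\Psi_\theta(B)\ge 1$ together with $p'\ge 1$ (so the residual $\Psi_\theta(B)^{1-p'}\le 1$), I obtain
$$\frac{1}{\Psi_{p'\theta}(B)|B|}\int_B|f|\;\le\;C\left(\frac{1}{\omega(B)}\int_B|f|^p\omega\right)^{1/p},$$
and taking the supremum over $B\ni x$ gives the pointwise estimate $M_{V,p'\theta}f(x) \le C(M^{(\omega)}(|f|^p)(x))^{1/p}$, where $M^{(\omega)}g(x):=\sup_{B\ni x}\frac{1}{\omega(B)}\int_B|g|\,\omega$ denotes the uncentered Hardy-Littlewood maximal function with respect to the measure $\omega\,dx$.

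Since $M^{(\omega)}$ is only of weak type $(1,1)$ with respect to $\omega\,dx$ (via the Besicovitch covering theorem), raising the pointwise estimate directly to the $p$-th power is not enough. I would therefore use the open-endedness of $A_p^{\rho,\theta}$ to replace $p$ by some $q=p-\varepsilon<p$ in the Hölder step, producing the sharper bound $M_{V,p'\theta}f(x)\le C(M^{(\omega)}(|f|^q)(x))^{1/q}$. Now the exponent $p/q>1$, so Marcinkiewicz interpolation between the Besicovitch weak-$(1,1)$ bound and the trivial $L^\infty$ bound gives strong boundedness of $M^{(\omega)}$ on $L^{p/q}_\omega$, yielding
$$\|M_{V,p'\theta}f\|_{L^p_\omega}^p \;\le\; C\int_{\rn}(M^{(\omega)}(|f|^q))^{p/q}\omega \;\le\; C'\int_{\rn}|f|^p\omega,$$
which is the desired $L^p_\omega$-boundedness.

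The main obstacle is the open-endedness of $A_p^{\rho,\theta}$ itself: in the classical Muckenhoupt setting this follows from reverse Hölder via a Calderón-Zygmund decomposition, but the non-doubling nature of $\omega$ and the localization factor $\Psi_\theta$ make the argument more delicate, and one must track the $\theta$-parameter carefully so that the output operator in the lemma is still $M_{V,p'\theta}$ (rather than $M_{V,p'\theta'}$ for some larger $\theta'$). This bookkeeping has already been carried out in \cite{BHS1,Ta3}, so I would invoke the self-improvement result as a black box. Everything else in the plan -- Hölder, Besicovitch, and Marcinkiewicz -- is entirely standard.
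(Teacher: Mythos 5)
The paper does not prove Lemma 2.3 itself; it is imported wholesale from \cite{Ta3}. So I can only judge your proposal on its own terms, and there I see one genuine gap and one deferral that is not as safe as you suggest.

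The genuine gap is at the Besicovitch step. You define $M^{(\omega)}g(x)=\sup_{B\ni x}\omega(B)^{-1}\int_B|g|\,\omega$ as the \emph{uncentered} weighted maximal function, and then assert that it is of weak type $(1,1)$ with respect to $\omega\,dx$ ``via the Besicovitch covering theorem.'' Besicovitch only yields the weak $(1,1)$ bound for the \emph{centered} maximal function $\sup_{r>0}\omega(B(x,r))^{-1}\int_{B(x,r)}|g|\,\omega$; passing from centered to uncentered requires a doubling hypothesis on the measure, which is precisely what is unavailable here. The paper emphasizes in Section 2 that $A_p^{\rho,\theta}$ weights are in general non-doubling (e.g.\ $\omega(x)=(1+|x|)^{-(n+\gamma)}$ is exhibited as a non-doubling member of $A_1^{\rho,\theta}$), and for non-doubling measures in $\mathbb{R}^n$, $n\ge 2$, the uncentered maximal operator is known to fail weak $(1,1)$ in general (Sj\"ogren's counterexample). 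Your pointwise reduction produces the uncentered $M^{(\omega)}$ because $M_{V,p'\theta}$ itself is an uncentered supremum over $B\ni x$, so the centered version does not drop out for free; replacing $B$ by the ball $B(x,2r_B)$ changes $\Psi_{p'\theta}(B)$ by a factor that is only controlled after invoking Lemma 2.1 and picking up yet another power of $\Psi_\theta$. This step needs a real repair, e.g.\ by splitting $M_{V,p'\theta}$ into a local part over sub-critical balls (where $\omega$ is locally doubling and $\Psi_\theta\sim 1$, so one can fall back on $M^{\mathrm{loc}}$ and Lemma 2.4(vii)) and a global tail controlled by the extra decay in $\Psi_{p'\theta}$, which is the localization-to-critical-balls route that the references in fact use.

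The $\theta$-bookkeeping you flag is also not as safely deferred as you imply. After passing to $q=p-\varepsilon$ the H\"older/$A_q^{\rho,\theta'}$ step produces a prefactor $\Psi_{\theta'}(B)/\Psi_{p'\theta}(B)$, so you need $\theta'\le p'\theta$ quantitatively. The version of self-improvement actually recorded in this paper, Lemma 2.4(iii), is only the qualitative statement $\omega\in A_p^{\rho,\infty}\Rightarrow\omega\in A_{p-\varepsilon}^{\rho,\infty}$ with no control on the new $\theta'$. Unless the quantitative form you need is genuinely in \cite{BHS1,Ta3}, this is a second place where the argument could silently fail, with the output becoming $M_{V,p'\theta''}$ for some larger $\theta''$ rather than $M_{V,p'\theta}$.
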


Next, we give some properties of weights class $A^{\rz,\tz}_{p}$ for $p\ge1$.

\begin{lem}\label{l2.4}
Let $\wz\in A^{\rho,\fz}_p=\bigcup_{\tz\ge 0}A_p^{\rho,\tz}$ for $p\ge 1$.
Then
\begin{enumerate}
 \item[\rm(i)] If $ 1\le p_1<p_2<\fz$, then $A_{p_1}^{\rho,\tz}\subset
A_{p_2}^{\rho,\tz}$.

 \item[\rm(ii)] $\wz\in A_p^{\rho,\tz}$ if and only
if\ $\wz^{-\frac 1{p-1}}\in A_{p'}^{\rho,\tz}$, where $1/p+1/p'=1.$

 \item[\rm(iii)] If $\wz\in A_p^{\rho,\fz},\ 1<p<\fz$, then there exists
$\ez>0$ such that $\wz\in A_{p-\ez}^{\rho,\fz}.$

 \item[\rm(iv)]Let $f\in L_{loc}(\rz)$, $0<\dz<1$, then
$(M_{V,\tz}f)^\dz\in A_1^{\rho,\tz}$.

 \item[\rm(v)] Let $1<p<\fz$, then $\wz\in A_p^{\rho,\fz}$ if and only
if $\wz=\wz_1\wz_2^{1-p}$, where $\wz_1,\wz_2\in A_1^{\rho,\fz}$.

 \item[\rm(vi)] For $\wz\in A_p^{\rho,\tz}$,  $Q=Q(x,r)$ and $\lz>1$,  there
 exists a positive constant $C$ such that
 $$\wz(\lz Q)\le C (\Psi_{\tz}(\lz Q))^p\, \lz^{np}\, \wz(Q).$$

\item[\rm(vii)] If $p\in(1,\fz)$ and $\wz\in A^{\rz,\tz}_{p}(\rn)$,
then the local Hardy-Littlewood maximal operator $M^{\loc}$ is
bounded on $L^p_{\wz}(\rn)$.

\item[\rm(viii)] If $\wz\in A^{\rz,\tz}_{1}(\rn)$, then $M^{\loc}$ is
bounded from $L^1_{\wz}(\rn)$ to $L^{1,\,\fz}_{\wz}(\rn)$.

\end{enumerate}

\end{lem}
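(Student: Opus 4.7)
The plan is to proceed in the order (i)--(ii)--(vi)--(iii)--(iv)--(v)--(vii)--(viii), exploiting the key observation that on balls with $r\le\rz(x_0)$ one has $\Psi_\tz(B)\le 2^\tz$, so the class $A_p^{\rz,\tz}$ behaves like classical $A_p$ at small scales, while the $\Psi_\tz$-correction only plays a role at large scales. Parts (i) and (ii) are immediate from the definition: (i) follows by applying Hölder's inequality to $\l(\dfrac 1{|B|}\dint_B \wz^{-1/(p_1-1)}\r)^{p_1-1}$ with conjugate exponents $(p_2-1)/(p_1-1)$ and its dual, and (ii) is obtained by interchanging the roles of $\wz$ and $\wz^{-1/(p-1)}$ in the defining inequality, using $(p-1)(p'-1)=1$. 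Part (vi) is a standard consequence of the defining inequality: writing $|Q|=\dint_{\lz Q}\chi_Q\,\wz^{1/p}\,\wz^{-1/p}$ and applying Hölder gives $|Q|/|\lz Q|\le C\,\Psi_\tz(\lz Q)\,(\wz(Q)/\wz(\lz Q))^{1/p}$, from which the claim follows upon rearranging.

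For the self-improvement in (iii), the main ingredient is a localized reverse Hölder inequality: for $\wz\in A_p^{\rz,\tz}$ there exist $\ez>0$ and $\tz'\ge\tz$ such that for every ball $B$,
\[
\l(\dfrac 1{|B|}\dint_B \wz(y)^{1+\ez}\,dy\r)^{1/(1+\ez)}\le C\,\Psi_{\tz'}(B)\,\dfrac 1{|B|}\dint_B \wz(y)\,dy.
\]
I would prove this by the usual Calderón--Zygmund stopping-time argument on a fixed ball, controlling the super-level sets of $\wz$ by the doubling estimate (vi); then applying the reverse Hölder inequality to $\wz^{-1/(p-1)}$ (which belongs to $A_{p'}^{\rz,\tz}$ by (ii)) yields $\wz\in A_{p-\ez}^{\rz,\tz'}\subset A_p^{\rz,\fz}$. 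For (iv), I would follow the scheme of Coifman--Rochberg: split $f=f\chi_{B^*}+f\chi_{\rn\setminus B^*}$, use Kolmogorov's inequality together with the weak $(1,1)$ estimate for $M_{V,\tz}$ to control the average of $(M_{V,\tz}(f\chi_{B^*}))^\dz$ on $B$, and use Lemma \ref{l2.1} to dominate $M_{V,\tz}(f\chi_{\rn\setminus B^*})(x)$ for $x\in B$ by $C\,\Psi_\tz(B)\,M_{V,\tz}f(y)$ uniformly in $y\in B$. Part (v) (Jones factorization) then follows from (iv) and (iii) exactly as in the classical case, via a Rubio-de-Francia iteration of the operator $f\mapsto M_{V,\tz}(f\wz^{-1/p})\wz^{1/p}$.

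Finally, (vii) and (viii) concern $M^\loc$, which averages only over balls of radius $r\le\rz(x_0)$. On such balls $\Psi_\tz(B)$ is uniformly bounded, so the $A_p^{\rz,\tz}$ condition reduces to a classical $A_p$-type inequality \emph{uniformly}. Covering $\rn$ by the critical balls $B_j=B(x_j,\rz(x_j))$ from Lemma \ref{l2.2} and localizing the small balls to their nearest $B_j$, the classical Calderón--Zygmund proof of the weighted weak $(1,1)$ and strong $(p,p)$ bounds for $M$---based on the reverse Hölder estimate in the small-ball regime---then delivers (vii) and (viii). The main obstacle throughout is (iii): the slowly varying factor $\Psi_\tz(B)$ must be carried through the Calderón--Zygmund stopping argument, and controlling the price one pays in $\tz'$ is the only nonroutine point; once (iii) is in hand, (iv)--(viii) are routine adaptations of the classical arguments.
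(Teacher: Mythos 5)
The paper does not actually prove Lemma 2.4: its entire proof is the citation ``(i)--(viii) have been proved in \cite{BHS1, Ta4}.'' Your proposal therefore takes a genuinely different route in that it reconstructs the arguments rather than citing them, and your outline is substantively in line with what those references do. Parts (i), (ii), (vi) are indeed the immediate Hölder/duality/$A_p$-condition manipulations you describe. Your observation that $\Psi_\tz(B)\le 2^\tz$ on balls with $r\le\rz(x_0)$, so that the class is ``classical $A_p$ uniformly at small scales,'' is exactly the mechanism behind (vii) and (viii); this is the same localization philosophy Rychkov uses for $A_p^{\rm loc}$, and the critical-ball covering of Lemma \ref{l2.2} with bounded overlap makes the patching rigorous. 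Your Coifman--Rochberg treatment of (iv) works, and one can even dispense with the extra $\Psi_\tz(B)$ loss in the far-field estimate: if $B'\ni z\in B$ meets $(B^*)^\complement$ then $r'\gtrsim r_B$, so $3B'$ contains any $y\in B$ and $\Psi_\tz(3B')\le 3^\tz\Psi_\tz(B')$, giving a scale-free pointwise bound; the Kolmogorov step then absorbs $\Psi_\tz(B)^{\dz-1}\le 1$ as you note.

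You are right that (iii) is the only nonroutine part, but one caveat is worth flagging: when you run the Calderón--Zygmund stopping-time argument on a fixed ball $B$, the $A_p^{\rz,\tz}$ condition applied to a stopping cube $Q^k_j\subset B$ produces a factor $\Psi_\tz(Q^k_j)$, and this is \emph{not} automatically $\le\Psi_\tz(B)$ because $\Psi_\tz$ depends on the cube's own center and radius, not its container. One must first invoke Lemma \ref{l2.1} to get $\Psi_\tz(Q^k_j)\ls\Psi_{\tz(k_0+1)}(B)$ uniformly over all subcubes, and then track that the ``$\az<1$'' constant in the level-set decay degrades with $\Psi_{\tz(k_0+1)}(B)$, so the admissible $\ez$ in the reverse Hölder exponent also depends on the scale of $B$. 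This is precisely why the clean statement ends up being a reverse Hölder with an extra $\Psi_{\tz'}(B)$ on the right (and $\ez$ independent of $B$), rather than a verbatim transplant of the Coifman--Fefferman proof; spelling out this bookkeeping is the one place where your sketch, as written, glosses over a real step. Once that is done, (iii) follows as you indicate by applying the inequality to $\wz^{-1/(p-1)}\in A_{p'}^{\rz,\fz}$, at the cost of enlarging $\tz$, which is harmless in $A_p^{\rz,\fz}=\bigcup_\tz A_p^{\rz,\tz}$; and (v) is then the usual Rubio de Francia iteration.
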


\begin{proof}
(i)-(viii) have been proved in \cite{BHS1, Ta4}.
\end{proof}

For any $\wz\in A^{\rz,\fz}_{\fz}(\rn)$, define the  critical
index of $\wz$ by
\beq \label{qw}
q_{\wz}\equiv\inf\left\{p\in[1,\fz):\,\wz\in A^{\rz,\fz}_p(\rn)\r\}.
\eeq
Obviously, $q_\wz\in [1,\fz)$. If $q_\wz\in(1,\fz)$, then
$\wz\not\in A_{q_\wz}^{\rz,\fz}$.

The symbols ${\cal D}(\rn)=C_0^\fz(\rn), {\cal D}'(\rn)$ is the
dual space of ${\cal D}(\rn)$, and for ${\cal D}(\rn),\,{\cal D}'(\rn)$
and $L^p_{\wz}(\rn)$,
we have the following conclusions.

\begin{lem}\label{l2.5}
Let $\wz\in A^{\rz,\fz}_{\fz}(\rn)$, $q_{\wz}$ be as in (2.4) and
$p\in(q_{\wz},\fz]$.
\begin{enumerate}
 \item[\rm(i)] If $\frac{1}{p}+\frac{1}{p'}=1$, then
${\cal {D}}(\rn)\subset L^{p'}_{\wz^{-1/(p-1)}}(\rn)$.

\item[\rm(ii)] $L^{p}_{\wz}(\rn)\subset\cd' (\rn)$ and the
inclusion is continuous.
\end{enumerate}
\end{lem}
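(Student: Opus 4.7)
The plan is to reduce both statements to H\"older's inequality combined with the local integrability of $\wz^{-1/(p-1)}$. The key input is that $p>q_\wz$ forces $\wz\in A_p^{\rz,\tz}$ for some $\tz\ge 0$, so by Lemma \ref{l2.4}(ii) the weight $\wz^{-1/(p-1)}$ itself lies in $A_{p'}^{\rz,\tz}$; in particular it is locally integrable.

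For (i), the endpoint $p=\fz$ is trivial, since then $p'=1$, $\wz^{-1/(p-1)}$ is to be read as $1$, and the claim reduces to $\cdd(\rn)\subset L^1(\rn)$. For $p<\fz$, given $\vz\in\cdd(\rn)$ supported in a ball $B$, I would bound
\[
\int_{\rn}|\vz(x)|^{p'}\wz^{-1/(p-1)}(x)\,dx
\le \|\vz\|_{L^\fz(\rn)}^{p'}\int_B \wz^{-1/(p-1)}(x)\,dx,
\]
and then use the $A_p^{\rz,\tz}$ defining inequality to control $\int_B \wz^{-1/(p-1)}$ by a constant multiple of $\Psi_\tz(B)^{p'}|B|^{p'}\bigl(\int_B\wz\bigr)^{-1/(p-1)}$, which is finite because $\wz$ is locally integrable and positive, and $\Psi_\tz(B)$ depends only on $B$ and $\tz$.

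For (ii), given $f\in L^p_\wz(\rn)$ and $\vz\in\cdd(\rn)$, I would write $f\vz=(f\wz^{1/p})(\vz\wz^{-1/p})$ and apply H\"older's inequality with exponents $p$ and $p'$ (using $p'/p=1/(p-1)$):
\[
\Big|\int_{\rn}f(x)\vz(x)\,dx\Big|
\le \|f\|_{L^p_{\wz}(\rn)}\,\|\vz\|_{L^{p'}_{\wz^{-1/(p-1)}}(\rn)}.
\]
Part (i) makes the second factor finite, so $f$ pairs with every test function; checking continuity in the test-function topology uses the same estimate localised to a ball containing the supports of a convergent sequence $\{\vz_n\}$, together with $\|\vz_n\|_{L^\fz(\rn)}\to 0$ for $\vz_n\to 0$ in $\cdd(\rn)$. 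Continuity of the inclusion then follows: if $f_n\to f$ in $L^p_\wz(\rn)$, then for every fixed $\vz\in\cdd(\rn)$ the constant $C_\vz:=\|\vz\|_{L^{p'}_{\wz^{-1/(p-1)}}(\rn)}$ yields $|\langle f_n-f,\vz\rangle|\le C_\vz\|f_n-f\|_{L^p_\wz(\rn)}\to 0$. The case $p=\fz$ is handled directly by $|\int f\vz|\le\|f\|_{L^\fz(\rn)}\|\vz\|_{L^1(\rn)}$.

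No genuine obstacle is anticipated; the argument amounts to one careful application of H\"older's inequality. The only point deserving attention is the $\Psi_\tz(B)$ factor in the $A_p^{\rz,\tz}$ inequality, but since $B$ is fixed in (i) this contributes only a finite constant, and none of the subtleties of the non-doubling setting intervene.
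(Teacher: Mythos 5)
Your proof is correct and takes the expected route: the paper omits the argument, deferring to Lemma~2.2 of \cite{Ta1}, but the standard proof there is exactly this H\"older-duality argument, with local integrability of $\wz^{-1/(p-1)}$ extracted from the $A_p^{\rz,\tz}$ defining inequality. The extra $\Psi_\tz(B)$ factor in the non-doubling setting is, as you observe, harmless for a fixed ball, and your handling of the $p=\fz$ endpoint and of continuity is complete.
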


By the same method as the proof of Lemma 2.2 in \cite{Ta1}, we can
get the Lemma 2.5, and we omit the details here.

For any $\vz\in {\cal D}(\rn)$, let
$\vz_t(x)=t^{-n}\vz\l(x/t\r)$ for $t>0$ and
$\vz_j(x)=2^{jn}\vz\l(2^jx\r)$ for $j\in \zz$.
It is easy to see that we have the following results.

\begin{lem}\label{l2.6} {\rm\bf (see \cite{Ta1})}\
Let $\vz\in {\cal D}(\rn)$ and $\dint_{\rn} \vz(x)dx=1$.
\begin{enumerate}
\item[\rm(i)] For any $\Phi\in {\cal D}(\rn)$ and $f\in {\cal D}'(\rn)$,
$\Phi*\vz_t\to\Phi$ in ${\cal D}(\rn)$ as $t\to0$, and
$f*\vz_t\to f$ in ${\cal D}'(\rn)$ as $t\to0$.

\item[\rm(ii)] Let
$\wz\in A_\fz^{\rz,\fz}$ and $q_\wz$ be as in (2.3). If $q\in
(q_\wz,\fz)$, then for any $f\in L^q_\wz(\rn)$, $f*\vz_t\to f$ in
$L^q_\wz(\rn)$ as $t\to0$.
\end{enumerate}
\end{lem}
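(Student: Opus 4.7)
The two items are independent; for (i) the argument is classical, while for (ii) the main effort goes into a uniform $L^q_\wz$-bound on the convolution family $\{f \mapsto f * \vz_t\}_{t \in (0,1]}$, combined with density of $\cdd(\rn)$ and the uniform convergence already contained in (i).

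For (i), I first handle the $\cdd(\rn)$ statement. Fix $\Phi \in \cdd(\rn)$ and choose $N$ with $\supp \vz \subset B(0, N)$. For $t \leq 1$,
\[
\supp(\Phi * \vz_t) \subset \supp \Phi + B(0, N),
\]
a fixed compact set. Since $\pz^{\az}(\Phi * \vz_t) = (\pz^\az \Phi) * \vz_t$, and since $\int \vz = 1$ yields
\[
(\pz^\az \Phi) * \vz_t(x) - \pz^\az \Phi(x) = \int_{\rn} [\pz^\az \Phi(x - ty) - \pz^\az \Phi(x)] \vz(y)\, dy,
\]
uniform continuity of each $\pz^\az \Phi$ gives uniform convergence to $0$ on $\rn$, so $\Phi * \vz_t \to \Phi$ in $\cdd(\rn)$. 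For $f \in {\cal D}'(\rn)$, set $\tilde\vz(x) := \vz(-x)$, which also satisfies $\int \tilde\vz = 1$. For any $\psi \in \cdd(\rn)$, the definition of distributional convolution gives
\[
\langle f * \vz_t, \psi \rangle = \langle f, \tilde\vz_t * \psi \rangle,
\]
and by the previous step $\tilde\vz_t * \psi \to \psi$ in $\cdd(\rn)$; continuity of $f$ then concludes (i).

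For (ii), I follow the standard three-step density scheme. First, $\cdd(\rn)$ is dense in $L^q_\wz(\rn)$: since $\wz$ is locally integrable, the truncations $f \chi_{B(0, R)}$ converge to $f$ in $L^q_\wz$ by dominated convergence, after which mollification produces smooth compactly supported approximants. Second, for $g \in \cdd(\rn)$, part (i) yields $\|g * \vz_t - g\|_{\fz} \to 0$ with $\supp(g * \vz_t)$ sitting inside a fixed compact $K$, whence
\[
\|g * \vz_t - g\|_{L^q_\wz} \leq \|g * \vz_t - g\|_{\fz}\, \wz(K)^{1/q} \longrightarrow 0.
\]
Third, one needs the uniform bound $\sup_{t \in (0, 1]} \|f * \vz_t\|_{L^q_\wz} \leq C \|f\|_{L^q_\wz}$, after which a standard $3\ez$ argument finishes. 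The starting point is
\[
|f * \vz_t(x)| \leq C \|\vz\|_{\fz}\, |B(x, tN)|^{-1} \int_{B(x, tN)} |f(y)|\, dy.
\]
Where $tN \leq \rz(x)$, the right-hand side is dominated by $M^{\loc} f(x)$, which is bounded on $L^q_\wz(\rn)$ by Lemma \ref{l2.4}(vii) (since $q > q_\wz$ forces $\wz \in A_q^{\rz,\fz}$). The residual region $\{x : \rz(x) < tN\}$ I plan to handle by localizing via the cover $\{B_j\}$ of Lemma \ref{l2.2}, combined with the fact that $\wz$ is genuinely $A_q$ on balls of size comparable to $\rz$.

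\textbf{Main obstacle.} The delicate step is precisely this uniform $L^q_\wz$-bound: $M^{\loc}$ dominates $|f * \vz_t(x)|$ only when $tN \leq \rz(x)$, but $\rz$ has no positive global lower bound, so the residual set cannot simply be discarded. Handling it cleanly with the covering $\{B_j\}$ and the local $A_q$ property of the $A_q^{\rz,\fz}$ class is the technical core of (ii); once it is in place, the rest is a routine density and $3\ez$ exercise.
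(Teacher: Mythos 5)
Part (i) is fine and matches the standard argument; note also that the paper itself gives no proof of this lemma (it is simply cited from \cite{Ta1}), so there is no internal proof to compare against.

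The gap is in (ii), and it is the one you yourself flag at the end. You reduce to the uniform estimate $\sup_{0<t\le 1}\|f*\vz_t\|_{L^q_{\wz}(\rn)}\le C\|f\|_{L^q_{\wz}(\rn)}$, observe correctly that the majorization $|f*\vz_t(x)|\lesssim M^{\loc}f(x)$ is available only where $tN\le\rz(x)$, and then merely announce that you will handle the residual set $\{x:\rz(x)<tN\}$ via the cover $\{B_j\}$ of Lemma \ref{l2.2}; you never do so, and the sketch does not obviously close. On that set the ball $B(x,tN)$ has radius far larger than $\rz(x)$, so it is not contained in any bounded dilate of a single $B_j$, and the class $A_q^{\rz,\tz}(\rn)$ controls averages over such a ball only up to the penalty $\Psi_\tz(B(x,tN))^q=(1+tN/\rz(x))^{q\tz}$. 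Indeed, the pointwise bound that follows from the definition of $M_{V,q'\tz}$ (the operator made bounded by Lemma \ref{l2.3}) is
$$|f*\vz_t(x)|\lesssim\Big(1+\frac{tN}{\rz(x)}\Big)^{q'\tz}M_{V,\,q'\tz}f(x),$$
and the extra factor is unbounded on $\{\rz(x)<tN\}$; it does not disappear as $t\to0$ because $\rz$ has no positive lower bound on $\rn$. You also cannot bypass the uniform bound by splitting $\rn=\{\rz>ctN\}\cup\{\rz\le ctN\}$ and using dominated convergence on the first piece, since you would then need $\int_{\{\rz\le ctN\}}|f*\vz_t|^q\wz\,dx\to0$, which runs into exactly the same unbounded factor. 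So the step you defer is genuinely the heart of (ii), the route you sketch for it (localizing to the $B_j$) does not by itself supply the needed control, and as written the proof is incomplete there.
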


\section{Weighted local Hardy spaces and their maximal
function characterizations}\label{s3}

\noi In this section, we introduce the weighted local Hardy spaces \whp\,
via the local grand
maximal function and establish its local vertical and nontangential
maximal function
characterizations via a local Calder\'on reproducing formula.
We also introduce the weighted atomic local Hardy space
$h^{p,q,s}_{\rho}(\wz)$ and give some basic properties of these
spaces.

We first introduce some local maximal functions. For $N\in\zz_+$
and $R\in(0,\fz)$, let
$$\begin{array}{cl}
\cdd_{N,\,R}(\rn)\equiv \Bigg\{\vz\in\cdd(\rn):&\,\supp(\vz)
 \subset B(0,R), \\
&\left.\|\vz\|_{\cdd_{N}(\rn)}\equiv\dsup_{x\in\rn}
\dsup_{{\az\in\zz^n_+},\,{|\az|\le N}}
|\partial^{\az}\vz(x)|\le1\r\}.
\end{array}$$

\begin{defn}\label{d3.1}
Let $N\in\zz_+$ and $R\in(0,\fz)$. For any $f\in\cd'(\rn)$, the
local nontangential grand maximal function $\wt{\cm}_{N,\,R} (f)$
of $f$ is defined by setting, for all $x\in\rn$,
\beq \label{tgnr}
\wt{\cm}_{N,\,R} (f)(x)\equiv\sup\left\{|\vz_l \ast
f(z)|:\,|x-z|<2^{-l}<\rz(x),\,\vz\in\cdd_{N,\,R}(\rn)\r\},
\eeq
and the  local vertical grand maximal function $\cm_{N,\,R}(f)$
of $f$ is defined by setting, for all $x\in\rn$,
\beq \label{gnr}
{\cm}_{N,\,R} (f)(x)\equiv\sup\left\{|\vz_l \ast
f(x)|:\,0<2^{-l}< \rz(x),\,\vz\in\cdd_{N,\,R}(\rn)\r\}.
\eeq

\end{defn}

For convenience's sake, when $R=1$, we denote $\cdd_{N,\,R}(\rn)$,
$\wt{\cm}_{N,\,R} (f)$ and $\cm_{N,\,R}(f)$ simply by
$\cdd^0_{N}(\rn)$, $\wt{\cm}^0_{N}(f)$ and $\cm^0_{N}(f)$,
respectively;
when $R=\max\{ R_1,\ R_2,\ R_3 \}>1$
(in which $R_1$, $R_2$ and $R_3$  are defined as in Lemma 4.2, 4.4 and 4.8),
we denote $\cdd_{N,\,R}(\rn)$,
$\wt{\cm}_{N,\,R} (f)$ and $\cm_{N,\,R}(f)$ simply by
$\cdd_{N}(\rn)$, $\wt{\cm}_{N}(f)$ and $\cm_{N}(f)$, respectively.
For any $N\in\zz_+$ and $x\in\rn$, obviously,
$$\cm^{0}_N(f)(x)\le\cm_N (f)(x)\le\wt{\cm}_N (f)(x).$$

For the local grand maximal function $\cm^0_N (f)$, we have the
following Proposition \ref{p3.1}, which can be proved by the same method
as in \cite[Proposition 2.2]{Ta1}. Here and in what follows,
the space $L^1_{\loc}(\rn)$ denotes the set of all locally
integrable functions on $\rn$.

\begin{prop} \label{p3.1}
Let $N\ge2$. Then
\begin{enumerate}
\item[\rm(i)] There exists a positive constant $C$ such that
for all $f\in L^1_{\loc}(\rn)\cap\cd'(\rn)$ and almost every $x\in\rn$,
$$|f(x)|\le \cm^0_N (f)(x)\le C M^{\loc}(f)(x).$$

\item[\rm(ii)] If $\wz\in A_p^{\rz,\tz}(\rn)$ with $p\in(1,\fz)$, then
$f\in L^p_{\wz}(\rn)$ if and only if $f\in \cd'(\rn)$ and $\cm^0_N
(f)\in L^p_{\wz}(\rn)$; moreover,
$$\|f\|_{L^p_{\wz}(\rn)}\sim\|\cm^0_N (f)\|_{L^p_{\wz}(\rn)}.$$

\item[\rm(iii)] If $\wz\in A_1^{\rz,\tz}(\rn)$, then $\cm^0_N$ is
bounded from $L^1_{\wz}(\rn)$ to $L^{1,\fz}_{\wz}(\rn)$.
\end{enumerate}
\end{prop}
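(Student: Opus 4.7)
The plan is to reduce everything to the pointwise comparison $|f| \le \cm^0_N(f) \le C M^{\loc}(f)$ in part (i) and then invoke the weighted boundedness results for $M^{\loc}$ already recorded in Lemma~\ref{l2.4}. Before proving anything I would check that the hypotheses in each item actually give local integrability of $f$: when $\wz\in A_p^{\rz,\tz}$ with $p\in(1,\fz)$, the $A_p^{\rz,\tz}$ condition makes $\wz^{-1/(p-1)}$ integrable on every ball, so H\"older gives $L^p_\wz(\rn)\subset L^1_{\loc}(\rn)$; when $\wz\in A_1^{\rz,\tz}$, the pointwise inequality $M_{V,\tz}\wz\le C\wz$ forces $\wz$ to be bounded below on every ball, hence $L^1_\wz(\rn)\subset L^1_{\loc}(\rn)$.

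For the upper half of (i), I fix $\vz\in\cdd^0_N(\rn)$, $l\in\zz$ with $2^{-l}<\rz(x)$, and estimate
\[
|\vz_l\ast f(x)|\le 2^{ln}\int_{B(x,2^{-l})}|f(y)|\,dy\le C\frac{1}{|B(x,2^{-l})|}\int_{B(x,2^{-l})}|f(y)|\,dy,
\]
which is admissible in the supremum defining $M^{\loc}(f)(x)$ because the radius $2^{-l}$ is less than $\rz(x)$. Taking the sup over $\vz$ and $l$ gives $\cm^0_N(f)(x)\le CM^{\loc}(f)(x)$. For the lower half, pick any $\vz\in\cdd^0_N(\rn)$ with $\int\vz=1$; then at each Lebesgue point $x$ of $f$ (and $\rz(x)>0$ everywhere by assumption), $\vz_l\ast f(x)\to f(x)$ along $l$ large enough that $2^{-l}<\rz(x)$, giving $|f(x)|\le \cm^0_N(f)(x)$ a.e.

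For (ii), the forward direction is immediate from (i) combined with Lemma~\ref{l2.5}(ii) (to place $f$ in $\cd'$) and Lemma~\ref{l2.4}(vii) (for the $L^p_\wz$ boundedness of $M^{\loc}$). For the reverse direction, suppose $f\in\cd'(\rn)$ and $\cm^0_N(f)\in L^p_\wz(\rn)$. Choose $\vz\in\cdd^0_N(\rn)$ with $\int\vz=1$ and set $f_l=\vz_l\ast f$. On the open set $E_k=\{x:\rz(x)>2^{-k}\}$ (which exhausts $\rn$ as $k\to\fz$ since $\rz>0$), the pointwise estimate $|f_l(x)|\le \cm^0_N(f)(x)$ holds for all $l\ge k$. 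Hence $\{f_l\chi_{E_k}\}_{l\ge k}$ is bounded in $L^p_\wz(\rn)$; since $p\in(1,\fz)$ makes this space reflexive, a diagonal extraction yields a subsequence $f_{l_j}$ converging weakly in $L^p_\wz(\rn)$ to some $g$. But by Lemma~\ref{l2.6}(i), $f_l\to f$ in $\cd'(\rn)$, so $f=g\in L^p_\wz(\rn)$ with $\|f\|_{L^p_\wz(\rn)}\le \|\cm^0_N(f)\|_{L^p_\wz(\rn)}$.

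Part (iii) follows exactly as the forward direction of (ii): for $f\in L^1_\wz(\rn)$ we have $f\in L^1_{\loc}(\rn)\cap\cd'(\rn)$ as noted, so (i) gives $\cm^0_N(f)\le CM^{\loc}(f)$ pointwise, and Lemma~\ref{l2.4}(viii) supplies the weak-type $(1,1)$ bound on the right. The main obstacle I anticipate is the reverse direction of (ii): one has to be careful that the pointwise domination $|f_l|\le \cm^0_N(f)$ is only available on the sets $E_k$ rather than globally, so the weak-compactness argument has to be set up with the exhaustion $\rn=\bigcup_k E_k$ together with a diagonal extraction, and one must then identify the weak limit with $f$ by comparing against the $\cd'$-convergence provided by Lemma~\ref{l2.6}(i).
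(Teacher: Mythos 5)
Your proof is correct and follows the standard route; the paper itself gives no argument here, simply citing \cite[Proposition~2.2]{Ta1}, and the reduction to the pointwise comparison with $M^{\loc}$ plus Lemma~\ref{l2.4}(vii)--(viii) is exactly what is intended. Two small remarks. In (i), the lower bound you obtain is really $|f(x)|\le C\,\cm^0_N(f)(x)$ with $C$ depending on the chosen normalization, since a $\vz\in\cdd(\rn)$ with $\int\vz=1$ need not itself satisfy $\|\vz\|_{\cdd_N}\le1$; one uses $\vz/\|\vz\|_{\cdd_N}\in\cdd^0_N(\rn)$, and the constant is harmless for (ii)--(iii). In the reverse direction of (ii), the diagonal extraction yields weak limits $g^{(k)}$ on each $E_k$; these patch because multiplication by $\chi_{E_k}$ is weakly continuous, the global bound follows from weak lower semicontinuity of the norm plus monotone convergence as $E_k\nearrow\rn$, and the identification $f=g$ pairs against $\phi\wz^{-1}$ using Lemma~\ref{l2.5}(i) (every test function is supported in some $E_k$ since $\rz$ is positive and continuous) -- you flag the exhaustion issue correctly, but these last steps deserve to be written out.
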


Now we introduce the weighted local Hardy space via the local
grand maximal function as follows.

\begin{defn}\label{d3.2}
Let $\wz\in A^{\rz,\fz}_{\fz}(\rn)$, $q_{\wz}$ be as in \eqref{qw}, $p\in(0,1]$
and $\wt{N}_{p,\wz}\equiv[n(\frac {q_\wz}p-1)]+2.$
For each $N\in\nn$ with $N\ge\wt{N}_{p,\,\wz}$,
the  weighted local Hardy space is defined by
$$h^{p}_{\rho,\,N}(\wz)\equiv\left\{f\in\cd'(\rn):\ \cm_N (f)\in
L^{p}_{\wz}(\rn)\r\}.$$
Moreover, let
$\|f\|_{h^{p}_{\rho,\,N}(\wz)}\equiv\|\cm_N(f)\|_{L^{p}_{\wz}(\rn)}$.

\end{defn}

Obviously, for any integers $N_1$ and $N_2$
with $N_1\ge N_2\ge \wt{N}_{p,\,\wz}$,
$$h^{p}_{\rho,\,\wt{N}_{p,\wz}}(\wz)\subset
h^{p}_{\rho,\,N_2}(\wz) \subset h^{p}_{\rho,\,N_1}(\wz),$$
and the inclusions are continuous.

Next, we introduce the weighted local atoms, via which,
we give the definition of
the weighted atomic local Hardy space.

\begin{defn}\label{d3.3}
Let $\wz\in A^{\rz,\fz}_{\fz}(\rn)$, $q_{\wz}$ be as in \eqref{qw}.
A triplet $(p,q,s)_\wz$ is called to be admissible, if $p\in (0,1]$,
$q\in(q_\wz,\fz]$ and $s\in\nn$ with $s\ge [n(q_\wz/p-1)]$.
A function $a$ on $\rn$ is said to be a $(p,q,s)_\wz-atom$ if
\begin{enumerate}
\item[\rm(i)] $\supp \, a\subset Q(x,r)$ and $r\le L_{1}\rz(x)$,
\item[\rm(ii)] $\|a\|_{L^q_\wz(\rn)}\le [\wz( Q)]^{1/q-1/p}$,
\item[\rm(iii)]$\dint_{\rn} a(x)x^\az dx=0$ for all $\az\in \zz_+^n$ with
$|\az|\le s$, when $Q=Q(x,r),\ r<L_{2}\rz(x)$,
\end{enumerate}
where $L_1\equiv4 C_0 (3\sqrt{n})^{k_0}$, $L_2\equiv1/C_0^2(3\sqrt{n})^{k_0+1}$,
and $C_0$, $k_0$ are constant given in Lemma \ref{2.1}.
 Moreover, a function $a(x)$ on $\rn$ is called  a
$(p,\,q)_{\wz}$-single-atom with $q\in(q_{\wz},\fz]$, if
$$\|a\|_{L^q_{\wz}(\rn)}\le[\wz(\rn)]^{1/q-1/p}.$$
\end{defn}

\begin{defn}\label{d3.4}
Let $\wz\in A^{\rz,\fz}_{\fz}(\rn)$, $q_{\wz}$ be as in \eqref{qw},
and $(p,q,s)_\wz$ be admissible, The  weighted atomic local Hardy space
$h^{p,\,q,\,s}_{\rho}(\wz)$ is defined as the set of all
$f\in\cd'(\rn)$ satisfying that
$$f=\sum_{i=0}^{\fz}\lz_i a_i$$
in $\cd'(\rn)$, where $\{a_i\}_{i\in\nn}$ are
$(p,\,q,\,s)_{\wz}$-atoms with $\supp(a_i)\subset Q_i$, $a_0$ is a
$(p,\,q)_{\wz}$-single-atom, $\{\lz_i\}_{i\in\zz_+}\subset\cc$.
Moreover, the quasi-norm of $f\in h^{p,q,s}_{\rho}(\wz)$ is defined by
$$\|f\|_{h^{p,q,s}_{\rho}(\wz)}\equiv
\dinf\l\{\l[\dsum_{i=0}^\fz|\lz_i|^p\r]^{1/p}\r\},$$
where the infimum is taken over all the decompositions of $f$ as
above.
\end{defn}

It is easy to see that if  triplets $(p,q,s)_\wz$ and $(p,\bar
q,\bar s)_\wz$ are admissible and  satisfy $\bar q\le q$ and $\bar
s\le s$, then $(p,q,s)_\wz$-atoms are $(p,\bar q,\bar
s)_\wz$-atoms, which  implies that
$h^{p,q,s}_{\rho}(\wz)\subset h^{p,\bar q,\bar s}_{\rho}(\wz)$ and
the inclusion is continuous.

Next, we introduce some local vertical, tangential and nontangential
maximal functions, and then we establish the characterizations of the
weighted local Hardy space $h^{p}_{\rho,\,N}(\wz)$ by
these local maximal functions.

\begin{defn}\label{d3.5}
Let
\beq \label{3.3}
\psi_0\in\cdd(\rn)\,\, \text{with}\,\,\int_{\rn}\psi_0 (x)\,dx\neq0.
\eeq
For every $x\in\rn$, there exists an integer $j_x\in\zz$ satisfying
$2^{-j_x}< \rz(x)\le 2^{-j_x+1}$,
and then for $j\ge j_x$, $A,\,B\in[0,\fz)$ and $y\in\rn$, let
$m_{j,\,A,\,B,\,x}(y)\equiv(1+2^j |y|)^A 2^{{B|y|}/{\rz(x)}}$.

The  local vertical maximal function $\psi_0^{+}(f)$ of $f$ associated to
$\psi_0$ is defined by setting, for all $x\in\rn$,
\beq \label{3.4}
\psi_0^{+}(f)(x)\equiv \sup_{j\ge j_x}|(\psi_0)_j \ast f(x)|,
\eeq
the local tangential Peetre-type  maximal function
$\psi^{\ast\ast}_{0,\,A,\,B}(f)$ of $f$ associated to $\psi_0$ is
defined by setting, for all $x\in\rn$,
\beq \label{3.5}
\psi^{\ast\ast}_{0,\,A,\,B}(f)(x)\equiv\sup_{j\ge j_x,\,y\in\rn}
\frac{|(\psi_0)_j \ast f(x-y)|}{m_{j,\,A,\,B,\,x}(y)},
\eeq
and the  local nontangential maximal function
$(\psi_0)^{\ast}_{\triangledown}(f)$ of $f$ associated to $\psi_0$ is
defined by setting, for all $x\in\rn$,
\beq \label{3.6}
(\psi_0)^{\ast}_{\triangledown}(f)(x)\equiv
\sup_{|x-y|<2^{-l}<\rz(x)}|(\psi_0)_l \ast f(y)|,
\eeq
where $l\in\zz$.

\end{defn}

Obviously, for any $x\in\rn$, we have
$$\psi_0^{+}(f)(x)\le(\psi_0)^{\ast}_{\triangledown}(f)(x)
\ls \psi^{\ast\ast}_{0,\,A,\,B}(f)(x).$$
We point out that the local
tangential Peetre-type maximal function
$\psi^{\ast\ast}_{0,\,A,\,B}(f)$ was introduced by Rychkov
\cite{Ry}.

In order to characterize $h^p_{\rz,N}(\wz)$ by the local vertical and the
local nontangential maximal function, we need to establish
some relations in the norm of $L^{p}_{\wz}(\rn)$
of the local maximal functions
$\psi^{\ast\ast}_{0,\,A,\,B}(f),\,\psi_0^{+}(f)$ and
$\wt{\cm}_{N,\,R}(f)$, which further imply the desired
characterizations.

We begin with a lemma on local reproducing formula,
 which can be deduced from the Lemma 1.6 in \cite{Ry}, and
 we omit the details of its proof here.

\begin{lem}\label{l3.1}
Let  $\psi_0$ be as in \eqref{3.3} and
$\psi(x)\equiv\psi_0(x)-(1/{2^n})\psi_0 (x/{2})$ for all $x\in\rn$.
Then for any given integers  $j\in\zz$ and $L\in\zz_+$,
there exist $\vz_0,\,\vz\in\cdd(\rn)$
such that $L_{\vz}\ge L$ and
\beq \label{3.7}
f=(\vz_0)_j\ast(\psi_0)_j \ast f+\sum_{k=j+1}^{\fz}
\vz_k\ast\psi_k \ast f \eeq
in $\cd'(\rn)$ for all $f\in\cd'(\rn)$.
\end{lem}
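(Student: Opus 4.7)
The plan is to reduce to the case $j=0$ by a dilation argument and then construct $\varphi_0,\varphi\in\cdd(\rn)$ on the Fourier side, which is essentially the content of Rychkov's Lemma 1.6.

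First, I would observe that the statement is dilation-invariant: if I can find $\vz_0,\vz\in\cdd(\rn)$ with $L_\vz\ge L$ so that
\[
g=\vz_0\ast\psi_0\ast g+\sum_{k=1}^{\fz}\vz_k\ast\psi_k\ast g\quad\text{in }\cd'(\rn)
\]
for every $g\in\cd'(\rn)$, then applying this to the dilated distribution $g(x)=f(2^{-j}x)$ and using that the dilation $h\mapsto h_j$ intertwines convolution with dyadic reindexing, I obtain the claimed formula starting from scale $j$. Thus it suffices to treat $j=0$.

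Next, I would construct $\vz_0,\vz$ via their Fourier transforms so as to enforce
\[
\wh{\vz_0}(\xi)\wh{\psi_0}(\xi)+\sum_{k=1}^{\fz}\wh{\vz}(2^{-k}\xi)\wh{\psi}(2^{-k}\xi)=1\qquad(\xi\in\rn).
\]
Since $\wh{\psi}(\xi)=\wh{\psi_0}(\xi)-\wh{\psi_0}(2\xi)$ vanishes at the origin while $\wh{\psi_0}(0)=\int\psi_0\ne 0$, a smooth dyadic partition of unity on the Fourier side lets me place $\wh{\vz_0}$ near the origin and $\wh{\vz}$ in a fixed annulus separated from $0$. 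The annular support of $\wh\vz$ automatically kills all derivatives of $\wh\vz$ at the origin, which yields $L_\vz=\fz$, in particular $L_\vz\ge L$ for any prescribed $L\in\zz_+$. The delicate point, and the one that distinguishes Rychkov's argument from the usual Schwartz-class Calder\'on identity, is arranging in addition that both $\vz_0$ and $\vz$ are \emph{compactly supported}; this is accomplished by an iterative modification of the initial choice of $\wh{\vz_0},\wh{\vz}$ that trades Paley--Wiener-type control for Fourier-side smoothness.

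Finally, I would verify that the series converges in $\cd'(\rn)$ by dualizing: for each $\Phi\in\cdd(\rn)$, the partial sums $\sum_{k=1}^{N}\vz_k\ast\psi_k\ast\Phi$ must converge in $\cdd(\rn)$ to $\Phi-\vz_0\ast\psi_0\ast\Phi$. Here the many vanishing moments of $\vz$ produce the decay in $k$ needed to sum the tail, via the standard estimate $|\vz_k\ast\psi_k\ast\Phi(x)|\ls 2^{-k(L+1)}$ with uniform spatial control coming from the compact supports of $\vz,\psi$. Pairing with $f\in\cd'(\rn)$ then gives the formula. The main technical obstacle is the compact-support requirement on $\vz_0$ and $\vz$ noted above; once that is secured, the convergence and the scaling reduction are routine.
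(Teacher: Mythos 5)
Your central construction is incorrect. You propose to give $\wh{\vz}$ compact support in an annulus separated from the origin, and observe that this forces all derivatives of $\wh{\vz}$ to vanish at $0$, hence $L_\vz=\fz$. But this is fundamentally incompatible with the conclusion $\vz\in\cdd(\rn)$: a nonzero compactly supported function has an entire Fourier transform (Paley--Wiener), which cannot vanish on an open neighborhood of the origin. So every nonzero $\vz\in\cdd(\rn)$ has $L_\vz<\fz$, which is exactly why the lemma is phrased ``for any given $L$'' rather than ``with all moments vanishing.'' Your remark that the compact-support problem is ``accomplished by an iterative modification of the initial choice \ldots that trades Paley--Wiener-type control for Fourier-side smoothness'' does not fill this gap; there is no such modification, because an annular-supported $\wh{\vz}$ can never be deformed into the Fourier transform of a compactly supported $\vz$ while keeping the reproducing identity.

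The argument that actually lies behind Rychkov's Lemma 1.6 is purely algebraic and never cuts off on the Fourier side. After normalizing $\int\psi_0=1$, fix $M\ge 2$ and $N\ge L+2$ and set $F(\xi)=1-\bigl(1-\wh{\psi_0}(\xi)^{M}\bigr)^{N}$. Then $F(0)=1$, and one checks that both $F/\wh{\psi_0}$ and $\bigl(F(\cdot)-F(2\cdot)\bigr)/\wh{\psi}$ are polynomials in $\wh{\psi_0}(\xi)$ and $\wh{\psi_0}(2\xi)$ with no constant term; define $\wh{\vz_0}$ and $\wh{\vz}$ to be these quotients. Because they are polynomials without constant term, $\vz_0$ and $\vz$ are finite linear combinations of iterated convolutions of $\psi_0$ and its dilate, hence automatically in $\cdd(\rn)$. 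Since each factor of $1-\wh{\psi_0}(\xi)^M$ or $1-\wh{\psi_0}(2\xi)^M$ vanishes to first order at the origin, $\wh{\vz}$ vanishes there to order $\ge N-1$, giving $L_\vz\ge N-2\ge L$; and the pointwise telescoping identity $\wh{\vz_0}\wh{\psi_0}+\sum_{k\ge 1}\wh{\vz}(2^{-k}\cdot)\wh{\psi}(2^{-k}\cdot)=\lim_{K\to\fz}F(2^{-K}\xi)=1$ is the desired formula at scale $j=0$. Your reduction to $j=0$ by dilation and your dualization argument for the $\cd'$-convergence of the series (using the vanishing moments of $\vz$ and the compact supports) are sound; only the construction of $\vz_0,\vz$ needs to be replaced by the algebraic telescoping above.
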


\begin{lem}\label{l3.2}
Let $0<r<\fz$, $\psi_0$ be as in \eqref{3.3} and
$\psi(x)\equiv\psi_0(x)-(1/{2^n})\psi_0 (x/{2})$. Then there exists a
positive constant $A_0$ depending only on the support of $\psi_0$ such that
for any $A\in (A_0,\fz)$ and $B\in [0,\fz)$, there exists a positive constant
$C$ depending only on $n,\,r,\,\psi_0,\,A$ and $B$, such that for all
$f\in\cd'(\rn)$, $x,\,x_0\in\rn$ and $j\ge j_{x_0}$
(where $2^{-j_{x_0}}< \rz(x_0)\le 2^{-j_{x_0}+1}$),
we have
\beq \label{3.8}
|\psi_j *f(x)|^r\le C \sum_{k=j}^{\fz} 2^{(j-k)Ar} 2^{kn}
\int \frac{|\psi_k*f(x-y)|^r}{m_{j,Ar,Br,x_0}(y)}\,dy. \eeq
\end{lem}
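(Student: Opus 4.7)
The inequality~\eqref{3.8} is a local weighted analogue of Peetre's classical maximal inequality for Littlewood--Paley pieces. My plan is to follow Rychkov's strategy~\cite{Ry}, using the local Calder\'on reproducing formula (Lemma~\ref{l3.1}) in place of its global counterpart.

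Fix an integer $L$ large (to be chosen depending on $A$, $r$, $n$ and the support of $\psi_0$). Invoking Lemma~\ref{l3.1} at scale $j$ produces $\vz_0,\vz\in\cdd(\rn)$ with $L_\vz\ge L$ and
$$ f=(\vz_0)_j*(\psi_0)_j*f+\sum_{k=j+1}^{\fz}\vz_k*\psi_k*f\quad\text{in }\cd'(\rn). $$
Convolving with $\psi_j$ yields a decomposition $\psi_j*f(x)=\sum_{k\ge j}K_{j,k}*(\psi_k*f)(x)$: for $k>j$ the kernel is $K_{j,k}=\psi_j*\vz_k$, while the $k=j$ term absorbs the contribution $\psi_j*(\vz_0)_j*(\psi_0)_j*f$ after using the telescoping identity $\psi_j=(\psi_0)_j-(\psi_0)_{j-1}$ (or, equivalently, a further finite iteration of Lemma~\ref{l3.1}). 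Each $K_{j,k}$ is compactly supported in a ball of radius $\ls 2^{-j}$. Exploiting the vanishing moments $L_\vz\ge L$ together with the smoothness of $\psi,\vz_0,\vz$, a standard Taylor-expansion argument then yields the almost-orthogonality bound
$$ |K_{j,k}(z)|\le C_M\,2^{(j-k)L}\,\dfrac{2^{jn}}{(1+2^j|z|)^M} $$
for any $M\in\nn$ and all $k\ge j$. The value of $A_0$ is fixed by the support radius of $\psi_0$ entering these estimates.

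Next, choose $L$ and $M$ large enough that $L\ge Ar+n$ and $Mr\ge Ar+n$, and apply the Peetre convex-combination trick to the $r$-th power of the convolution identity. For $r\ge1$ this uses H\"older's inequality with weight $(1+2^j|y|)^{-Mr}$ inside each convolution together with a summable factor $2^{(j-k)\ez}$ on the outer sum; for $0<r<1$ one invokes subadditivity $(\sum|a_i|)^r\le\sum|a_i|^r$ together with an auxiliary Peetre-type tangential supremum that must first be shown to be finite. The result is
$$ |\psi_j*f(x)|^r\le C\sum_{k\ge j}2^{(j-k)Ar}\int\dfrac{2^{kn}|\psi_k*f(x-y)|^r}{(1+2^j|y|)^{Ar}}\,dy, $$
after absorbing $2^{(j-k)n}=2^{jn}/2^{kn}$ into the decay factor $2^{(j-k)L}$. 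Finally, since $K_{j,k}$ is effectively supported in $|y|\ls 2^{-j}\ls\rho(x_0)$ (using $j\ge j_{x_0}$), the exponential factor $2^{Br|y|/\rho(x_0)}$ is uniformly bounded on this support and may be inserted into the denominator at the cost of an absolute constant, producing the full weight $m_{j,Ar,Br,x_0}(y)$.

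The main obstacle is making the Peetre trick rigorous when $0<r<1$: the subadditivity $(\sum|a_i|)^r\le\sum|a_i|^r$ does not directly apply to the convolution integral, so one typically introduces an auxiliary weighted supremum (essentially $\psi_{0,Ar,Br}^{**}(f)$) that must be shown to be finite by a truncation argument before the pointwise bound~\eqref{3.8} can be concluded. A secondary subtlety is handling the $(\psi_0)_j*f$ contribution from the reproducing formula without introducing scales $k<j$ on the right-hand side, which requires careful use of the compact support of all kernels.
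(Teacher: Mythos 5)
Your plan follows the same Rychkov-style strategy as the paper: convolve the local reproducing formula (Lemma~\ref{l3.1}) with $\psi_j$, use the vanishing moments of $\vz$ to get the almost-orthogonality bound on the kernels $\psi_j*\vz_k$ (paper's (3.10)--(3.11)), exploit the compact support of the kernels together with $2^{-j}\ls\rho(x_0)$ to insert the exponential factor $2^{Br|y|/\rho(x_0)}$ at no cost, and for $0<r<1$ apply the Peetre convexity trick with the auxiliary maximal function $M_{A,B,x_0}(x,j)$ whose finiteness is ensured (for $A>A_0$) by a standard truncation/quotation as in \cite[Prop.\,2.3.4(a)]{Gr}. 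The only cosmetic deviation is your treatment of the $k=j$ term via telescoping; the paper instead simply commutes the convolutions so that $(\vz_0)_j*(\psi_0)_j$ acts directly on $\psi_j*f$, which is cleaner but amounts to the same thing.
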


\begin{proof}
By Lemma \ref{l3.1}, we can find $\vz_0,\,\vz\in\cdd(\rn)$
so that $L_{\vz}\ge A$
and \eqref{3.7} is true. Hence, we have
\beq \label{3.9}
\psi_j*f=(\vz_0)_j\ast(\psi_0)_j * \psi_j \ast f+\sum_{k=j+1}^{\fz}
\psi_j*\vz_k\ast\psi_k \ast f.
\eeq
The function $\psi_j*\vz_k$ ($k\ge j+1$) have support size $\le C 2^{-j}$
and enjoy the uniform estimate
\beq \label{3.10}
\|\psi_j*\vz_k\|_{L^{\fz}(\rn)}\le C2^{(j-k)A} 2^{jn}, \eeq
which can be easily deduced by the moment condition on $\vz$
(see \cite[(2.13)]{Ry}).
Therefore, we may write
\beq \label{3.11}
|\psi_j*\vz_k(y)|\le C \frac{2^{(j-k)A}2^{kn}}{m_{j,A,B,x_0}(y)}
 \quad (y\in \rn).
\eeq
Putting \eqref{3.11} together with the similar estimate for
$(\vz_0)_j\ast(\psi_0)_j$
into \eqref{3.9} gives \eqref{3.8} for $r=1$, and the case
$r>1$ follows by H\"older's inequality.
To obtain the case $r<1$, we introduce the maximal functions
$$M_{A,B,\,x_0}(x,j)=\sup_{k\ge j,\,y\in\rn}
2^{(j-k)A}\frac{|\psi_k*f(x-y)|}{m_{j,A,B,\,x_0}(y)}.
$$
The \eqref{3.8} with $r=1$ gives
\beq
2^{(j-k)A}|\psi_k*f(x-y)|\le
C \sum_{l=k}^{\fz}2^{(j-l)A}2^{ln}
\int \frac{|\psi_l*f(x-z)|}{m_{k,A,B,\,x_0}(z-y)}\,dz,
\eeq
and the right of (3.12) decreases as $k$ increases.
Hence, to get the estimate for $M_{A,B,\,x_0}(x,j),$
we may only consider (3.12) with $k=j$.
Combing with the elementary inequality
\beq \label{3.13}
m_{j,A,B,\,x_0}(z)\le m_{j,A,B,\,x_0}(y)m_{k,A,B,\,x_0}(z-y).\eeq
we can get
\begin{align} \label{3.14}
M_{A,B,\,x_0}(x,j)
&\le C\dsum_{k=j}^{\fz}2^{(j-k)A}2^{kn}
\dint \dfrac{|\psi_l*f(x-z)|}{m_{j,A,B,\,x_0}(z)}\,dz \nonumber \\
&\le C M_{A,B,\,x_0}(x,j)^{1-r}
\dsum_{k=j}^{\fz}2^{(j-k)Ar}2^{kn}
\dint \dfrac{|\psi_l*f(x-z)|^r}{m_{j,Ar,Br,\,x_0}(z)}\,dz.
\end{align}

Considering $|\psi_j*f(x)|\le M_{A,B,\,x_0}(x,j)$, (3.14) implies (3.8),
if $M_{A,B,\,x_0}(x,j)<\fz.$
By \cite[Proposition\,2.3.4(a)]{Gr}, for any $f\in \cd'(\rn)$,
we have $M_{A,B,\,x_0}(x,j)<\fz$ for all $x\in\rn$ and $j\ge j_{x_0}$,
provided $A>A_0$, where $A_0$ is a positive constant
depending only on the support of $\psi_0$.
This finishes the proof.
\end{proof}

For $f\in L^1_{\loc}(\rn)$, $B\in[0,\fz)$ and $x\in\rn$, let
\beq \label{3.15}
K_Bf(x)=\frac{1}{(\rz(x))^n}\int_{\rn} |f(y)|2^{-B\frac{|x-y|}{\rz(x)}}\,dy,
\eeq
and for the operator $K_B$, we have the following lemma:

\begin{lem}\label{l3.3}
Let $p\in(1,\fz)$ and $\wz\in A_p^{\rz,\tz}(\rn)$, then there exist
constants $C>0$
and $B_0\equiv B_0(\wz,n)>0$
such that for all $B>B_0/p$,
$$\|K_Bf\|_{L^p_{\wz}(\rn)}\le C \|f\|_{L^p_{\wz}(\rn)},$$
for all $f\in L^p_{\wz}(\rn).$
\end{lem}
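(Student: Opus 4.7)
The strategy is to derive a pointwise estimate for $K_Bf(x)$ in terms of the local Hardy--Littlewood maximal function $M^{\loc}f$ and the weighted maximal function $M_{V,p'\tz}f$, and then invoke Lemma~\ref{l2.3} and Lemma~\ref{l2.4}(vii). Fix $\wz\in A_p^{\rho,\tz}$ and split $\rn$ relative to $x$ into the local ball $B(x,\rho(x))$ and the dyadic annuli $A_k(x)=\{y:2^{k-1}\rho(x)<|x-y|\le 2^k\rho(x)\}$ for $k\ge 1$.

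On $B(x,\rho(x))$ the factor $2^{-B|x-y|/\rho(x)}\le 1$, so by \eqref{Mloc} the contribution of this ball to $K_Bf(x)$ is at most $CM^{\loc}f(x)$. On $A_k(x)$ the exponential factor is bounded by $2^{-B2^{k-1}}$, and writing $\rho(x)^{-n}=2^{kn}|B(x,2^k\rho(x))|^{-1}$ together with $\Psi_{p'\tz}(B(x,2^k\rho(x)))=(1+2^k)^{p'\tz}$ yields
\[
\frac{1}{\rho(x)^n}\int_{A_k(x)}|f(y)|\,dy\le 2^{kn}(1+2^k)^{p'\tz}M_{V,p'\tz}f(x).
\]
Summing these annular bounds against $2^{-B2^{k-1}}$ and combining with the local piece produces
\[
K_Bf(x)\le C\,S(B)\bigl(M^{\loc}f(x)+M_{V,p'\tz}f(x)\bigr),\qquad S(B)=\sum_{k=0}^{\fz}2^{-B2^{k-1}}2^{k(n+p'\tz)},
\]
and $S(B)<\fz$ as soon as $B$ exceeds a threshold of the form $B_0/p$, with $B_0=B_0(\wz,n)$ absorbing $n$ and the $A_p^{\rho,\tz}$ parameter $\tz$ of $\wz$.

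Taking $L^p_\wz$--norms, Lemma~\ref{l2.4}(vii) gives $\|M^{\loc}f\|_{L^p_\wz(\rn)}\ls\|f\|_{L^p_\wz(\rn)}$, and Lemma~\ref{l2.3} gives $\|M_{V,p'\tz}f\|_{L^p_\wz(\rn)}\ls\|f\|_{L^p_\wz(\rn)}$. Adding these inequalities and recalling that $S(B)$ is finite for $B>B_0/p$ yields the desired bound $\|K_Bf\|_{L^p_\wz(\rn)}\le C\|f\|_{L^p_\wz(\rn)}$.

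The main obstacle is tracking the explicit dependence of $B_0$ on $\wz$ and $n$ so that the threshold takes the stated form $B>B_0/p$ rather than a weaker form like $B>B_0$. A useful alternative, which makes the $1/p$ scaling transparent, is first to apply H\"older's inequality in the form
\[
|K_Bf(x)|^p\le K_{p\az}(|f|^p\wz)(x)\cdot\bigl[K_{p'\bz}(\wz^{-p'/p})(x)\bigr]^{p-1},\qquad \az+\bz=B,
\]
choose $\az$ just above $B_0/p$, and control the second factor using $\wz^{-p'/p}\in A_{p'}^{\rho,\tz}$ (Lemma~\ref{l2.4}(ii)); Fubini applied to the first factor then recovers the claim.
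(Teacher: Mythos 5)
Your proof follows essentially the same route as the paper: split $\rn$ into the critical ball $B(x,\rho(x))$ and dyadic annuli, bound each piece by a maximal function (the paper controls the local ball directly by $M_{V,p'\tz}f$, you use $M^{\loc}f$ — a cosmetic difference), sum the doubly-exponentially-decaying annular coefficients, and finish by Lemma~\ref{l2.3} (and Lemma~\ref{l2.4}(vii)). The core pointwise estimate $K_Bf\ls M_{V,p'\tz}f$ and the reduction to the known weighted boundedness of $M_{V,p'\tz}$ are exactly the paper's argument.
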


\begin{proof}
It is suffice to show that there exists a constant $C>0$ such that
for all $B>B_0$,
$$K_Bf(x)\le C M_{V,p'\tz}f(x),$$
then combining with Lemma \ref{l2.3}, we get the boundedness
of the operator $K_B$.

To control $K_Bf(x)$, we argue as follows:
\beqs
\begin{aligned}
K_Bf(x)&=\frac{1}{(\rz(x))^n}\int_{\rn}
|f(y)|2^{-B\frac{|x-y|}{\rz(x)}}\,dy\\
&=\frac{1}{(\rz(x))^n}\int_{|y-x|<\rz(x)}
|f(y)|2^{-B\frac{|x-y|}{\rz(x)}}\,dy
+\frac{1}{(\rz(x))^n}\int_{|y-x|\ge \rz(x)}
|f(y)|2^{-B\frac{|x-y|}{\rz(x)}}\,dy\\
&=\frac{1}{(\rz(x))^n}\int_{|y-x|<\rz(x)}
|f(y)|2^{-B\frac{|x-y|}{\rz(x)}}\,dy\\
&\qquad\qquad +\sum_{k=0}^{\fz}
\frac{1}{(\rz(x))^n}\int_{|y-x|\sim 2^k \rz(x)}
|f(y)|2^{-B\frac{|x-y|}{\rz(x)}}\,dy\\
&\equiv I_1+I_2.
\end{aligned}
\eeqs
For $I_1$, it is easy to get
$$ I_1\le \dfrac{C}{\Psi_{p'\tz }(B_1)|B_1|} \int_{B_1}|f(y)|\,dy
 \le C M_{V,p'\tz} f(x), $$
in which $B_1=B(x,\rz(x))$ is a critical ball.

\noi For $I_2$, we have
\beqs
\begin{aligned}
I_2&\le C\dsum_{k=0}^{\fz}
\dfrac{(1+2^{k+1})^{p'\tz} 2^{kn}}{2^{B2^k}}
\dfrac{1}{\Psi_{p'\tz }(2^{k+1}B_1)|2^{k+1}B_1|}
 \int_{2^{k+1}B_1}|f(y)|\,dy\\
& \le C\l(\dsum_{k=0}^{\fz} \dfrac{(1+2^{k+1})^{p'\tz} 2^{kn}}{2^{B2^k}}\r)
   M_{V,p'\tz} f(x)\\
&\le C  M_{V,p'\tz} f(x),
\end{aligned}
\eeqs
where the sum converges when $B>B_0/p$.
\end{proof}

\begin{lem}\label{l3.4}
Let $\psi_0$ be as in \eqref{3.3} and $r\in(0,\fz)$. Then
for any $A\in(\max\{A_0,n/r\},\fz)$ (where $A_0$ is as in Lemma 3.2)
and
$B\in[0,\fz)$, there exists a positive constant $C$, depending only
on $n,\,r,\,\psi_0,\,A$ and $B$, such that for all
$f\in\cd'(\rn)$, $x\in\rn$ and $j\ge j_x$
(where $2^{-j_{x}}< \rz(x)\le  2^{-j_x+1}$),
\beqs
\begin{aligned}
\left[(\psi_0 )^{\ast}_{j,\,A,\,B}(f)(x)\r]^r
&\le C\dsum_{k=j}^{\fz}
2^{(j-k)(Ar-n)}\left\{M^{\loc}(|(\psi_0 )_k \ast
f|^r)(x)\r.\\
&+K_{Br}(|(\psi_0 )_k \ast f|^r)(x)\Big\},
\end{aligned}
\eeqs
where
$$
(\psi_0 )^{\ast}_{j,\,A,\,B}(f)(x)\equiv \sup_{y\in \rn}
\dfrac{|(\psi_0)_j * f(x-y)|}{m_{j,\,A,\,B,\,x}(y)}
$$
for all $x\in \rn$.
\end{lem}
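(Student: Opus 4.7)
The plan is to first establish an analog of Lemma \ref{l3.2} with $(\psi_0)_j$ in place of $\psi_j$, then use the submultiplicativity of $m_{j,A,B,x}$ to eliminate the shift variable from the integrand, and finally split the resulting integral at $|v|=\rz(x)$ into a local part matching $M^{\loc}$ and a tail part matching $K_{Br}$.

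For the first step I would adapt the proof of Lemma \ref{l3.2} with $(\psi_0)_j$ replacing $\psi_j$ throughout. Convolving the reproducing formula \eqref{3.7} with $(\psi_0)_j$ expands $(\psi_0)_j\ast f$ into kernels $(\psi_0)_j\ast(\vz_0)_j$ and $(\psi_0)_j\ast\vz_k$ acting on $(\psi_0)_j\ast f$ and $\psi_k\ast f$ respectively; the pointwise bound \eqref{3.11} carries over unchanged because it exploits only the vanishing moments of $\vz$. The remaining $\psi_k\ast f$ terms are then converted to $(\psi_0)_k\ast f$ via the telescoping identity $\psi_k=(\psi_0)_k-(\psi_0)_{k-1}$ after a one-step reindexing of the $k$-sum, and the Peetre-maximal-function iteration used in the case $r<1$ applies verbatim (this is where the hypothesis $A>A_0$ enters). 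The outcome is, for $j\ge j_{x_0}$,
\beqs
|(\psi_0)_j\ast f(x)|^r\ls \dsum_{k=j}^{\fz} 2^{(j-k)Ar}2^{kn}\dint \dfrac{|(\psi_0)_k\ast f(x-z)|^r}{m_{j,Ar,Br,x_0}(z)}\,dz.
\eeqs

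Specializing $x_0=x$ and replacing $x$ by $x-u$ in this estimate, then performing the change of variable $v=u+z$ and dividing by $m_{j,A,B,x}(u)^r=m_{j,Ar,Br,x}(u)$, I would invoke the submultiplicativity $m_{j,Ar,Br,x}(v)\le m_{j,Ar,Br,x}(u)\,m_{j,Ar,Br,x}(v-u)$ (which follows from the triangle inequality and the elementary bound $(1+a+b)^A\le(1+a)^A(1+b)^A$ for $a,b\ge 0$) to kill the $u$-dependence in the integrand. Taking the supremum over $u\in\rn$ then yields
\beqs
\l[(\psi_0)^{\ast}_{j,A,B}(f)(x)\r]^r\ls \dsum_{k=j}^{\fz} 2^{(j-k)Ar}2^{kn}\dint \dfrac{|(\psi_0)_k\ast f(x-v)|^r}{m_{j,Ar,Br,x}(v)}\,dv.
\eeqs
I would then split the integral at $|v|=\rz(x)$. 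On $\{|v|\le\rz(x)\}$ the exponential factor of $m$ is bounded, and a dyadic decomposition $|v|\sim 2^{-j+\ell}$ (with the outermost annulus absorbed into $B(x,\rz(x))$, the critical ball centered at $x$, which is admissible for $M^{\loc}$) combined with the polynomial factor $(1+2^j|v|)^{Ar}\sim 2^{\ell Ar}$ produces a geometric series in $\ell$ that converges precisely because $A>n/r$; this contributes the $M^{\loc}(|(\psi_0)_k\ast f|^r)(x)$ term with the correct prefactor $2^{(j-k)(Ar-n)}$. On $\{|v|>\rz(x)\}$ the lower bound $(1+2^j|v|)^{Ar}\ge(2^j\rz(x))^{Ar}\sim 2^{(j-j_x)Ar}$ supplies the missing decay; the remaining integral equals $\rz(x)^n K_{Br}(|(\psi_0)_k\ast f|^r)(x)$ after the substitution $y=x-v$, and using $\rz(x)\sim 2^{-j_x}$ the combined prefactor simplifies to $2^{(j_x-k)(Ar-n)}\le 2^{(j-k)(Ar-n)}$. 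The main delicate step—and the only place where the hypothesis $A>n/r$ is essential—is precisely this last cancellation: the growth $2^{jn}\rz(x)^n\sim 2^{(j-j_x)n}$ produced by identifying $K_{Br}$ must be absorbed by the polynomial decay of $m$.
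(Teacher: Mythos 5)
Your proof follows the same route as the paper: obtain the Lemma \ref{l3.2}-type estimate with $(\psi_0)_j$ on the left and $(\psi_0)_k$ on the right, upgrade it to a bound on the Peetre-type maximal function via the submultiplicativity of $m_{j,Ar,Br,x}$, and then split the shift integral at radius comparable to $\rz(x)$, identifying the inner piece with $M^{\loc}$ through a dyadic annular decomposition and the outer piece with $K_{Br}$ after extracting the factor $(1+2^j|v|)^{-Ar}\ls 2^{-(j-j_x)Ar}$. You fill in two points the paper leaves implicit—the telescoping from $\psi_k$ to $(\psi_0)_k$ and the explicit shift-and-divide derivation of the maximal inequality—but the decomposition and all the key estimates coincide with the paper's; the only slip is the closing remark that the far-field cancellation is the sole place $A>n/r$ is used, which contradicts your own (correct) observation that the near-field geometric series also requires it.
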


\begin{proof}
First we can get the stronger version of \eqref{3.8} by virtue of
\eqref{3.13}, that is:
\beqs
\begin{aligned}
\left[(\psi_0 )^{\ast}_{j,\,A,\,B}(f)(x)\r]^r
&\le C\sum_{k=j}^{\fz} 2^{(j-k)Ar}2^{kn}
\int_{\rn} \frac{|(\psi_0)_k*f(y)|^r}{m_{j,Ar,Br,\,x}(x-y)}\,dy\\
&\le C\sum_{k=j}^{\fz} 2^{(j-k)(Ar-n)}
\l\{2^{jn}\int_{|y-x|< 2^{-j_x}}
\frac{|(\psi_0)_k*f(y)|^r}{(1+2^j|x-y|)^{Ar}}\,dy\r.\\
&\l.\qquad\ +\ 2^{jn}\int_{|y-x|\ge 2^{-j_x}}
\frac{|(\psi_0)_k*f(y)|^r}{(2^j|x-y|)^{Ar}2^{Br|x-y|/\rz(x)}}\,dy \r\}\\
&\equiv C\sum_{k=j}^{\fz} 2^{(j-k)(Ar-n)}\{I+II\}.
\end{aligned}
\eeqs
Since $2^{-j_{x}}< \rz(x)\le 2^{-j_x+1}$ and $j\ge j_x$, for $I$ we have
\beqs
\begin{aligned}
I&=2^{jn}\int_{2^{-j}\le |y-x|<2^{-j_x}}
\frac{|(\psi_0)_k*f(y)|^r}{(1+2^j|x-y|)^{Ar}}\,dy
+ 2^{jn}\int_{|y-x|\le 2^{-j}}
\frac{|(\psi_0)_k*f(y)|^r}{(1+2^j|x-y|)^{Ar}}\,dy\\
&\equiv I_1+I_2.
\end{aligned}
\eeqs
According to the definition of $M^{loc}f(x)$ (see \eqref{Mloc}),
for $I_2$ we have
$$
I_2 \le 2^{jn}\int_{|y-x|\le 2^{-j}}
|(\psi_0)_k*f(y)|^r\,dy
\le C M^{loc}\l(|(\psi_0)_k*f|^r\r)(x),
$$
and for $I_1$ we have
\beqs
\begin{aligned}
I_1 &\le 2^{jn} \sum_{l=j_x+1}^j
\int_{2^{-l}\le |y-x|< 2^{-l+1}}
\frac{|(\psi_0)_k*f(y)|^r}{(2^j|x-y|)^{Ar}}\,dy\\
&\le \sum_{l=j_x+1}^j
\frac{2^{jn}(2^{-l+1})^n}{(2^{j-l})^{Ar}}
\frac{1}{(2^{-l+1})^n}
\int_{|y-x|\le 2^{-l+1}} |(\psi_0)_k*f(y)|^r\,dy\\
&\le \sum_{l=j_x+1}^j  \frac{2^n}{2^{(Ar-n)(j-l)}}
M^{loc}\l(|(\psi_0)_k*f|^r\r)(x)\\
&\le C M^{loc}\l(|(\psi_0)_k*f|^r\r)(x),
\end{aligned}
\eeqs
where $Ar>n$. In addition, with regard to $II$, we have the
following estimate,
\beqs
\begin{aligned}
II &\le \frac{2^{jn}(\rz(x))^n}{(2^{j-j_x})^{Ar}}
\frac{1}{(\rz(x))^n}
\int_{\rn} |(\psi_0)_k*f(y)|^r 2^{-Br\frac{|x-y|}{\rz(x)}}\,dy\\
&\le C \frac{2^{jn}(2^{-j_x})^n}{(2^{j-j_x})^{Ar}}
K_{Br}(|(\psi_0)_k*f|)(x)\\
&\le C \frac{(2^{j-j_x})^n}{(2^{j-j_x})^{Ar}}
K_{Br}(|(\psi_0)_k*f|)(x)\\
&\le CK_{Br}(|(\psi_0)_k*f|)(x),
\end{aligned}
\eeqs
where the last inequality is a consequence of the face
that $j\ge j_x$ and $Ar>n$.
This finishes the proof.
\end{proof}

Now we can establish  weighted norm inequalities of $\psi_0^{+}(f)$,
$\psi^{\ast\ast}_{0,\,A,\,B}(f)$ and $\wt{\cm}_{N,\,R} (f)$.

\begin{thm}\label{t3.1}
Let $\wz\in A^{\rz,\fz}_{\fz}(\rn)$, $R\in(0,\fz)$,
$p\in(0,1]$, $\psi_0$ and $q_{\wz}$
be respectively as in \eqref{3.3} and \eqref{qw}, and let $\psi_0^{+}(f)$,
$\psi^{\ast\ast}_{0,\,A,\,B}(f)$ and $\wt{\cm}_{N,\,R} (f)$ be respectively
as in \eqref{3.4}, \eqref{3.5} and \eqref{tgnr}. Let
$A_1\equiv \max\{A_0, nq_{\wz}/p\}$, $B_1\equiv B_0/p$
and $N_0\equiv[2A_1]+1$, where $A_0$ and $B_0$ are respectively as in
Lemmas \ref{l3.2} and \ref{l3.3}. Then for any $A\in (A_1,\fz),\ B\in(B_1,\fz)$
and integer $N\ge N_0$, there exists a positive constant $C$, depending only
on $A,\,B,\,N,\,R,\,\psi_0,\,\wz$ and $n$, such that for all
$f\in\cd'(\rn)$,
\beq \label{3.16}
 \left\|\psi^{\ast\ast}_{0,\,A,\,B}(f)\r\|_{L^{p}_{\wz}(\rn)}\le C
 \left\|\psi_0^{+}(f)\r\|_{L^{p}_{\wz}(\rn)},
\eeq
and
\beq \label{3.17}
 \left\|\wt{\cm}_{N,\,R} (f)\r\|_{L^{p}_{\wz}(\rn)}\le C
 \left\|\psi_0^{+}(f)\r\|_{L^{p}_{\wz}(\rn)},
\eeq
\end{thm}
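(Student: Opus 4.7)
The plan is to establish (3.16) and (3.17) separately. For (3.16), the pointwise estimates of Lemmas~\ref{l3.2} and~\ref{l3.4} reduce matters to the weighted $L^{p/r}_{\wz}$-boundedness of $M^{\loc}$ and $K_{Br}$, which come from Lemma~\ref{l2.4}(vii) and Lemma~\ref{l3.3}. For (3.17), the pointwise bound $\wt{\cm}_{N,R}(f)(x)\ls\psi^{\ast\ast}_{0,A,B}(f)(x)$ is obtained by expanding an arbitrary $\vz\in\cdd_{N,R}(\rn)$ via the local Calder\'on reproducing formula of Lemma~\ref{l3.1}, after which (3.16) closes the argument.

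For (3.16) I would first choose $r\in(0,p/q_{\wz})$ small enough to ensure $Ar>n$ and $Br>B_0$ simultaneously, which is possible because $A>A_1\ge nq_{\wz}/p$ and $B>B_1=B_0/p$. Applying Lemma~\ref{l3.4} and taking the supremum over $j\ge j_x$ yields the pointwise inequality
\[
[\psi^{\ast\ast}_{0,A,B}(f)(x)]^{r}\le C\sum_{k\ge j_x}2^{(j_x-k)(Ar-n)}\left\{M^{\loc}(|(\psi_0)_k\ast f|^{r})(x)+K_{Br}(|(\psi_0)_k\ast f|^{r})(x)\right\}.
\]
The essential step is to bound the right-hand side in terms of $\psi_0^{+}(f)$: for $y$ in the $\rz(x)$-scale ball over which $M^{\loc}$ averages, Lemma~\ref{l2.1} gives $|j_y-j_x|\le C$, so $|(\psi_0)_k\ast f(y)|\le\psi_0^{+}(f)(y)$ up to a fixed shift in $k$; for $y$ outside this scale, the exponential decay $2^{-Br|x-y|/\rz(x)}$ inside $K_{Br}$ absorbs the mismatch. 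Summing the geometric series in $k$ (convergent since $Ar>n$), taking $L^{p/r}_{\wz}$ norms (so that $\wz\in A_{p/r}^{\rz,\fz}$ because $p/r>q_{\wz}$), and invoking the weighted boundedness of $M^{\loc}$ and $K_{Br}$ then yields (3.16).

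For (3.17), fix $\vz\in\cdd_{N,R}(\rn)$ and apply Lemma~\ref{l3.1} (with $L\ge A$) to obtain $\eta_0,\eta\in\cdd(\rn)$ with $L_{\eta}\ge L$ such that $f=(\eta_0)_l\ast(\psi_0)_l\ast f+\sum_{k=l+1}^{\fz}\eta_k\ast\psi_k\ast f$ for every $l\in\zz$. Convolving with $\vz_l$ gives
\[
\vz_l\ast f(z)=\vz_l\ast(\eta_0)_l\ast(\psi_0)_l\ast f(z)+\sum_{k=l+1}^{\fz}\vz_l\ast\eta_k\ast\psi_k\ast f(z).
\]
The vanishing moments of $\eta$ together with the smoothness and compact support of $\vz$ yield uniform estimates $\|\vz_l\ast\eta_k\|_{L^{\fz}(\rn)}\ls 2^{(l-k)A}2^{kn}$, with support radius $\ls 2^{-l}$, exactly as in \eqref{3.10}--\eqref{3.11}. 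Using the defining bound $|(\psi_0)_k\ast f(x-y)|\le m_{l,A,B,x}(y)\,\psi^{\ast\ast}_{0,A,B}(f)(x)$ and summing in $k$ produces $|\vz_l\ast f(z)|\le C\psi^{\ast\ast}_{0,A,B}(f)(x)$ whenever $|x-z|<2^{-l}<\rz(x)$. Taking the supremum in the definition of $\wt{\cm}_{N,R}(f)$ and combining with (3.16) delivers (3.17).

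The main obstacle is the delicate passage between $|(\psi_0)_k\ast f(y)|$ (valid for $k\ge j_x$) and $\psi_0^{+}(f)(y)$ (which requires $k\ge j_y$): outside an $O(\rz(x))$-ball the scales $j_x$ and $j_y$ can differ by an unbounded amount, so the argument must rely on Lemma~\ref{l2.1} locally and on the exponential weight inside $K_{Br}$ globally. The threshold $N_0=[2A_1]+1$ in the hypothesis is chosen precisely so that any $\vz\in\cdd_{N,R}(\rn)$ has enough smoothness to combine with $L_{\eta}\ge L$ and produce the decay factor $2^{(l-k)A}$ required in the reproducing expansion.
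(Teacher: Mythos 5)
Your proposal follows essentially the same route as the paper's proof: Lemma~\ref{l3.4} with a suitable $r\in(0,p/q_\wz)$ (so that $Ar>n$ and $Br>B_0/q_\wz$), together with the weighted $L^{p/r}_\wz$-boundedness of $M^{\loc}$ and $K_{Br}$ from Lemma~\ref{l2.4}(vii) and Lemma~\ref{l3.3}, yields \eqref{3.16}; and the reproducing-formula expansion of a generic $\gz\in\cdd_{N,R}(\rn)$ via Lemma~\ref{l3.1}, combined with the Rychkov moment estimate and the condition $N>2A$, gives the pointwise bound $\wt{\cm}_{N,R}(f)\ls\psi^{\ast\ast}_{0,A,B}(f)$, whence \eqref{3.17} follows from \eqref{3.16}.

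The index-alignment subtlety you flag---passing $|(\psi_0)_k\ast f(y)|\le\psi^{+}_0(f)(y)$ inside the arguments of $M^{\loc}$ and $K_{Br}$ requires $k\ge j_y$, not merely $k\ge j_x$---is a genuine point, and the paper's own proof quietly elides it (it only observes the inequality at the single point $y=x$ before applying it under the integral). Your Lemma~\ref{l2.1} shift-in-$k$ argument does fix the $M^{\loc}$ piece, since there $|y-x|\ls\rz(x)$ forces $|j_y-j_x|\le C$. For the $K_{Br}$ piece, however, the appeal to the exponential weight ``absorbing the mismatch'' is the correct heuristic but is not by itself a proof: for $y$ with $j_y>k$ there is no direct pointwise bound on $|(\psi_0)_k\ast f(y)|$ by $\psi^{+}_0(f)(y)$, so one still needs some a priori control on that term before the decay $2^{-Br|x-y|/\rz(x)}$ can finish the estimate. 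This is a gap your sketch shares with the paper's presentation, rather than one you introduce.
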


\begin{proof}
Let $f\in\cd'(\rn)$. First, we prove \eqref{3.16}. Let
$A\in(A_1,\fz)$ and $B\in(B_1,\fz)$.
By $A_1\equiv \max\{A_0,\,nq_{\wz}/p\}$ and $B_1\equiv B_0
/p$, we know that there exists
$r_0\in(0,p/q_{\wz})$ such that $A>n/r_0$
and $Br_0>B_0/q_{\wz}$,  where $A_0$ and $B_0$ are
respectively as in Lemmas \ref{l3.2} and \ref{l3.3}.
Thus, by Lemma 3.4, for all $x\in\rn$ and $j\ge j_x$ we have
\begin{align}\label{3.18}
\left[(\psi_0)^{\ast}_{j,\,A,\,B}(f)(x)\r]^{r_0}
&\ls \sum_{k=j}^{\fz}
2^{(j-k)(Ar_0 -n)}\Big\{M^{ \loc}\left(|(\psi_0)_k\ast f|^{r_0}\r)(x)
\nonumber\\
 &\qquad +K_{Br_0}\left(|(\psi_0)_k\ast f|^{r_0}\r)(x)\Big\}.
\end{align}
Let $\psi^{+}_0 (f)$ and $\psi^{\ast\ast}_{0,\,A,\,B}(f)$ be
respectively as in \eqref{3.4} and \eqref{3.5}. We notice that
for any $x\in\rn$ and $k\ge j_x$,
$$|(\psi_0)_k \ast f (x)|\le\psi^{+}_0(f)(x),$$
which together with \eqref{3.18} implies that for all
$x\in\rn$,
\beq\label{3.19}
\left[\psi^{\ast\ast}_{0,\,A,\,B}(f)(x)\r]^{r_0}\ls
M^{\loc}\left([\psi^{+}_0 (f)]^{r_0})(x)+K_{Br_0}([\psi^{+}_0
(f)]^{r_0}\r)(x).
\eeq
Then by \eqref{3.19} we have
\begin{align}\label{3.20}
\int_{\rn}\left|\psi^{\ast\ast}_{0,\,A,\,B}(f)(x)\r|^p\wz(x)\,dx
& \ls\int_{\rn}\left|\left\{M^{\loc}
\left([\psi^{+}_0 (f)]^{r_0}\r)(x)\r\}\r|^{p/{r_0}}\wz(x)\,dx\nonumber\\
&\qquad+\int_{\rn}\left|\left\{K_{Br_0}\left([\psi^{+}_0
(f)]^{r_0}\r)(x)\r\}\r|^{p/{r_0}}\wz(x)\,dx\nonumber\\
&\equiv \mathrm{I_1}+\mathrm{I_2}.
\end{align}
For $I_1$, as $r_0<p/q_{\wz}$, we have $q\equiv p/r_0>q_{\wz}$
and $\wz\in A_q^{\rz,\fz}(\rn)$, therefore by Lemma \ref{l2.4}(vii)
 we get
\beq \label{3.21}
\int_{\rn}\l|M^{\loc}\left([\psi^{+}_0 (f)]^{r_0}\r)(x)\r|^{p/r_0}\wz(x)\,dx
\ls \int_{\rn} \l|\psi^{+}_0 (f)\r|^p\wz(x)\,dx
\eeq
and for $I_2$ by Lemma \ref{l3.3} we get
\beq \label{3.22}
\int_{\rn}\l|K_{Br_0}\left([\psi^{+}_0(f)]^{r_0}\r)(x)\r|^{p/r_0}\wz(x)\,dx
\ls \int_{\rn} \l|\psi^{+}_0 (f)\r|^p\wz(x)\,dx,
\eeq
which together with \eqref{3.21} implies \eqref{3.16}.

Now we prove \eqref{3.17}. By $N_0 \equiv[2A_1]+1$,
we know
that there exists $A\in(A_1,\fz)$ such that $2A<N_0$. In the rest of
this proof, we fix $A\in(A_1,\fz)$ satisfying $2A<N_0$ and
$B\in(B_1,\fz)$.
Take an integer $N\ge N_0$ and $R\in(0,\fz)$.
For any $\gz\in\cdd_{N,\,R}(\rn)$, $x\in\rn$,
$l\in\zz$ (where $l$ satisfies $2^{-l}\in (0,\,\rz(x))$)
and $j\ge j_x$
(where $2^{-j_x}< \rz(x)\le 2^{-j_x+1}$),
from Lemma \ref{l3.1}, it follows that
\beq\label{3.23}
\gz_l \ast f=\gz_l \ast(\vz_0)_j\ast(\psi_0)_j\ast
f+\sum_{k=j+1}^{\fz}\gz_l \ast\vz_k\ast\psi_k \ast f,
\eeq
where $\vz_0,\,\vz\in\cdd(\rn)$ with $L_{\vz}\ge N$ and $\psi$ is as
in Lemma \ref{l3.1}.

 For any given  $l_0\in\zz$ which satisfies $2^{-l_0}\in (0,\,\rz(x))$,
 and $z\in\rn$ which
 satisfies $|z-x|<2^{-l_0}$,  by (3.23) we have
\begin{align}\label{3.24}
|\gz_{l_0} \ast f(z)|&\le \left|\gz_{l_0} \ast(\vz_0)_{l_0}\ast(\psi_0)_{l_0}
\ast f(z)\r|+\sum^{\fz}_{k=l_0+1}\left|\gz_{l_0} \ast\vz_k\ast\psi_k \ast f(z)\r|\nonumber\\
&\le \int_{\rn}\left|\gz_{l_0} \ast(\vz_0)_{l_0}
(y)\r|\left|(\psi_0)_{l_0}\ast f(z-y)\r|\,dy\nonumber\\
&\quad +\sum_{k=l_0+1}^{\fz}\int_{\rn}\left|\gz_{l_0} \ast\vz_k
(y)\r|\left|\psi_k\ast f(z-y)\r|\,dy \equiv I_3+I_4.
\end{align}
To estimate $I_3$, from
\begin{equation*}
\begin{aligned}
\psi^{\ast\ast}_{0,\,A,\,B}(f)(x)
&=\sup_{j\ge j_x,\,y\in\rn}
\frac{|(\psi_0)_j\ast f(x-y)|}{m_{j,\,A,\,B,\,x}(y)}\\
&= \sup_{j\ge j_x,\,y\in\rn}\frac{|(\psi_0)_j\ast
f(x-(y+x-z))|}{m_{j,\,A,\,B,\,x}(y+x-z)}\\
&= \sup_{j\ge j_x,\,y\in\rn}\frac{|(\psi_0)_j\ast
f(z-y)|}{m_{j,\,A,\,B,\,x}(y+x-z)},
\end{aligned}
\end{equation*}
we infer that
$$\left|(\psi_0)_{l_0}\ast f(z-y)\r|\le\psi^{\ast\ast}_{0,\,A,\,B}(f)(x)
m_{l_0,\,A,\,B,\,x}(y+x-z),$$
which together with the facts that
$$m_{l_0,\,A,\,B,\,x}(y+x-z)\le m_{l_0,\,A,\,B,\,x}(x-z)m_{l_0,\,A,\,B,\,x}(y)$$
and
\begin{equation*}
\begin{aligned}
m_{l_0,\,A,\,B,\,x}(x-z) &= (1+2^{l_0}|x-z|)^A 2^{B\frac{|x-z|}{\rz(x)}}
\ls 2^A,
\end{aligned}
\end{equation*}
implies that
$$|(\psi_0)_{l_0}\ast f(z-y)|\ls2^{A}\psi^{\ast\ast}_{0,\,A,\,B}(f)(x)
m_{l_0,\,A,\,B,\,x}(y).$$
Thus, we have
$$
I_3 \ls2^{A}\left\{\int_{\rn}|\gz_{l_0} \ast(\vz_0)_{l_0}
(y)|m_{l_0,\,A,\,B,\,x}(y)\,dy\r\}\psi^{\ast\ast}_{0,\,A,\,B}(f)(x).
$$

To estimate $\mathrm{I_4}$, by the definition of $\psi$, it is easy
to know that for any $k\in\zz$,
$$\left|\psi_k\ast
f(z-y)\r|\le\left|(\psi_0)_k\ast f(z-y)\r|+\left|(\psi_0)_{k-1}\ast
f(z-y)\r|.$$
By the definition of $\psi^{\ast\ast}_{0,\,A,\,B}(f)$
and the facts that
$$m_{k,\,A,\,B,\,x}(y+x-z)\le
m_{k,\,A,\,B,\,x}(x-z)m_{k,\,A,\,B,\,x}(y),$$
for any $k\in\zz$ and
$m_{k,\,A,\,B,\,x}(x-z)\ls2^{(k-l_0)A}$, we conclude that
\begin{equation*}
\begin{aligned}
|(\psi_0)_k\ast f(z-y)|&\le
\psi^{\ast\ast}_{0,\,A,\,B}(f)(x)m_{k,\,A,\,B,\,x}(y+x-z)\\
&\le \psi^{\ast\ast}_{0,\,A,\,B}(f)(x) m_{k,\,A,\,B,\,x}(x-z)m_{k,\,A,\,B,\,x}(y)\\
&\ls 2^{(k-l_0)A} m_{k,\,A,\,B,\,x}(y)\psi^{\ast\ast}_{0,\,A,\,B}(f)(x).
\end{aligned}
\end{equation*}
Similarly, we also have
$$|(\psi_0)_{k-1}\ast f(z-y)|\ls2^{(k-l_0)A}
m_{k,\,A,\,B,\,x}(y)\psi^{\ast\ast}_{0,\,A,\,B}(f)(x).$$
Thus,
$$
I_4\ls\sum_{k=l_0+1}^{\fz}2^{(k-l_0)A}\left\{\int_{\rn}|\gz_t
\ast\vz_k (y)|m_{k,\,A,\,B,\,x}(y)\,dy\r\}\,\psi^{\ast\ast}_{0,\,A,\,B}(f)(x).
$$

From \eqref{3.24} and the above estimates of $\mathrm{I_3}$ and
$\mathrm{I_4}$, it follows that
\begin{align}\label{3.25}
|\gz_{l_0} \ast f(z)|&\ls\left\{\int_{\rn}|\gz_{l_0} \ast(\vz_0)_{l_0}(y)|m_{l_0,\,A,\,B,\,x}(y)\,dy\r.\nonumber\\
& \quad + \sum_{k=l_0+1}^{\fz}2^{(k-l_0)A}\int_{\rn}|\gz_{l_0} \ast\vz_k
(y)|m_{k,\,A,\,B,\,x}(y)\,dy\bigg\}\psi^{\ast\ast}_{0,\,A,\,B}(f)(x).
\end{align}
Assume that $\supp(\vz_0)\subset B(0,R_0)$. Then
$\supp((\vz_0)_j)\subset B(0,2^{-j}R_0)$ for all $j\ge j_x$.
Moreover, by $\supp(\gz)\subset B(0,R)$,
we see that
$\supp(\gz_{l_0})\subset B(0,2^{-l_0}R).$
From this, we further deduce that
$\supp(\gz_{l_0}\ast(\vz_0)_{l_0})\subset B(0,2^{-l_0}(R_0+R))$ and
$$
|\gz_{l_0}\ast(\vz_0)_{l_0} (y)|\ls\int_{\rn}|\gz_{l_0} (s)||(\vz_0)_{l_0}(y-s)|\,ds
\ls2^{l_0 n}\int_{\rn}|\gz_{l_0} (s)|\,ds\sim2^{l_0 n},
$$
which implies that
\begin{equation}\label{3.26}
\int_{\rn}|\gz_{l_0}\ast(\vz_0)_{l_0} (y)|m_{l_0,\,A,\,B,\,x}(y)\,dy
 \ls2^{l_0 n}\int_{B(0,2^{-l_0}(R_0 +R))}(1+2^{l_0}|y|)^A 2^{\frac{B|y|}{\rz(x)}}\,dy\ls1.
\end{equation}
Moreover, since $\vz$ has vanishing moments up to order $N$, it was
proved in \cite[(2.13)]{Ry} that
$$\|\gz_{l_0}\ast\vz_k\|_{L^{\fz}(\rn)}\ls2^{(l_0-k)N}2^{l_0 n}$$
for all $k\in\zz$ with $k\ge l_0 +1$, which, together with the
facts that $l_0\ge j_x$, $N>2A$ and
$$\supp(\gz_{l_0}\ast\vz_k) \subset B(0,2^{-l_0}R_0+2^{-k}R),$$
implies that
\begin{align}\label{3.27}
\sum_{k=l_0+1}^{\fz}2^{(k-l_0)A}&\int_{\rn}|\gz_{l_0} \ast\vz_k
(y)|m_{k,\,A,\,B,\,x}(y)\,dy\nonumber\\
 & \ls\sum_{k=l_0+1}^{\fz}2^{(k-l_0)A}2^{(l_0-k)N}2^{l_0 n}
 (2^{-l_0}R_0 +2^{-k}R)^n\nonumber\\
&\hspace{1em}
\times\left[1+2^k(2^{-l_0}R_0+2^{-k}R)\r]^A 2^{B(2^{-l_0}R_0+2^{-k}R)/\rz(x)}\nonumber\\
 &\ls\sum_{k=l_0 +1}^{\fz}2^{(l_0-k)(N-2A)}\ls1.
\end{align}
Thus, from \eqref{3.25}, \eqref{3.26} and \eqref{3.27}, we deduce
that $|\gz_{l_0} \ast f(z)|\ls\psi^{\ast\ast}_{0,\,A,\,B}(f)(x)$. Then,
by the arbitrariness of $l_0\ge j_x$ and $z\in B(x,2^{-l_0})$, we know that
\beq
\wt{\cm}_{N,R}(f)(x)\ls\psi^{\ast\ast}_{0,\,A,\,B}(f)(x),
\eeq
which deduces the \eqref{3.17} and completes the proof of this theorem.
\end{proof}

As a corollary of Theorem \ref{t3.1}, we immediately obtain  the
local vertical and the local nontangential maximal function
characterizations of $h^{p}_{\rz,\,N}(\wz)$ with $N\ge N_{p,\,\wz}$ as follows.
Here and in what follows,
\begin{equation}\label{3.29}
N_{p,\,\wz}\equiv\max\left\{\wt{N}_{p,\,\wz},\,N_0\r\},
\end{equation}
where $\wt{N}_{p,\,\wz}$ and $N_0$ are  respectively as in
Definition \ref{d3.2} and Theorem \ref{t3.1}.

\begin{thm}\label{t3.2}
Let  $\wz\in A^{\rz,\,\fz}_{\fz}(\rn)$, $\psi_0$ and $N_{p,\,\wz}$ be respectively
as in \eqref{3.3} and \eqref{3.29}. Then for any integer $N\ge N_{p,\,\wz}$,
the following are equivalent:
\begin{enumerate}
\item[\rm(i)] $f\in h^{p}_{\rz,\,N}(\wz);$
\item[\rm(ii)] $f\in\cd'(\rn)$ and $\psi^{+}_0 (f)\in L^{p}_{\wz}(\rn);$
\item[\rm(iii)] $f\in\cd'(\rn)$ and $(\psi_0)^{\ast}_{\triangledown}
(f)\in L^{p}_{\wz}(\rn);$
\item[\rm(iv)] $f\in\cd'(\rn)$ and $\wt{\cm}_N (f)\in L^{p}_{\wz}(\rn);$
\item[\rm(v)] $f\in\cd'(\rn)$ and $\wt{\cm}^0_N (f)\in L^{p}_{\wz}(\rn);$
\item[\rm(vi)] $f\in\cd'(\rn)$ and $\cm^0_N (f)\in L^{p}_{\wz}(\rn)$.
\end{enumerate}
Moreover, for all $f\in h^p_{\rz,\,N}(\wz)$
\begin{align}\label{3.30}
\|f\|_{h^{p}_{\rho,\,N}(\wz)}&\sim \left\|\psi^{+}_0(f)\r\|_{L^{p}_{\wz}(\rn)}
\sim\left\|(\psi_0)^{\ast}_{\triangledown}(f)\r\|_{L^{p}_{\wz}(\rn)}\nonumber\\
&\sim \left\|\wt{\cm}_N(f)\r\|_{L^{p}_{\wz}(\rn)}
\sim\left\|\wt{\cm}^0_N(f)\r\|_{L^{p}_{\wz}(\rn)}
\sim\left\|\cm^0_N(f)\r\|_{L^{p}_{\wz}(\rn)},
\end{align}
where the implicit constants are independent of $f$.
\end{thm}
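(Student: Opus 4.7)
The plan is to close the six equivalences via a cycle, using elementary pointwise dominations together with the two $L^p_\wz$-bounds \eqref{3.16} and \eqref{3.17} from Theorem \ref{t3.1}. First I would record the pointwise inclusions: since $\cdd^0_N\subset \cdd_{N,R}$ for the $R\ge 1$ built into $\cm_N$, one has $\cm^0_N(f)\le\cm_N(f)\le\wt{\cm}_N(f)$ and $\wt{\cm}^0_N(f)\le\wt{\cm}_N(f)$, immediately giving the implications (iv)$\Rightarrow$(i), (v), (vi). Moreover $\psi_0^+(f)\le(\psi_0)^{\ast}_{\triangledown}(f)\ls \psi^{\ast\ast}_{0,A,B}(f)$, as noted just after Definition \ref{d3.5}, which yields (iii)$\Rightarrow$(ii).

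Next I would use Theorem \ref{t3.1} to move among the $\psi_0$-dependent conditions. Inequality \eqref{3.16}, combined with the pointwise bound above, gives $\|(\psi_0)^{\ast}_{\triangledown}(f)\|_{L^p_\wz(\rn)}\ls \|\psi_0^+(f)\|_{L^p_\wz(\rn)}$, whence (ii)$\Leftrightarrow$(iii). Inequality \eqref{3.17}, applied with $R$ chosen as either $1$ or the radius built into $\cm_N$, delivers (ii)$\Rightarrow$(v) and (ii)$\Rightarrow$(iv).

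The decisive step is to prove (vi)$\Rightarrow$(ii), for which I would exploit the freedom in \eqref{3.3}. Fix once and for all an auxiliary $\psi_0^{(0)}\in \cdd^0_N(\rn)$ with $\int_{\rn}\psi_0^{(0)}(x)\,dx\neq 0$ (any normalized smooth bump supported in $B(0,1)$ works). For this specific choice the definition of $\cm^0_N$ gives $\psi_0^{(0),+}(f)(x)\le \cm^0_N(f)(x)$ pointwise. Applying \eqref{3.17} to $\psi_0^{(0)}$ with any $R\in(0,\fz)$ yields
\begin{equation*}
\|\wt{\cm}_{N,R}(f)\|_{L^p_\wz(\rn)}\ls \|\psi_0^{(0),+}(f)\|_{L^p_\wz(\rn)}\le \|\cm^0_N(f)\|_{L^p_\wz(\rn)}.
\end{equation*}
Taking $R$ equal to the value used to define $\cm_N$ then closes (i), (iv), (v), (vi) into a single equivalence class. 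For an arbitrary $\psi_0$ satisfying \eqref{3.3}, pick $R_0$ and $C_1$ with $\supp(\psi_0)\subset B(0,R_0)$ and $\|\psi_0\|_{\cdd_N(\rn)}\le C_1$; then $C_1^{-1}\psi_0\in \cdd_{N,R_0}(\rn)$, so pointwise $\psi_0^+(f)(x)\ls \cm_{N,R_0}(f)(x)\le \wt{\cm}_{N,R_0}(f)(x)$, and combining with the displayed inequality at $R=R_0$ produces $\|\psi_0^+(f)\|_{L^p_\wz(\rn)}\ls \|\cm^0_N(f)\|_{L^p_\wz(\rn)}$, completing (vi)$\Rightarrow$(ii).

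The main obstacle is that the support radius of a general $\psi_0$ need not be $1$, so there is no direct pointwise comparison between $\psi_0^+(f)$ and $\cm^0_N(f)$. The remedy is precisely the two-step detour just described: \eqref{3.17} lets me transfer from the $\psi_0$-dependent norm to $\wt{\cm}_{N,R_0}$ with an arbitrarily chosen radius, and the fixed reference bump $\psi_0^{(0)}\in \cdd^0_N$ converts back to $\cm^0_N$. All constants involved are uniform in $f$, which simultaneously produces the norm equivalences \eqref{3.30}.
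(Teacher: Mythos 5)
Your proof is correct and follows essentially the same strategy as the paper: use the pointwise dominations $\cm^0_N\le\cm_N\le\wt{\cm}_N$, $\wt{\cm}^0_N\le\wt{\cm}_N$ and $\psi_0^+\le(\psi_0)^\ast_\triangledown\ls\psi^{\ast\ast}_{0,A,B}$ for the easy directions, and invoke \eqref{3.16} and \eqref{3.17} from Theorem \ref{t3.1} to close the cycle. The paper organizes the implications as a simple chain (i)$\Rightarrow$(ii)$\Rightarrow\cdots\Rightarrow$(vi)$\Rightarrow$(i), with the two ``bridge'' steps (i)$\Rightarrow$(ii) and (vi)$\Rightarrow$(i) each using a fixed reference bump ($\wt{\psi}_0\in\cdd_N$ and $\psi_1\in\cdd^0_N$ respectively) plus \eqref{3.17}; your proof merges these two bridges into a single (vi)$\Rightarrow$(ii) by applying \eqref{3.17} once to the fixed $\psi_0^{(0)}\in\cdd^0_N$ and once via the normalization/support trick for the general $\psi_0$. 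The substance -- the use of \eqref{3.17} as a radius-changing device, the reference bump in $\cdd^0_N$, and the rescaling of a general $\psi_0$ into $\cdd_{N,R_0}$ -- is the same, so this is a cosmetic reorganization rather than a different route.
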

\begin{proof}
$\mathrm{(i)\Rightarrow(ii)}$.
 Pick an integer $N\ge N_{p,\,\wz}$ and $f\in h^{p}_{\rho,\,N}(\wz)$.
Let $\wt{\psi}_0$ satisfy \eqref{3.3} and $\wt{\psi}_0\in\cdd_N (\rn)$.
 Then from the definition
of $\cm_N (f)$, we infer that $\wt{\psi}^{+}_0 (f)\le\cm_N (f)$ and
hence $\wt{\psi}^{+}_0 (f)\in L^{p}_{\wz}(\rn)$.
For any $\psi_0$ satisfying \eqref{3.3}, assume that $\supp(\psi_0)\subset B(0,R)$.
Then, by \eqref{3.17} and the above argument, we have
$$\left\|\wt{\cm}_{N,\,R} (f)\r\|_{L^{p}_{\wz}(\rn)}
\ls \left\|\wt{\psi}_0^{+}(f)\r\|_{L^{p}_{\wz}(\rn)}
\ls \|f\|_{h^{p}_{\rz,\,N}(\wz)},$$
which together with $\psi^{+}_0(f)\ls\wt{\cm}_{N,\,R} (f)$
implies that $\psi^{+}_0 (f)\in L^{p}_{\wz}(\rn)$ and
$$\left\|\psi^{+}_0
(f)\r\|_{L^{p}_{\wz}(\rn)}\ls\left\|f\r\|_{h^{p}_{\rho,\,N}(\wz)}.$$

$\mathrm{(ii)\Rightarrow(iii)}$. Let $f\in\cd'(\rn)$ satisfy
$\psi_0^{+}(f) \in L^{p}_{\wz}(\rn)$, where $\psi_0$ is as in \eqref{3.3}.
Then from the fact that
$$\psi_0^{+}(f)
\le(\psi_0)^{\ast}_{\triangledown}(f)\ls\psi^{\ast\ast}_{0,\,A,\,B}(f)$$
and \eqref{3.16}, we deduce that
$(\psi_0)^{\ast}_{\triangledown}(f)\in L^{p}_{\wz}(\rn)$
and
$$\|(\psi_0)^{\ast}_{\triangledown}(f)\|_{L^{p}_{\wz}(\rn)}\ls\|\psi^{+}_0
(f)\|_{L^{p}_{\wz}(\rn)}.$$

$\mathrm{(iii)\Rightarrow(iv)}$. Let $f\in\cd'(\rn)$ satisfy
$(\psi_0)^{\ast}_ {\triangledown}(f)\in L^{p}_{\wz}(\rn)$, where
$\psi_0$ is as in \eqref{3.3}. By \eqref{3.17},
$$\|\wt{\cm}_{N}(f)\|_ {L^{p}_{\wz}(\rn)}\ls
\|\psi_0^{+}(f)\|_{L^{p}_{\wz}(\rn)},$$
which together with the fact that
$$\psi_0^{+}(f) \le(\psi_0)^{\ast}_{\triangledown}(f)$$
and the assumption that $(\psi_0)^{\ast}_ {\triangledown}(f)\in
L^{p}_{\wz}(\rn)$ implies $\wt{\cm}_{N} (f)\in L^{p}_{\wz}(\rn)$ and
$$\left\|\wt{\cm}_N
(f)\r\|_{L^{p}_{\wz}(\rn)}\ls\left\|(\psi_0)^{\ast}_{\triangledown}(f)
\r\|_{L^{p}_{\wz}(\rn)}.$$

$\mathrm{(iv)\Rightarrow(v)\Rightarrow(vi)}$. By the facts that
$\cm^0_N (f)\le\wt{\cm}^0_N (f)\le\wt{\cm}_N (f)$ for any
$f\in\cd'(\rn)$ , we see that all the
conclusions hold. Moreover, it is obvious that
$$\left\|\cm^0_N(f)\r\|_{L^{p}_{\wz}(\rn)}
\le\left\|\wt{\cm}^0_N(f)\r\|_{L^{p}_{\wz}(\rn)}
\le\left\|\wt{\cm}_N(f)\r\|_{L^{p}_{\wz}(\rn)}.$$

$\mathrm{(vi)\Rightarrow(i)}$. Let $f\in\cd'(\rn)$ satisfy
$\cm^0_N(f)\in L^{p}_{\wz}(\rn).$
Let $\psi_1$ satisfy \eqref{3.3} and
$\psi_1\in\cdd^0_N (\rn)$. Then by \eqref{3.17}, we have that
$$\left\|\wt{\cm}_{N} (f)\r\|_ {L^{p}_{\wz}(\rn)}
\ls \|\psi_1^{+}(f)\|_{L^{p}_{\wz}(\rn)},$$
which together with the
facts that $\psi_1^{+}(f)\le\cm^0_{N} (f)$
and $\cm_{N} (f)\le\wt{\cm}_{N} (f)$ implies that
$$\left\|\cm_{N} (f)\r\|_{L^{p}_{\wz}(\rn)}
\ls\left\|\cm^0_{N} (f)\r\|_{L^{p}_{\wz}(\rn)}.$$
Thus, by the definition of
$h^{p}_{\rho,\,N}(\wz)$, we know that $f\in h^{p}_{\rho,\,N}(\wz)$ and
$$\|f\|_ {h^{p}_{\rho,\,N}(\wz)}\ls\|\cm^0_{N}(f)\|_ {L^{p}_{\wz}(\rn)},$$
which completes the proof of Theorem \ref{t3.2}.
\end{proof}

By the Theorems \ref{t3.1} and \ref{t3.2}, we also have the following corollary about
local tangential maximal function characterization of
$h^{p}_{\rho,\,N}(\wz)$, and we omit the details here.

\begin{cor}\label{c3.1}
Let  $\psi_0$ be as in \eqref{3.3},
$\wz\in A^{\rz,\,\fz}_{\fz}(\rn)$, $N_{p,\,\wz}$ be as
in \eqref{3.29}, $A$ and $B$ be as in Theorem \ref{t3.1}. Then for
integer $N\ge N_{p,\,\wz}$,
$f\in h^{p}_{\rho,\,N}(\wz)$
if and only if $f\in\cd'(\rn)$ and
$\psi^{\ast\ast}_{0,\,A,\,B}(f)\in{L^{p}_{\wz}(\rn)}$;
moreover,
$$\|f\|_{h^{p}_{\rho,\,N}(\wz)}\sim
\|\psi^{\ast\ast}_{0,\,A,\,B}(f)\|_{L^{p}_{\wz}(\rn)}.$$
\end{cor}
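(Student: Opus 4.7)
The plan is to deduce this corollary directly from the already-established Theorems \ref{t3.1} and \ref{t3.2}, using the pointwise comparisons between the three maximal functions $\psi_0^{+}(f)$, $(\psi_0)^{\ast}_{\triangledown}(f)$ and $\psi^{\ast\ast}_{0,\,A,\,B}(f)$. The key observation is that the tangential Peetre-type maximal function $\psi^{\ast\ast}_{0,\,A,\,B}(f)$ already appeared as an intermediate object in the proof of Theorem \ref{t3.1}, sandwiched between the vertical and the local grand maximal functions in weighted $L^p_\wz$-norm, so there is essentially no new work to do beyond assembling the existing estimates.

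For the necessity direction, suppose $f\in h^{p}_{\rho,\,N}(\wz)$. By Theorem \ref{t3.2}(ii) we have $\psi_0^{+}(f)\in L^{p}_{\wz}(\rn)$ with $\|\psi_0^{+}(f)\|_{L^{p}_{\wz}(\rn)}\ls\|f\|_{h^{p}_{\rho,\,N}(\wz)}$. Then inequality \eqref{3.16} of Theorem \ref{t3.1} gives
\[
\|\psi^{\ast\ast}_{0,\,A,\,B}(f)\|_{L^{p}_{\wz}(\rn)}
\ls \|\psi_0^{+}(f)\|_{L^{p}_{\wz}(\rn)}
\ls \|f\|_{h^{p}_{\rho,\,N}(\wz)},
\]
which yields $\psi^{\ast\ast}_{0,\,A,\,B}(f)\in L^{p}_{\wz}(\rn)$ and one side of the claimed equivalence of quasi-norms.

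For the sufficiency direction, suppose $f\in\cd'(\rn)$ and $\psi^{\ast\ast}_{0,\,A,\,B}(f)\in L^{p}_{\wz}(\rn)$. The elementary pointwise bound noted right after Definition \ref{d3.5}, namely
\[
\psi_0^{+}(f)(x)\le (\psi_0)^{\ast}_{\triangledown}(f)(x)\ls\psi^{\ast\ast}_{0,\,A,\,B}(f)(x)
\quad\text{for all } x\in\rn,
\]
immediately gives $\psi_0^{+}(f)\in L^{p}_{\wz}(\rn)$ with the corresponding norm control. Then Theorem \ref{t3.2}, specifically the implication (ii)$\Rightarrow$(i) together with the comparability \eqref{3.30}, yields $f\in h^{p}_{\rho,\,N}(\wz)$ and
\[
\|f\|_{h^{p}_{\rho,\,N}(\wz)}\ls \|\psi_0^{+}(f)\|_{L^{p}_{\wz}(\rn)}
\ls \|\psi^{\ast\ast}_{0,\,A,\,B}(f)\|_{L^{p}_{\wz}(\rn)}.
\]

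Since no new pointwise estimate, covering argument, or weight inequality is required beyond those already proved in Lemmas \ref{l3.2}--\ref{l3.4} and Theorems \ref{t3.1}--\ref{t3.2}, there is no substantive obstacle; the only thing to be careful about is that the parameters $A$ and $B$ are chosen exactly as in Theorem \ref{t3.1} (so that both \eqref{3.16} and the pointwise comparison $\psi_0^{+}(f)\ls\psi^{\ast\ast}_{0,\,A,\,B}(f)$ are available), and that $N\ge N_{p,\,\wz}$ so the hypotheses of Theorem \ref{t3.2} apply. Hence the proof reduces to two short chains of inequalities, which is presumably why the authors chose to omit the details.
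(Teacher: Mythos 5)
Your proposal is correct and follows exactly the route the paper intends: the paper states that Corollary~\ref{c3.1} follows ``by Theorems~\ref{t3.1} and~\ref{t3.2}'' and omits the details, and your two chains of inequalities (Theorem~\ref{t3.2} for $\psi_0^+$, inequality~\eqref{3.16} for the upper bound, and the pointwise comparison $\psi_0^+(f)\le(\psi_0)^*_\triangledown(f)\ls\psi^{**}_{0,A,B}(f)$ for the lower bound) are precisely those omitted details.
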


Next we give some basic properties of $h^p_{\rz,\,N}(\wz)$
and $h^{p,\,q,\,s}_{\rz}(\wz)$.

\begin{prop}\label{p3.2}
Let  $\wz\in A^{\rz,\,\fz}_{\fz}(\rn)$, $p\in(0,1]$ and $N_{p,\,\wz}$
be as in \eqref{3.29}.
For any integer $N\ge N_{p,\,\wz}$,  the inclusion
$h^{p}_{\rho,\,N}(\wz)\hookrightarrow\cd'(\rn)$ is continuous.
\end{prop}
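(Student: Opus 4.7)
The plan is to establish the continuity of the inclusion by proving the pointwise bound $|\langle f, \varphi\rangle| \le C_\varphi \|f\|_{h^p_{\rho,N}(\wz)}$ for every test function $\varphi \in \cd(\rn)$, where $C_\varphi$ depends only on $\varphi$. Since $\cd(\rn)$ is an LF-space and $h^p_{\rho,N}(\wz)$ carries a quasi-norm topology via $\cm_N$, such bounds are equivalent to continuity of the embedding.

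The first step is a reduction to test functions with small support. Fix $\varphi \in \cd(\rn)$. Because $\supp \varphi$ is compact and $\rz$ is a positive continuous function, $\dz \equiv \inf_{\supp\varphi}\rz > 0$. I would cover $\supp\varphi$ by finitely many balls $B(y_k, r_k)$ with $r_k \le \dz/(4R)$ (where $R$ is the fixed radius parameter appearing in $\cdd_N(\rn)=\cdd_{N,R}(\rn)$), choose a smooth partition of unity $\{\chi_k\}$ subordinate to this cover, and write $\varphi = \sum_k \varphi\chi_k$. The number of pieces depends only on $\varphi$, and each $\varphi\chi_k$ is supported in a ball $B(y_k, r_k)$ with $r_k$ small compared to $\rz$ on $B(y_k, r_k)$, while $\|\varphi\chi_k\|_{C^N(\rn)} \ls \|\varphi\|_{C^N(\rn)}$.

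The second step is the key pointwise comparison. Suppose $\phi \in \cd(\rn)$ satisfies $\supp\phi \subset B(x_0, r_0)$ with $r_0 \le \rz(x_0)/(4R)$. By Lemma~\ref{l2.1}, $\rz(x) \sim \rz(x_0)$ uniformly for $x \in B(x_0, r_0)$, so one can pick an integer $l$ with $2r_0/R \le 2^{-l} < \rz(x)$ for every $x \in B(x_0,r_0)$. For any such $x$, I would define
\[
\psi(u) \equiv C_\phi^{-1}\, 2^{-ln}\, \phi\bigl(x - 2^{-l}u\bigr), \qquad C_\phi \equiv \|\phi\|_{C^N(\rn)}\max\bigl(1, 2^{-lN}\bigr).
\]
Direct inspection gives $\supp\psi \subset B(2^l(x-x_0), 2^l r_0)\subset B(0,R)$ (using $|x-x_0|+r_0 \le 2r_0 \le 2^{-l}R$) and $\|\psi\|_{\cdd_N(\rn)}\le 1$, so $\psi \in \cdd_{N,R}(\rn)$. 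A change of variables shows $\phi(y) = C_\phi \cdot \psi_l(x-y)$, hence
\[
|\langle f, \phi\rangle| = C_\phi \left|\psi_l\ast f(x)\right| \le C_\phi\, \cm_N(f)(x) \quad\text{for every } x\in B(x_0, r_0),
\]
because $\psi \in \cdd_{N,R}(\rn)$ and $2^{-l}<\rz(x)$ satisfy the conditions in \eqref{gnr}.

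The final step converts the pointwise bound into a norm bound. Since $p\in(0,1]$ and $\wz(B(x_0,r_0))>0$, raising to the power $p$ and integrating against $\wz$ over $B(x_0,r_0)$ yields
\[
|\langle f,\phi\rangle|^p\, \wz(B(x_0, r_0)) \le C_\phi^p \int_{B(x_0, r_0)} \cm_N(f)(x)^p \wz(x)\,dx \le C_\phi^p \|f\|_{h^p_{\rho,N}(\wz)}^p,
\]
so $|\langle f, \phi\rangle| \le C_\phi \wz(B(x_0, r_0))^{-1/p}\|f\|_{h^p_{\rho,N}(\wz)}$. Applying this to each piece $\varphi\chi_k$ from Step~1 and summing (finitely many terms) gives the desired estimate for $\varphi$ with a constant depending only on $\varphi$, establishing the continuous inclusion.

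The main obstacle is the careful choice of the scale $l$ together with the explicit normalization $C_\phi$: one has to simultaneously enforce the support constraint $2^{-l}R\ge 2r_0$, the locality constraint $2^{-l}<\rz(x)$ uniformly on a ball around $x_0$, and the $\cdd_N$-bound on $\psi$. Lemma~\ref{l2.1} makes the uniformity in $x$ possible provided one has already reduced to test functions whose support is small relative to the local value of $\rz$, which is precisely the role of the partition-of-unity reduction.
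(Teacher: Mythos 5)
Your argument is correct in substance and shares the core idea with the paper's proof: represent $\langle f,\phi\rangle$ as a constant times $\psi_l\ast f(x)$ for a rescaled $\psi\in\cdd_{N,R}(\rn)$ and integrate the resulting pointwise bound against $\wz$ over a ball. The execution differs, though. The paper keeps the test function $\vz$ whole, writes $\langle f,\vz\rangle=(f\ast\wt\vz)(0)$, and bounds this by $\wt{\cm}_{N,R}(f)(x)$ for $x$ in a fixed ball $B(0,r_1)$ with $r_1\sim\rz(0)$, letting $R$ be the support radius of $\vz$; it then closes the loop by invoking Theorems~\ref{t3.1} and \ref{t3.2} to pass from $\wt{\cm}_{N,R}$ back to the defining norm $\|\cm_N(f)\|_{L^p_\wz}$. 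You instead stay with the vertical maximal function $\cm_N$ at the fixed parameter $R$, which requires a partition of unity to shrink the support of each piece until it fits in $B(0,R)$ after rescaling. Your route is more elementary (it never needs the maximal-function equivalences of Theorems~\ref{t3.1}--\ref{t3.2}), at the cost of the partition-of-unity reduction and of having the rescaled bump $\psi$ depend on the evaluation point $x$.

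Two small bookkeeping slips. First, the normalization $C_\phi=\|\phi\|_{C^N}\max(1,2^{-lN})$ is not quite right: since $\partial^\az\psi(u)=C_\phi^{-1}\,2^{-l(n+|\az|)}(\partial^\az\phi)(x-2^{-l}u)$, the correct constant is $C_\phi=\|\phi\|_{C^N}\max\bigl(1,2^{-l(n+N)}\bigr)$; your version fails to give $\|\psi\|_{\cdd_N}\le 1$ when $l<0$. The cleanest fix is to arrange $l\ge 0$ from the start (take $r_0\le R/4$ in addition to $r_0\ls\rz(x_0)$), after which $C_\phi=\|\phi\|_{C^N}$ suffices. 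Second, the choice $r_0\le\rz(x_0)/(4R)$ alone does not guarantee that a dyadic scale exists in the interval $[2r_0/R,\ \inf_{B(x_0,r_0)}\rz)$; by Lemma~\ref{l2.1} one has $\inf_{B(x_0,r_0)}\rz\gtrsim\rz(x_0)$ with an implied constant depending on $C_0,k_0$, so the factor $4R$ should be replaced by a constant that also absorbs $C_0 2^{k_0}$. Both are adjustments of constants, not of ideas; the argument goes through once they are made.
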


\begin{proof}
First, for any $x\in B(0,\rz(0))$, by Lemma \ref{l2.1}, there exist
$C_0\ge 1$ and $k_0 \geq 1$, such that
$$\rz(0)\le C_0  \l(1+\frac{|x|}{\rz(0)}\r)^{k_0} \rz(x)
\le C_0 2^{k_0} \rz(x).$$
We take $r_1\equiv\rz(0)/C_02^{k_0+1}< \min\{\rz(x),\,\rz(0)\}$, then we have
$B(0,r_1)\subset B(0,\rz(0))$. In addition,
 for any $x\in B(0,r_1)$, we also have $|x|<r_1< \rz(x)$.

Next, let $f\in h^{p}_{\rz,\,N}(\wz)$. For any given $\vz\in\cdd(\rn)$,
assume that $\supp(\vz)\subset B(0,R)$ with $R\in(0,\fz)$.
Then by Theorem \ref{t3.1} and \ref{t3.2},  we have
\begin{eqnarray*}
\begin{aligned}
|\langle f,\vz\rangle|=\left|f\ast\wt{\vz}(0)\r|
&\le\left\|\wt{\vz}\r\|_{\cdd_{N,\,R}(\rn)}\inf_{x\in B(0,r_1)}\wt{\cm}_{N,\,R} (f)(x)\\
&\le \left\|\wt{\vz}\r\|_{\cdd_{N,\,R}(\rn)} [\wz(B(0,r_1))]^{-1/p}
\l\|\wt{\cm}_{N,\,R} (f)\r\|_{L^p_{\wz}(\rn)} \\
&\ls \left\|\wt{\vz}\r\|_{\cdd_{N,\,R}(\rn)} [\wz(B(0,r_1))]^{-1/p}
\l\|f\r\|_{h^p_{\rho,\,N}(\wz)},
\end{aligned}
\end{eqnarray*}
where $\wt{\cm}_{N,\,R} (f)$ is as in \eqref{tgnr} and
$\wt{\vz}(x)\equiv\vz(-x)$ for all $x\in\rn$.
This implies $f\in {\cal D}'(\rn)$
and the inclusion is continuous. The proof is finished.
\end{proof}

\begin{prop}\label{p3.3}
Let  $\wz\in A^{\rz,\,\fz}_{\fz}(\rn)$, $p\in(0,1]$ and $N_{p,\,\wz}$
be as in \eqref{3.29}.
For any integer $N\ge N_{p,\,\wz}$, the space
$h^{p}_{\rho,\,N}(\wz)$ is complete.
\end{prop}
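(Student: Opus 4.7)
The plan is to verify completeness through the standard $p$-norm series criterion for quasi-Banach spaces: since the quasi-norm on $h^p_{\rho,N}(\wz)$ is $p$-subadditive (because $p\in(0,1]$ and $\cm_N$ is sublinear), it suffices to show that every series $\sum_{k=1}^{\fz}f_k$ in $h^p_{\rho,N}(\wz)$ with $\sum_{k=1}^{\fz}\|f_k\|^p_{h^p_{\rho,N}(\wz)}<\fz$ converges to some $f\in h^p_{\rho,N}(\wz)$ in the quasi-norm of $h^p_{\rho,N}(\wz)$.

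First, I would show that the partial sums $S_K\equiv\sum_{k=1}^{K}f_k$ form a Cauchy sequence in $\cd'(\rn)$. For this, the continuous embedding established in Proposition \ref{p3.2} gives, for any $\vz\in\cdd(\rn)$ with $\supp(\vz)\subset B(0,R)$, an estimate of the form
$$|\langle S_{K_2}-S_{K_1},\vz\rangle|\ls\|\wt{\vz}\|_{\cdd_{N,R}(\rn)}[\wz(B(0,r_1))]^{-1/p}\l\|\sum_{k=K_1+1}^{K_2}f_k\r\|_{h^p_{\rho,N}(\wz)},$$
and then the $p$-subadditivity together with $\sum_k\|f_k\|^p_{h^p_{\rho,N}(\wz)}<\fz$ makes this tend to zero. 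Since $\cd'(\rn)$ is complete, there exists $f\in\cd'(\rn)$ with $S_K\to f$ in $\cd'(\rn)$.

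Next, I would check that $f\in h^p_{\rho,N}(\wz)$ and $\|f-S_K\|_{h^p_{\rho,N}(\wz)}\to0$. The key observation is that for any $\vz\in\cdd_N(\rn)$ and $x\in\rn$, $j\ge j_x$, we have $\vz_j\ast S_K(x)\to\vz_j\ast f(x)$ pointwise, so by the sublinearity of $\cm_N$ and Fatou's lemma applied to the local grand maximal function,
$$\cm_N(f)(x)\le\liminf_{K\to\fz}\cm_N(S_K)(x)\le\l(\sum_{k=1}^{\fz}[\cm_N(f_k)(x)]^p\r)^{1/p}.$$
Integrating the $p$-th power against $\wz$ and using $p\in(0,1]$ to interchange sum and integral yields
$$\|f\|^p_{h^p_{\rho,N}(\wz)}=\int_{\rn}[\cm_N(f)(x)]^p\wz(x)\,dx\le\sum_{k=1}^{\fz}\|f_k\|^p_{h^p_{\rho,N}(\wz)}<\fz,$$
so $f\in h^p_{\rho,N}(\wz)$. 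The same argument applied to the tail $f-S_K=\sum_{k>K}f_k$ (which converges in $\cd'(\rn)$ by the Cauchy argument above) gives
$$\|f-S_K\|^p_{h^p_{\rho,N}(\wz)}\le\sum_{k>K}\|f_k\|^p_{h^p_{\rho,N}(\wz)}\longrightarrow0\quad\text{as }K\to\fz,$$
which is the desired convergence.

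The only delicate point I expect to have to justify carefully is the Fatou-type step for $\cm_N$: one needs $\vz_j\ast S_K(x)\to\vz_j\ast f(x)$ for each fixed $\vz\in\cdd_N(\rn)$, $x\in\rn$ and admissible $j$, which follows from the definition of $\cd'$-convergence applied to the test function $y\mapsto\vz_j(x-y)$. Once this pointwise convergence is in hand, passing to the liminf inside the supremum defining $\cm_N$ is standard, and the rest reduces to monotone convergence and the $p$-subadditivity that is built into the very form of the quasi-norm $\|\cdot\|_{h^p_{\rho,N}(\wz)}=\|\cm_N(\cdot)\|_{L^p_{\wz}(\rn)}$.
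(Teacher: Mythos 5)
Your proposal is correct and follows essentially the same route as the paper: reduce completeness to the $p$-series criterion, use Proposition \ref{p3.2} to pass from a Cauchy sequence in $h^p_{\rho,N}(\wz)$ to a limit $f$ in $\cd'(\rn)$, then apply the pointwise estimate $[\cm_N(f)(x)]^p\le\sum_k[\cm_N(f_k)(x)]^p$ (the paper states this directly; you derive it via the Fatou-type argument on $\vz_j\ast S_K(x)\to\vz_j\ast f(x)$) to conclude $\|f-S_K\|^p_{h^p_{\rho,N}(\wz)}\le\sum_{k>K}\|f_k\|^p_{h^p_{\rho,N}(\wz)}\to0$. The only cosmetic difference is that the paper fixes the normalization $\|f_j\|<2^{-j}$ whereas you work with the equivalent hypothesis $\sum_k\|f_k\|^p<\fz$.
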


\begin{proof}
For any $\psi\in\cdd_N (\rn)$ and $\{f_i\}_{i\in\nn}\subset\cd'(\rn)$
such that $\{\sum_{i=1}^j f_i\}_{j\in\nn}$ converges in $\cd'(\rn)$
to a distribution $f$ as $j\to\fz$, and the series $\sum_i
 f_i*\psi(x)$ converges pointwise to $f*\psi(x)$ for each $x\in\rn$.
Therefore,
$$\l(\cm_N (f)(x)\r)^p\le\left(\sum_{i=1}^{\fz}\cm_N
(f_i)(x)\r)^p\le\sum_{i=1}^{\fz}\left(\cm_N (f_i)(x)\r)^p \quad
{\rm for \ all}\ x\in\rn,$$
and hence
$\|f\|_{h^p_{\rho,N}(\wz)}\le \dsum_i^{\fz} \|f_i\|_{h^p_{\rho,N}(\wz)}$.

To prove that $h^p_{\rho,N}(\wz)$ is complete, it  suffices to show
that for every sequence $\{f_j\}_{j\in\nn}$ with
$\|f_j\|_{h^p_{\rho,N}(\wz)}<2^{-j}$ for any $j\in\nn$, the series
$\sum_{j\in\nn}f_j$ convergence in $h^p_{\rho,N}(\wz)$.
Since $\{\sum_{i=1}^j f_i\}_{j\in\nn}$ is a Cauchy sequence in
$h^{p}_{\rho,\,N}(\wz)$, by Proposition \ref{p3.2} and the
completeness of $\cd'(\rn)$, $\{\sum_{i=1}^j f_i\}_{j\in\nn}$ is
also a Cauchy sequence in $\cd'(\rn)$ and thus converges to some
$f\in\cd'(\rn)$. Therefore,
$$\bigg\|f-\dsum_{i=1}^j f_i\bigg\|^p_{h^p_{\rho,N}(\wz)}
=\bigg\|\dsum_{i=j+1}^\fz f_i\bigg\|^p_{h^p_{\rho,N}(\wz)}
\le \dsum_{i=j+1}^\fz 2^{-ip}\to 0$$ as $j\to\fz$. This finishes the
proof.
\end{proof}

\begin{thm}\label{t3.3}
Let $\wz\in A^{\rz,\,\fz}_{\fz}(\rn)$ and $N_{p,\,\wz}$ be as in \eqref{3.29}.
If
$(p,\,q,\,s)_{\wz}$ is an admissible triplet (see Definition \ref{d3.3}) and
integer $N\ge N_{p,\,\wz}$, then
$$h^{p,\,q,\,s}_{\rho}(\wz)\subset
h^{p}_{\rho,\,N_{p,\,\wz}}(\wz)\subset
h^{p}_{\rho,\,N}(\wz),$$
and moreover, there exists a positive
constant $C$ such that for all $f\in h^{p,\,q,\,s}_{\rho}(\wz)$,
$$\|f\|_{h^{p}_{\rho,\,N}(\wz)}
\le\|f\|_{h^{p}_{\rho,\,N_{p,\,\wz}}(\wz)}
\le C\|f\|_{h^{p,\,q,\,s}_{\rho}(\wz)}.$$
\end{thm}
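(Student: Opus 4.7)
The first inclusion $h^{p}_{\rho,\,N_{p,\,\wz}}(\wz)\subset h^{p}_{\rho,\,N}(\wz)$ is immediate from the definition: for $N\ge N_{p,\,\wz}$ one has $\cdd_{N}(\rn)\subset\cdd_{N_{p,\wz}}(\rn)$ (the seminorm constraint is more restrictive for larger $N$), hence $\cm_N(f)\le\cm_{N_{p,\wz}}(f)$ pointwise, and the corresponding norm inequality follows. The bulk of the work is the inclusion $h^{p,\,q,\,s}_{\rho}(\wz)\subset h^{p}_{\rho,\,N_{p,\,\wz}}(\wz)$. By Proposition \ref{p3.3} and the standard $p$-subadditivity $[\cm_{N}(\sum_i \lz_i a_i)]^p\le \sum_i |\lz_i|^p [\cm_{N}(a_i)]^p$, it suffices to prove a uniform bound $\|a\|_{h^{p}_{\rho,\,N_{p,\wz}}(\wz)}\le C$ for every $(p,q,s)_\wz$-atom $a$ (and, separately, for every $(p,q)_\wz$-single-atom); taking the infimum over atomic decompositions of $f$ then yields the quantitative inequality.

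Fix an atom $a$ with $\supp a\subset Q=Q(x_0,r)$ and $r\le L_1\rz(x_0)$, and let $Q^{*}=2Q$. The plan is to split
$$\int_{\rn}[\cm_N(a)(x)]^p\wz(x)\,dx=\int_{Q^{*}}+\int_{\rn\setminus Q^{*}}.$$
For the local piece over $Q^{*}$, I will use Lemma \ref{l2.1} to get $\rz(y)\sim\rz(x_0)$ uniformly for $y\in Q^{*}$ (since $r\lesssim\rz(x_0)$), so that $\cm_N(a)$ is dominated, up to a constant depending only on $\psi_0$ and $N$, by $\cm^{0}_N(a)$ on $Q^{*}$. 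Then by H\"older's inequality with exponent $q/p$, Proposition \ref{p3.1}(ii), the size condition $\|a\|_{L^{q}_\wz(\rn)}\le [\wz(Q)]^{1/q-1/p}$, and Lemma \ref{l2.4}(vi) applied to compare $\wz(Q^{*})$ with $\wz(Q)$, the local integral is bounded by a constant independent of $a$.

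For the far piece $\rn\setminus Q^{*}$ I distinguish two cases. If $r<L_2\rz(x_0)$, then $a$ has vanishing moments up to order $s\ge[n(q_\wz/p-1)]$, so for any $\vz\in\cdd_{N}(\rn)$ with $\int x^\az a=0$ for $|\az|\le s$, a standard Taylor expansion of $\vz_l$ around $x_0$ together with the support condition gives the pointwise estimate
$$\cm_N(a)(x)\ls \frac{r^{n+s+1}}{|x-x_0|^{n+s+1}}\,[\wz(Q)]^{-1/p}\,|Q|^{1-1/p}\quad (x\notin Q^{*}),$$
after normalizing by $\|a\|_{L^1}\le |Q|^{1-1/p}[\wz(Q)]^{1/p}\cdot[\wz(Q)]^{-1/p}$ through Definition \ref{d3.3}(ii) with $q=\infty$ or H\"older's inequality for general $q$. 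The choice $s+1>n(q_\wz/p-1)$ allows one to sum over dyadic annuli $2^kQ^{*}\setminus 2^{k-1}Q^{*}$ using Lemma \ref{l2.4}(vi), again giving a uniform bound. If instead $L_2\rz(x_0)\le r\le L_1\rz(x_0)$, there are no moment conditions, but here the strict locality of $\cm_N$ saves us: for $x\notin Q^{*}$ one has $|x-x_0|>r\sim\rz(x_0)$, and the constants $L_1,L_2$ (together with Lemma \ref{l2.1}) are chosen precisely so that any admissible scale $2^{-l}<\rz(x)$ is too small for $B(x,2^{-l})$ to meet $\supp a\subset Q$; hence $\cm_N(a)(x)=0$ off a controlled enlargement of $Q$, and the estimate reduces to the local piece already handled.

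The main obstacle is the second case above: one has to unwind the constants $C_0,k_0$ from Lemma \ref{l2.1} and check that the specific values $L_1=4C_0(3\sqrt n)^{k_0}$ and $L_2=1/[C_0^{2}(3\sqrt n)^{k_0+1}]$ of Definition \ref{d3.3} indeed guarantee this geometric separation; this is what makes the argument genuinely different from the classical Muckenhoupt setting and is the reason moment cancellation is only required in the small-atom regime $r<L_2\rz(x_0)$. The single-atom case is handled directly by applying Proposition \ref{p3.1}(ii) globally and using H\"older's inequality with the size bound $\|a_0\|_{L^q_\wz}\le[\wz(\rn)]^{1/q-1/p}$.
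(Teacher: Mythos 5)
Your proposal follows essentially the same route as the paper: reduce to a uniform $L^p_\wz$-bound on the grand maximal function of a single atom via $p$-subadditivity, split into a near piece over an enlargement of $Q$ (handled by H\"older, Proposition \ref{p3.1}(ii), and the doubling estimate of Lemma \ref{l2.4}(vi)) and a far piece, and in the far piece distinguish small atoms ($r<L_2\rho(x_0)$, where moment cancellation plus a Taylor expansion gives the decay $|x-x_0|^{-(n+s+1)}$, summable over dyadic annuli because $s\ge[n(q_\wz/p-1)]$) from large atoms ($L_2\rho(x_0)\le r\le L_1\rho(x_0)$, where the local cutoff $2^{-l}<\rho(x)$ and Lemma \ref{l2.1} force $\cm_N(a)$ to vanish outside a fixed dilate of $Q$), with the single-atom case done by a global H\"older argument. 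The only cosmetic difference is that the paper works with $\cm^0_{N_{p,\wz}}$ directly (so that Proposition \ref{p3.1}(ii) applies verbatim) and then invokes Theorem \ref{t3.2}, whereas you phrase things for $\cm_{N_{p,\wz}}$ and appeal to the comparability of the two maximal functions; this is a matter of bookkeeping, not of substance.
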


\begin{proof}
Obviously, we only need to prove $h^{p,\,q,\,s}_{\rho}(\wz)\subset
h^p_{\rho,\,N_{p,\wz}}(\wz)$. For  all $f\in h^{p,\,q,\,s}_{\rho}(\wz)$,
$$\|f\|_{h^p_{\rho,\,N_{p,\wz}}(\wz)}\ls \|f\|_{h^{p,\,q,\,s}_{\rho}(\wz)}.$$
By Definition \ref{d3.4} and Theorem \ref{t3.2}, it suffices to prove that
there exists a positive constant $C$ such that
\beq \label{3.31}
\l\|\cm^0_{N_{p,\,\wz}}(a)\r\|_{L^p_{\wz}(\rn)}\le C,
\quad{\rm for\ all}\ (p,q)_\wz-{single-atoms}\ a,
\eeq
and
\beq \label{3.32}
\l\|\cm^0_{N_{p,\,\wz}}(a)\r\|_{L^p_{\wz}(\rn)}\le C,
\quad{\rm for\ all}\ (p,q,s)_\wz-{atoms}\ a.
\eeq
We  first prove (3.31).
Since $q\in (q_\wz,\fz]$, so $\wz\in A_q^{\rz,\,\fz}(\rn)$.
Let $a$ be a $(p,\,q)_{\wz}$-single-atom. When
$\wz(\rn)=\fz$, by the definition of the single atom, we know that
$a=0$ for almost every $x\in\rn$. In this case, it is easy to see
that \eqref{3.31} holds. When $\wz(\rn)<\fz$,
from H\"older's inequality, $\wz\in A_q^{\rz,\,\fz}(\rn)$ and
Proposition \ref{p3.1}(i), we deduce that
\begin{equation*}
\begin{aligned}
\l\|\cm^0_{N_{p,\,\wz}}(a)\r\|_{L^p_{\wz}(\rn)}^p
&=\int_{\rn}\left|\cm^0_{N_{p,\,\wz}}(a)(x)\r|^p\wz(x)\,dx\\
&\le \l(\int_{\rn}\left|\cm^0_{N_{p,\,\wz}}(a)(x)\r|^q\wz(x)\,dx\r)^{p/q}
\l(\int_{\rn}\wz(x)\,dx\r)^{1-p/q}\\
&\le C\|a\|_{L^q_\wz(\rn)}^p[\wz(\rn)]^{1-p/q}\le C.
\end{aligned}
\end{equation*}

Next, we prove (3.32). Let $a$ be a
$(p,\,q,\,s)_{\wz}$-atom supported in the  cube $Q\equiv Q(x_0,r)$.
We consider the following two cases for $Q$.

The first case is when $r< L_{2}\rz(x_0)$.
Let $\wt{Q}\equiv2\sqrt{n}Q$, then we have
\begin{align}\label{3.33}
\int_{\rn}\left|\cm^0_{N_{p,\,\wz}}(a)(x)\r|^p\wz(x)\,dx
&=\int_{\wt{Q}}\left|\cm^0_{N_{p,\,\wz}}(a)(x)\r|^p\wz(x)\,dx
+\int_{{\wt{Q}}^{\complement} }\left|\cm^0_{N_{p,\,\wz}}(a)(x)\r|^p\wz(x)\,dx
\nonumber\\
&\equiv\mathrm{I_1}+\mathrm{I_2}.
\end{align}
For $I_1$, by H\"older's inequality and the properties of $A_q^{\rz,\,\tz}(\rn)$
(see Lemma \ref{l2.4}(vi)), we have
\begin{align}\label{3.34}
I_1
&\le \l(\int_{\rn}\left|\cm^0_{N_{p,\,\wz}}(a)(x)\r|^q\wz(x)\,dx\r)^{p/q}
\l(\int_{\wt{Q}}\wz(x)\,dx\r)^{1-p/q}\nonumber\\
&\le C\|a\|_{L^q_\wz(\rn)}^p[\wz(\wt{Q})]^{1-p/q}\le C.
\end{align}
To estimate $I_2$, we claim that for  $x\in \wt{Q}^\complement$
\begin{equation}\label{3.35}
\cm^0_{N_{p,\,\wz}}(a)(x)\le C |Q|^{(s_0+n+1)/n}[\wz(Q)]^{-1/p}
|x-x_0|^{-(s_0+n+1)} \chi_{B(x_0,c_1\rz(x_0))}(x),
\end{equation}
where $s_0\equiv[n(q_{\wz}/p-1)]$ and $c_1>2\sqrt{n}$ is an constant independent of the atom $a$.
Indeed, for any $\psi\in\cdd^0_{N}(\rn)$ and $2^{-l}\in(0,\rz(x))$, let $P$ be the Taylor
expansion of $\psi$ about $(x-x_0)/2^{-l}$ with degree $s_0$.
By Taylor's remainder theorem, for any $y\in\rn$, we have
\begin{equation*}
\begin{aligned}
&\left|\psi\left(\frac{x-y}{2^{-l}}\r)-P\left(\frac{x-x_0}{2^{-l}}\r)\r| \\
&\le C\sum_{\genfrac{}{}{0pt}{}{\az\in\zz^n_+}{|\az|=s_0+1}}
\left|\left(\partial^{\az}\psi\r)
\left(\frac{\theta(x-y)+(1-\theta)(x-x_0)}{2^{-l}}\r)\r|\left|
\frac{x_0-y}{2^{-l}}\r|^{s_0 +1},
\end{aligned}
\end{equation*}
where $\theta\in(0,1)$.
By $2^{-l}\in(0,\rz(x))$ and $x\in\wt{Q}^{\complement}$,
we see that $\supp (a\ast\psi_l)\subset B(x_0,c_1\rz(x_0))$,
and  $a\ast\psi_l (x)\neq0$ implies that $2^{-l}>|x-x_0|/2$.
Thus, from the above facts and Definition
\ref{d3.3}, we get that for all
$x\in\wt{Q}^{\complement}$,
\begin{equation*}
\begin{aligned}
|a\ast\psi_l (x)|&\le \frac{1}{2^{-ln}}\left\{\int_{Q}|a(y)|
\left|\psi\left(\frac{x-y}{2^{-l}}\r)-P\left(\frac{x-x_0}{2^{-l}}\r)\r|\,dy\r\}\,
\chi_{B(x_0,c_1\rz(x_0))}(x)\\
&\le C |x-x_0|^{-(s_0 +n+1)}\left\{\int_{Q}|a(y)||x_0 -y|^{s_0+1}\,dy\r\} \chi_{B(x_0,c_1\rz(x_0))}(x)\\
&\le C |Q|^{(s_0 +1)/{n}}
\|a\|_{L^q_{\wz}(\rn)}\left(\int_{Q}[\wz(y)]^{-q'/q}\,dy\r)^{1/q'}\!
|x-x_0|^{-(s_0 +n+1)}\chi_{B(x_0,c_1\rz(x_0))}(x)\\
&\le C |Q|^{(s_0 +n+1)/{n}}[\wz(Q)]^{-1/p}|x-x_0|^{-(s_0 +n+1)}
\chi_{B(x_0,c_1\rz(x_0))}(x),
\end{aligned}
\end{equation*}
which together with the arbitrariness of $\psi\in\cdd^0_{N}(\rn)$
implies \eqref{3.35}. Thus, the claim holds.

Let $Q_i\equiv2^i\sqrt{n}Q$ for all $i\in\nn$ and $i_0 \in\nn$
satisfying $2^{i_0}r\le c_1\rz(x_0)<2^{i_0 +1}r$. As
$s_0=\left[ n\left(q_{\wz}/p-1\r)\r]$,
we know that there exists $q_0\in(q_{\wz},\fz)$ such that
$p(s_0 +n+1)>nq_0$.
Then from the Lemma \ref{l2.4}, we conclude that
\begin{equation*}
\begin{aligned}
I_2&\le \int_{\sqrt{n}r\le|x-x_0|<c_1\rz(x_0)}
\left|\cm^0_{N_{p,\,\wz}}(a)(x)\r|^p\wz(x)\,dx\\
&\le C |Q|^{p(s_0+n+1)/n}[\wz(Q)]^{-1}
\int_{\sqrt{n}r\le|x-x_0|<c_1\rz(x_0)}
|x-x_0|^{-p(s_0+n+1)}\wz(x)\,dx\\
&\le C r^{p(s_0+n+1)}[\wz(Q)]^{-1}
\sum_{i=0}^{i_0}\int_{Q_{i+1}\setminus Q_{i}}
|x-x_0|^{-p(s_0+n+1)}\wz(x)\,dx\\
&\le C [\wz(Q)]^{-1}
\sum_{i=0}^{i_0} 2^{-ip(s_0+n+1)} \wz(Q_{i+1})\\
&\le C [\wz(Q)]^{-1}
\sum_{i=0}^{i_0} 2^{-i[p(s_0+n+1)-nq_0]} \wz(Q)
\le C,
\end{aligned}
\end{equation*}
which together with \eqref{3.33} and \eqref{3.34} implies
\eqref{3.32} in the first   case.

Now we consider the case $L_{2}\rz(x_0)\le r\le L_{1}\rz(x_0)$,
let $Q^{\ast}\equiv Q(x_0,c_2r)$, in which $c_2>1$ is an constant independent of  atom $a$.
Thus, from $\supp(\cm^0_{N_{p,\,\wz}}(a))\subset Q^{\ast}$,  H\"older's inequality
and Lemma \ref{l2.4}, we get
\begin{equation*}
\begin{aligned}
\int_{\rn}\left|\cm^0_{N_{p,\,\wz}}(a)(x)\r|^p\wz(x)\,dx
&=\int_{Q^*} \left|\cm^0_{N_{p,\,\wz}}(a)(x)\r|^p\wz(x)\,dx\\
&\le C\|a\|_{L^q_\wz(\rn)}^p[\wz(Q^*)]^{1-p/q}\\
&\le C\|a\|_{L^q_\wz(\rn)}^p[\wz(Q)]^{1-p/q}\\
&\le C.
\end{aligned}
\end{equation*}
This finishes the proof of Theorem 3.3.
\end{proof}

\section{Calder\'on-Zygmund decompositions}\label{s4}

\noi In this section, we establish the Calder\'on-Zygmund
decompositions associated with local grand maximal functions on weighted
Euclidean space $\rn$. We  follow the constructions in \cite {St}, \cite {Bo} and
\cite{BLYZ}.

Let $\wz\in A_{\fz}^{\rz,\,\fz}(\rn)$ and $q_{\wz}$ be as in \eqref{qw}.
For integer $N\ge 2$, let $\cm_N (f)$ and $\cm_N^0 (f)$ be as in \eqref{gnr}.
Throughout  this section, we consider a distribution $f$ so that for all $\lz>0$,
$$\wz\left(\{x\in\rn:\,\cm_N (f)(x)>\lz\}\r)<\fz.$$
For a given $\lz>\inf_{x\in\rn}\cm_{N}(f)(x)$, we set
$$\oz_{\lz}\equiv\{x\in\rn: {\cal M}_N(f)(x)>\lz\}.$$
It is obvious that $\oz_{\lz}$ is a proper open subset of $\rn$.
As in \cite{St}, we give the usual Whitney decomposition of $\oz_{\lz}$.
Thus we can find closed cubes $Q_k$ whose interiors distance from
$\oz_{\lz}^\complement$, with $\oz_{\lz}=\bigcup_k Q_k$ and
$$diam(Q_k)\le 2^{-(6+n)} dist(Q_k,\oz_{\lz}^\complement)\le 4diam(Q_k).$$
In what follows, fix $a\equiv 1+2^{-(11+n)}$  and $b\equiv1+2^{-(10+n)}$,
and if we denote $\bar Q_k=aQ_k, Q^*_k=bQ_k$, we have
$Q_k\subset\bar Q_k\subset Q_k^*$.
Moveover, $\oz_{\lz}=\bigcup_{k}Q^{\ast}_k$, and $\{Q^{\ast}_k\}_k$
have the bounded interior property, namely, every point in
$\oz_{\lz}$ is contained in at most a fixed number of
$\{Q^{\ast}_k\}_k$.

Now we take a function $\xi\in \cdd(\rn)$ such that $0\le\xi\le1$,
$\supp(\xi)\subset aQ(0,1)$ and $\xi\equiv1$ on $Q(0,1)$.
For $x\in\rn$, set $\xi_k (x)\equiv\xi((x-x_k)/l_k)$, where and in what
follows, $x_k$ is the center of the cube $Q_k$ and $l_k$ is its sidelength.
Obviously, by the construction of
$\{Q_k^{\ast}\}_k$ and $\{\xi_k\}_k$, for any $x\in\rn$, we have
$1\le\sum_{k}\xi_k (x)\le M$, where $M$ is a fixed positive integer
independent of $x$.
Let $\eta_k\equiv \xi_k/(\sum_j\xi_j).$
Then $\{\eta_k\}_k$ form a smooth partition of unity for
$\oz_{\lz}$ subordinate to the locally
finite covering $\{Q_k^{\ast}\}_k$ of $\oz_{\lz}$, namely,
$\chi_{\oz_{\lz}}=\sum_k \eta_k$ with each $\eta_k\in \cdd(\rn)$
supported in $\bar Q_k$.

Let $s\in\zz_{+}$ be some fixed integer and $\cp_s (\rn)$ denote the
linear space of polynomials in $n$ variables of degrees no more than
$s$. For each $i\in\nn$ and $P\in\cp_s (\rn)$, set
\begin{equation}\label{4.1}
\|P\|_i\equiv\left[\frac{1}{\int_{\rn}\eta_i
(y)\,dy}\int_{\rn}|P(x)|^2\eta_i (x)\,dx\r]^{1/2}.
\end{equation}
Then it is easy to see that $(\cp_s (\rn),\,\|\cdot\|_i)$ is a
finite dimensional Hilbert space.
Let $f\in\cd'(\rn)$. Since $f$ induces a linear functional on $\cp_s (\rn)$ via
$$P\mapsto\frac{1}{\int_{\rn}\eta_i (y)\,dy}\langle f, P\eta_i\rangle,$$
by the Riesz represent theorem, there exists a
unique polynomial $P_i \in\cp_s (\rn)$ for each $i$ such that
for all $Q\in\cp_s (\rn)$,
$$\langle f, Q\eta_i\rangle=\langle P_i, Q\eta_i\rangle
 =\int_{\rn} P_i(x) Q(x)\eta_i(x)\,dx.$$
For each $i$, define the distribution
$b_i\equiv(f-P_i)\eta_i$ when $l_i\in(0,L_3\rz(x_i))$
(where $L_3=2^{k_0}C_0$, $x_i$ is the center of the cube $Q_i$)
and
$b_i\equiv f\eta_i $ when $l_i\in[L_3\rz(x_i),\fz)$.

We will show that for suitable choices of $s$ and $N$, the series
$\sum_i b_i$ converge in $\cd'(\rn)$, and in this case, we define
$g\equiv f-\sum_i b_i$ in $\cd'(\rn)$. We point out that the represent
$f=g+\sum_i b_i$, where $g$ and $b_i$ are as above, is called a
Calder\'on-Zygmund decomposition of $f$ of degree $s$ and height
$\lz$ associated with $\cm_N (f)$.

The rest of this section consists of a series of lemmas.
In Lemma 4.1 and Lemma 4.2, we give some properties of the smooth partition
of unity $\{\eta_i\}_i$. In Lemmas 4.3 through 4.6, we derive some
estimates for the bad parts $\{b_i\}_i$. Lemma 4.7 and Lemma 4.8
give controls over the good part $g$. Finally, Corollary 4.1 shows
the density of $L_\wz^q(\rn)\bigcap h^p_{\rho,N}(\wz)$ in
$h^p_{\rho,N}(\wz)$, where $q\in (q_\wz,\fz)$.

\begin{lem}\label{l4.1}
There exists a positive constant $C_1$ depending only on $N$,
 such that for all $i$ and $l\le l_i$,
$$\dsup_{|\az|\le N}\sup_{x\in\rn}|\pz^\az\eta_i (lx)|\le C_1.$$
\end{lem}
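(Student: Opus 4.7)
The plan is to reduce the bound to uniform estimates on the derivatives of $\eta_i$ itself, and then exploit the Whitney-type properties of the partition of unity $\{\eta_k\}_k$.

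First I would observe that by the chain rule, for the function $x\mapsto\eta_i(lx)$ we have
$$\pz^\az\big[\eta_i(l\,\cdot)\big](x)=l^{|\az|}(\pz^\az\eta_i)(lx).$$
Hence it suffices to prove the $l$-free estimate $|(\pz^\az\eta_i)(y)|\le C\,l_i^{-|\az|}$ uniformly in $y\in\rn$ and $|\az|\le N$; the conclusion of the lemma then follows from $(l/l_i)^{|\az|}\le 1$, since $l\le l_i$.

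Next I would estimate $\pz^\az\eta_i$ via the representation $\eta_i=\xi_i/S$ with $S\equiv\sum_j\xi_j$. The chain rule applied directly to $\xi_i(y)=\xi((y-x_i)/l_i)$ gives
$$|\pz^\bz\xi_i(y)|\le l_i^{-|\bz|}\|\pz^\bz\xi\|_{L^\fz(\rn)}\quad(|\bz|\le N).$$
For $S$ I would use two standard Whitney facts: at every point $y$ only boundedly many (at most $M$) of the $\xi_j$ are nonzero, and whenever $\supp(\xi_j)\cap\supp(\xi_i)\neq\vn$, the sidelengths satisfy $l_j\sim l_i$. Together these yield $|\pz^\bz S(y)|\le C\,l_i^{-|\bz|}$ on $\supp(\xi_i)$. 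Since $\bar Q_i\subset\oz_\lz$ and $\sum_k\eta_k=\chi_{\oz_\lz}$, we also have $S(y)\ge 1$ on $\supp(\xi_i)$.

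With these in hand, the Leibniz rule gives
$$\pz^\az\eta_i=\sum_{\bz\le\az}\binom{\az}{\bz}(\pz^\bz\xi_i)\,\pz^{\az-\bz}(1/S),$$
and repeated application of the chain rule to $u\mapsto 1/u$ together with the bounds on $\pz^\gz S$ and $S\ge 1$ on $\supp(\xi_i)$ yields $|\pz^\gz(1/S)(y)|\le C\,l_i^{-|\gz|}$ there. Combining these estimates produces $|\pz^\az\eta_i(y)|\le C_N\,l_i^{-|\az|}$ on $\supp(\xi_i)$ (and the function vanishes outside), which is exactly the estimate required above.

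The only non-routine step is controlling $\pz^\gz(1/S)$; this is where one uses essentially both Whitney properties: the lower bound $S\ge 1$ on $\supp(\xi_i)$, and the comparability of sidelengths of neighboring Whitney cubes so that the contributing terms in $\pz^\gz S=\sum_j\pz^\gz\xi_j$ all admit the same scale $l_i^{-|\gz|}$. Once this is in place, the rest is bookkeeping with Leibniz and chain rules, and the constant $C_1$ indeed depends only on $N$, $\xi$, the overlap constant $M$, and the Whitney ratio.
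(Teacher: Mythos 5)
Your proof is correct and follows the standard argument. Since the paper proves Lemma 4.1 by citing Lemma 5.2 of Bownik's memoir (which rests on the same ingredients: chain rule to reduce to the bound $|\partial^\alpha\eta_i|\ls l_i^{-|\alpha|}$, the representation $\eta_i=\xi_i/\sum_j\xi_j$, the bounded-overlap and comparable-sidelength Whitney properties, the lower bound $\sum_j\xi_j\ge1$ on $\supp\xi_i\subset\oz_\lz$, and Leibniz), your route is essentially the same as the one the paper intends.
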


Lemma 4.1 is essentially Lemma 5.2 in \cite{Bo}.

\begin{lem}\label{l4.2}
If  $l_i<L_3\rz(x_i)$, then there exists a constant $C_2>0$
independent of $f\in {\cal D}'(\rn)$, $l_i$ and $\lz>0$ so that
$$\dsup_{y\in\rn}|P_i(y)\eta_i(y)|\le C_2\lz.$$
\end{lem}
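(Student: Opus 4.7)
The plan is to reduce the sup bound to a bound on the intrinsic norm $\|P_i\|_i$ from \eqref{4.1}, and then control $\|P_i\|_i$ by testing against a normalized bump built out of $P_i\eta_i$ itself and applying the local grand maximal function at a Whitney--neighbour point. Testing the defining identity of $P_i$ with $Q=P_i$ gives
$$
\|P_i\|_i^2\int_{\rn}\eta_i(y)\,dy=\langle f, P_i\eta_i\rangle.
$$
Rescaling by $l_i$ turns $\|\cdot\|_i$ into a weighted $L^2$ seminorm on the fixed cube $Q(0,a)$ whose weight, by Lemma \ref{l4.1}, is bounded above and (on $Q(0,1)$) below by constants independent of $i$. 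Since $\cp_s(\rn)$ is finite dimensional, all seminorms on it are equivalent, and hence $\sup_{\bar Q_i}|P_i|\le C\|P_i\|_i$ with $C$ independent of $i$. Together with $0\le \eta_i\le 1$ this reduces the lemma to the estimate $\|P_i\|_i\ls\lz$, and since $\int\eta_i\sim l_i^n$ it is enough to prove
$$
|\langle f,P_i\eta_i\rangle|\ls \|P_i\|_i\, l_i^n\, \lz.
$$

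Next I would pick a point $x^*\in \oz_\lz^\complement$ with $|x^*-x_i|\le C l_i$, using the Whitney estimate $\mathrm{dist}(Q_i,\oz_\lz^\complement)\le 2^{8+n}\sqrt{n}\,l_i$; then $\cm_N(f)(x^*)\le\lz$. The hypothesis $l_i<L_3\rz(x_i)=2^{k_0}C_0\rz(x_i)$ enters here: combined with $|x^*-x_i|\ls l_i\ls \rz(x_i)$, Lemma \ref{l2.1} yields $\rz(x^*)\sim\rz(x_i)\gtrsim l_i$, so there exists an integer $l\in\zz$ with $2^{-l}\sim l_i$ and $2^{-l}<\rz(x^*)$, i.e.\ an admissible scale for $\cm_N$ at the point $x^*$.

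With this in hand I would realise $P_i\eta_i$ as a rescaled admissible bump. Setting
$$
\phi(u)=\frac{P_i(x^*-2^{-l}u)\,\eta_i(x^*-2^{-l}u)}{\kappa\,\|P_i\|_i},
$$
a direct computation using Lemma \ref{l4.1}, the supremum bound $\sup_{\bar Q_i}|P_i|\le C\|P_i\|_i$, and Markov--Bernstein type estimates for polynomials in $\cp_s(\rn)$ (which convert each $\partial^\beta$ on $P_i$ into an $l_i^{-|\beta|}$--loss against a sup on $\bar Q_i$) shows that for a suitable absolute constant $\kappa$ the function $\phi$ lies in $\cdd_{N,R}(\rn)$ for some fixed $R$ depending only on $a,\,k_0,\,C_0$ and $N$. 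By construction
$$
\phi_l\ast f(x^*)=\frac{2^{ln}}{\kappa\,\|P_i\|_i}\,\langle f, P_i\eta_i\rangle,
$$
while the definition of $\cm_N$ together with $2^{-l}<\rz(x^*)$ gives $|\phi_l\ast f(x^*)|\le \cm_N(f)(x^*)\le \lz$. Combining these two identities yields $|\langle f,P_i\eta_i\rangle|\ls \|P_i\|_i\,l_i^n\,\lz$, hence $\|P_i\|_i\ls\lz$ and finally $\sup_{y\in\rn}|P_i(y)\eta_i(y)|\le \sup_{\bar Q_i}|P_i|\le C_2\lz$.

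The main obstacle will be verifying, with constants independent of $i$, $f$ and $\lz$, that $\phi$ really belongs to $\cdd_{N,R}(\rn)$ for a fixed $R$: this step is where the Markov--Bernstein bound on $\cp_s$, the bump estimate for $\eta_i$ in Lemma \ref{l4.1}, and the quantitative comparison $\rz(x^*)\sim \rz(x_i)$ from Lemma \ref{l2.1} (which depends on the precise choice $L_3=2^{k_0}C_0$) all have to conspire. Once $\phi\in\cdd_{N,R}(\rn)$ is in place with uniform constants, the remainder is arithmetic.
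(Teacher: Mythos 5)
Your proposal is correct and follows essentially the same route as the paper: realise the pairing $\langle f, P\eta_i\rangle$ as a convolution $f*\Phi_{l}$ with a normalised $\cdd_{N,R}$-bump at a Whitney-neighbour point in $\oz_\lz^\complement$, then invoke the grand maximal function bound there, with the hypothesis $l_i<L_3\rz(x_i)$ used exactly as you say to guarantee the scale $2^{-l}\sim l_i$ is admissible for $\cm_N$. The only cosmetic difference is that the paper tests against an orthonormal basis $\{\pi_k\}$ of $(\cp_s,\|\cdot\|_i)$ and bounds each coefficient separately, whereas you test against $Q=P_i$ itself and close via the self-pairing identity $\|P_i\|_i^2\int\eta_i=\langle f,P_i\eta_i\rangle$ together with Markov--Bernstein on $\cp_s$; both exploit finite-dimensionality of $\cp_s$ in the same way.
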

\begin{proof}
As in the proof of Lemma 5.3 in \cite{Bo}.
Let $\pi_1,\cdots, \pi_m(m=dim{\cal P}_s)$ be an orthonormal basis of
${\cal P}_s$ with respect to the norm (4.1).
we have
\beq\label{4.2}
P_i=\dsum_{k=1}^m\l(\dfrac {1}{\int\eta_i}
\dint f(x)\pi_k(x)\eta_i(x)\,dx\r)\bar\pi_k,
\eeq
where the
integral is understood as $\langle f,\pi_k\eta_i \rangle $.
Therefore,
\begin{align}
1&=\frac{1}{\int\eta_i}
\int_{\bar Q_i} |\pi_k(x)|^2\eta_i(x)\,dx
\ge \dfrac{2^{-n}}{|Q_i|} \dint_{\bar Q_i}|\pi_k(x)|^2\eta_i(x)\,dx \nonumber \\
&\ge \dfrac {2^{-n}}{| Q_i|}
\dint_{Q_i}|\pi_k(x)|^2\,dx=2^{-n}\dint_{Q^0}|\wt\pi_k(x)|^2\,dx,
\end{align}
where $\wt\pi_k(x)=\pi_k(x_i+l_i x)$ and $Q^0$ denotes the cube of
side length $1$ centered at the origin.

Since ${\cal P}_s$ is finite dimensional, all norms on ${\cal P}_s$
are equivalent, then there exists $A_1>0$ such that for all $P \in {\cal P}_s$
$$\dsup_{|\az|\le s}\dsup_{z\in bQ^0}|\pz^\az P(z)|\le
A_1\l(\dint_{Q^0}|P(z)|^2\,dz\r)^{1/2}.$$
From this and (4.3), for $k=1,\cdots, m$, we have
\beq \label{4.4}
\dsup_{|\az|\le s}\dsup_{z\in bQ^0}|\pz^\az \wt\pi_k(z)|\le A_1 2^{n/2}.
\eeq
If $z\in 2^{8+n}nQ_i\cap\oz^{\complement}$, by Lemma \ref{l2.1}, we have
$\rz(x_i)\le C_0(1+2^{8+n}n^2L_3)^{k_0}\rz(z)$, then we let
$\widetilde{L}\equiv1/C_0(1+2^{8+n}n^2L_3)^{k_0}L_3$.
For $k=1,\cdots, m$, we define
$$\Phi_k(y)=\dfrac{2^{-k_in}}{\int\eta_i}\pi_k(z-2^{-k_i}y)\eta_i(z-2^{-k_i}y),$$
where $z\in 2^{8+n}nQ_i\cap\oz^{\complement}$ and $2^{-k_i}\le \widetilde{L} l_i< 2^{-k_i+1} $.
It is easy to see that $\supp \Phi_k\subset B(0, R_1)$
where $R_1\equiv2^{9+n}n^2/\widetilde{L}$,
and $\|\Phi_k\|_{{\cal D}_N}\le A_2$ by Lemma \ref{4.1}.

Note that
$$\dfrac 1{\int\eta_i}\dint f(x)\pi_k(x)\eta_i(x)\,dx=(f*(\Phi_k)_{k_i})(z),$$
since $2^{-k_i}\le \widetilde{L} l_i<\widetilde{L} L_3\rz(x_i)\le \rz(z)$,  then we have
$$\l|\dfrac 1{\int\eta_i}\dint
f(x)\pi_k(x)\eta_i(x)\,dx\r|\le {\cal M}_Nf(z)\|\Phi_k\|_{{\cal D}_N}\le A_2\lz. $$
By (4.2), (4.4) and above estimate, we have
$$\dsup_{z\in Q^*_i}| P_i(z)|\le m 2^{n/2} A_1A_2\lz.$$
Thus,
$$\dsup_{z\in\rn}| P_i(z)\eta_i(z)|\le C_2\lz.$$
The proof is complete.
\end{proof}

By the same method, we can get the following lemma as the Lemma 4.3 in \cite{Ta1}, and
we omit the details here.

\begin{lem}\label{l4.3}
There exists a constant $C_3>0$ such that
\beq \label{4.5}
{\cal M}_N^0 b_i(x)\le C_3 {\cal M}_Nf(x)\quad {\rm for}\ x\in Q_i^*.
\eeq
\end{lem}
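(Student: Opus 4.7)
The plan is to fix $x \in Q_i^*$ and bound $|(\varphi_l \ast b_i)(x)|$ uniformly over $\varphi \in \cdd^0_N(\rn)$ and $l \in \zz$ with $2^{-l} < \rz(x)$. Decomposing $b_i$ into the piece $f\eta_i$ and (when $l_i < L_3\rz(x_i)$) the polynomial correction $-P_i\eta_i$, I would estimate each summand in turn and then take the supremum over $\varphi$ and $l$.

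For the piece $(\varphi_l \ast (f\eta_i))(x)$, the aim is to realize it as $c_0\,(\Psi_k \ast f)(x)$ for some $\Psi \in \cdd_{N,R_1}(\rn)$ with $\|\Psi\|_{\cdd_N(\rn)} \le 1$ and some $k$ with $2^{-k} < \rz(x)$. When $2^{-l} \le l_i$, the natural choice is $k = l$ and $\Psi(u) \equiv c_0^{-1}\varphi(u)\eta_i(x - 2^{-l}u)$; Lemma \ref{l4.1} and the Leibniz rule bound the derivatives of $\Psi$ up to order $N$ uniformly, since the factor $(2^{-l}/l_i)^{|\az|}$ that arises from each chain-rule differentiation of $\eta_i$ is then $\le 1$. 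When $2^{-l} > l_i$ (which forces us into the regime $l_i < L_3\rz(x_i)$), I would instead work at the coarser scale $2^{-k} \sim l_i$, defining $\Psi(u) \equiv c_0^{-1}\,2^{(l-k)n}\varphi(2^{l-k}u)\,\eta_i(x - 2^{-k}u)$; a parallel calculation, together with $|x - x_i| \le \sqrt n\, l_i$, shows that $\supp \Psi$ sits in a fixed ball $B(0, R_1)$ and $\|\Psi\|_{\cdd_N(\rn)} \le 1$. The admissibility condition $2^{-k} < \rz(x)$ follows from $l_i < L_3\rz(x_i)$ combined with the comparability $\rz(x) \sim \rz(x_i)$ supplied by Lemma \ref{l2.1}. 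In either case one concludes $|(\varphi_l \ast (f\eta_i))(x)| \le c_0\,\cm_N f(x)$.

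For the polynomial correction $\varphi_l \ast (P_i\eta_i)(x)$, Lemma \ref{l4.2} gives $\|P_i\eta_i\|_{L^\fz(\rn)} \le C_2\lz$, whence $|(\varphi_l \ast (P_i\eta_i))(x)| \le C_2\lz\,\|\varphi\|_{L^1(\rn)} \ls \lz$. To absorb $\lz$ into $\cm_N f(x)$ I would invoke the Whitney property of the cube decomposition: since $\mathrm{dist}(Q_i, \oz_\lz^\complement) \ge 2^{6+n}\,\mathrm{diam}(Q_i)$ while $Q_i^* = bQ_i$ enlarges $Q_i$ only by the factor $b = 1 + 2^{-(10+n)}$, it follows that $Q_i^* \subset \oz_\lz$, so $\cm_N f(x) > \lz$ for every $x \in Q_i^*$. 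Combining the two estimates and taking the supremum over $\varphi$ and $l$ yields \eqref{4.5}.

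The chief technical obstacle is tracking the support radius and derivative bounds of $\Psi$ in the subcase $2^{-l} > l_i$, since the naive choice blows up like $(2^{-l}/l_i)^{|\az|}$ under differentiation; the remedy is exactly the switch to the intermediate scale $2^{-k} \sim l_i$ described above, whose factors $2^{(l-k)n}$ and $2^{l-k} < 1$ compensate the unbounded derivatives of $\eta_i$. This case analysis mirrors the argument in the proof of Lemma 4.3 of Tang \cite{Ta1}, which is invoked in the excerpt.
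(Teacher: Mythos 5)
Your decomposition of $b_i$ into $f\eta_i$ plus (for small cubes) $-P_i\eta_i$, the re-expression of $\varphi_l\ast(f\eta_i)(x)$ as $c_0\,\Psi_k\ast f(x)$ with $\Psi$ built from $\varphi$ and $\eta_i$, and the use of Lemma \ref{l4.2} together with $Q_i^*\subset\oz_\lz$ to handle the polynomial piece are exactly the ingredients of the argument the paper delegates to Tang \cite{Ta1}, so the overall approach is the right one and the estimates you sketch go through.

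There is, however, one incorrect intermediate claim. You assert that $2^{-l}>l_i$ \emph{forces} $l_i<L_3\rz(x_i)$, and you then route the admissibility of the new scale ($2^{-k}<\rz(x)$) through the comparability $\rz(x)\sim\rz(x_i)$ that would follow. This implication is not valid: for a Whitney cube with $l_i\ge L_3\rz(x_i)$, the upper bound from Lemma \ref{l2.1} only yields $\rz(x)\lesssim (C_0\sqrt{n})^{k_0/(k_0+1)}\,l_i$ for $x\in Q_i^*$, and since $C_0\ge 1$ and $n\ge 3$ that constant exceeds $1$, so $\rz(x)>l_i$ (and hence $2^{-l}>l_i$ for some admissible $l$) can indeed occur even when $l_i\ge L_3\rz(x_i)$. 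Fortunately the claim is also superfluous: once you pick $k$ with $2^{-k}\le l_i<2^{-k+1}$, the admissibility $2^{-k}<\rz(x)$ is immediate from the chain $2^{-k}\le l_i<2^{-l}<\rz(x)$, with no appeal to $L_3\rz(x_i)$ or to $\rz(x)\sim\rz(x_i)$. With that shortcut substituted in, the rest of your argument — the support bound for $\Psi$ coming from $\supp\eta_i\subset\bar Q_i$ and $|x-x_i|\lesssim l_i$, the derivative bounds from Lemma \ref{l4.1} with the compensating factors $2^{(l-k)n}$, $2^{(l-k)|\bz|}$, $(2^{-k}/l_i)^{|\az-\bz|}\lesssim 1$, and the absorption of the $O(\lz)$ polynomial term via $\cm_N f>\lz$ on $Q_i^*$ — is sound.
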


\begin{lem}\label{l4.4}
Suppose $Q\subset \rn$ is bounded, convex, and $0\in Q$, and $N$
is a positive integer. Then there is a constant $C$ depending only
on $Q$ and $N$ such that for every $\phi\in{\cal D}(\rn)$ and
every integer $s,\ 0\le s<N$ we have
$$\dsup_{z\in Q}\dsup_{|\az|\le N}|\pz^\az R_y(z)|\le C
\dsup_{z\in y+Q}\dsup_{s+1\le |\az|\le N}|\pz^\az \phi(z)|,$$
where
$R_y$ is the remainder of the Taylor expansion of $\phi$ of order
$s$ at the point $y\in\rn$.
\end{lem}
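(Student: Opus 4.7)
The plan is to use the integral form of the Taylor remainder and split according to whether $|\az|$ is at most $s$ or strictly greater than $s$. Interpreting $R_y$ in the natural way, we have
\[
R_y(z)=\sum_{|\beta|=s+1}\frac{s+1}{\beta!}\,z^{\beta}\int_0^1 (1-t)^{s}(\pz^{\beta}\phi)(y+tz)\,dt,
\]
so for $z\in Q$ and $t\in[0,1]$, the point $y+tz$ lies in $y+Q$ by convexity (using $0\in Q$ together with $z\in Q$, one has $tz=tz+(1-t)\cdot 0\in Q$). Thus every evaluation of a derivative of $\phi$ produced by this formula or by its differentiation will take place inside $y+Q$, which is exactly the region controlling the right-hand side of the inequality.

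First I would handle the easy range $s+1\le|\az|\le N$. Since the Taylor polynomial
\[
P(z)\equiv \sum_{|\gz|\le s}\frac{\pz^{\gz}\phi(y)}{\gz!}\,z^{\gz}
\]
has total degree $s$, any derivative of order greater than $s$ annihilates it, so $\pz^{\az}R_y(z)=(\pz^{\az}\phi)(y+z)$, and the pointwise bound
\[
|\pz^{\az}R_y(z)|\le \sup_{w\in y+Q}|\pz^{\az}\phi(w)|
\le \sup_{w\in y+Q}\sup_{s+1\le|\gz|\le N}|\pz^{\gz}\phi(w)|
\]
is immediate.

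Next I would treat the harder range $|\az|\le s$. Differentiating the integral formula above $\az$ times (or equivalently applying Taylor's theorem to $(\pz^{\az}\phi)(y+\cdot)$ with order $s-|\az|$) gives
\[
\pz^{\az}R_y(z)=\sum_{|\beta|=s-|\az|+1}\frac{s-|\az|+1}{\beta!}\,z^{\beta}\int_0^1(1-t)^{s-|\az|}(\pz^{\az+\beta}\phi)(y+tz)\,dt.
\]
For $z\in Q$ and $t\in[0,1]$, $y+tz\in y+Q$, while $|z^{\beta}|\le (\mathrm{diam}\, Q)^{|\beta|}$ since $Q$ is bounded and $0\in Q$. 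Every multi-index $\az+\beta$ appearing has $|\az+\beta|=s+1$, which lies in the range $[s+1,N]$ since $s<N$. Consequently
\[
|\pz^{\az}R_y(z)|\le C(Q,N)\sup_{w\in y+Q}\sup_{s+1\le|\gz|\le N}|\pz^{\gz}\phi(w)|,
\]
with a constant that depends only on $Q$ and $N$.

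Combining the two cases yields the desired estimate uniformly in $|\az|\le N$, with $C=C(Q,N)$. The only mildly delicate point is the second case: one must justify that $\pz^{\az}$ applied to the integral-remainder formula again produces an integral-remainder expression of one lower order (so that $|\az+\beta|=s+1$), either by a direct differentiation under the integral sign and bookkeeping of combinatorial factors, or equivalently by invoking Taylor's theorem for $\pz^{\az}\phi$; this is the step where attention to the algebra is needed, but it involves no analytic difficulty.
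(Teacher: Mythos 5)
Your proof is correct. Note that the paper does not actually supply a proof of this lemma: immediately after the statement it simply records ``Lemma 4.4 is Lemma 5.5 in \cite{Bo}'' and moves on, so there is no argument in the text to compare against. Your approach is the standard one for this fact and is surely what the cited reference does as well. The two halves of your case split are both sound: when $s+1\le|\az|\le N$ the derivative kills the degree-$s$ Taylor polynomial and one has $\pz^\az R_y(z)=(\pz^\az\phi)(y+z)$, with $y+z\in y+Q$; when $|\az|\le s$, the cleanest way to justify your formula is the observation you yourself mention in passing, namely that $\pz^\az R_y$ equals the order-$(s-|\az|)$ Taylor remainder at $0$ of the function $z\mapsto(\pz^\az\phi)(y+z)$ (this follows by directly differentiating $R_y(z)=\phi(y+z)-\sum_{|\gz|\le s}\frac{\pz^\gz\phi(y)}{\gz!}z^\gz$ and re-indexing the sum), after which the integral form of the remainder gives exactly the displayed expression with $|\az+\bz|=s+1\le N$, evaluated only at points $y+tz\in y+Q$ by convexity. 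The bound $|z^\bz|\le(\mathrm{diam}\,Q)^{|\bz|}$ and the finite number of terms give a constant depending only on $Q$ and $N$, as required.
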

\noi Lemma 4.4 is Lemma 5.5 in \cite{Bo}.

\begin{lem}\label{l4.5}
Suppose $0\le s<N$. Then there exist positive constants $C_4, C_5$
so that for $i\in\nn$,
\beq \label{4.6}
{\cal M}_N^0(b_i)(x)\le
C\dfrac{\lambda l_i^{n+s+1}}{(l_i+|x-x_i|)^{n+s+1}}\chi_{\{|x-x_i|<C_4 \rz(x)\}}(x)\quad
{\rm  if}\ x\not\in Q_i^*.
\eeq
Moreover,
$${\cal M}_N^0(b_i)(x)=0,\ {\rm if}\  x\not\in Q_i^*\ {\rm and}\
l_i\ge C_5\rz(x).$$
\end{lem}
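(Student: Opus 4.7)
My plan is to address the two assertions of Lemma \ref{l4.5} separately: the ``moreover'' statement is a pure support argument that pins down $C_5$, while the pointwise bound is split into two subcases according to the two definitions of $b_i$.

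For the ``moreover'' claim, since $\vz\in\cdd_N^0(\rn)$ is supported in $B(0,1)$, the convolution $\vz_l\ast b_i(x)$ vanishes unless $B(x,2^{-l})\cap\bar Q_i\ne\vn$. For $x\notin Q_i^\ast=bQ_i$ with $\bar Q_i=aQ_i$ one has $\mathrm{dist}(x,\bar Q_i)\ge(b-a)l_i/2=2^{-(12+n)}l_i$, so any $l$ with nonvanishing convolution satisfies $2^{-l}\ge 2^{-(12+n)}l_i$; combined with $2^{-l}<\rz(x)$ this forces $l_i<2^{12+n}\rz(x)$, and $C_5:=2^{12+n}$ works. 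A refinement of the same computation gives $2^{-l}\gtrsim l_i+|x-x_i|$, which together with $2^{-l}<\rz(x)$ yields the indicator factor $\chi_{\{|x-x_i|<C_4\rz(x)\}}$.

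For the pointwise decay, the main case is $l_i<L_3\rz(x_i)$, where $b_i=(f-P_i)\eta_i$ enjoys $\langle b_i,T\rangle=0$ for every $T\in\cp_s(\rn)$. Subtracting from $\vz_l(x-\cdot)$ its Taylor polynomial $T_{x_i}$ at $x_i$ of order $s$ gives $\vz_l\ast b_i(x)=\langle b_i,\Psi\rangle$ with $\Psi(y):=[\vz_l(x-y)-T_{x_i}(y)]\eta_i(y)$. Lemma \ref{l4.4} applied to the Taylor remainder, together with the Leibniz rule and the cutoff bounds $\|\pz^\gz\eta_i\|_\fz\ls l_i^{-|\gz|}$, yields $\|\pz^\az\Psi\|_\fz\ls l_i^{s+1-|\az|}\,2^{l(n+s+1)}$ for all $|\az|\le N$. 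Rescaling at scale $\sim l_i$ around a Whitney-complement point $z\in 2^{8+n}nQ_i\cap\oz_\lz^\complement$ (with $\cm_N(f)(z)\le\lz$, supplied by the Whitney construction) realizes $\Psi=Cl_i^{n+s+1}2^{l(n+s+1)}\tilde\psi_j(z-\cdot)$ for some $\tilde\psi\in\cdd_N(\rn)$ and $2^{-j}\sim l_i$; Lemma \ref{l2.1} ensures $\rz(z)\sim\rz(x_i)$, so the scale condition $2^{-j}<\rz(z)$ holds after adjusting constants. Unfolding $b_i=(f-P_i)\eta_i$, the $f$-part of $\tilde\psi_j\ast b_i(z)$ is controlled by $\cm_N(f)(z)\le\lz$ via the same rescale-to-test-function procedure applied to $\eta_i\tilde\psi_j(z-\cdot)$, while the $P_i$-part is bounded by $\|P_i\eta_i\|_\fz\|\tilde\psi\|_{L^1(\rn)}\ls\lz$ using Lemma \ref{l4.2}. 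Combining with $2^{l(n+s+1)}\ls(l_i+|x-x_i|)^{-(n+s+1)}$ from the support step completes this case.

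In the remaining case $l_i\ge L_3\rz(x_i)$, $b_i=f\eta_i$ has no vanishing moments. Lemma \ref{l2.1} together with $|x-x_i|\ls\rz(x)$ (from the support step) yields $|x-x_i|\ls\rz(x_i)\le l_i/L_3$, so $l_i+|x-x_i|\sim l_i$. The bound then reduces to $|\vz_l\ast b_i(x)|\ls\lz$, proved by the same rescale-to-test-function argument applied directly to $\eta_i\vz_l(x-\cdot)$ at the Whitney point $z$. The main obstacle throughout is the constant bookkeeping: each rescaling must produce a test function in $\cdd_N(\rn)$ (supported in $B(0,R_3)$ with bounded derivatives up to order $N$) paired at a scale $2^{-j}<\rz(z)$, requiring repeated use of Lemma \ref{l2.1} to transfer $\rz$-estimates between $x$, $x_i$ and $z$, together with a suitable choice of $R_3$.
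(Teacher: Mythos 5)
Your proposal follows the paper's proof essentially step for step: the support argument that produces $C_4$, $C_5$ and the indicator; the case split by $l_i$ versus $L_3\rz(x_i)$; Taylor subtraction and Lemma \ref{l4.4} exploiting the moment condition when $l_i<L_3\rz(x_i)$; rescaling to a normalized $\cdd_N$ test function paired with $f$ at a Whitney-complement point $w\in 2^{8+n}nQ_i\cap\oz^\complement$ with $\cm_N(f)(w)\le\lz$; Lemma \ref{l4.2} for the $P_i$ contribution; and Lemma \ref{l2.1} to transfer $\rz$-estimates among $x$, $x_i$ and $w$. The one slip is the identity $\vz_l\ast b_i(x)=\langle b_i,\Psi\rangle$ with $\Psi=[\vz_l(x-\cdot)-T_{x_i}]\eta_i$, which double-counts the cutoff since $b_i$ already carries $\eta_i$; it should read $\langle f-P_i,\Psi\rangle$, and with that correction the argument is the paper's.
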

\begin{proof}
Take $\vz\in {\cal D}^0_N(\rn)$. Recall that $\eta_i$ is
supported in the cube $\bar Q_i$, and we have taken $\bar Q_i$ to
be strictly contained in $Q_i^*$.
Thus if $x\not\in Q_i^*$ and
$\eta_i(y)\not=0$, then there exists a positive constant $C_4$
such that  $|x-y|\le|x-x_i|\le C_4|x-y|$. On the other hand, the support property
of $\vz$ requires that $\rz(x)>2^{-l}\ge |x-y|\ge 2^{-11-n}l_i$. Hence,
$|x-x_i|\le C_4 2^{-l}$, $l_i< 2^{11+n}\rz(x)\equiv C_5\rz(x)$ and $l_i<C_5 2^{-l}$.
Pick some $w\in (2^{8+n}n Q_i)\cap\oz^{\complement}$, and we discuss the following two cases.

\noi Case I. If $L_3 \rz(x_i)\le l_i< C_5 2^{-l}<C_5 \rz(x)$, then  	
according to the Lemma  \ref{l2.1} we have
$l_i<C_5C_0(1+C_4)^{k_0}\rz(x_i)$ and
$$\rz(\wz)\ge C_0^{-1}\l(1+\frac{|\wz-x_i|}{\rz(x_i)}\r)^{-k_0}\rz(x_i)
\ge C_0^{-1}\l(1+2^{8+n}n\sqrt{n}L_2\r)^{-k_0}\rz(x_i),$$
 therefore,
$l_i<a_1\rz(w)$, where  $a_1>1$ is a constant.

Now we define $\bar l_i=l_i/a_1< \rz(w)$ and take $k_i\in\zz$ such that
$2^{-k_i}\le \bar l_i<2^{-k_i+1}$, then for $\vz\in {\cal D}^0_N(\rn)$,
$\phi(z)\equiv\vz(2^{-k_i}z/2^{-l})$
and $2^{-l}<\rz(x)$ we have
\beqs
\begin{aligned}
(b_i*\vz_l)(x)&=2^{ln}\dint b_i(z)\vz(2^l(x-z))dz\\
&=2^{ln}\dint b_i(z)\phi(2^{k_i}(x-z))dz\\
&=2^{ln}\dint b_i(z)\phi_{2^{k_i}(x-w)}(2^{k_i}(w-z))dz\\
&=\dfrac{2^{ln}}{2^{k_in}}(f*\Phi_{k_i})(w),
\end{aligned}
\eeqs
where
$$\Phi(z)\equiv\phi_{2^{k_i}(x-w)}(z)\eta_i(w-2^{-k_i}z), \ \phi_{2^{k_i}(x-w)}(z)\equiv\phi(z+2^{k_i}(x-w)).$$
Obviously, $\supp\Phi\subset B(0,R_2)$, where $R_2\equiv2^{9+n}n^2a_1$.
Notice that $l_i<C_52^{-l}$ and
$|x-x_i|\le C_42^{-l}$, we obtain
\beq \label{4.7}
|(b_i*\vz_l)(x)|\le C\dfrac{2^{ln}}{2^{k_in}}{\cal M}_Nf(w)
\le C\lz \dfrac{2^{ln}}{2^{k_in}}
\le C\lz\dfrac{l_i^{n+s+1}}{(l_i+|x-x_i|)^{n+s+1}}.
\eeq
\noi Case II. If $ l_i<L_3\rz(x_i)$ and $\vz\in {\cal D}^0_N(\rn)$,
taking $j_i\in\zz$ such that
$2^{-j_i}\le  l_i<2^{-j_i+1}$,
then we define
 $\phi(z)=\vz(2^{-j_i}z/2^{-l})$ and consider the Taylor expansion of $\phi$ of order $s$
at the point $y=2^{j_i}(x-w)$:
$$\phi(y+z)=\dsum_{|\az|\le
s}\dfrac{\pz^\az\phi(y)}{\az!}z^\az+R_y(z),$$ where $R_y$ denotes
the remainder.
Thus we get
\begin{align} \label{4.8}
(b_i*\vz_l)(x)&=2^{ln}\dint b_i(z)\vz(2^{ln}(x-z))dz \nonumber\\
 &=2^{ln}\dint b_i(z)\phi(2^{j_in}(x-z))dz  \nonumber\\
&=2^{ln}\dint b_i(z)R_{2^{j_i}(x-w)}(2^{j_i}(w-z))dz  \nonumber\\
&=\dfrac{2^{ln}}{2^{j_in}}(f*\Phi_{j_i})(w)
-2^{ln}\dint P_i(z)\eta_i(z)R_{2^{j_i}(x-w)}(2^{j_i}(w-z))dz,
\end{align}
where
$$\Phi(z)\equiv R_{2^{j_i}(x-w)}(z)\eta_i(\wz-2^{-j_i}z).$$
Obviously, $\supp\Phi\subset B_n\equiv B(0,R_2)$. Applying Lemma \ref{l4.4} to
$\phi(z)=\vz(2^{-j_i}z/2^{-l}),\ y=2^{j_i}(x-w)$ and $B_n$, we have
\beqs
\begin{aligned}
\dsup_{z\in B_n}\dsup_{|\az|\le N}|\pz^\az R_y(z)|
&\le C\dsup_{z\in y+B_n}\dsup_{s+1\le |\az|\le N}|\pz^\az \phi(z)|\\
&\le C\dsup_{z\in y+B_n}\l(\dfrac {2^{-j_i}}{2^{-l}}\r)^{s+1}
\dsup_{s+1\le |\az|\le N}|\pz^\az \vz(2^{-j_i}z/2^{-l})|\\
&\le C\l(\dfrac {2^{-j_i}}{2^{-l}}\r)^{s+1}.
\end{aligned}
\eeqs
Notice that $l_i<C_5 2^{-l}$ and
$|x-x_i|\le C_4 2^{-l}$,  therefore by (4.8), we obtain
\begin{align}
(b_i*\vz_l)(x)&\le\dfrac{2^{ln}}{2^{j_in}}|(f*\Phi_{j_i})(w)|+2^{ln}
\dint |P_i(z)\eta_i(z)R_{2^{j_i}(x-w)}(2^{j_i}(w-z))|dz \nonumber\\
&\le C\dfrac{2^{ln}}{2^{j_in}}\l({\cal M}_Nf(w)\|\Phi\|_{{\cal D}_N}+
\lz\dsup_{z\in B_n}\dsup_{|\az|\le N}|\pz^\az R_y(z)|\r) \nonumber\\
&\le C\lz\dfrac{l_i^{n+s+1}}{(l_i+|x-x_i|)^{n+s+1}}.
\end{align}
By combining both cases, we obtain (4.6).
\end{proof}

\begin{lem}\label{l4.6}
Let $\wz\in A_\fz^{\rz,\,\fz}(\rn)$ and $q_\wz$ be as in \eqref{qw}. If
$p\in(0,1]$, $s\ge [n(q_\wz/p-1)]$, $N>s$ and $N\ge N_{p,\,\wz}$,
where $N_{p,\,\wz}$ is as in (3.29),
then there exists a positive
constant $C_6$ such that for all $f\in h^{p}_{\rho,N}(\wz)$,
$\lz>\dinf_{x\in\rn}{\cal M}_Nf(x)$ and $i\in\nn$,
\beq \label{4.10}
\dint_{\rn}\l({\cal M}^0_N(b_i)(x)\r)^p\wz(x)dx
\le C_6\dint_{Q_i^*} \l({\cal M}_N(f)(x)\r)^p\wz(x)dx.
\eeq
Moreover the series
$\sum_i b_i$ converges in $h^p_{\rho,N}(\wz)$ and
\beq \label{4.11}
\dint_{\rn}\l({\cal M}^0_N\l(\sum_i b_i\r)(x)\r)^p\wz(x)dx
\le  C_6\dint_{\oz} \l({\cal M}_N(f)(x)\r)^p\wz(x)dx.
\eeq
\end{lem}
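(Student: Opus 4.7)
The plan is to handle \eqref{4.10} by splitting $\int_{\rn}$ into the local piece over $Q_i^*$ and the far piece over its complement, and then to deduce \eqref{4.11} together with the convergence of $\sum_i b_i$ from sublinearity and the bounded interior property of the covering $\{Q_i^*\}$. For the local piece I invoke Lemma \ref{l4.3}, which gives $\cm_N^0 b_i(x)\le C_3\cm_N f(x)$ on $Q_i^*$, so $\int_{Q_i^*}(\cm_N^0 b_i)^p\wz\ls\int_{Q_i^*}(\cm_N f)^p\wz$ at once. For the far piece I feed in Lemma \ref{l4.5}, which furnishes the pointwise decay
$$\cm_N^0 b_i(x)\ls\frac{\lz\,l_i^{n+s+1}}{(l_i+|x-x_i|)^{n+s+1}}\chi_{\{|x-x_i|<C_4\rz(x)\}}(x)$$
together with the vanishing of $\cm_N^0 b_i(x)$ when $x\notin Q_i^*$ and $l_i\ge C_5\rz(x)$. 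By Lemma \ref{l2.1} these conditions confine the nontrivial part of the support to a ball $B(x_i,K\rz(x_i))$ of radius comparable to $\rz(x_i)$, which I then slice into dyadic shells $2^{k+1}Q_i^*\setminus 2^k Q_i^*$ for $0\le k\le k_1$, where $k_1$ is chosen so $2^{k_1}l_i\sim\rz(x_i)$; on the $k$-th shell the pointwise bound is $\ls\lz\,2^{-k(n+s+1)}$, so its integral against $\wz$ is $\ls\lz^p\,2^{-kp(n+s+1)}\wz(2^{k+1}Q_i^*)$.

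The delicate step is summing the shells. By Lemma \ref{l2.4}(iii), $\wz\in A_q^{\rz,\fz}(\rn)$ for some $q\in(q_\wz,\fz)$ arbitrarily close to $q_\wz$; since $s\ge[n(q_\wz/p-1)]$ I can arrange $p(s+n+1)>nq$. Lemma \ref{l2.4}(vi) then yields $\wz(2^{k+1}Q_i^*)\ls 2^{knq}\wz(Q_i^*)$ for $k\le k_1$, the $\Psi_\tz$-factor staying bounded there because $2^kl_i\ls\rz(x_i)$, so the geometric series $\sum_k 2^{-k[p(s+n+1)-nq]}$ converges and the far piece is bounded by $C\lz^p\wz(Q_i^*)$. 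Since $Q_i\subset\oz_\lz$ forces $\cm_N f>\lz$ on $Q_i$, and $\wz(Q_i^*)\sim\wz(Q_i)$ by Lemma \ref{l2.4}(vi) (as $Q_i^*$ is only a mild dilate of $Q_i$), I infer $\lz^p\wz(Q_i^*)\ls\int_{Q_i^*}(\cm_N f)^p\wz$. Combining with the local estimate yields \eqref{4.10}. The main obstacle is precisely this sharp coordination of $q$, $p$ and $s$: without the tightness $s\ge[n(q_\wz/p-1)]$ and Lemma \ref{l2.4}(iii), the $\wz$-measure growth of the dyadic shells would overwhelm the algebraic decay from Lemma \ref{l4.5}.

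For \eqref{4.11} and the convergence of $\sum_i b_i$, sublinearity of $\cm_N^0$ together with the subadditivity $(\sum t_i)^p\le\sum t_i^p$, valid because $p\le 1$, gives
$$\int_{\rn}\Bigl(\cm_N^0\Bigl(\sum_i b_i\Bigr)(x)\Bigr)^p\wz(x)\,dx\le\sum_i\int_{\rn}(\cm_N^0 b_i)^p\wz\ls\sum_i\int_{Q_i^*}(\cm_N f)^p\wz\ls\int_{\oz_\lz}(\cm_N f)^p\wz,$$
where the last step uses the bounded interior property $\sum_i\chi_{Q_i^*}\le M$ together with $\bigcup_i Q_i^*=\oz_\lz$. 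Applied to tails $\sum_{i\ge j}b_i$, the same chain shows $\|\sum_{i\ge j}b_i\|_{h^p_{\rho,N}(\wz)}^p\ls\int_{\bigcup_{i\ge j}Q_i^*}(\cm_N f)^p\wz\to 0$ as $j\to\fz$, because $\cm_N f\in L^p_\wz(\rn)$ and the sets $\bigcup_{i\ge j}Q_i^*$ shrink; by completeness of $h^p_{\rho,N}(\wz)$ (Proposition \ref{p3.3}), $\sum_i b_i$ converges in $h^p_{\rho,N}(\wz)$ to the bound claimed in \eqref{4.11}.
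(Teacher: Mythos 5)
Your proposal is correct and follows essentially the same route as the paper: split into the local piece over $Q_i^*$ (controlled via Lemma~\ref{l4.3}) and the far piece confined to $B(x_i,\,K\rz(x_i))$ by Lemma~\ref{l4.5}, then sum the dyadic shells using $\wz\in A_q^{\rz,\fz}$ with $q$ close enough to $q_\wz$ that $p(s+n+1)>nq$, and finally deduce \eqref{4.11} and the convergence of $\sum_i b_i$ from $p$-subadditivity, the bounded overlap of $\{Q_i^*\}$, and completeness of $h^p_{\rho,N}(\wz)$. One small caution: your step ``$\wz(Q_i^*)\sim\wz(Q_i)$ since $Q_i^*$ is a mild dilate'' is not uniform in $i$ when $l_i\gg\rz(x_i)$ (the $\Psi_\tz$-factor in Lemma~\ref{l2.4}(vi) then grows), though in that regime the far piece vanishes so no harm results; the cleaner observation, which the paper uses, is that $Q_i^*\subset\oz_\lz$ already, whence $\cm_N f>\lz$ on all of $Q_i^*$ and $\lz^p\wz(Q_i^*)\le\int_{Q_i^*}(\cm_N f)^p\wz$ with no dilation step needed.
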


\begin{proof}
By the proof of Lemma \ref{l4.5}, we know $|x-x_i|<C_4\rz(x)$, $l_i<C_5\rz(x)$
and $\rz(x)\le C_0(1+C_4)^{k_0}\rz(x_i)$, thus $Q_i^*\subset a_2\rz(x_i)Q_i^0$,
where $a_2\equiv2C_0(1+C_4)^{k_0}\max\{C_4,\,C_5\}$ and $Q_i^0\equiv Q(x_i,1)$.
Furthermore, we have
\begin{align} \label{4.12}
\dint_{\rn}({\cal M}^0_N(b_i)(x))^p\wz(x)dx
&\le \dint_{Q^*_i}({\cal M}^0_N(b_i)(x))^p\wz(x)dx   \nonumber\\
&\qquad+\dint_{a_2\rz(x_i)Q^0_i\setminus Q^*_i}({\cal M}^0_N(b_i)(x))^p\wz(x)dx.
\end{align}
Notice that  $s\ge[n(q_\wz/p-1)]$ implies
$2^{-n(q_\wz+\eta)}2^{(s+n+1)p}>1$ for sufficient small $\eta>0$.
By using Lemma 2.1 (iii) with $\wz\in A_{q_\wz+\eta}^{\rz,\,\fz}(\rn)$, Lemma
\ref{l4.5} and  the fact that ${\cal M}_N(f)(x)>\lz$ for all $x\in
Q^*_i$, we have
\beq
\begin{aligned} \label{4.13}
\dint_{a_2\rz(x_i)Q^0_i\setminus Q^*_i}({\cal M}^0_N(b_i)(x))^p\wz(x)dx
&\le \dsum_{k=0}^{k_0}\dint_{2^kQ_i^*\setminus 2^{k-1}Q_i^*}
({\cal M}^0_N(b_i)(x))^p\wz(x)dx\\
&\le \lz^p\wz(Q^*_i)\dsum_{k=0}^{k_0}[2^{-n(q_\wz+\eta)+(s+n+1)p}]^{-k}\\
&\le C\dint_{Q_i^*}({\cal M}_Nf(x))^p\wz(x)dx,
\end{aligned}
\eeq
where $b=1+2^{-(10+n)}$, $k_0\in\zz$ such that $2^{k_0-1}bl_i\le a_2\rz(x_i)< 2^{k_0}bl_i$.

Combining the last two estimates we obtain (4.10),
furthermore, we have
 $$\sum_i\dint_{\rn}({\cal M}^0_N(b_i)(x))^p\wz(x)dx
 \le C \dsum_i\dint_{Q_i^*}({\cal M}_Nf(x))^p\wz(x)dx
 \le C\dint_\oz({\cal M}_N(f)(x))^p\wz(x)dx,$$
which together with complete of
$h^p_{\rho,N}(\wz)$ (see Proposition 3.3)implies that $\sum_i b_i$
converges in $h^p_{\rho,N}(\wz)$. Therefore,  the series
$\sum_i b_i$ converges in ${\cal D}'(\rn)$ and
${\cal M}_N^0(\sum_i b_i)(x)\le \sum_i {\cal M}_N^0( b_i)(x)$ by Proposition 3.2,
which gives (4.11). This finishes the proof of Lemma 4.6.
\end{proof}

\begin{lem}\label{l4.7.}
Let $\wz\in A_\fz^{\rz,\,\fz}(\rn)$ and $q_\wz$ be as in (2.4), $s\in\zz_+$,
and integer $N\ge 2$. If $q\in(q_\wz,\fz]$ and $f\in L_\wz^q(\rn)$, then
the series $\sum_i b_i$ converges in $L_\wz^q(\rn)$ and there
exists a positive constant $C_7$, independent of $f$ and $\lz$,
such that
$$\Big\|\sum_i |b_i|\Big\|_{L^q_\wz(\rn)}
\le C_7\|f\|_{L^q_\wz(\rn)}.$$
\end{lem}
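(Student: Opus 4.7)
The plan is to reduce the lemma to a pointwise estimate on $\sum_i|b_i|$ combined with the $L^q_\wz(\rn)$-boundedness of the local grand maximal function $\cm_N$. Splitting the index set into $I_1\equiv\{i:l_i<L_3\rz(x_i)\}$ and $I_2\equiv\{i:l_i\ge L_3\rz(x_i)\}$, the definition of $b_i$ gives $|b_i|\le|f|\eta_i$ for $i\in I_2$ and $|b_i|\le|f|\eta_i+|P_i|\eta_i$ for $i\in I_1$. Summing, using the partition-of-unity identity $\sum_i\eta_i=\chi_{\oz_\lz}$, the bound $|P_i\eta_i|\le C_2\lz$ from Lemma~\ref{l4.2} (valid precisely for $i\in I_1$), and the bounded-overlap property of $\{\bar Q_i\}$, one obtains the pointwise estimate
\[
\sum_i|b_i(x)|\le|f(x)|\chi_{\oz_\lz}(x)+C\lz\,\chi_{\oz_\lz}(x).
\]

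Next I would take the $L^q_\wz(\rn)$-norm of both sides. The first term is trivially controlled by $\|f\|_{L^q_\wz(\rn)}$, while for the second, Chebyshev's inequality yields $\lz^q\wz(\oz_\lz)\le\|\cm_N(f)\|_{L^q_\wz(\rn)}^q$. Since $q\in(q_\wz,\fz]$ forces $\wz\in A_q^{\rz,\fz}(\rn)$, and since any $\vz\in\cdd_{N,R}(\rn)$ satisfies $\|\vz\|_{\cdd_N(\rn)}\le1$ and $\supp\vz\subset B(0,R)$ while the admissible scales satisfy $2^{-l}<\rz(x)$, one gets the pointwise comparison $\cm_N(f)\ls M^{\loc}(f)$ (the constant depends on $R$ but this is harmless); combined with Lemma~\ref{l2.4}(vii) this gives $\|\cm_N(f)\|_{L^q_\wz(\rn)}\ls\|f\|_{L^q_\wz(\rn)}$. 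Together these yield $\|\sum_i|b_i|\|_{L^q_\wz(\rn)}\le C_7\|f\|_{L^q_\wz(\rn)}$ with $C_7$ independent of $\lz$; the case $q=\fz$ is handled similarly using $\|\cm_N(f)\|_{L^\fz(\rn)}\ls\|f\|_{L^\fz(\rn)}$.

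For the $L^q_\wz$-convergence of $\sum_ib_i$, the pointwise bound shows that $B\equiv\sum_i|b_i|\in L^q_\wz(\rn)$, so the series converges absolutely almost everywhere; the partial sums $S_K=\sum_{i=1}^Kb_i$ are dominated by $B$ and converge pointwise to $\sum_ib_i$, hence the dominated convergence theorem yields convergence in $L^q_\wz(\rn)$. The main obstacle is the $L^q_\wz$-boundedness of $\cm_N$ itself: since the bumps are supported in $B(0,R)$ with $R>1$ rather than $R=1$, one cannot quote Proposition~\ref{p3.1}(ii) directly, but a covering argument using Lemma~\ref{l2.1} to control how $\rz$ varies over balls of radius $R\rz(x)$ reduces it to the $L^q_\wz$-boundedness of $M^{\loc}$ provided by Lemma~\ref{l2.4}(vii).
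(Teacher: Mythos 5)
Your proof is correct and is essentially the paper's: the same split of indices by whether $l_i<L_3\rz(x_i)$, the same invocation of Lemma~\ref{l4.2} for the bound $|P_i\eta_i|\le C\lz$, and the same combination of bounded overlap of the Whitney cubes with a Chebyshev estimate $\lz^q\wz(\oz_\lz)\ls\|\cm_N f\|^q_{L^q_\wz(\rn)}$, with the only cosmetic difference that you form a pointwise bound on $\sum_i|b_i|$ before integrating, whereas the paper sums $\|b_i\|^q_{L^q_\wz(\rn)}$ termwise and then invokes bounded overlap. Your closing remark that the $L^q_\wz$-boundedness of $\cm_N$ (whose test functions live on $B(0,R)$ with $R>1$) must be reduced to that of $M^{\loc}$ via Lemma~\ref{l2.1}, rather than quoted directly from Proposition~\ref{p3.1}(ii), correctly flags a step the paper leaves implicit.
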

\begin{proof}

 The proof for $q=\fz$ is similar to that for $q\in
(q_\wz,\fz)$. So we only give the proof for $q\in(q_\wz,\fz)$. Set
$F_1=\{i\in\nn: l_i\ge L_3\rz(x_i)\}$ and $F_2=\{i\in\nn: l_i< L_3\rz(x_i)\}$.
By Lemma \ref{l4.2}, for $i\in F_2$, we have
\beqs
\begin{aligned}
\dint_{\rn}|b_i(x)|^q\wz(x)dx
&\le\dint_{Q_i^*}|f(x)|^q\wz(x)dx+\dint_{Q_i^*}|P_i(x)\eta_i(x)|^q\wz(x)dx\\
&\le \dint_{Q_i^*}|f(x)|^q\wz(x)dx + \lz^q\wz(Q^*_i).
\end{aligned}
\eeqs
For $i\in F_1$, we have
$$\dint_{\rn}|b_i(x)|^q\wz(x)dx\le\dint_{Q_i^*}|f(x)|^q\wz(x)dx.$$
From these, we obtain
\beqs
\begin{aligned}
\dsum_i\dint_{\rn}|b_i(x)|^q\wz(x)dx
&=\dsum_{i\in F_1}\dint_{\rn}|b_i(x)|^q\wz(x)dx
+\dsum_{i\in F_2}\dint_{\rn}|b_i(x)|^q\wz(x)dx  \\
&\le \dsum_i\dint_{Q_i^*}|f(x)|^q\wz(x)dx
+ C\dsum_{i\in F_2}\lz^q\wz(Q^*_i)\\
&\le \dsum_i\dint_{Q_i^*}|f(x)|^q\wz(x)dx+ C\lz^q\wz(\oz)\\
&\le C \dint_{\rn}|f(x)|^q\wz(x)dx.
\end{aligned}
\eeqs
Combining above estimates with the fact that $\{b_i\}_i$ have finite covering, we obtain
$$\Big\|\sum_i |b_i|\Big\|_{L^q_\wz(\rn)}\le C_7\|f\|_{L^q_\wz(\rn)}.$$
This finishes the proof.
\end{proof}

\begin{lem}\label{l4.8}
 If $N>s\ge 0$ and $\sum_i b_i$ converges in ${\cal D}'(\rn)$,
 then there exists a  positive constant $C_8$, independent of $f$
 and $\lz$, such that for all $x\in\rn$,
 $${\cal M}_N^0(g)(x)\le {\cal M}_N^0(f)(x)\chi_{\oz^\complement}(x)+
C_8\lz \sum_i\dfrac{l_i^{n+s+1}}{(l_i+|x-x_i|)^{n+s+1}}\chi_{\{|x-x_i|<C_4\rz(x)\}}(x)
+C_8\lz \chi_{\oz}(x),$$
where $x_i$ is the center of $Q_i$ and $C_4$ is as in Lemma 4.5.
\end{lem}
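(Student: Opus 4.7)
My plan is to split the estimate according to whether $x\in\oz^\complement$ or $x\in\oz$. On $\oz^\complement$ the argument is direct: since $\oz=\bigcup_i Q_i^*$, every such $x$ lies outside each $Q_i^*$, so from the distributional identity $g=f-\sum_i b_i$, the sublinearity of $\cm_N^0$, and the standard bound (already used in the proof of Proposition \ref{p3.3}) that $\cm_N^0$ of a convergent $\cd'$-series is majorized pointwise by the sum of $\cm_N^0$'s, I would write
$$\cm_N^0(g)(x)\le \cm_N^0(f)(x)+\sum_i \cm_N^0(b_i)(x).$$
Lemma \ref{l4.5} then produces exactly the first and middle contributions of the stated inequality on $\oz^\complement$.

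On $\oz$ the direct bound fails because $\cm_N^0(f)(x)$ can be much larger than $\lz$, so cancellation between $f$ and $\sum_i b_i$ must be exploited. I would rewrite $g$ using the partition identity $\chi_\oz=\sum_i\eta_i$ and the two-case definition of $b_i$ (with $F_1=\{i:\,l_i\ge L_3\rz(x_i)\}$, $F_2=\nn\setminus F_1$) to obtain, in $\cd'(\rn)$,
$$g=f\chi_{\oz^\complement}+\sum_{i\in F_2}P_i\eta_i.$$
Fix $\vz\in\cdd_N^0(\rn)$ with $2^{-l}<\rz(x)$ and pair against $\vz_l(x-\cdot)$:
$$g*\vz_l(x)=\l\langle f\chi_{\oz^\complement},\vz_l(x-\cdot)\r\rangle+\sum_{i\in F_2}(P_i\eta_i)*\vz_l(x).$$
Split the second sum by the finite set $I(x)=\{i\in F_2:\,x\in Q_i^*\}$, which is finite thanks to the bounded overlap of $\{Q_k^*\}$. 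For $i\in I(x)$, Lemma \ref{l4.2} yields $\|P_i\eta_i\|_{L^\fz}\ls\lz$ and, since $\|\vz_l\|_{L^1}\ls 1$ with $|I(x)|$ controlled, the corresponding contributions sum to at most $C\lz$. For $i\in F_2\setminus I(x)$, $P_i\eta_i$ is bounded by $C\lz$ and supported in $\bar Q_i$; an adaptation of the Taylor-expansion argument from the proof of Lemma \ref{l4.5} (applied to $P_i\eta_i$ in place of $b_i$, using $N>s$ and $\|\vz\|_{\cdd_N^0}\le 1$) yields decay of the form $l_i^{n+s+1}/(l_i+|x-x_i|)^{n+s+1}\chi_{\{|x-x_i|<C_4\rz(x)\}}$, feeding into the middle sum of the statement.

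The main obstacle is the first summand $\langle f\chi_{\oz^\complement},\vz_l(x-\cdot)\rangle$, since $\chi_{\oz^\complement}$ is not smooth and the product is not a legitimate testing function in $\cdd_N^0(\rn)$. I would handle it via the Whitney geometry: pick $i_0\in I(x)$, so the Whitney inequality $\operatorname{diam}(Q_{i_0})\le 2^{-(6+n)}\operatorname{dist}(Q_{i_0},\oz^\complement)$ produces a $w\in\oz^\complement$ with $|x-w|\ls l_{i_0}$, and Lemma \ref{l2.1} gives $\rz(w)\sim\rz(x)$. The goal is then to recast the non-smooth pairing as a bounded multiple of $f*\Psi_k(w)$ for a suitable $\Psi\in\cdd_N(\rn)$ with support radius $\ls 1$ and $2^{-k}<\rz(w)$; since $w\in\oz^\complement$, $\cm_N(f)(w)\le\lz$, which supplies the desired $C\lz$ bound. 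Taking the supremum over admissible $\vz$ and $l$ then finishes the case $x\in\oz$.
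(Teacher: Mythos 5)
Your first step, on $\oz^\complement$, matches the paper. The trouble starts on $\oz$, where you depart from the paper's route and introduce two genuine gaps.

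\textbf{The middle sum.} You attribute the term $C_8\lz\sum_i l_i^{n+s+1}/(l_i+|x-x_i|)^{n+s+1}$ to the pieces $(P_i\eta_i)*\vz_l(x)$ with $i\in F_2\setminus I(x)$, via ``an adaptation of the Taylor-expansion argument \ldots applied to $P_i\eta_i$ in place of $b_i$.'' This cannot work: the Taylor argument in the proof of Lemma \ref{l4.5} kills the polynomial terms $\sum_{|\az|\le s}\frac{\pz^\az\phi(y)}{\az!}z^\az$ using the vanishing moments $\int b_i(x)x^\az\,dx=0$ for $|\az|\le s$, which follow from the defining relation $\langle f, Q\eta_i\rangle=\langle P_i, Q\eta_i\rangle$. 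The function $P_i\eta_i$ has \emph{no} such cancellation (e.g.\ $\int P_i\eta_i=\langle f,\eta_i\rangle\neq 0$ in general). Without it, all you get from $\|P_i\eta_i\|_{L^\fz}\ls\lz$, $\supp(P_i\eta_i)\subset\bar Q_i$ and $2^{-l}\gtrsim l_i+|x-x_i|$ is the decay $\lz\, l_i^n/(l_i+|x-x_i|)^n$, which has the wrong power: $\sum_i l_i^n/(l_i+|x-x_i|)^n$ need not be bounded and does not match the statement. The paper gets the $n+s+1$ power precisely because it applies Lemma \ref{l4.5} to $\cm_N^0(b_i)$, where the moment cancellation is available.

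\textbf{The cutoff.} Writing $g=f\chi_{\oz^\complement}+\sum_{i\in F_2}P_i\eta_i$ is, as you note, problematic for general $f\in\cd'(\rn)$: $\chi_{\oz^\complement}\vz_l(x-\cdot)$ is not a legitimate test function, so $\langle f\chi_{\oz^\complement},\vz_l(x-\cdot)\rangle$ is not well-defined as a pairing. Your proposed remedy — pick $w\in\oz^\complement$ near $x$ and ``recast the non-smooth pairing as a bounded multiple of $f*\Psi_k(w)$'' — is not carried out and does not actually smooth the characteristic function; nothing is offered to bridge from $\chi_{\oz^\complement}\vz_l(x-\cdot)$ to a $\cdd_N(\rn)$ test function. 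The paper avoids the issue entirely by subtracting only the \emph{finite} collection $\{b_i\}_{i\in J}$, $J=\{i:Q_i^*\cap Q_k^*\neq\emptyset\}$ (so the residual cutoff $\xi=1-\sum_{i\in J}\eta_i$ is smooth), pushing the remaining $b_i$ into the Lemma \ref{l4.5} estimate. Concretely, $\cm_N^0(g)(x)\le\sum_{i\notin J}\cm_N^0(b_i)(x)+\cm_N^0\bigl(f-\sum_{i\in J}b_i\bigr)(x)$, where $f-\sum_{i\in J}b_i=f\xi+\sum_{i\in J}P_i\eta_i$ is then estimated at a single Whitney-adjacent point $w\in\oz^\complement$. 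Adopting that split — rather than the global $\chi_{\oz^\complement}$ decomposition — resolves both gaps at once.
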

\begin{proof}
If $x\not\in\oz$, since
$${\cal M}_N^0(g)(x)\le {\cal M}_N^0(f)(x)+\dsum_i {\cal M}_N^0(b_i)(x),$$
by Lemma 4.5, we obtain
$${\cal M}_N^0(g)(x)\le {\cal M}_N^0(f)(x)\chi_{\oz^\complement}(x)
+ C\lz\dsum_i\dfrac{l_i^{n+s+1}}{(l_i+|x-x_i|)^{n+s+1}}\chi_{\{|x-x_i|<C_4\rz(x)\}}(x).$$
If $x\in\oz$, take $k\in\nn$ such that $x\in Q_k^*$.
Let $J\equiv\{i\in\nn: Q_i^*\bigcap Q_k^*\neq \emptyset\}$. Then the cardinality
of $J$ is bounded by $L$. By Lemma 4.5, we have
$$\dsum_{i\not\in J} {\cal M}_N^0(b_i)(x)\le C\lz \dsum_{i\not\in J}
\dfrac{l_i^{n+s+1}}{(l_i+|x-x_i|)^{n+s+1}}\chi_{\{|x-x_i|<C_4\rz(x)\}}(x).$$
It suffices to estimate the grand maximal function of
$g+\sum_{i\not\in J}b_i=f-\sum_{i\in J}b_i$.
Take $\vz\in {\cal D}^0_N(\rn)$ and $l\in\zz$ such that $0<2^{-l}<\rz(x)$,
then we write
\beq
\begin{aligned}
\Big(f-\sum_{i\in J}b_i\Big)*\vz_i(x)
&=(f\xi)*\vz_l(x)+\Big(\sum_{i\in J}P_i\eta_i\Big)*\vz_l(x)\\
&=f*\widetilde{\Phi}_l(w)+\Big(\sum_{i\in J}P_i\eta_i\Big)*\vz_l(x),
\end{aligned}
\eeq
where $w\in (2^{8+n}nQ_k)\bigcap\oz^{\complement}$, $\xi=1-\sum_{i\in J}\eta_i$ and
$$\widetilde{\Phi}(z)\equiv\vz(z+2^l(x-w))\xi(w-2^{-l}z).$$
Since for $N\ge 2$ there is a constant $C>0$ such that $\|\vz\|_{L^1(\rn)}\le C$
for all $\vz\in {\cal D}^0_N(\rn)$ and Lemma 4.1, we have
 $$\l|\l(\dsum_{i\in J}P_i\eta_i\r)*\vz_l(x)\r|\le C\lz.$$
Finally, we estimate $f*\Phi_l(w)$. There are two cases:
If $2^{-l}\le 2^{-(11+n)}l_k$,  then $f*\Phi_l(w)=0$,
 because $\xi$ vanishes in $Q_k^*$
 and $\vz_l$ is supported in $B(0,2^{-l})$. On the other hand, if $2^{-l}\ge 2^{-(11+n)}l_k$,
 then there exists a positive constant $a_3>1$ such that $2^{-l}<\rz(x)<a_3\rz(w)$.
Take $\Phi(x)\equiv\widetilde{\Phi}(x/2^{m_1})$ and
$m_1\in\nn$ satisfying $2^{m_1-1}\le a_3<2^{m_1}$,
then
$\supp \Phi \subset B(0,R_3)$ where $R_3\equiv2^{3(11+n)}a_3$,
and $\|\Phi\|_{{\cal D}_N}\le C$.
Therefore, $2^{-l-m_1}<\rz(x)/a_3<\rz(w)$ and
 $$\l|(f*\widetilde{\Phi}_l)(w)\r|=2^{-m_1n}|(f*\Phi_{l+m_1})(w)|
 \le C{\cal M}_N f(w)\|\Phi\|_{{\cal D}_N}\le C\lz.$$
According to  above estimates, we have
$$\Big|(f-\sum_{i\in J}b_i)*\vz_l\Big|\le C\lz,$$
then we can get
$${\cal M}^0_N\Big((f-\sum_{i\in J}b_i)\Big)(x)\le C\lz.$$
This finishes the proof of the lemma.
\end{proof}

\begin{lem}\label{l4.9}
Let $\wz\in A_\fz^{\rz\,\fz}(\rn)$, $q_\wz$ be as in (2.4), $q\in(q_{\wz},\,\fz)$,
$p\in (0,1]$ and $N\ge N_{p,\,\wz}$, where $N_{p,\,\wz}$ is as in (3.29).
\begin{enumerate}
 \item[(i)] If $N>s\ge [n(q_\wz/p-1)]$ and $f\in h_{\rho,\,N}^p(\wz)$,
 then ${\cal M}_N^0(g)\in  L_\wz^q(\rn)$ and there
 exists a positive constant $C_9$, independent of $f$ and $\lz$,
 such that
 $$\dint_{\rn}[{\cal M}_N^0(g)(x)]^q\wz(x)dx
 \le C_9\lz^{q-p}\dint_{\rn} [{\cal M}_N(f)(x)]^p\wz(x)dx.$$
 \item[(ii)] If $N\ge 2$ and $f\in L_\wz^q(\rn)$, then $g\in L_\wz^\fz(\rn)$
 and there exists a positive constant $C_{10}$,
independent of $f$ and $\lz$,
 such that $\|g\|_{L_\wz^\fz}\le C_{10}\lz$.
\end{enumerate}
\end{lem}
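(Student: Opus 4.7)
The plan is to treat the two parts separately: part (ii) follows essentially from the structure of the Calder\'on--Zygmund decomposition, while part (i) rests on the pointwise control of $\cm_N^0(g)$ afforded by Lemma \ref{l4.8}.

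For part (ii), I would first combine $\sum_i\eta_i=\chi_{\Omega}$ with the two cases in the definition of $b_i$ to rewrite, for almost every $x\in\rn$,
$$g(x)=f(x)\chi_{\Omega^\complement}(x)+\sum_{\{i:\,l_i<L_3\rz(x_i)\}}P_i(x)\eta_i(x).$$
On $\Omega^\complement$, Proposition \ref{p3.1}(i) gives $|f(x)|\le\cm_N^0(f)(x)\le\cm_N(f)(x)\le\lz$ a.e. On $\Omega$, Lemma \ref{l4.2} bounds each summand by $C_2\lz$, and the bounded overlap of $\{\bar Q_i\}$ caps the number of nonzero terms uniformly in $x$, so $|g(x)|\ls\lz$ a.e., which gives the desired $L^\fz_\wz$ bound.

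For part (i), I would apply Lemma \ref{l4.8}, raise to the $q$-th power and integrate against $\wz$, producing $\int_{\rn}[\cm_N^0(g)]^q\wz\ls\mathrm{I}+\mathrm{II}+\mathrm{III}$, where
$$\mathrm{I}=\int_{\Omega^\complement}[\cm_N^0(f)]^q\wz,\quad \mathrm{II}=\lz^q\!\int_{\rn}\!\Bigl(\sum_i F_i\Bigr)^{\!q}\wz,\quad \mathrm{III}=\lz^q\wz(\Omega),$$
with $F_i(x)\equiv l_i^{n+s+1}/(l_i+|x-x_i|)^{n+s+1}\,\chi_{\{|x-x_i|<C_4\rz(x)\}}(x)$. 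For $\mathrm{I}$, using $\cm_N^0(f)\le\cm_N(f)\le\lz$ on $\Omega^\complement$ to trade $q-p$ powers gives $\mathrm{I}\ls\lz^{q-p}\int[\cm_N(f)]^p\wz$. For $\mathrm{III}$, the Chebyshev estimate $\wz(\Omega)\le\lz^{-p}\int[\cm_N(f)]^p\wz$ yields the same bound.

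The main obstacle is term $\mathrm{II}$, which I plan to attack by duality on $L^q_\wz$: it suffices to bound $\bigl|\int(\sum_i F_i)h\bigr|\ls[\wz(\Omega)]^{1/q}\|h\|_{L^{q'}_{\wz^{1-q'}}}$. The strategy is to establish the pointwise comparison $F_i(x)\ls\int_{Q_i}l_i^{s+1}/(l_i+|x-y|)^{n+s+1}\,dy$, then use Fubini together with a dyadic annular decomposition about $y$ to obtain $\int F_i(x)h(x)\,dx\ls\int_{Q_i}M^{\loc}(h)(y)\,dy$; the kernel decay produces the convergent geometric series $\sum_{k\ge 0}2^{-k(s+1)}$, and the support restriction $|x-x_i|<C_4\rz(x)$ together with Lemma \ref{l2.1} legitimizes replacing the global maximal operator by $M^{\loc}$. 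Summing over $i$ using the disjointness of the Whitney cubes $\{Q_i\}$ yields $\sum_i\int F_ih\ls\int_\Omega M^{\loc}(h)$, and a final H\"older inequality together with the $L^{q'}_{\wz^{1-q'}}$-boundedness of $M^{\loc}$ (from Lemma \ref{l2.4}(ii),(vii), using $q>q_\wz$) delivers the factor $[\wz(\Omega)]^{1/q}$ and closes the argument. The delicate point is precisely the localization to $M^{\loc}$: the weight class $A_q^{\rz,\fz}$ does not in general carry the full Hardy--Littlewood maximal, so the inherited support constraint in $F_i$ is essential.
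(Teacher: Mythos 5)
Your proof of part (ii) matches the paper's: both write $g=f\chi_{\oz^\complement}+\sum_{\{i:\,l_i<L_3\rz(x_i)\}}P_i\eta_i$, use Lemma \ref{l4.2} together with the bounded overlap of $\{\bar Q_i\}$ on $\oz$, and Proposition \ref{p3.1}(i) on $\oz^\complement$. For part (i) you also perform the same three-way split via Lemma \ref{l4.8} and dispose of $\mathrm{I}$ and $\mathrm{III}$ by the same power-trading and Chebyshev steps. The genuine divergence is in $\mathrm{II}$: the paper simply pushes the $q$-th power inside the sum, asserting $\int(\sum_iF_i)^q\wz\ls\sum_i\int F_i^q\wz$ and then bounding each $\int F_i^q\wz\ls\wz(Q_i^*)$ by re-running the dyadic-annulus computation from Lemma \ref{l4.6} with exponent $q$ in place of $p$. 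That inequality $(\sum_iF_i)^q\ls\sum_iF_i^q$ is the wrong direction for $q>1$ and is left unjustified as written; it can be repaired (for instance via $F_i\ls[M(\chi_{Q_i})]^{(n+s+1)/n}$ and a bounded-overlap-of-dilates argument), but the paper does not do so. Your duality route — testing $\sum_iF_i$ against $h\in L^{q'}_{\wz^{1-q'}}$, using the pointwise majorization of $F_i$ by a kernel averaged over $Q_i$, Fubini, dyadic annuli to land on $M^{\loc}(h)$, disjointness of the Whitney cubes, and then H\"older plus Lemma \ref{l2.4}(ii),(vii) — is longer but fully rigorous and self-contained, and it arrives at the same estimate $\|\sum_iF_i\|_{L^q_\wz}\ls[\wz(\oz)]^{1/q}$. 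Your observation that the support restriction $|x-x_i|<C_4\rz(x)$ (via Lemma \ref{l2.1}) is what permits the passage from $M$ to $M^{\loc}$ is exactly the right point to emphasize, since without it the argument would demand $A_{q'}$ rather than $A_{q'}^{\rz,\fz}$ for the dual weight.
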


\begin{proof}
We first prove (i).  Since $f\in h^p_{\rz,\,N}(\wz)$, by Lemma 4.6 and Proposition \ref{p3.2},
$\sum_i b_i$ converges in
both $h^{p}_{\rz,\,N}(\wz)$ and ${\cal D}'(\rn)$.
Notice that $s\ge[n(q_\wz/p-1)]$, by Lemma 4.8 and the proof of Lemma 4.6, we get
\beqs
\begin{aligned}
\dint_{\rn} ({\cal M}^0_N(g)(x))^q\wz(x)dx
&\le  C\lz^q\dsum_i\dint_{\rn}
\l[\dfrac{l_i^{(n+s+1)}}{(l_i+|x-x_i|)^{(n+s+1)}}\chi_{B(x_i,a_2\rz(x_i))}(x)\r]^q\wz(x)dx\\
&\qquad+C\lz^q\int_{\rn}\chi_{\oz}(x)\wz(x)\,dx
+\dint_{\oz^\complement}({\cal M}_N(f)(x))^q\wz(x)dx\\
&\le C\lz^q\dsum_i\wz(Q_i^*)+C\lz^q\wz(\oz)+\dint_{\oz^\complement}({\cal M}_N(f)(x))^q\wz(x)dx\\
&\le C\lz^q\wz(\oz)+C\lz^{q-p}\dint_{\oz^\complement}({\cal M}_N(f)(x))^p\wz(x)dx\\
&\le C_9\lz^{q-p}\dint_{\rn}({\cal M}_N(f)(x))^p\wz(x)dx.
\end{aligned}
\eeqs
Thus, (i) holds.

Next, we prove (ii).
If $f\in L_\wz^q(\rn)$, then $g$ and $\{b_i\}_i$ are
functions. By Lemma 4.7, we know that $\sum_i b_i$ converges in $L^q_\wz(\rn)$
and hence in ${\cal D}'(\rn)$ by Lemma \ref{l2.5}(ii).
Write
$$g=f-\dsum_i b_i=f\Big(1-\dsum_i\eta_i\Big)+\dsum_{i\in
F_2}P_i\eta_i=f\chi_{\oz^\complement}+\dsum_{i\in F_2}P_i\eta_i.$$
By Lemma
4.3, we have $|g(x)|\le C\lz$ for all $x\in\oz$, and by
Proposition 3.1(i), we also have $|g(x)|=|f(x)|\le {\cal M}_N f(x)\le \lz$ for
almost everywhere $x\in \oz^\complement$. Therefore,
$\|g\|_{L_\wz^\fz(\rn)}\le C_{10}\lz$ which yields (ii).
\end{proof}

\begin{cor}\label{c4.1}
Let $\wz\in A_\fz^{\rz,\,\fz}(\rn)$ and $q_\wz$ be as in (2.4). If
$q\in (q_\wz,\fz)$, $p\in (0,1]$ and $N\ge N_{p,\,\wz}$,
where $N_{p,\,\wz}$ is as in (3.29),  then
$h^p_{\rho,N}(\wz)\cap L_\wz^q(\rn)$ is dense in
$h^p_{\rho,N}(\wz)$.
\end{cor}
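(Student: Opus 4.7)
The plan is to approximate each $f\in h^p_{\rho,N}(\wz)$ in quasi-norm by the ``good parts'' of its Calder\'on-Zygmund decompositions at heights $\lz\to\fz$. Since $\cm_N f\in L^p_\wz(\rn)$, Chebyshev's inequality gives $\wz(\oz_\lz)\le\lz^{-p}\|\cm_N f\|_{L^p_\wz(\rn)}^p<\fz$ for every $\lz>0$, and $\oz_\lz$ is a proper open subset of $\rn$ for all sufficiently large $\lz$. I would fix an integer $s$ with $[n(q_\wz/p-1)]\le s<N$, which is possible because $N\ge N_{p,\wz}>[n(q_\wz/p-1)]$ by \eqref{3.29}, and a sequence $\lz_k\uparrow\fz$; then for each large $k$ apply the decomposition of Section \ref{s4} of degree $s$ and height $\lz_k$ to write $f=g^{(k)}+\sum_i b^{(k)}_i$ in $\cd'(\rn)$. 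Set $b^{(k)}\equiv\sum_i b^{(k)}_i$.

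Next I would verify that each $g^{(k)}$ lies in $h^p_{\rho,N}(\wz)\cap L^q_\wz(\rn)$. The $h^p_{\rho,N}(\wz)$ membership is immediate: by Lemma \ref{l4.6}, $b^{(k)}$ converges in $h^p_{\rho,N}(\wz)$, so $g^{(k)}=f-b^{(k)}\in h^p_{\rho,N}(\wz)$. For the $L^q_\wz(\rn)$ membership, Lemma \ref{l4.9}(i) (applicable since $s\ge[n(q_\wz/p-1)]$) yields $\cm_N^0(g^{(k)})\in L^q_\wz(\rn)$, and then Proposition \ref{p3.1}(ii), with exponent $q>q_\wz$ and the fact that $\wz\in A_q^{\rz,\fz}(\rn)$, gives $g^{(k)}\in L^q_\wz(\rn)$.

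The convergence $g^{(k)}\to f$ in $h^p_{\rho,N}(\wz)$ follows by combining Lemma \ref{l4.6} with the equivalence $\|\cdot\|_{h^p_{\rho,N}(\wz)}\sim\|\cm_N^0(\cdot)\|_{L^p_\wz(\rn)}$ from Theorem \ref{t3.2}:
$$\|b^{(k)}\|_{h^p_{\rho,N}(\wz)}^p\sim\|\cm_N^0(b^{(k)})\|_{L^p_\wz(\rn)}^p\le C_6\int_{\oz_{\lz_k}}\bigl(\cm_N f(x)\bigr)^p\wz(x)\,dx.$$
Since $(\cm_N f)^p\wz\in L^1(\rn)$ and $\chi_{\oz_{\lz_k}}\to 0$ pointwise $\wz$-a.e.\ as $k\to\fz$ (because $\{\cm_N f=\fz\}$ is $\wz$-null), the dominated convergence theorem forces the right-hand side to $0$. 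Hence $g^{(k)}\to f$ in $h^p_{\rho,N}(\wz)$.

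There is no deep obstacle here; the argument is essentially a direct assembly of the machinery of Section \ref{s4}. The main point of care is to pick a single decomposition degree $s$ that simultaneously satisfies the hypotheses of Lemma \ref{l4.6} (namely $s\ge[n(q_\wz/p-1)]$ and $s<N$) and of Lemma \ref{l4.9}(i), and this compatibility is precisely what the definition \eqref{3.29} of $N_{p,\wz}$ guarantees.
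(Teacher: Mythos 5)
Your proposal is correct and follows essentially the same route as the paper: fix a degree $s$ with $[n(q_\wz/p-1)]\le s<N$, perform the Calder\'on--Zygmund decomposition $f=g^\lz+\sum_i b_i^\lz$ at heights $\lz\to\fz$, invoke Lemma \ref{l4.6} (together with Theorem \ref{t3.2}) to get $\|\sum_i b_i^\lz\|_{h^p_{\rho,N}(\wz)}^p\ls\int_{\oz_\lz}(\cm_N f)^p\wz\to 0$, and combine Lemma \ref{l4.9}(i) with Proposition \ref{p3.1}(ii) to place $g^\lz$ in $L^q_\wz(\rn)$. The only difference is that you spell out the dominated-convergence step that the paper leaves implicit.
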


\begin{proof}
 Let $f\in h^p_{\rho,N}(\wz)$. For any
$\lz>\dinf_{x\in\rn}{\cal M}_Nf(x)$, let $f=g^\lz+\sum_i b_i^\lz$
be the Calder\'on-Zygmund decomposition of $f$ of degree $s$ with
$[n(q_\wz/p-1)]\le s<N$ and height $\lz$ associated to ${\cal M}_N f$.
By Lemma 4.6, we have
$$\Big\|\dsum_i b_i^\lz\Big\|_{h^p_{\rho,N}(\wz)}\le C\dint_{\{x\in\rn: {\cal
M}_Nf(x)>\lz\}}({\cal M}_Nf(x))^p\wz(x)dx.$$
Therefore, $g^\lz\to f$ in $h^p_{\rho, N}(\wz)$ as $\lz\to\fz$.
Moreover, by Lemma 4.9,
we have ${\cal M}_N^0 (g^\lz)\in L_\wz^q(\rn)$, which combined with Proposition \ref{p3.1}(ii)
implies $g^\lz\in L_\wz^q(\rn)$.  Thus, Corollary 4.1 is proved.
\end{proof}

\section{Weighted atomic decompositions of $h^p_{\rho,\,N}(\wz)$ \label{s5}}

\noi In this section, we establish the
equivalence between $h^{p}_{\rho,\,N}(\wz)$ and
$h^{p,\,q,\,s}_{\rho}(\wz)$ by using the Calder\'on-Zygmund
decomposition associated to the local grand maximal function stated
in Section \ref{s4}.

Let $\wz\in A_\fz^{\rz,\,\fz}(\rn), q_\wz$ be as in  (2.4), $p\in (0,1]$,
$N\ge N_{p,\,\wz}$, $s\equiv [n(q_\wz/p-1)]$ and $f\in h^p_{\rho,\,N}(\wz)$.
Take $m_0\in\zz$ such that
$2^{m_0-1}\le\inf_{x\in\rn}{\cal M}_Nf(x)<2^{m_0}$, if
$\inf_{x\in\rn}{\cal M}_Nf(x)=0$, write $m_0=-\fz$.
For each integer $m\ge m_0$ consider the Calder\'on-Zygmund decomposition of $f$ of
degree $s$ and height $\lz=2^m$ associated to ${\cal M}_Nf$, namely
\begin{equation} \label{5.1}
f=g^m+\dsum_{i\in\nn} b_i^m,
\end{equation}
and
$$\oz_m\equiv\{x\in\rn: {\cal M}_Nf(x)>2^m\}, \ Q_i^m\equiv Q_{l_i^m}.$$
In this section, we write $\{Q_i\}_i$, $\{\eta_i\}_i$, $\{P_i\}_i$
 and $\{b_i\}_i$, respectively, as
$\{Q^m_i\}_i$, $\{\eta^m_i\}_i$, $\{P^m_i\}_i$ and $\{b^m_i\}_i$.
The  center and the  sidelength of $Q^m_i$ are
respectively denoted by $x^m_i$ and $l^m_i$. Recall that for all $i$
and $m$,
\begin{equation}\label{5.2}
\sum_i \eta^m_i =\chi_{\oz_{m}},\quad
\supp(b^m_i)\subset\supp (\eta^m_i)\subset Q^{m\ast}_i,
\end{equation}
$\{Q^{m\ast}_i\}_i$ has the bounded interior property, and for all
$P\in\cp_s (\rn)$,
\begin{equation}\label{5.3}
\langle f, P\eta^m_i\rangle=\langle P^m_i,P\eta^m_i\rangle.
\end{equation}

\noi For each integer $m\ge m_0$ and $i,\,j\in\nn$, let $P^{m+1}_{i,\,j}$
be the orthogonal projection of $(f-P^{m+1}_j)\eta^m_i$ on $\cp_s(\rn)$
with respect to the norm
$$\|P\|_j^2\equiv\frac{1}{\int_{\rn}\eta^{m+1}_j (y)\,dy}
\int_{\rn}|P(x)|^2 \eta^{m+1}_j (x)\,dx,$$
namely, $P^{m+1}_{i,\,j}$
is the unique polynomial of $\cp_s (\rn)$ such that for any
$P\in\cp_s (\rn)$,
\begin{equation}\label{5.4}
\langle(f-P^{m+1}_j)\eta^k_i,P\eta^{m+1}_j\rangle
= \int_{\rn}P^{m+1}_{i,\,j} (x)P(x)\eta^{m+1}_j (x)\,dx.
\end{equation}
In what follows,
we denote
$Q_i^{m*}=(1+2^{-(10+n)})Q_i^m$,
$$E^m_1\equiv\left\{i\in\nn:\ l^m_i\ge \rz(x_i^m)/(2^5 n)\r\},\qquad
E^k_2\equiv\left\{i\in\nn:\ l^m_i<\rz(x_i^m)/(2^5 n)\r\},$$
$$F^k_1\equiv\left\{i\in\nn:\ l^m_i\ge L_3 \rz(x_i^m)\r\},\qquad
F^k_2\equiv\left\{i\in\nn:\ l^m_i< L_3\rz(x_i^m)\r\},$$
where $L_3=2^{k_0}C_0$ is as in Section 4.

Observe that
\begin{eqnarray}\label{5.5}
P^{m+1}_{i,\,j}\neq0 \quad \text{if and only if} \quad Q_i^{m\ast}\cap
Q_j^{(m+1)\ast}\neq\emptyset.
\end{eqnarray} Indeed, this follows directly from
the definition of $P^{m+1}_{i,\,j}$.  The following Lemmas \ref{l5.1}-\ref{l5.3}
can be proved by similar methods of Lemmas 5.1-5.3 in \cite{Ta1}.

\begin{lem}\label{l5.1}
Notice that $\oz_{m+1}\subset \oz_m$, then
\begin{enumerate}
\item[\rm(i)] If $Q_i^{m\ast}\cap Q_j^{(m+1)\ast}\neq\emptyset$,
then $l^{m+1}_j\le2^4\sqrt{n}l^m_i$ and
$ Q_j^{(m+1)\ast}\subset2^6nQ_i^{k\ast}\subset\oz_{m}$.

\item[\rm(ii)] There exists a positive integer $L$ such that for
each $i\in\nn$, the cardinality of $\{j\in\nn:\,Q_i^{m\ast}\cap
Q_j^{(m+1)\ast}\neq\emptyset\}$ is bounded by $L$.
\end{enumerate}
\end{lem}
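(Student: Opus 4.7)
The plan is to exploit the defining geometric estimate of the Whitney cubes, namely
\[
2^{6+n}\sqrt{n}\,l_k\le\mathrm{dist}(Q_k,\Omega^{\complement})\le 2^{8+n}\sqrt{n}\,l_k,
\]
together with the inclusion $\oz_{m+1}\subset\oz_m$, equivalently $\oz_m^{\complement}\subset\oz_{m+1}^{\complement}$ and hence $\mathrm{dist}(\cdot,\oz_{m+1}^{\complement})\le\mathrm{dist}(\cdot,\oz_m^{\complement})$.

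For part (i), I fix a point $x\in Q_i^{m*}\cap Q_j^{(m+1)*}$. The upper Whitney estimate for $Q_i^m$ gives $\mathrm{dist}(x,\oz_m^{\complement})\le(2^{8+n}+b/2)\sqrt{n}\,l_i^m$, while the lower Whitney estimate for $Q_j^{m+1}$ yields $\mathrm{dist}(x,\oz_{m+1}^{\complement})\ge(2^{6+n}-b/2)\sqrt{n}\,l_j^{m+1}$. Chaining these via $\mathrm{dist}(x,\oz_{m+1}^{\complement})\le\mathrm{dist}(x,\oz_m^{\complement})$ yields $l_j^{m+1}\le 2^4\sqrt{n}\,l_i^m$ after rounding constants for $n\ge 1$. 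From this, every $y\in Q_j^{(m+1)*}$ satisfies $|y-x_i^m|\le|y-x|+|x-x_i^m|\ls l_j^{m+1}+l_i^m\ls l_i^m$, and tracking constants carefully yields $Q_j^{(m+1)*}\subset 2^6 n\,Q_i^{m*}$. The inclusion $2^6 n\,Q_i^{m*}\subset\oz_m$ follows because the half-diagonal $2^5 n b\sqrt{n}\,l_i^m$ of $2^6 n\,Q_i^{m*}$ around $x_i^m$ is dominated by $\mathrm{dist}(x_i^m,\oz_m^{\complement})\ge 2^{6+n}\sqrt{n}\,l_i^m$ whenever $n\ge 1$.

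For part (ii), I would leverage (i) to place every relevant $Q_j^{(m+1)*}$ inside the fixed enlargement $2^6 n\,Q_i^{m*}$, and then combine this with the bounded interior property of the dilated Whitney family $\{Q_j^{(m+1)*}\}_j$ (each point lying in at most a fixed number $M_n$ of them) and the interior-disjointness of the $\{Q_j^{m+1}\}_j$. If the sidelengths of the relevant $Q_j^{m+1}$ are moreover comparable to $l_i^m$ from below, the volume comparison $\sum_j|Q_j^{m+1}|\le|2^6 n\,Q_i^{m*}|$ immediately bounds the cardinality by a constant $L$ depending only on $n$.

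The main obstacle will be establishing this lower comparability $l_j^{m+1}\gtrsim l_i^m$, because the monotonicity argument used in (i) only delivers the upper bound: indeed $\mathrm{dist}(x,\oz_{m+1}^{\complement})$ may well be strictly smaller than $\mathrm{dist}(x,\oz_m^{\complement})$, reflecting that the level set $\oz_{m+1}$ can puncture $\oz_m$ from within. To circumvent this I would argue that any $Q_j^{m+1}$ meeting $Q_i^{m*}$ must lie deep inside $\oz_m$, so that $\mathrm{dist}(Q_j^{m+1},\oz_m^{\complement})\gtrsim l_i^m$; combining this with the upper Whitney bound for $Q_j^{m+1}$ inside $\oz_{m+1}$ and a careful packing of $\{Q_j^{m+1}\}_j$ in the $\oz_m$-neighbourhood provided by the finite multiplicity property then delivers the required uniform cardinality bound.
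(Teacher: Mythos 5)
Your proof of part (i) is correct and follows the standard Whitney-geometry argument that the paper implicitly relies on (the paper itself gives no proof, only a citation to Lemmas 5.1--5.3 of Tang's Illinois J.\ Math.\ paper \cite{Ta1}): for $x\in Q_i^{m*}\cap Q_j^{(m+1)*}$, chain the upper Whitney estimate $\mathrm{dist}(x,\Omega_m^{\complement})\lesssim l_i^m$ against the lower Whitney estimate $\mathrm{dist}(x,\Omega_{m+1}^{\complement})\gtrsim l_j^{m+1}$ through the monotone inclusion $\Omega_m^{\complement}\subset\Omega_{m+1}^{\complement}$. Your constants are slightly off (the $+b/2$ in the upper bound should be $+b$, the full diameter of $Q_i^{m*}$ in units of $\sqrt{n}\,l_i^m$), but the generous target $2^4\sqrt{n}$ easily absorbs the slack, and the two containments then follow by the geometry you indicate. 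This is the same route taken in Bownik's memoir and in Tang.

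Part (ii), however, is not established, and you in effect diagnose the obstruction yourself: the volume-packing scheme needs the \emph{lower} bound $l_j^{m+1}\gtrsim l_i^m$, while the nesting supplies only the upper bound. Your proposed workaround does not close this gap. It is correct that $\mathrm{dist}(Q_j^{m+1},\Omega_m^{\complement})\gtrsim l_i^m$ (the cube lies deep inside $\Omega_m$) and that $\mathrm{dist}(Q_j^{m+1},\Omega_{m+1}^{\complement})\lesssim l_j^{m+1}$ (the upper Whitney bound in $\Omega_{m+1}$), but the only inequality linking the two distances, namely $\mathrm{dist}(Q_j^{m+1},\Omega_{m+1}^{\complement})\le\mathrm{dist}(Q_j^{m+1},\Omega_m^{\complement})$, points the wrong way: it lets lower bounds pass from the $\Omega_{m+1}$-distance to the $\Omega_m$-distance, not the reverse, and hence places no constraint whatever on $l_j^{m+1}$ in terms of $l_i^m$. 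In the very scenario you describe — $\partial\Omega_{m+1}$ penetrating deep into the interior of $\Omega_m$ — the $\Omega_{m+1}$-Whitney cubes near a point $p\in\partial\Omega_{m+1}\cap\Omega_m$ shrink to zero while the neighbouring $\Omega_m$-Whitney cube $Q_i^m$ keeps a fixed size, so $l_j^{m+1}/l_i^m$ has no positive lower bound and the packing argument cannot run. The closing appeal to ``a careful packing \ldots in the $\Omega_m$-neighbourhood provided by the finite multiplicity property'' is not an argument that can be checked. So part (ii) remains open in your proposal and requires a genuinely different idea, not a tightening of constants; the bounded-overlap property of the same-level family $\{Q_j^{(m+1)*}\}_j$ alone cannot bound a \emph{total} cardinality without a sidelength lower bound.
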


\begin{lem}\label{l5.2}
 There exists a positive constant $C$
 such that for all $i,\,j\in\nn$ and integer $m\ge m_0$ with
$l^{m+1}_j< L_3 \rz(x_j^{m+1})$,
\begin{equation}\label{5.6}
\sup_{y\in\rn}\left|P^{m+1}_{i,\,j}(y)\eta^{m+1}_j (y)\r|\le C2^{m+1}.
\end{equation}
\end{lem}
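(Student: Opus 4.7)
By observation \eqref{5.5}, we may assume $Q_i^{m\ast}\cap Q_j^{(m+1)\ast}\neq\emptyset$, since otherwise $P^{m+1}_{i,j}\equiv 0$ and the bound is trivial. Lemma \ref{l5.1}(i) then gives $l_j^{m+1}\le 2^{4}\sqrt{n}\,l_i^m$ and $Q_j^{(m+1)\ast}\subset 2^6n\,Q_i^{m\ast}\subset\oz_m$. The plan is to mimic the proof of Lemma \ref{l4.2}: we expand $P^{m+1}_{i,j}$ in an orthonormal basis of $\cp_s$ and estimate each coefficient using Lemma \ref{l4.2} (for the piece involving $P_j^{m+1}$) and the grand maximal function of $f$ (for the piece involving $f$ itself).

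Let $\{\pi_k\}_{k=1}^{d}$ be an orthonormal basis of $\cp_s(\rn)$ with respect to the inner product $(P,Q)\mapsto \bigl(\int\eta_j^{m+1}\bigr)^{-1}\int PQ\,\eta_j^{m+1}\,dx$. Repeating verbatim the first part of the proof of Lemma \ref{l4.2} yields $\sup_{z\in Q_j^{(m+1)\ast}}|\pi_k(z)|\le A_1\,2^{n/2}$. Writing $P^{m+1}_{i,j}=\sum_k c_k\bar{\pi}_k$ we have
$$c_k=\frac{1}{\int\eta_j^{m+1}}\Bigl[\langle f,\pi_k\eta_j^{m+1}\eta_i^m\rangle-\int P_j^{m+1}\eta_j^{m+1}\pi_k\eta_i^m\,dx\Bigr]\equiv I_k-II_k,$$
so it suffices to show $|I_k|,\,|II_k|\le C\,2^{m+1}$. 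For $II_k$ the hypothesis $l_j^{m+1}<L_3\rho(x_j^{m+1})$ lets us apply Lemma \ref{l4.2} to $P_j^{m+1}$, giving $\sup_x|P_j^{m+1}(x)\eta_j^{m+1}(x)|\le C_2\,2^{m+1}$; combining this with $|\pi_k|\ls 1$ on $Q_j^{(m+1)\ast}$, $|\eta_i^m|\le 1$, and $|Q_j^{(m+1)\ast}|/\int\eta_j^{m+1}\ls 1$ immediately yields $|II_k|\le C\,2^{m+1}$.

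The estimate for $I_k$ is the crux, and the key new idea over Lemma \ref{l4.2} is to pick the evaluation point $w$ from $\oz_{m+1}^{\complement}$ rather than $\oz_m^{\complement}$. By the Whitney property of the decomposition of $\oz_{m+1}$, we may choose $w\in(2^{8+n}n\,Q_j^{m+1})\cap\oz_{m+1}^{\complement}$, which gives $|w-x_j^{m+1}|\ls l_j^{m+1}$ and $\cm_N f(w)\le 2^{m+1}$. Since $l_j^{m+1}<L_3\rho(x_j^{m+1})$, Lemma \ref{l2.1} ensures $\rho(w)\gtrsim\rho(x_j^{m+1})$, so we can pick $k_j\in\zz$ with $2^{-k_j}\sim\wt{L}\,l_j^{m+1}$ (for a small enough constant $\wt{L}$) and $2^{-k_j}<\rho(w)$. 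Setting
$$\Phi(y)\equiv\frac{2^{-k_j n}}{\int\eta_j^{m+1}}\,(\pi_k\,\eta_j^{m+1}\,\eta_i^m)(w-2^{-k_j}y),$$
a change of variables gives $I_k=(f*\Phi_{k_j})(w)$, while support and derivative bounds on $\pi_k,\eta_j^{m+1},\eta_i^m$ (using $l_j^{m+1}\ls l_i^m$, so that the derivative bound $(l_j^{m+1})^{-|\az|}$ from $\eta_j^{m+1}$ dominates) show $\|\Phi\|_{\cdd_N}\ls 1$ and $\supp(\Phi)\subset B(0,R')$ for a universal $R'$. Hence $|I_k|\le C\cm_N f(w)\le C\,2^{m+1}$, and summing over $k$ completes the proof. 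The main obstacle is matching the scale of the test function with the grand maximal function constraint $2^{-k_j}<\rho(w)$; this is precisely where the hypothesis $l_j^{m+1}<L_3\rho(x_j^{m+1})$ is essential, working together with the Whitney structure of $\oz_{m+1}$ to provide a suitable $w$ within distance $\sim l_j^{m+1}$ of $Q_j^{(m+1)\ast}$.
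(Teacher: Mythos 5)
Your proof is correct and follows the natural adaptation of the paper's own Lemma \ref{l4.2}, which is exactly what the paper points to (it defers Lemmas \ref{l5.1}--\ref{l5.3} to the "similar methods" of Tang \cite{Ta1}). Splitting the orthonormal-basis coefficient as $I_k-II_k$, bounding $II_k$ via Lemma \ref{l4.2} applied at the $(m+1)$-level under the hypothesis $l_j^{m+1}<L_3\rz(x_j^{m+1})$, and bounding $I_k$ by building a scaled test function of scale $\sim l_j^{m+1}$ and evaluating $\cm_N f$ at a Whitney-adapted point $w\in(2^{8+n}n\,Q_j^{m+1})\cap\oz_{m+1}^{\complement}$ (using Lemma \ref{l2.1} and the support/derivative bounds, with $l_j^{m+1}\ls l_i^m$ from Lemma \ref{l5.1}(i) controlling the $\eta_i^m$ factor) is precisely the intended argument.
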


\begin{lem}\label{l5.3}
For any $k\in\zz$ with $m\ge m_0$,
$$\sum_{i\in\nn}\Big(\sum_{j\in F^{m+1}_2}P^{m+1}_{i,\,j}\eta^{m+1}_j\Big)=0,$$
where the series converges both in $\cd'(\rn)$ and pointwise.
\end{lem}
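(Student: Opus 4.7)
The plan is to first reduce the claim to a statement about each index $j$ separately via a locally finite cardinality argument, and then to derive $\sum_i P^{m+1}_{i,j}\eta^{m+1}_j\equiv 0$ from the orthogonality properties of the Calder\'on--Zygmund projections. For the reduction, I would fix $x\in\rn$ and observe that $\eta^{m+1}_j(x)\neq 0$ holds only for finitely many $j$ (by the bounded interior property of the covering $\{Q^{(m+1)\ast}_j\}_j$) and, by \eqref{5.5} combined with Lemma \ref{l5.1}(ii), for each such $j$ the indices $i$ giving a nonzero $P^{m+1}_{i,j}$ form a set of cardinality at most $L$. Thus the double sum is a finite sum at every point, which simultaneously ensures both pointwise and $\cd'(\rn)$-convergence and legitimizes the interchange of the order of summation.

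Having done this, I would fix $j\in F^{m+1}_2$ and study the polynomial $Q_j\equiv\sum_i P^{m+1}_{i,j}\in\cp_s(\rn)$ (a finite sum, by the above). Summing the defining identity \eqref{5.4} over $i$ and bringing the finite sum inside the distributional pairing, I obtain, for every $P\in\cp_s(\rn)$,
\begin{equation*}
\int_{\rn}Q_j(x)P(x)\eta^{m+1}_j(x)\,dx
=\Big\langle (f-P^{m+1}_j)\sum_i\eta^m_i,\,P\eta^{m+1}_j\Big\rangle.
\end{equation*}
Since $\oz_{m+1}\subset\oz_m$, the support of $\eta^{m+1}_j$ lies inside $\oz_m$, where $\sum_i\eta^m_i\equiv 1$ by \eqref{5.2}. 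The right-hand side therefore collapses to $\langle f-P^{m+1}_j,\,P\eta^{m+1}_j\rangle$, which vanishes by the defining property \eqref{5.3} of $P^{m+1}_j$ applied at level $m+1$.

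To finish, I would take $P=Q_j$ in the displayed identity, obtaining $\int_{\rn}|Q_j|^2\eta^{m+1}_j\,dx=0$. Since $\eta^{m+1}_j$ is strictly positive on the nonempty open cube $Q^{m+1}_j$, this forces the polynomial $Q_j$ to vanish on a set of positive measure and hence identically; consequently $Q_j\eta^{m+1}_j\equiv 0$, and summing over $j\in F^{m+1}_2$ yields the lemma. The main technical subtlety is the legitimacy of commuting $\sum_i$ with the distributional pairing; this is justified precisely because, on the compact support of the test function $P\eta^{m+1}_j$, only finitely many $\eta^m_i$ are nonzero, thanks to the local finiteness of the Whitney covering $\{Q^{m\ast}_i\}_i$ of $\oz_m$.
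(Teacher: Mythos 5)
Your argument is correct and is essentially the standard proof of this type of cancellation lemma (the paper itself only cites Lemma~5.3 of \cite{Ta1}, which uses the same idea, going back to Bownik's anisotropic Hardy space construction): pointwise local finiteness allows reordering, then summing the defining identity \eqref{5.4} over $i$, using $\sum_i\eta^m_i\equiv1$ on $\oz_m\supset\supp\eta^{m+1}_j$ together with \eqref{5.3} at level $m+1$, shows $\int Q_j P\,\eta^{m+1}_j=0$ for all $P\in\cp_s(\rn)$, and choosing $P=Q_j$ forces $Q_j\equiv0$.

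One small inaccuracy in your reduction: you invoke ``\eqref{5.5} combined with Lemma~\ref{l5.1}(ii)'' to bound, for fixed $j$, the number of indices $i$ with $P^{m+1}_{i,j}\neq0$. Lemma~\ref{l5.1}(ii) gives the opposite bound --- for each fixed $i$ it controls the number of $j$ such that $Q^{m*}_i\cap Q^{(m+1)*}_j\neq\emptyset$. The bound you actually need (finitely many $Q^{m*}_i$ meeting a fixed $Q^{(m+1)*}_j$) is true, but it follows from the Whitney geometry of $\oz_m$: any $Q^{m*}_i$ meeting $Q^{(m+1)*}_j$ has sidelength comparable to $\mathrm{dist}(Q^{(m+1)*}_j,\oz_m^{\complement})$ (one comparison is Lemma~\ref{l5.1}(i)), and the $Q^{m*}_i$ have bounded overlap, so only boundedly many such cubes exist. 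Alternatively, and more directly in the spirit of your final paragraph, the finiteness of $\sum_i$ in the summed identity is already guaranteed by the local finiteness of $\{\supp\eta^m_i\}_i$ on the compact set $\supp(P\eta^{m+1}_j)$, so the explicit cardinality bound on $\{i\}$ is not really needed for the core argument. With that reference corrected, the proof is complete.
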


The following lemma gives the weighted atomic decomposition for a
dense subspace of $h^{p}_{\rz,\,N}(\wz)$.

\begin{lem}\label{l5.4}
Let $\wz\in A_\fz^{\rz,\,\fz}(\rn)$, $q_\wz$ and $N_{p,\,\wz}$
be respectively as in (2.4) and (3.29). If
$ p\in(0,1]$, $s\ge [n(q_\wz/p-1)]$, $N>s$ and $N\ge N_{p,\,\wz}$, then
for any $f\in (L^q_\wz(\rn)\cap h^p_{\rho,\, N}(\wz))$, there
exists numbers $\lz_0\in \cc$ and $\{\lz_i^m\}_{m\ge k_0,i}\subset \cc$,
$(p,\fz,s)_\wz$-atoms $\{a_i^m\}_{m\ge m_0,i}$ and a single atom
$a_0$ such that
\beq \label{5.7}
f=\dsum_{m\ge m_0}\dsum_{i}\lz_i^ma_i^m+\lz_0a_0,
\eeq
where the series converges almost everywhere and in ${\cal D}'(\rn)$.
Moreover, there exists a positive constant $C$, independent of $f$, such that
\beq \label{5.8}
\dsum_{m\ge m_0,i}|\lz_i^m|^p+|\lz_0|^p\le C\|f\|_{h^p_{\rz,\,N}(\wz)}.
\eeq
\end{lem}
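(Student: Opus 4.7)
The plan is to perform, at each dyadic height $\lz=2^m$ with $m\ge m_0$, the Calder\'on--Zygmund decomposition $f=g^m+\dsum_i b_i^m$ constructed in Section~\ref{s4} and to telescope: $f=g^{m_0}+\dsum_{m\ge m_0}(g^{m+1}-g^m)$. Since $f\in h^p_{\rho,\,N}(\wz)$, Chebyshev's inequality gives $\wz(\oz_m)\le 2^{-mp}\|f\|^p_{h^p_{\rho,\,N}(\wz)}\to 0$ as $m\to\fz$, which together with Lemma~\ref{l4.6} forces $\dsum_i b_i^m\to 0$ in $h^p_{\rho,\,N}(\wz)$ and hence $g^m\to f$ in $\cd'(\rn)$; Lemma~\ref{l4.9}(ii) gives $\|g^{m_0}\|_{L^\fz_\wz}\ls 2^{m_0}$, so after normalization $g^{m_0}$ will play the role of the single atom $\lz_0 a_0$ (and when $m_0=-\fz$ we simply set $\lz_0=0$). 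It remains to atomize each difference $g^{m+1}-g^m$.

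Following the construction in \cite{Bo,Ta1}, I would use the polynomials $P^{m+1}_{i,j}$ from \eqref{5.4} to define
$$h_i^m\equiv (f-P_i^m)\eta_i^m - \dsum_j \l[(f-P_j^{m+1})\eta_i^m - P^{m+1}_{i,j}\r]\eta_j^{m+1},$$
with the convention $P_i^m\equiv 0$ for $i\in F_1^m$ (and analogously for $P^{m+1}_{i,j}$ when $j\in F_1^{m+1}$). Using $\chi_{\oz_{m+1}}=\dsum_j \eta_j^{m+1}$, the inclusion $\oz_{m+1}\subset\oz_m$, and Lemma~\ref{l5.3} to cancel the residual double sum $\dsum_i\dsum_{j\in F_2^{m+1}} P^{m+1}_{i,j}\eta_j^{m+1}$, a direct computation shows $\dsum_i h_i^m=g^{m+1}-g^m$ both pointwise and in $\cd'(\rn)$.

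Next, I would verify that, after normalization, each $h_i^m$ is a $(p,\fz,s)_\wz$-atom. Lemma~\ref{l5.1}(i) gives $\supp(h_i^m)\subset 2^6 n Q_i^{m\ast}$, whose sidelength is $\ls\rz(x_i^m)$ by the CZ construction, yielding condition~(i) of Definition~\ref{d3.3}. The bound $\|h_i^m\|_{L^\fz_\wz(\rn)}\ls 2^m$ is obtained by combining Lemma~\ref{l4.3} (which controls $(f-P_i^m)\eta_i^m$), Lemma~\ref{l5.2} (which controls the $P^{m+1}_{i,j}\eta_j^{m+1}$ terms) and Lemma~\ref{l4.2} applied at level $m+1$, together with the trivial bound $|f|\le\cm_N(f)\le 2^{m+1}$ on $\oz_{m+1}^\complement$. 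The required vanishing moments of order $\le s$, needed only when $l_i^m<L_2\rz(x_i^m)$ (i.e.\ $i\in F_2^m$), follow immediately from the orthogonality relations defining $P_i^m$ and $P^{m+1}_{i,j}$, since every building block of $h_i^m$ pairs to zero against any $P\in\cp_s(\rn)$. Setting $\lz_i^m\equiv C 2^m [\wz(2^6 n Q_i^{m\ast})]^{1/p}$ with $C$ sufficiently large, the function $a_i^m\equiv h_i^m/\lz_i^m$ is a $(p,\fz,s)_\wz$-atom.

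The coefficient estimate~\eqref{5.8} then follows from the bounded-overlap property $\dsum_i\chi_{Q_i^{m\ast}}\ls\chi_{\oz_m}$, doubling on Whitney cubes, and a layer-cake computation:
$$\dsum_{m\ge m_0}\dsum_i |\lz_i^m|^p \ls \dsum_{m\ge m_0} 2^{mp}\wz(\oz_m) \ls \dint_{\rn}[\cm_N(f)(x)]^p \wz(x)\,dx = \|f\|^p_{h^p_{\rho,\,N}(\wz)},$$
while $|\lz_0|^p\ls 2^{m_0 p}\wz(\oz_{m_0-1})\ls\|f\|^p_{h^p_{\rho,\,N}(\wz)}$ is handled analogously. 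The main obstacle is verifying the identity $\dsum_i h_i^m = g^{m+1}-g^m$ and the convergence of these series in $\cd'(\rn)$ and pointwise a.e.: this requires carefully separating indices in $F_1$ and $F_2$ (so that only the "small" cubes carry polynomial corrections), invoking Lemma~\ref{l5.3} to cancel unwanted polynomial terms, and using Lemma~\ref{l4.7} together with the hypothesis $f\in L^q_\wz(\rn)$ to make the intermediate rearrangements of $\dsum_i b_i^m$ legitimate.
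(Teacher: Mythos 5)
Your overall strategy matches the paper's: telescope $f=g^{m_0}+\sum_{m\ge m_0}(g^{m+1}-g^m)$, rewrite each difference as $\sum_i h_i^m$ with $h_i^m\equiv b_i^m - \sum_j b_j^{m+1}\eta_i^m + \sum_{j\in F_2^{m+1}}P^{m+1}_{i,j}\eta_j^{m+1}$, bound $\|h_i^m\|_{L^\fz_\wz}\ls 2^m$ using Lemmas \ref{l4.2}, \ref{l4.3}, \ref{l5.1}, \ref{l5.2}, and normalize. The coefficient estimate and the single-atom treatment of $g^{m_0}$ are also the same.

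However, there is a genuine gap in the atomization step. You normalize \emph{every} $h_i^m$ directly as $a_i^m\equiv h_i^m/\lz_i^m$ with $\lz_i^m\equiv C\,2^m[\wz(2^6 n Q_i^{m\ast})]^{1/p}$, but for indices $i\in E_1^m$ (i.e.\ $l_i^m\ge \rz(x_i^m)/(2^5 n)$) this object need not be a $(p,\fz,s)_\wz$-atom: the Whitney cube $Q_i^m$ carries no a priori upper bound in terms of $\rz(x_i^m)$, so $\wt Q_i^m\equiv 2^6 n Q_i^{m\ast}$ can have sidelength exceeding $L_1\rz(x_i^m)$, violating condition (i) of Definition~\ref{d3.3}. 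The paper's proof splits into Cases I and II: for $i\in E_1^m$ it first partitions $\wt Q_i^m$ into finitely many disjoint cubes $\{Q_{i,k}^m\}_{k=1}^{n_i}$ whose sidelengths are comparable to the critical radius at their centers (so that $L_2\rz(x_{i,k}^m)<l_{i,k}^m\le L_1\rz(x_{i,k}^m)$, and in particular no vanishing moments are required on them), and then writes $h_i^m=\sum_k\lz_{i,k}^m a_{i,k}^m$ with $a_{i,k}^m$ supported on $Q_{i,k}^m$. Only for $i\in E_2^m$ does the single cube $\wt Q_i^m$ have the right size, and it is precisely there (using the disjointness from $F_1^{m+1}$-cubes, cf.\ the paper's \eqref{5.14}) that the vanishing-moment verification you sketch is carried out. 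Your remark that moments are ``needed only when $i\in F_2^m$'' also conflates the Whitney cube $Q_i^m$ with the atom's support cube $\wt Q_i^m$; in Definition~\ref{d3.3} the threshold $r<L_2\rz(x)$ refers to the latter, which is why the $E_1^m/E_2^m$ dichotomy, not the $F_1^m/F_2^m$ one, governs the atomization. Without the finite subdivision of $\wt Q_i^m$ for $i\in E_1^m$, your decomposition does not consist of admissible atoms, so the argument as written does not establish the lemma.
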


\begin{proof}
Let $f\in (L^q_{\wz}(\rn)\cap h^{p}_{\rho,\,N}(\wz))$. We first
consider the case  $m_0 =-\fz$.
As above, for each $m\in\zz$,
$f$ has a Calder\'on-Zygmund decomposition of degree $s$ and height
$\lz=2^m$ associated to ${\cal M}_N (f)$ as in \eqref{5.1}, namely,
$f=g^m +\sum_{i}b_i^m.$
By Corollary 4.1 and Proposition 3.1, $g^m\to f$ in both
$h^p_{\rz,N}(\wz)$ and ${\cal D}'(\rn)$ as $m\to\fz$. By Lemma 4.9 (i),
$\|g^m\|_{L_\wz^q(\rn)}\to 0$ as $m\to-\fz$, and moreover, by Lemma 2.5 (ii),
$g^m\to 0$ in ${\cal D}'(\rn)$ as $m\to-\fz$.
Therefore,
\begin{equation} \label{5.9}
f=\dsum_{m=-\fz}^\fz(g^{m+1}-g^m)
\end{equation}
in ${\cal D}'(\rn)$.
Moreover, since $\supp(\sum_i b_i^m)\subset\oz_m$ and
$\wz(\oz_m)\to 0$ as $m\to \fz$, then $g^m\to f$ almost everywhere
as $m\to\fz$. Thus, (5.9) also holds almost everywhere.
By Lemma 5.3 and
$\sum_i\eta_i^m b_j^{m+1}=\chi_{\oz_m}b_j^{m+1}=b_j^{m+1}$ for all $j$, then we have
\begin{align}\label{5.10}
g^{m+1}-g^m
&= \Big(f-\sum_j b^{m+1}_j \Big)-\Big(f-\sum_i b_i^m\Big) \nonumber\\
&= \sum_i b_i^m-\sum_j b^{m+1}_j
+\sum_i\Big(\sum_{j\in F^{m+1}_2}P^{m+1}_{i,\,j}\eta^{m+1}_j \Big)\nonumber\\
&= \sum_i \Big[b_i^m-\sum_j b^{m+1}_j \eta^m_i
+\sum_{j\in F^{m+1}_2}P^{m+1}_{i,\,j}\eta^{m+1}_j\Big] \equiv\sum_i h^m_i,
\end{align}
where all the series converge in both  ${\cal D}'(\rn)$ and almost
everywhere.
Furthermore, from the definitions of $b_j^m$ and
$b_j^{m+1}$, we infer that when $l^m_i<L_3\rz(x_i^m)$,
\begin{equation}\label{5.11}
h^m_i=f\chi_{\oz_{{m+1}}^\complement}\eta^m_i-P^m_i \eta^m_i
+\sum_{j\in F^{m+1}_2} P^{m+1}_j \eta^m_i \eta^{m+1}_j
+\sum_{j\in F^{m+1}_2}P^{m+1}_{i,\,j}\eta^{m+1}_j,
\end{equation}
and when $l_i^m\ge L_3\rz(x_i^m)$,
\begin{equation}\label{5.12}
h^m_i=f\chi_{\oz_{m+1}^\complement}\eta^m_i
+\sum_{j\in F^{m+1}_2} P^{m+1}_j \eta^m_i \eta^{m+1}_j
+\sum_{j\in F^{m+1}_2}P^{m+1}_{i,\,j}\eta^{m+1}_j.
\end{equation}
By Proposition \ref{p3.1}(i), we know that  for almost every
$x\in\oz_{{m+1}}^\complement$,
$$|f(x)|\le\cm_N (f)(x)\le2^{m+1},$$
which combined with Lemma \ref{l4.2}, Lemma \ref{l5.1}(ii),
 Lemma \ref{5.2}, \eqref{5.11} and \eqref{5.12} implies
that there exists a positive constant $C_{11}$ such that for all
$i\in\nn$,
\begin{equation}\label{5.13}
\|h^m_i\|_{L^{\fz}_{\wz}(\rn)}\le C_{11} 2^m.
\end{equation}
Next, we show that for each $i$ and $m$, $h^m_i$ is either a multiple of a
$(p,\,\fz,\,s)_{\wz}$-atom
or a finite linear combination of
$(p,\,\fz,\,s)_{\wz}$-atom by considering the following two cases
for $i$.

\noi Case I. For $i\in E^m_1$, $l_i^m\ge \rz(x_i^m)/2^5n$. Clearly,
$h_i^m$ is supported in a cube $\wt Q_i^m$ that contains
$Q_i^{m*}$ as well as all the $Q_j^{(m+1)*}$ that intersect $Q_i^{m*}$.
In fact, observe that if
$Q_i^{m*}\cap Q_j^{(m+1)*}\not=\emptyset$, by Lemma 5.1, we have
 $Q_j^{(m+1)*}\subset 2^6nQ_i^{m*}\subset \oz_m,$ thus, we set
$\wt Q_i^m\equiv2^6nQ_i^{m*}.$
Since $l(\wt Q_i^m)\ge 2\rz(x_i^m)$,
by the same method of Lemma 3.1 in \cite{Ta3}, $\wt Q_i^m$
can be decomposed into finite disjoint cubes $\{Q_{i,\,k}^m\}_k$
such that $\wt Q_i^m=\bigcup_{k=1}^{n_i}Q_{i,\,k}^m$
and
$l_{i,k}^m/4<\rz(x)\le C_0(3\sqrt{n})^{k_0}l_{i,k}^m$
for some $x\in Q_{i,\,k}^m=Q(x_{i,k}^m,l_{i,k}^m)$,
where $C_0,\ k_0$ are  constants given in Lemma \ref{l2.1}.
Moreover, by Lemma 2.1, we also have $l_{i,k}^m\le L_1 \rz(x_{i,k}^m)$ and
$l_{i,k}^m> L_2 \rz(x_{i,k}^m).$
 Therefore, let
$$\lz_{i,k}^m\equiv C_{11}2^m[\wz(Q_{i,k}^m)]^{1/p}
\quad \text{and}
\quad a_{i,k}^m\equiv(\lz_{i,k}^m)^{-1}
\frac{h_i^m\chi_{Q_{i,\,k}^m}}{\sum_{k=1}^{n_i}\chi_{Q_{i,\,k}^m}},
$$
then $\supp\, a_{i,k}^m\subset Q_{i,\,k}^m$ and
$\|a_{i,k}^m\|_{L^{\fz}_\wz(\rn)}\le [\wz(Q_{i,k}^m)]^{-1/p}$,
hence each $a_{i,k}^m$ is a $(p,\,\fz,\,s)_{\wz}$-atom
and $h_i^m=\sum_{k=1}^{n_i}\lz_{i,k}^m a_{i,k}^m$.

\noi Case II.
For $i\in E^m_2$, if $j\in F^{m+1}_1$,
we claim that $Q^{m\ast}_i \cap Q^{(m+1)\ast}_j=\emptyset$.
In fact, if  $Q^{m\ast}_i \cap Q^{(m+1)\ast}_j\neq\emptyset$,
by Lemma \ref{l5.1} (i), we know $l^{m+1}_j\le2^4\sqrt{n}l^m_i$
then we can deduce that
$l_i^m<l_i^m/2\sqrt{n}$ which is a contradiction, hence the claim is true.
Thus, we have
\begin{align} \label{5.14}
h^m_i&=\left(f-P^m_i\r)\eta^m_i-\sum_{j\in F^{m+1}_1}f\eta^{m+1}_j\eta^m_i
 -\sum_{j\in F^{m+1}_2}\left(f-P^{m+1}_{j}\r)\eta^{m+1}_j \eta^m_i\nonumber \\
&\qquad \qquad+\sum_{j\in F^{m+1}_2}P^{m+1}_{i,\,j}\eta^{m+1}_j \nonumber \\
&=\left(f-P^m_i\r)\eta^m_i -\sum_{j\in F^{m+1}_2}
\left\{\left(f-P^{m+1}_{j}\r)\eta^{m+1}_j \eta^m_i-P^{m+1}_{i,\,j} \eta^{m+1}_j\r\}.
\end{align}
Let $\wt{Q}^m_i\equiv2^6 n Q^{m\ast}_i$, then $l(\wt{Q}^m_i)<L_1\rz(x_i^m)$
and $\supp\,h^m_i\subset\wt{Q}^m_i$.
Moveover, $h^m_i$ satisfies the desired moment conditions, which are
deduced from the moment conditions of $(f-P^m_i)\eta^m_i$ (see \eqref{5.3})
and $(f-P^{m+1}_{j})\eta^{m+1}_j\eta^m_i-P^{m+1}_{i,\,j}\eta^{m+1}_j$ (see \eqref{5.4}).
Let $\lz_i^m\equiv C_{11}2^m[\wz(\wt Q_i^m)]^{1/p}$
and $a_i^m\equiv(\lz_i^m)^{-1}h_i^m$, then $a_i^m$ is a $(p,\fz,s)_\wz$-atom.

Thus, from \eqref{5.9}, \eqref{5.10}, Case I and Case II, we
infer that
$$f=\sum_{m\in \zz}\bigg(\sum_{i\in E_1^m}\bigg(\sum_{k=1}^{n_i}\lz^m_{i,k} a^m_{i,k}\bigg)
+\sum_{i\in E_2^m}\lz^m_i a^m_i\bigg)$$
holds in both ${\cal D}'(\rn)$ and almost everywhere.
Moreover, by Lemma \ref{l2.4}, we get
\beqs
\begin{aligned}
\dsum_{k\in\zz}\bigg[\sum_{i\in E_1^m}\bigg[\sum_{k=1}^{n_i}|\lz^m_{i,k}|^p \bigg]
+\sum_{i\in E_2^m}|\lz^m_i|^p \bigg]
&\le C\dsum_{k\in\zz} 2^{mp}\bigg[\sum_{i\in E_1^m}\bigg[\sum_{k=1}^{n_i}\wz(Q_{i,k}^m) \bigg]
+\sum_{i\in E_2^m}\wz(\wt Q_i^m)\bigg] \\
&\le C\dsum_{k\in\zz}2^{mp}\bigg[\sum_{i\in E_1^m}\wz(\wt Q_i^m)
+\sum_{i\in E_2^m}\wz(\wt Q_i^m)\bigg] \\
&\le C\dsum_{m\in\zz}\sum_{i\in\nn}2^{mp}\wz(\wt Q_i^m)\\
\end{aligned}
\eeqs
\beqs
\begin{aligned}
&\qquad\qquad\qquad\qquad\qquad \le C\dsum_{m\in\zz}\sum_{i\in\nn}2^{mp}\wz( Q_i^{m*}) \\
&\qquad\qquad\qquad\qquad\qquad \le C \dsum_{m\in\zz} 2^{mp} \wz(\oz_m)\\
&\qquad\qquad\qquad\qquad\qquad \le C\|{\cal M}_N(f)\|_{L^p_\wz(\rn)}^p
= C\|f\|_{h^p_{\rho,N}(\wz)}^p,
\end{aligned}
\eeqs
which implies \eqref{5.8} in the case that $m_0 =-\fz$.

Finally, we consider the case that $k_0>-\fz$. In this case, by
$f\in h^{p}_{\rho,\,N}(\wz)$, we see that $\wz(\rn)<\fz$.
Adapting the previous arguments, we conclude that
\begin{equation}\label{5.15}
f=\sum_{m=m_0}^{\fz}\left(g^{m+1}-g^m\r)+g^{m_0} \equiv\wt{f}+g^{m_0}.
\end{equation}
For the function $\wt f$, we have the same $(p,\fz,s)_\wz$
atomic decomposition as above
\begin{equation}\label{5.16}
\wt{f}= \sum_{m\ge m_0,\,i} \lz^m_i a^m_i,
\end{equation}
and
\begin{equation}\label{5.17}
\dsum_{m\ge m_0}\dsum_{i\in\nn}|\lz_i^m|^p\le
C\| f\|_{h^p_{\rho,N}(\wz)}^p.
\end{equation}
For the function $g^{m_0}$, by Lemma \ref{l4.9} (ii), we have
\begin{equation}\label{5.18}
\|g^{m_0}\|_{L^\fz_\wz(\rn)}\le C_{10} 2^{m_0}
\le 2C_{10}\inf_{x\in\rn}{\cal M}_Nf(x),
\end{equation}
where $C_{10}$ is the same as in Lemma \ref{l4.9} (ii).
Let $\lz_0\equiv C_{10}2^{m_0}[\wz(\rn)]^{1/p}$
and $a_0\equiv \lz_0^{-1}g^{m_0}$, then
\begin{equation}\label{5.19}
\|a_0\|_{L^\fz_\wz(\rn)}\le [\wz(\rn)]^{-1/p} \quad \text{and} \quad
|\lz_0|^p\le (2C_{10})^p\| f\|_{h^p_{\rho,N}(\wz)}^p.
\end{equation}
Thus, $a_0$ is a $(p,\,\fz)_{\wz}$-single-atom and
$g^{m_0}=\lz_0 a_0$, which together with \eqref{5.15} and
\eqref{5.16} implies \eqref{5.7} in the case that $k_0>-\fz$.
Furthermore, by combining (5.17) with (5.19), we obtain
$$\dsum_{m\ge m_0}\dsum_{i\in\nn}|\lz_i^m|^p
+|\lz_0|^p\le C\| f\|_{h^p_{\rho,N}(\wz)}^p.$$
The proof of the lemma is complete.
\end{proof}

Now we state the weighted atomic decompositions of $h^{p}_{\rho,\,N}(\wz)$.

\begin{thm}\label{t5.1}
Let $\wz\in A_\fz^{\rz,\,\fz}(\rn)$, $q_\wz$ and $N_{p,\,\wz}$
be respectively as in (2.4) and (3.29).
If $q\in (q_\wz,\fz],\ p\in (0,1]$,  and
integers $s$ and $N$ satisfy $N\ge N_{p,\wz}$ and
$N>s\ge[n(q_\wz/p-1)]$, then $h^{p,q,s}_{\rho}(\wz)=h^p_{\rho, N}(\wz)=
h^p_{\rho, N_{p,\wz}}(\wz)$ with equivalent norms.
\end{thm}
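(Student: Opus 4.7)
The plan is to combine Theorem \ref{t3.3} with the dense atomic decomposition in Lemma \ref{l5.4} and the density statement Corollary \ref{c4.1}, and to close the argument via $p$-subadditivity.

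First, Theorem \ref{t3.3} already supplies the chain
$$h^{p,q,s}_{\rho}(\wz)\subset h^{p}_{\rho,\,N_{p,\wz}}(\wz)\subset h^{p}_{\rho,\,N}(\wz)$$
together with $\|f\|_{h^{p}_{\rho,\,N}(\wz)}\le\|f\|_{h^{p}_{\rho,\,N_{p,\wz}}(\wz)}\le C\|f\|_{h^{p,q,s}_{\rho}(\wz)}$. So the task reduces to the reverse estimate
$$\|f\|_{h^{p,q,s}_{\rho}(\wz)}\le C\|f\|_{h^{p}_{\rho,\,N}(\wz)}\qquad\text{for every }f\in h^{p}_{\rho,\,N}(\wz),$$
after which the three spaces must coincide with equivalent quasi-norms.

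For $f$ in the dense subspace $L^q_\wz(\rn)\cap h^{p}_{\rho,\,N}(\wz)$ (fix some $q\in(q_\wz,\fz)$ using Corollary \ref{c4.1}), I would invoke Lemma \ref{l5.4} to obtain a decomposition $f=\sum_{m\ge m_0,\,i}\lz^m_i a^m_i+\lz_0 a_0$ into $(p,\fz,s)_\wz$-atoms and a $(p,\fz)_\wz$-single-atom with $\sum_{m,i}|\lz^m_i|^p+|\lz_0|^p\le C\|f\|_{h^p_{\rho,\,N}(\wz)}^p$. By H\"older's inequality every $(p,\fz,s)_\wz$-atom is a $(p,q,s)_\wz$-atom and every $(p,\fz)_\wz$-single-atom is a $(p,q)_\wz$-single-atom (this is exactly the monotonicity noted after Definition \ref{d3.4}), so the same representation witnesses $\|f\|_{h^{p,q,s}_{\rho}(\wz)}^p\le C\|f\|_{h^{p}_{\rho,\,N}(\wz)}^p$ for such $f$.

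To extend to arbitrary $f\in h^{p}_{\rho,\,N}(\wz)$ I would, by Corollary \ref{c4.1}, pick $f_k\in L^q_\wz(\rn)\cap h^{p}_{\rho,\,N}(\wz)$ with $\|f-f_k\|_{h^{p}_{\rho,\,N}(\wz)}\le 2^{-k}\|f\|_{h^{p}_{\rho,\,N}(\wz)}$ and write $f=f_1+\sum_{k\ge 1}(f_{k+1}-f_k)$ convergent in $h^{p}_{\rho,\,N}(\wz)$, and hence in $\cd'(\rn)$ by Proposition \ref{p3.2}. Applying the previous paragraph to $f_1$ and to each $f_{k+1}-f_k$ (both belonging to the vector space $L^q_\wz(\rn)\cap h^{p}_{\rho,\,N}(\wz)$) produces atomic decompositions whose concatenation defines an atomic representation of $f$; the convergence of the concatenated series in $\cd'(\rn)$ follows from the $\cd'$-convergence of each individual Lemma \ref{l5.4} decomposition together with the $\cd'$-convergence of $\sum_k(f_{k+1}-f_k)$. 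The $p$-subadditive estimate
\begin{align*}
\|f\|_{h^{p,q,s}_{\rho}(\wz)}^p
&\le\|f_1\|_{h^{p,q,s}_{\rho}(\wz)}^p+\sum_{k\ge 1}\|f_{k+1}-f_k\|_{h^{p,q,s}_{\rho}(\wz)}^p\\
&\le C\left(\|f_1\|_{h^{p}_{\rho,\,N}(\wz)}^p+\sum_{k\ge 1}\|f_{k+1}-f_k\|_{h^{p}_{\rho,\,N}(\wz)}^p\right)\le C\|f\|_{h^{p}_{\rho,\,N}(\wz)}^p
\end{align*}
then delivers the missing norm inequality and hence the equality of all three spaces.

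The main obstacle is the last step: verifying rigorously that the concatenated atomic series converges to $f$ in $\cd'(\rn)$ rather than only formally, and that the infimum defining $\|\cdot\|_{h^{p,q,s}_\rho(\wz)}$ is indeed dominated by the $\ell^p$-sum of the coefficients arising from the two layers of decomposition. Everything else is essentially bookkeeping once Lemma \ref{l5.4} and Corollary \ref{c4.1} are in hand.
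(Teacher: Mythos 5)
Your plan follows the paper's own route: use Theorem~\ref{t3.3} for one inclusion, Corollary~\ref{c4.1} for density, Lemma~\ref{l5.4} for the atomic decomposition on the dense subspace, and then $p$-subadditivity to pass to the whole space. The only genuine issue, which you correctly flag but do not resolve, is that the bare concatenation of the atomic decompositions of $f_1, f_2-f_1,\ldots$ is not an admissible representation in the sense of Definition~\ref{d3.4}: each Lemma~\ref{l5.4} decomposition contributes its own $(p,\fz)_\wz$-single-atom $\lambda^m_0 a^m_0$, so the concatenated series has countably many single-atoms, whereas the definition of $h^{p,q,s}_\rho(\wz)$ allows exactly one. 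The paper closes this by merging them: set $\wt\lambda_0\equiv[\wz(\rn)]^{1/p}\sum_m|\lambda^m_0|\|a^m_0\|_{L^\fz_\wz(\rn)}$ and $\wt a_0\equiv\wt\lambda_0^{-1}\sum_m\lambda^m_0 a^m_0$; then $\|\wt a_0\|_{L^\fz_\wz(\rn)}\le[\wz(\rn)]^{-1/p}$, so $\wt a_0$ is again a single-atom, and because $p\le1$ one has $\wt\lambda_0\le\sum_m|\lambda^m_0|\le(\sum_m|\lambda^m_0|^p)^{1/p}$ (equivalently, the paper uses the geometric decay $|\lambda_0^m|\lesssim2^{-m}\|f\|_{h^p_{\rho,N}(\wz)}$), so $|\wt\lambda_0|^p$ is controlled. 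With this merging your $p$-subadditivity inequality becomes legitimate, and the $\cd'$-convergence of the reindexed series is routine by Proposition~\ref{p3.2}. So the approach is correct; the one missing step is exactly this single-atom recombination, and it is indeed bookkeeping once you notice why it is needed.
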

\begin{proof}
It is easy to see that
$$h^{p,\fz,\bar s}_{\rho}(\wz)\subset h^{p,\,q,\,s}_{\rho}(\wz)\subset
h^p_{\rho, N_{p,\wz}}(\wz)\subset h^p_{\rho, N}(\wz) \subset
h^p_{\rho, \bar N}(\wz),$$
where $\bar s$ is an integer no less
than $s$ and $\bar N$ is an integer larger than $N$, and the
inclusions are continuous.
Thus, to prove Theorem 5.1, it suffices
to prove that for any $N>s\ge[n(q_\wz/p-1)]$,
$h^p_{\rho, N}(\wz)\subset h^{p,\fz,s}_{\rho}(\wz)$,
and for all $f\in h^p_{\rho, N}(\wz)$,
$\|f\|_{h^{p,\fz,s}_{\rho}(\wz)}\le C\|f\|_{h^p_{\rho, N}(\wz)}$.

Let $f\in h^p_{\rho,N}(\wz)$. By Corollary 4.1, there
exists a sequence of functions
$\{f_m\}_{m\in\nn}\subset (h^p_{\rho,N}(\wz)\cap L_\wz^q(\rn))$
such that for all $m\in\nn$,
\begin{equation}\label{5.20}
\|f_m\|_{h^p_{\rho, N}(\wz)}\le 2^{-m} \|f\|_{h^p_{\rho,N}(\wz)}
\end{equation}
and $f=\sum_{m\in\nn}f_m$ in $h^p_{\rho, N}(\wz)$.
By Lemma 5.4, for each $m\in\nn$, $f_m$ has an atomic decomposition
$$f_m=\sum_{i\in\zz_+}\lz_i^ma_i^m$$
in ${\cal D}'(\rn)$ with
$$\sum_{i\in\zz_+}|\lz_i^m|^p\le C\|f_m\|_{h^p_{\rho,N}(\wz)}^p,$$
where $\{\lz^m_i\}_{i\in\zz_+}\subset\cc$, $\{a^m_i\}_{i\in\nn}$ are
$(p,\,\fz,\,s)_{\wz}$-atoms and $a^m_0$ is a
$(p,\,\fz)_{\wz}$-single-atom.

Let
$$\wt{\lz}_0\equiv[\wz(\rn)]^{1/p}\sum^{\fz}_{m=1}|\lz^m_0|\|a^m_0\|_{L^{\fz}_{\wz}(\rn)}
\quad \text{and} \quad
\wt{a}_0\equiv(\wt{\lz}_0)^{-1}\sum^{\fz}_{m=1}\lz^m_0 a^m_0.$$
Then
$$\wt{\lz}_0\wt{a}_0=\sum^{\fz}_{m=1}\lz^m_0 a^m_0.$$
It is easy to see that
$$\|\wt{a}_0\|_{L^{\fz}_{\wz}(\rn)}\le [\wz(\rn)]^{-1/p},$$
which implies that $\wt{a}_0$ is a
$(\rz,\fz)_{\wz}$-single-atom.
Since $\|a_0^m\|_{L_{\wz}^{\fz}(\rn)}\le (\wz(\rn))^{-1/p}$ and
$$|\lz_0^m|\le C\|f_m\|_{h^p_{\rho,N}(\wz)}\le C 2^{-m}\|f\|_{h^p_{\rho,N}(\wz)},$$
we have
$$|\wt{\lz}_0|\le C\Big(\sum_{m=1}^{\fz}2^{-m}\Big)\|f\|_{h^p_{\rho,N}(\wz)}
\le C\|f\|_{h^p_{\rho,N}(\wz)},$$
moreover, we also have
$$\dsum_{m\in\nn}\dsum_{i\in\nn}|\lz_i^m|^p +|\wt{\lz}_0|^p
\le C\Big(\dsum_{m\in\nn}\|f_m\|_{h^p_{\rho,N}(\wz)}^p+\|f\|^p_{h^p_{\rho,N}(\wz)}\Big)
\le C\|f\|_{h^p_{\rho,N}(\wz)}^p.$$
Finally, we obtain
$$f=\sum_{m\in\nn}\sum_{i\in\nn}\lz^m_i a^m_i
+\wt{\lz}_0\wt{a}_0\in h^{p,\,\fz,\,s}_{\rho}(\wz)$$
and
$$\|f\|_{h^{p,\,\fz,\,s}_{\rho}(\wz)}
\le C \|f\|_{h^{p}_{\rho,\,N}(\wz)}.$$
The theorem is  proved.
\end{proof}

For simplicity, from now on, we denote by $h^p_{\rho}(\wz)$ the
weighted local Hardy space $h^p_{\rho, N}(\wz)$  when $N\ge N_{p,\,\wz}$.

\section{Finite atomic decompositions\label{s6}}

\noi In this section, we prove that for any given finite
linear combination of weighted atoms when $q <\fz$, its norm in
$h^{p}_{\rz,\,N}(\wz)$ can be achieved via all its finite
weighted atomic decompositions. This extends the main results in \cite{MSV} to the
setting of weighted local Hardy spaces.

\begin{defn}\label{d6.1}
Let  $\wz\in A^{\rz,\,\fz}_{\fz}(\rn)$ and
$(p,\,q,\,s)_{\wz}$ be admissible as in Definition \ref{d3.3}.
Then  $h^{p,\,q,\,s}_{\rho,\,\fin}(\wz)$ is defined
to be the vector space of all finite linear combinations
of $(p,\,q,\,s)_{\wz}$-atoms and
a $(p,\,q)_{\wz}$-single-atom, and the norm of $f$
in $h^{p,\,q,\,s}_{\rho,\,\fin}(\wz)$ is defined by
\beqs
\begin{aligned}
\|f\|_{h^{p,q,s}_{\rho,\fin}(\wz)}
&\equiv\dinf\Big\{\Big[\dsum_{i=0}^k|\lz_i|^p\Big]^{1/p}:
f=\dsum_{i=0}^k\lz_ia_i,\ k\in\zz_+,\ \{\lz_i\}_{i=0}^k\subset\cc,\
\{a_i\}_{i=1}^k\ {\rm are}\ \Big. \\
&\qquad\qquad (p,q,s)_\wz\ {\rm atoms},\ {\rm and}\ a_0\ {\rm is\ a}\ (p,q)_\wz\ {\rm single\ atom}\Big\}.
\end{aligned}
\eeqs
\end{defn}

Obviously, for any admissible triplet
$(p,\,q,\,s)_\wz$ atom and $(p,\,q)_\wz$ single atom,
$h_{\rho,\fin}^{p,q,s}(\wz)$ is dense in $h^{p,q,s}_{\rho}(\wz)$ with
respect to the quasi-norm $\|\cdot\|_{h_{\rho}^{p,q,s}(\wz)}$.

\begin{thm}\label{t6.1}
Let $\wz\in A^{\rz,\,\fz}_{\fz}(\rn)$, $q_{\wz}$ be as in (2.4) and
$(p,\,q,\,s)_{\wz}$ be admissible as in Definition \ref{d3.3}.
If $q\in(q_{\wz},\fz)$, then
$\|\cdot\|_{h^{p,\,q,\,s}_{\rho,\,\fin}(\wz)}$ and
$\|\cdot\|_{h^{p}_{\rho}(\wz)}$ are equivalent quasi-norms on
$h^{p,\,q,\,s}_{\rho,\,\fin}(\wz)$.

\end{thm}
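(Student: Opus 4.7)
One direction, $\|f\|_{h^p_\rho(\wz)} \lesssim \|f\|_{h^{p,q,s}_{\rho,\fin}(\wz)}$, is immediate from Theorem \ref{t3.3}: since $h^{p,q,s}_{\rho,\fin}(\wz)\subset h^{p,q,s}_\rho(\wz)$, the finite atomic quasi-norm already dominates the $h^p_\rho(\wz)$ quasi-norm. The content of the theorem is therefore the reverse estimate $\|f\|_{h^{p,q,s}_{\rho,\fin}(\wz)} \lesssim \|f\|_{h^p_\rho(\wz)}$ for $f$ a fixed finite combination of $(p,q,s)_\wz$-atoms and one $(p,q)_\wz$-single-atom. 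By homogeneity I may assume $\|f\|_{h^p_\rho(\wz)} = 1$; because such a finite combination automatically lies in $L^q_\wz(\rn)$, Lemma \ref{l5.4} applies and yields
\[ f = \sum_{m\ge m_0}\sum_i \lambda_i^m a_i^m + \lambda_0 a_0 \]
in $\cd'(\rn)$ and almost everywhere, with $\{a_i^m\}$ a family of $(p,\infty,s)_\wz$-atoms, $a_0$ a $(p,\infty)_\wz$-single-atom, and $\sum_{m,i} |\lambda_i^m|^p + |\lambda_0|^p \lesssim 1$. Every $(p,\infty,s)_\wz$-atom is automatically a $(p,q,s)_\wz$-atom, so the only obstacle is that the series is infinite.

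My plan is to truncate this series at a large level $M$ and absorb the tail into one additional $(p,q,s)_\wz$-atom. For a large integer $M$ and suitable finite cutoffs $k_m$, set
\[ f_M \equiv \lambda_0 a_0 + \sum_{m_0\le m\le M}\sum_{i\le k_m}\lambda_i^m a_i^m, \qquad g_M \equiv f - f_M. \]
Then $f_M$ is already a finite combination of atoms whose $h^{p,q,s}_{\rho,\fin}(\wz)$ quasi-norm is at most $(1+o_M(1))\|f\|_{h^p_\rho(\wz)}$. The key step is to show $g_M$ is, up to a uniformly bounded multiplicative constant, a single $(p,q,s)_\wz$-atom. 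The pointwise bound $\|h_i^m\|_{L^\infty_\wz(\rn)} \le C_{11}2^m$ from \eqref{5.13} together with the bounded-overlap of the Whitney family $\{Q_i^{m\ast}\}_i$ gives $\|g_M\|_{L^\infty(\rn)} \lesssim 2^M$, while $\supp(g_M)\subset\oz_{M+1}$ (after reassigning the removed atoms into $f_M$) and $\wz(\oz_{M+1}) \lesssim 2^{-Mp}\|\cm_N f\|_{L^p_\wz(\rn)}^p \lesssim 2^{-Mp}$. Combining these via H\"older's inequality,
\[ \|g_M\|_{L^q_\wz(\rn)} \lesssim 2^M [\wz(\oz_{M+1})]^{1/q} \lesssim [\wz(\oz_{M+1})]^{1/q - 1/p}, \]
which is exactly the size bound needed for a $(p,q,s)_\wz$-atom supported in a cube enclosing $\oz_{M+1}$.

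The main obstacle is then verifying the remaining atomic conditions on $g_M$. One has to enclose its disconnected support inside a single cube $Q$ of weighted measure comparable to $\wz(\oz_{M+1})$ (achievable by covering $\oz_{M+1}$ with finitely many cubes and invoking Lemma \ref{l2.4}(vi)), and, more delicately, one has to check the vanishing-moment condition. Since only the \emph{small} Whitney cubes with $l_i^m < L_2\rho(x_i^m)$ require moments (Definition \ref{d3.3}(iii)), I will split the tail into its small-cube and large-cube pieces and show each constitutes a uniformly bounded number of $(p,q,s)_\wz$-atoms. The convergence of $g_M$ in $L^q_\wz(\rn)$, which legitimizes the absorption step, rests on the same bounded-overlap estimate together with control of $\sum_m 2^m [\wz(\oz_m)]^{1/q}$; this bookkeeping, rather than any single sharp inequality, is where the bulk of the work will be concentrated. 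The overall structure follows \cite{MSV} and \cite{YZ}, suitably adapted to the local, $A_p^{\rho,\infty}$-weighted setting.
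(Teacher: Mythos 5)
Your proposal has the right skeleton (normalize, apply Lemma~\ref{l5.4}, truncate to a finite partial sum, absorb the remainder), but the specific estimates you use to make the remainder into atoms are not correct, and the reason the moment conditions are a non-issue is missed.

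First, the $L^\infty$ bound $\|g_M\|_{L^\infty}\lesssim 2^M$ fails. The inequality \eqref{5.13} gives $\|h_i^m\|_{L^\infty_\wz}\le C_{11}2^m$, which \emph{grows} with $m$. If you drop all $(m,i)$ with $m>M$, then for $x\in\oz_j\setminus\oz_{j+1}$ with $j>M$ the surviving terms of the remainder have $M+1\le m\le j$ (since $h_i^m$ is supported in $\oz_m$), and the bounded overlap gives $|g_M(x)|\lesssim\sum_{M<m\le j}2^m\sim 2^j$, not $2^M$. Thus $\|g_M\|_{L^\infty}$ is unbounded unless $\cm_N f\in L^\infty$. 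Consequently your H\"older step $\|g_M\|_{L^q_\wz}\lesssim 2^M[\wz(\oz_{M+1})]^{1/q}$ also fails; a direct computation shows $\sum_{j>M}2^{jq}\wz(\oz_j)\sim\sum_{j>M}2^{j(q-p)}$ diverges, so this route doesn't produce the size bound you want. Second, the support claim $\supp(g_M)\subset\oz_{M+1}$ cannot be arranged: for each $m$ the Whitney family $\{Q_i^m\}_i$ is in general infinite, so a \emph{finite} truncation must cut in $i$ as well, leaving tail terms $h_i^m$ with $m\le M$ whose supports live in $\oz_m\supset\oz_{M+1}$. The parenthetical ``after reassigning'' does not repair this, because including all $i$ for $m\le M$ makes $f_M$ infinite.

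The paper sidesteps all of this. What drives the smallness of the remainder is not any $L^\infty$ or level-set estimate but $L^q_\wz$-convergence of the whole atomic series, which follows by dominated convergence: the absolute value of any partial sum is pointwise dominated by $C\cm_N(f)$, and $\cm_N(f)\in L^q_\wz(\rn)$ by Proposition~\ref{p3.1}(ii) since $f\in L^q_\wz$. One then distinguishes $\wz(\rn)<\fz$ from $\wz(\rn)=\fz$. In the first case the remainder is already a $(p,q)_\wz$-single-atom once its $L^q_\wz$-norm is below $[\wz(\rn)]^{1/q-1/p}$. In the second, single atoms vanish, hence $f$ is compactly supported, and the remainder is supported in a \emph{fixed} enlarged cube $\wt Q_0$ that is independent of the truncation level; $\wt Q_0$ is then split into finitely many cubes $\{Q_j\}$ with $L_2\rz(x_j)<l_j<L_1\rz(x_j)$. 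Precisely because these cubes live at critical scale, Definition~\ref{d3.3}(iii) imposes \emph{no} vanishing moment requirement, so the truncated remainders $(f-f_{K_0})\chi_{Q_j}/\vez$ are atoms with only the size bound to check. This is the idea your proposal is missing: the moment conditions you propose to verify on the tail are entirely avoided by choosing the covering cubes at critical scale, and the $L^q$-smallness comes from dominated convergence, not from $L^\infty\times$measure bounds.
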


\begin{proof}
Obviously, by Theorem \ref{t5.1}, we infer that
$h^{p,\,q,\,s}_{\rho,\,\fin}(\wz)\subset
h^{p,\,q,\,s}_{\rho}(\wz)=h^{p}_{\rho}(\wz)$
and for all $f\in h^{p,\,q,\,s}_{\rho,\,\fin}(\wz)$, we have
$$\|f\|_{h^{p}_{\rho}(\wz)}\le C \|f\|_{h^{p,\,q,\,s}_{\rho,\,\fin}(\wz)}.$$
Thus, it suffices to show that for every $q\in(q_\wz,\fz)$ there exists a
constant $C$ such that for all $f\in h^{p,\,q,\,s}_{\rho,\,\fin}(\wz)$,
\begin{equation}\label{6.1}
\|f\|_{h^{p,\,q,\,s}_{\rho,\,\fin}(\wz)}\le C\|f\|_{h^{p}_{\rho}(\wz)}.
\end{equation}
Suppose  that $f$ is in
$h^{p,q,s}_{\rho,\fin}(\wz)$ with $\|f\|_{h^p_{\rho}(\wz)}=1$.
In this section, as in Section 5, we take $m_0\in\zz$ such that
$2^{m_0-1}\le\inf_{x\in\rn}{\cal M}_Nf(x)<2^{m_0}$, and for
$\inf_{x\in\rn}{\cal M}_Nf(x)=0$, write $m_0=-\fz$.
For each integer $m\ge m_0$, set
$$\oz_m\equiv\{x\in\rn: {\cal M}_Nf(x)>2^m\},$$
where and in what follows $N=N_{p,\wz}$.
Since $f\in(h^{p}_{\rho,\,N}(\wz)\cap L^{q}_{\wz}(\rn))$,
by Lemma \ref{l5.4}, there exist $\lz_0\in\cc$,
$\{\lz^m_i\}_{m\ge k_0,\,i}\subset\cc$, a
$(p,\,\fz)_{\wz}$-single-atom $a_0$ and
$(p,\,\fz,\,s)_{\wz}$-atoms $\{a^m_i\}_{m\ge m_0,\,i}$, such that
\begin{equation}\label{6.2}
f=\sum_{m\ge m_0}\sum_{i}\lz^m_i a^m_i+\lz_0 a_0
\end{equation}
holds both in ${\cal D}'(\rn)$ and almost everywhere.
First, we claim that \eqref{6.2} also holds in $L^q_{\wz}(\rn)$.
For any $x\in\rn$,
by $\rn=\cup_{m\ge m_0}(\oz_{2^m}\setminus \oz_{2^{k+1}})$,
we see that there exists $j\in\zz$ such that
$x\in(\oz_{2^j}\setminus \oz_{2^{j+1}})$.
By the proof of Lemma \ref{l5.4}, we know that for
all $m>j$, $\supp(a^m_i)\subset \wt{Q}^m_i\subset\oz_m\subset\oz_{j+1}$;
then from \eqref{5.13} and \eqref{5.18}, we conclude that
$$\bigg|\sum_{m\ge m_0}\sum_{i}\lz^m_i a^m_i(x)\bigg|+|\lz_0 a_0 (x)|
\le C \sum_{k_0\le k\le j}2^k +2^{k_0}
\le C 2^j\le C\cm_N (f)(x).$$
Since $f\in L^q_{\wz}(\rn)$, from Proposition
\ref{p3.1}(ii), we infer that $\cm_N (f)(x)\in L^q_{\wz}(\rn)$. This
combined with the Lebesgue dominated convergence theorem implies that
$$\sum_{m\ge m_0}\sum_{i}\lz^m_i a^m_i+\lz_0 a_0$$
converges to $f$ in $L^q_{\wz}(\rn)$, which deduce the claim.

Next, we show \eqref{6.1} by considering the following two cases for $\wz$.

\noi Case I: $\wz(\rn)=\fz$. In this case, as $f\in L^{q}_{\wz}(\rn)$,
we know that $m_0=-\fz$ and $a_0 (x)=0$ for
almost every $x\in\rn$ in \eqref{6.2}. Thus, in this case,
\eqref{6.2} can be written as
$$f=\sum_{m\in\zz} \sum_{i}\lz^m_ia^m_i.$$
Since, when $\wz(\rn)=\fz$, all
$(\rz,\,q)_{\wz}$-single-atoms are 0,
which implies that $f$ has compact support for $f\in h^{p,\,q,\,s}_{\rz,\,\fin}(\wz)$.
Assume that $\supp(f)\subset Q_0 \equiv Q(x_0,r_0)$ and
$\wt{Q}_0\equiv Q(x_0,r_1)$, in which
$r_1=\sqrt{n}r_0+C_0^2(1+R)^{k_0+1}(1+\sqrt{n}r_0/\rz(x_0))\rz(x_0)$.
Then for any $\psi\in{\cal D }_N (\rn)$, $x\in\rn\setminus\wt{Q}_0$ and
$2^{-l}\in(0,\rz(x))$,
we have
\beqs
\psi_l \ast f(x)=\int_{Q(x_0,r_0)}\psi_l(x-y)f(y)\,dy
=\int_{B(x,R\rz(x))\cap Q(x_0,r_0)}\psi_l(x-y)f(y)\,dy=0.
\eeqs
Thus, for any $m\in\zz$, $\oz_m\subset\wt{Q}_0$, which implies
that  $\supp(\sum_{m\in\zz} \sum_{i}\lz^m_i a^m_i)\subset \wt{Q}_0$.
For each positive integer $K$, let
$$F_K\equiv\{(m,i):\,m\in\zz,\,m\ge m_0, i\in\nn,\,|m|+i\le K\},$$
and
$$f_K\equiv\sum_{(m,i)\in F_K}\lz^m_i a^m_i.$$
Then, by the above claim, we know that $f_K$ converges
to $f$ in $L^q_{\wz}(\rn)$.
Hence, for any given $\vez\in(0,1)$, there exists a $K_0 \in\nn$ large
enough such that $\supp(f-f_{K_0})/\varepsilon\subset\wt{Q}_0$ and
$$\|(f-f_{K_0})/\varepsilon\|_{L^q_{\wz}(\rn)}
\le [\wz(\wt{Q}_0)]^{1/q-1/p}.$$
For $\wt{Q}_0$, since $l(\wt{Q}_0)=r_1>2\rz(x_0)$,
we can decompose it into finite disjoint cubes $\{Q_j\}_j$
such that $\wt Q_0=\bigcup_{j=1}^{N_0}Q_{j}$
and
$l_j/4<\rz(x)\le C_0(3\sqrt{n})^{k_0}l_j$
for some $x\in Q_j=Q(x_j,l_j)$.
Moreover, each $l_j$ satisfies $L_2\rz(x_j)<l_j<L_1\rz(x_j)$.
It is clear that for $q\in(q_{\wz},\,\fz)$ and $p\in(0,\,1]$
we have
$$\|(f-f_{K_0})\chi_{Q_i}/\vez\|_{L^q_{\wz}(\rn)}
\le[\wz(\wt{Q}_0)]^{1/q-1/p}
\le [\wz(Q_j)]^{1/q-1/p},$$
which together with $\supp((f-f_{K_0})\chi_{Q_j}/\vez)\subset Q_j$
implies that $(f-f_{K_0})\chi_{Q_j}/\vez$ is a
$(p,\,q,\,s)_{\wz}$-atom for $j=1,\,2,\cdots,\,N_0$.
Therefore,
$$f=f_{K_0}+\sum_{j=1}^{N_0}\vez \frac{(f-f_{K_0})\chi_{Q_j}}{\vez}$$
 is a finite  weighted atom linear combination of $f$ almost everywhere.
Then take $\vez\equiv N_0^{-1/p}$,
we obtain
$$\|f\|^p_{h^{p,\,q,\,s}_{\rz,\,\fin}(\wz)}
\le \dsum_{(m,i)\in F_K}|\lz_i^m|^p+N_0\vez^p\le C,
$$
which implies the Case I.

\noi Case II: $\wz(\rn)<\fz$. In this case, $f$ may not have compact
support. Similarly to Case I, for any positive integer $K$, let
$$f_K\equiv\sum_{(m,i)\in F_K}\lz^m_i a^m_i+\lz_0 a_0$$
and $b_K\equiv f-f_K$, where $F_K$ is as in Case I.
From the above claim, we
deduce that $f_K$ converges to $f$ in $L^q_{\wz}(\rn)$.
Thus, there exists a positive integer $K_1\in\nn$ large enough such that
$$\l\|b_{K_1}\r\|_{L^q_{\wz}(\rn)}\le [\wz(\rn)]^{1/q-1/p}.$$
Thus, $b_{K_1}$ is a
$(p,\,q)_{\wz}$-single-atom and $f=f_{K_1}+b_{K_1}$ is a finite
 weighted atom linear combination of $f$.
 By Lemma 5.4, we have
$$\|f\|^p_{h^{p,q,s}_{\rho,\fin}(\wz)}
\le C\Big(\dsum_{(m,i)\in F_K}|\lz_i^m|^p+\lz_0^p\Big)\le C.$$
Thus, (6.1) holds, and the theorem is now proved.

\end{proof}

 As an application of finite atomic
decompositions, we establish boundedness in $h^p_\rz(\wz)$ of
quasi- Banach-valued sublinear operators.

As in \cite{BLYZ}, a  quasi-Banach space space ${\cal B}$
is a vector space endowed with a quasi-norm $\|\cdot\|_{{\cal B}}$
which is nonnegative, non-degenerate (i.e., $\|f\|_{\cal B}=0$ if
and only if $f=0$), homogeneous, and obeys the quasi-triangle
inequality, i.e., there exists a positive constant $K$ no less
than $1$ such that for all $f,g\in{\cal B}$, $\|f+g\|_{{\cal
B}}\le K(\|f\|_{\cal B}+\|g\|_{\cal B})$.

Let $\bz\in (0,1]$. A quasi-Banach space ${\cal B}_\bz$ with the
quasi-norm $\|\cdot\|_{{\cal B}_\bz}$ is called a
$\bz$-quasi-Banach space if $\|f+g\|^\bz_{{\cal B}_\bz}\le
\|f\|^\bz_{{\cal B}_\bz}+\|g\|^\bz_{{\cal B}_\bz}$ for all $f,g\in
{\cal B}_\bz$.

Notice that any Banach space is a $1$-quasi-Banach space, and the
quasi-Banach space $l^\bz,\ L^\bz_\wz(\rn)$ and $h^\bz_\wz(\rn)$
with $\bz\in (0,1)$ are typical $\bz$-quasi-Banach spaces.

For any given $\bz$-quasi-Banach space ${\cal B}_\bz$ with $\bz\in
(0,1]$ and  a linear space ${\cal Y}$, an operator $T$ from
 ${\cal Y}$ to ${\cal B}_\bz$ is said to be ${\cal B}_\bz$-sublinear
 if for any $f, g\in {\cal B}_\bz$  and $\lz,\ \nu\in\cc$,
 $$\|T(\lz f+\nu g)\|_{{\cal B}_\bz}\le\l(|\lz|^\bz\|T(f)\|_{{\cal
 B}_\bz}^\bz +|\nu|^\bz\|T(g)\|_{{\cal B}_\bz}^\bz\r)^{1/\bz}$$
and
$\|T(f)-T(g)\|_{{\cal B}_\bz}\le \|T(f-g)\|_{{\cal B}_\bz}$.

We remark that if $T$ is linear, then it is ${\cal B}_\bz$-sublinear.
Moreover, if ${\cal B}_\bz$ is a space of functions, and
$T$ is nonnegative and sublinear in the classical sense, then $T$
is also ${\cal B}_\bz$-sublinear.

\begin{thm}\label{t6.2}
Let $\wz\in A_\fz^{\rz,\,\fz}(\rn), 0<p\le\bz\le 1$, and ${\cal B}_\bz$ be a
$\bz$-quasi-Banach space. Suppose $q\in (q_\wz,\fz)$ and
$T: h^{p,q,s}_{\rho,\,\fin}(\wz)\to {\cal B}_\bz$
is a ${\cal B}_\bz$-sublinear operator  such that
$$
S\equiv\sup\{\|T(a)\|_{{\cal B}_\bz}:
\ a \ {\rm is\ a\ }
(p,q,s)_\wz{\ \rm atom}
\ {\rm or}\ (p,q)_\wz\ {\rm  single\ atom}\}<\fz.
$$
Then there exists a unique bounded ${\cal B}_\bz$-sublinear
operator $\wt T$ from $h^p_\rho(\wz)$ to ${\cal B}_\bz$ which extends $T$.
\end{thm}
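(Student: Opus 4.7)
The plan is to combine the finite atomic decomposition equivalence (Theorem \ref{t6.1}) with the $\beta$-quasi-Banach inequality, and then extend $T$ to all of $h^p_\rho(\wz)$ by density and completeness.

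First I would verify the key bound on the dense subspace $h^{p,q,s}_{\rho,\,\fin}(\wz)$. Fix $f\in h^{p,q,s}_{\rho,\,\fin}(\wz)$ and write $f=\sum_{i=0}^k \lz_i a_i$, where each $a_i$ with $i\ge 1$ is a $(p,q,s)_\wz$-atom, $a_0$ is a $(p,q)_\wz$-single-atom, and
$$\Big[\dsum_{i=0}^k |\lz_i|^p\Big]^{1/p}\le 2\,\|f\|_{h^{p,q,s}_{\rho,\,\fin}(\wz)}.$$
Since $T$ is $\mathcal{B}_\bz$-sublinear and $\mathcal{B}_\bz$ is $\bz$-quasi-Banach, iterating the $\bz$-quasi-triangle inequality on the finite sum gives
$$\|T(f)\|_{\mathcal{B}_\bz}^{\bz}\le \dsum_{i=0}^k |\lz_i|^{\bz}\,\|T(a_i)\|_{\mathcal{B}_\bz}^{\bz}\le S^{\bz}\dsum_{i=0}^k|\lz_i|^{\bz}.$$
Because $p\le\bz\le1$, the embedding $\ell^p\hookrightarrow\ell^{\bz}$ yields $\sum_i|\lz_i|^{\bz}\le(\sum_i|\lz_i|^p)^{\bz/p}$, so
$$\|T(f)\|_{\mathcal{B}_\bz}\le S\Big[\dsum_{i=0}^k|\lz_i|^p\Big]^{1/p}\le 2S\,\|f\|_{h^{p,q,s}_{\rho,\,\fin}(\wz)}.$$
Now I invoke Theorem \ref{t6.1}, which guarantees $\|f\|_{h^{p,q,s}_{\rho,\,\fin}(\wz)}\le C\|f\|_{h^p_\rho(\wz)}$, to conclude $\|T(f)\|_{\mathcal{B}_\bz}\le CS\|f\|_{h^p_\rho(\wz)}$ for every $f\in h^{p,q,s}_{\rho,\,\fin}(\wz)$.

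Next I would extend $T$ to all of $h^p_\rho(\wz)$. By Theorem \ref{t5.1} we have $h^{p,q,s}_{\rho}(\wz)=h^p_\rho(\wz)$ with equivalent quasi-norms, and $h^{p,q,s}_{\rho,\,\fin}(\wz)$ is dense in $h^{p,q,s}_{\rho}(\wz)$ (any $f=\sum_{i\ge0}\lz_i a_i\in h^{p,q,s}_{\rho}(\wz)$ is approximated in quasi-norm by its partial sums, which lie in the finite space). Thus $h^{p,q,s}_{\rho,\,\fin}(\wz)$ is dense in $h^p_\rho(\wz)$. For any $f\in h^p_\rho(\wz)$ take a Cauchy sequence $\{f_m\}\subset h^{p,q,s}_{\rho,\,\fin}(\wz)$ with $f_m\to f$ in $h^p_\rho(\wz)$; by the bound just established and $\mathcal{B}_\bz$-sublinearity,
$$\|T(f_m)-T(f_l)\|_{\mathcal{B}_\bz}\le \|T(f_m-f_l)\|_{\mathcal{B}_\bz}\le CS\,\|f_m-f_l\|_{h^p_\rho(\wz)},$$
so $\{T(f_m)\}$ is Cauchy in the complete space $\mathcal{B}_\bz$, and I define $\wt T(f):=\lim_m T(f_m)$. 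A standard two-sequence argument shows this limit is independent of the approximating sequence; $\mathcal{B}_\bz$-sublinearity and the bound $\|\wt T(f)\|_{\mathcal{B}_\bz}\le CS\|f\|_{h^p_\rho(\wz)}$ pass to the limit; and uniqueness of any bounded extension follows from density.

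The only nonroutine ingredient is the very first bound on $\|T(f)\|_{\mathcal{B}_\bz}$ by the \emph{local Hardy} quasi-norm rather than by an infimum over finite atomic representations, and this is exactly what Theorem \ref{t6.1} provides; without the identity $\|\cdot\|_{h^{p,q,s}_{\rho,\,\fin}(\wz)}\sim\|\cdot\|_{h^p_\rho(\wz)}$ on the finite subspace one could not pass to a bound on infinite decompositions, because distinct infinite atomic decompositions of the same $f$ could have wildly different $\ell^p$ sums. Everything else is the standard $\bz$-quasi-Banach extension argument, so this is the step I would highlight as the conceptual heart of the proof.
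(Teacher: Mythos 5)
Your argument is correct and follows essentially the same route as the paper: establish the uniform bound $\|T(f)\|_{\mathcal{B}_\bz}\le CS\|f\|_{h^p_\rho(\wz)}$ on the dense subspace $h^{p,q,s}_{\rho,\,\fin}(\wz)$ via Theorem \ref{t6.1} (the paper directly selects a near-optimal finite decomposition from Theorem \ref{t6.1}, whereas you first near-infimize the $\|\cdot\|_{h^{p,q,s}_{\rho,\fin}(\wz)}$ norm and then apply the equivalence, but these are the same idea), then extend by density. You have simply filled in the details that the paper compresses into ``by the assumption'' (the iterated $\bz$-quasi-triangle inequality and the $\ell^p\hookrightarrow\ell^\bz$ embedding), which is exactly what a careful reading requires.
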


\begin{proof}
For any $f\in h^{p,q,s}_{\rho,\fin}(\wz)$, by Theorem 6.1,
there exist a set of numbers $\{\lz_j\}_{j=0}^l\subset\cc$,
$(p,\,q,\,s)_\wz$-atoms $\{a_j\}_{j=1}^l$ and  a $(p,\,q)_\wz$ single
atom $a_0$ such that $f=\sum_{j=0}^l\lz_ja_j$ pointwise and
$$\sum_{j=0}^l|\lz_j|^p\le C\|f\|_{h^p_\rho(\wz)}^p.$$
Then by the assumption, we have
$$\|T(f)\|_{{\cal B}_\bz}\le C\bigg[\dsum_{j=0}^l|\lz_j|^p\bigg]^{1/p}
\le C\|f\|_{h^p_\rho(\wz)}.$$
Since $h^{p,\,q,\,s}_{\rho,\,\fin}(\wz)$ is dense
in $h^p_\rho(\wz)$, a density argument gives the desired results.
\end{proof}

\section{Atomic characterization of \wha}

\noi In this section, we
apply the  atomic characterization of the  weighted local
Hardy spaces $h_{\rho}^1(\wz)$ with  $A_1^{\rz,\tz}(\rn)$ weights  to establish atomic characterization of weighted Hardy space \wha\
associated to Schr\"{o}dinger operator  with $A_1^{\rz,\tz}(\rn)$ weights.

Let $\L=-\Delta+V$ be a  Schr\"{o}dinger operator on $\rn,\ n\geq 3$,
where $V \in  RH_{n/2}$ is a fixed non-negative potential.

Let $\{T_t\}_{t>0}$ be the semigroup of linear operators generated by $\L$ and
$T_t(x,y)$ be their kernels, that is,
\beq
T_t f(x)=e^{-t\L}f(x)=\int_{\rn}T_t(x,y)f(y)\,dy,
\qquad {\text{for}}\ t>0\ {\text{and}}\ f\in L^{2}(\rn).
\eeq
Since \v \ is non-negative the Feynman-Kac formula implies that
\beq
0\leq T_t(x,y)\leq \wt{T}_t(x,y)\equiv(4\pi t)^{-\frac{n}{2}}\exp\l(-\frac{|x-y|^{2}}{4t}\r).
\eeq
Obviously, by (1.2) the maximal operator
\beqs
\T^{*}f(x)=\dsup_{t>0}\l|T_tf(x)\r|
\eeqs
is of weak-type (1,1).
A weighted Hardy-type space related to $\L$ with $A_1^{\rz,\tz}(\rn)$ weights
is naturally defined by:
\beq
H_{\L}^{1}(\omega)\equiv\{f\in L^{1}_\wz(\rn): \T^{*}f(x)\in L^{1}_\wz(\rn)\},
\qquad\text{with}
\quad \whaf \equiv\|\T^{*}f\|_{L^{1}_\wz(\rn)}.
\eeq
The $H_{\L}^{1}(\omega)$ with $\wz\in A_1(\rn)$ has been studied in \cite{LTZ,ZL}

Now let us recall some basic properties of kernels $T_t(x,y)$ and the operator $\T^*$

\begin{lem}\label{l7.1}{\rm (see \cite{DZ1})}
For every $l>0$ there is a constant $C_l$ such that
\beq T_t(x,y)\leq C_l (1+|x-y|/\rz(x))^{-l}|x-y|^{-n},\eeq
for $x,y\in \rn$. Moreover, there is an $\vez>0$ such that for every $C'>0$,
there exists $C$ so that
\beq |T_t(x,y)-\wt{T}_t(x,y)|\leq C\frac{(|x-y|/\rz(x))^{\vez}}{|x-y|^n}, \eeq
for $|x-y|\leq C'\rz(x).$
\end{lem}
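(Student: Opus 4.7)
The plan is to derive both estimates from the Duhamel perturbation formula
\[
T_t(x,y) = \wt{T}_t(x,y) - \int_0^t \int_{\rn} \wt{T}_{t-s}(x,z)\, V(z)\, T_s(z,y)\, dz\, ds,
\]
combined with the Fefferman--Phong inequality for $V\in RH_{n/2}(\rn)$ (which controls local integrals of $V$ by powers of $1+R/\rz(x)$) and the slow-variation estimate for $\rz$ from Lemma \ref{l2.1}. The Gaussian bound on $\wt{T}_t$ together with the subordination inequality $0\le T_t\le \wt{T}_t$ will serve as the base case.

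For \eqref{7.4}, I would proceed by induction on $l\in\nn$. The case $l=0$ is immediate from the Gaussian bound $\wt{T}_t(x,y)\ls t^{-n/2}\exp(-c|x-y|^2/t)\ls |x-y|^{-n}$. To pass from $l$ to $l+1$, substitute the inductive bound for $T_s$ into the perturbation formula, split the $z$-integral into the regions $|x-z|\le |x-y|/2$, $|z-y|\le |x-y|/2$, and the complement, apply the Fefferman--Phong estimate
\[
\dint_{B(x_0,R)} V(z)\, dz \ls R^{n-2}\Big(1+\dfrac{R}{\rz(x_0)}\Big)^{N_0}
\]
on dyadic annuli, and use Lemma \ref{l2.1} to compare $\rz(z)$ with $\rz(x)$. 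The Gaussian factor in $\wt{T}_{t-s}$ provides the $t$-integrability while each application of Fefferman--Phong yields an extra factor $(1+|x-y|/\rz(x))^{-1}$. Non-integer $l$ then follows by monotonicity in $l$.

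For \eqref{7.5}, I would apply the perturbation formula once and write
\[
|T_t(x,y)-\wt{T}_t(x,y)|\le \dint_0^t\dint_{\rn}\wt{T}_{t-s}(x,z)\, V(z)\, T_s(z,y)\, dz\, ds.
\]
Fix $|x-y|\le C'\rz(x)$; by Lemma \ref{l2.1} one has $\rz(z)\sim \rz(x)$ on the effective support. Insert \eqref{7.4} with $l$ large to bound $T_s(z,y)$, use the Gaussian decay of $\wt{T}_{t-s}$ in the $z$-variable, and integrate in $s\in(0,t)$. The self-improvement of the reverse H\"older class ($V\in RH_{n/2}$ implies $V\in RH_{n/2+\dz}$ for some $\dz>0$, as recalled in Section 2) produces a scale-invariant H\"older-type gain that upgrades the crude bound by the factor $(|x-y|/\rz(x))^{\vez}$ with $\vez=2-n/(n/2+\dz)>0$.

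The main obstacle will be the first estimate: maintaining the induction uniformly in $t$ while gaining a full extra polynomial factor at every step requires a careful dyadic decomposition around both $x$ and $y$ and a precise bookkeeping of the Fefferman--Phong exponent $N_0$ against the polynomial weight. The second estimate is then a relatively routine consequence once \eqref{7.4} is available, the only subtlety being the use of the improved reverse H\"older exponent to extract a genuine H\"older gain rather than merely a logarithmic one.
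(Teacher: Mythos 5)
This lemma is imported from Dziuba\'{n}ski--Zienkiewicz [DZ1]; the paper offers no proof of its own, so there is no internal argument to compare against. Evaluated on its own merits, your plan for (7.5) is essentially correct: apply the Duhamel identity once to express $\wt T_{t}-T_{t}$ as the nonnegative integral $\int_0^t\int_{\rn}\wt T_{t-s}(x,z)V(z)T_s(z,y)\,dz\,ds$, bound $T_s\le\wt T_s$, restrict to the local regime $|x-y|\le C'\rz(x)$ where Lemma 2.1 gives $\rz(z)\sim\rz(x)$ on the effective support, and exploit the self-improvement $V\in RH_{n/2+\dz}$ to get $\int_{B(x,r)}V\ls r^{n-2}(r/\rz(x))^{\vez}$ for $r\ls\rz(x)$; your expression $\vez=2-n/(n/2+\dz)=4\dz/(n+2\dz)$ is correct, and the intermediate appeal to (7.4) can be dropped since $T_s\le\wt T_s$ already suffices.

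Your proposed inductive proof of (7.4), however, has a genuine gap. Since $V\ge0$, the Duhamel identity
\[
T_t(x,y)=\wt T_t(x,y)-\int_0^t\int_{\rn}\wt T_{t-s}(x,z)\,V(z)\,T_s(z,y)\,dz\,ds
\]
has a nonnegative correction and therefore immediately gives $T_t\le\wt T_t$; but substituting an inductive \emph{upper} bound for $T_s$ into the subtracted integral yields an upper bound on the correction, and hence only the \emph{lower} bound $T_t\ge\wt T_t-(\,\cdot\,)$. It cannot sharpen the upper bound on $T_t$ beyond the trivial Gaussian one. To improve the upper bound you would need a \emph{lower} bound on the Duhamel integral, i.e.\ on $T_s$ itself, which the induction does not supply --- the argument is circular, and no amount of dyadic splitting or Fefferman--Phong bookkeeping can repair the sign. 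The standard route to (7.4) --- the one in [DZ1], building on Shen [Sh] --- first establishes a seed decay of the form $\int_{\rn}T_t(x,z)\,dz\ls(1+\sqrt t/\rz(x))^{-\ez_0}$ for some small $\ez_0>0$ (using Fefferman--Phong together with a Caccioppoli-type or weighted $L^2$ estimate on the resolvent level), and then iterates through the semigroup factorization $T_t=T_{t/k}\circ\cdots\circ T_{t/k}$ via Chapman--Kolmogorov, promoting $\ez_0$ to arbitrary polynomial order $l$. That iteration gains at each step because the semigroup identity is an exact factorization, unlike the one-sided comparison furnished by Duhamel; replacing the Duhamel induction by this semigroup iteration is the correction your sketch needs.
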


Since $T_t(x,y)$ is a symmetric function, we also have
\beq T_t(x,y)\leq C_l (1+|x-y|/\rz(y))^{-l}|x-y|^{-n},\qquad \text{for}\  x,y\in \rn. \eeq

\begin{lem}\label{l7.2}{\rm (see \cite{DZ2})}
There exist a rapidly decaying function $w\ge 0$ and a $\delta>0$
such that
\beq
|T_t(x,y)-\wt{T}_t(x,y)|\leq \l(\frac{\sqrt{t}}{\rz(x)}\r)^{\delta}w_{\sqrt{t}}(x-y).
\eeq
\end{lem}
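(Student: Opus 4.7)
The plan is to derive the estimate from the Duhamel (perturbation) formula
$$\wt{T}_t(x,y)-T_t(x,y)=\dint_0^t\!\!\dint_{\rn}\wt{T}_s(x,z)V(z)T_{t-s}(z,y)\,dz\,ds,$$
which is valid because $V\ge 0$ and both semigroups preserve positivity. Combining this with the Feynman--Kac domination $T_{t-s}(z,y)\le \wt{T}_{t-s}(z,y)$ (recorded in (1.2) of this section) reduces matters to estimating the purely Gaussian quantity
$$I(x,y,t)\equiv \dint_0^t\!\!\dint_{\rn}\wt{T}_s(x,z)V(z)\wt{T}_{t-s}(z,y)\,dz\,ds.$$

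Next I would apply the classical Gaussian product identity
$$\wt{T}_s(x,z)\wt{T}_{t-s}(z,y)=\wt{T}_t(x,y)\,\wt{T}_{\sz(s)}(z-x_s),\qquad x_s\equiv\tfrac{t-s}{t}x+\tfrac{s}{t}y,\ \sz(s)\equiv\tfrac{s(t-s)}{t},$$
to factor out $\wt{T}_t(x,y)$ and rewrite
$$I(x,y,t)=\wt{T}_t(x,y)\dint_0^t J(x_s,\sz(s))\,ds,\qquad J(\xi,\sz)\equiv\dint_{\rn}\wt{T}_\sz(z-\xi)V(z)\,dz.$$
The crucial Shen-type estimate for $J$ follows from $V\in RH_{n/2}$ together with the definition \eqref{2.1} of $\rz$: a dyadic decomposition of $\rn$ into annuli around $\xi$, combined with the elementary bound $r^{2-n}\int_{B(\xi,r)}V\ls (r/\rz(\xi))^{\dz}$ valid for $r\le \rz(\xi)$ (a direct consequence of the reverse H\"older condition applied at the critical scale), produces $\dz\in(0,2)$ and $C>0$ with
$$J(\xi,\sz)\le \dfrac{C}{\sz}\l(\dfrac{\sqrt{\sz}}{\rz(\xi)}\r)^{\dz}\quad\text{whenever }\sqrt{\sz}\le \rz(\xi).$$

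With this in hand, the argument splits according to the size of $\sqrt{t}$ relative to $\rz(x)$. In the regime $\sqrt{t}\le \rz(x)$ every $\sqrt{\sz(s)}\le \sqrt{t}\le \rz(x)$, so the Shen estimate applies; Lemma \ref{l2.1} gives $\rz(x_s)^{-\dz}\ls \rz(x)^{-\dz}(1+|x-y|/\rz(x))^{k_0\dz}$, and the beta-type computation $\dint_0^t\sz(s)^{\dz/2-1}\,ds\sim t^{\dz/2}$ then yields
$$I(x,y,t)\ls \wt{T}_t(x,y)\l(\dfrac{\sqrt{t}}{\rz(x)}\r)^{\dz}(1+|x-y|/\rz(x))^{k_0\dz}.$$
Since $\sqrt{t}\le \rz(x)$, the polynomial factor $(1+|x-y|/\rz(x))^{k_0\dz}$ is dominated by $(1+|x-y|/\sqrt{t})^{k_0\dz}$, which is absorbed into the Gaussian $\wt{T}_t(x,y)$, producing the desired inequality with $w$ of the form polynomial times Gaussian (hence rapidly decaying). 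In the complementary regime $\sqrt{t}>\rz(x)$ the prefactor $(\sqrt{t}/\rz(x))^{\dz}$ already exceeds $1$, and the trivial bound $|T_t(x,y)-\wt{T}_t(x,y)|\le 2\wt{T}_t(x,y)=2\wt w_{\sqrt{t}}(x-y)$ (with $\wt w(u)=(4\pi)^{-n/2}e^{-|u|^2/4}$) closes the argument.

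The main technical obstacle is the passage from $\rz(x_s)$ to $\rz(x)$ uniformly in $s\in(0,t)$, together with the attendant polynomial blow-up $(1+|x-y|/\rz(x))^{k_0\dz}$ coming from Lemma \ref{l2.1}. This is handled by taking $\dz$ small enough that the polynomial factor is dominated by the residual Gaussian in $(x-y)/\sqrt{t}$, and defining $w$ as a suitable common majorant (say a slightly dilated Gaussian) of the two rapidly decaying envelopes produced in the two regimes.
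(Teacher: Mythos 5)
The paper proves nothing here: Lemma~\ref{l7.2} is simply quoted from \cite{DZ2}, so there is no in-paper argument to compare against. Your perturbation-formula strategy (Duhamel expansion, Feynman--Kac domination $T_{t-s}\le\wt T_{t-s}$, the Gaussian product identity pulling out $\wt T_t(x,y)$, Shen's potential estimate at the critical scale, and splitting on $\sqrt t$ versus $\rz(x)$) is indeed the standard route in the Dziuba\'nski--Zienkiewicz papers, and the structure of your proof is sound.

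There is, however, one genuine gap in the regime $\sqrt{t}\le\rz(x)$. Your Shen-type estimate for $J(\xi,\sz)$ is stated under the hypothesis $\sqrt{\sz}\le\rz(\xi)$, but you invoke it with $\xi=x_s$ after only checking $\sqrt{\sz(s)}\le\sqrt t\le\rz(x)$. These are not the same condition: $x_s$ moves from $x$ to $y$ as $s$ ranges over $(0,t)$, and Lemma~\ref{l2.1} only controls $\rz(x_s)$ up to a factor $(1+|x-x_s|/\rz(x))^{\pm k_0}$. If $|x-y|\gg\rz(x)$, then for $s$ close to $t$ one can have $\rz(x_s)\ll\rz(x)$, so $\sqrt{\sz(s)}>\rz(x_s)$ and the conditional Shen estimate does not apply as written. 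The fix uses the same ideas you already introduced: replace the conditional bound on $J$ by the unconditional one
\[
J(\xi,\sz)\;\le\;\frac{C}{\sz}\left(\frac{\sqrt{\sz}}{\rz(\xi)}\right)^{\dz}\left(1+\frac{\sqrt{\sz}}{\rz(\xi)}\right)^{N_1},
\]
obtained by using the doubling-type growth $r^{2-n}\int_{B(\xi,r)}V\ls(r/\rz(\xi))^{N_1}$ on the large annuli $r>\rz(\xi)$; then bound $\sqrt{\sz(s)}/\rz(x_s)\ls(\sqrt t/\rz(x))(1+|x-y|/\rz(x))^{k_0}\le(1+|x-y|/\rz(x))^{k_0}$ by Lemma~\ref{l2.1} and $\sqrt t\le\rz(x)$. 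The resulting polynomial factor $(1+|x-y|/\rz(x))^{k_0(\dz+N_1)}\le(1+|x-y|/\sqrt t)^{k_0(\dz+N_1)}$ is absorbed into the Gaussian exactly as you do for the $k_0\dz$ factor. With this repair the proof is complete.
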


\begin{lem}\label{l7.3}{\rm (see \cite{DZ3})}
 If $V\in RH_s(\rn),\ s>n/2$, then there exist $\dz=\dz(s)>0$ and $c>0$ such that for
 every $N>0$, there is a constant $C_N$ so that, for all $|h|<\sqrt{t}$
\beq |T_t(x+h,y)-T_t(x,y)|\leq C_N \l(\frac{|h|}{\sqrt{t}}\r)^{\dz} t^{-\frac{n}{2}}
 \l(1+\frac{\sqrt{t}}{\rz(x)}+\frac{\sqrt{t}}{\rz(y)}\r)^{-N}
  \exp\l(-\frac{c|x-y|^2}{t}\r).\eeq
\end{lem}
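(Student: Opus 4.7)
The plan is to combine Hölder regularity of the classical Gaussian kernel $\wt{T}_t$ with the perturbation estimates already recorded in Lemmas \ref{l7.1} and \ref{l7.2}, and then to upgrade the polynomial decay factor $(1+\sqrt{t}/\rho(x)+\sqrt{t}/\rho(y))^{-N}$ to arbitrary order $N$ by iterating Duhamel's formula. First, I would establish the baseline estimate, i.e.\ the same bound but without the $(1+\sqrt{t}/\rho)^{-N}$ factor:
\beq
|T_t(x+h,y)-T_t(x,y)|\le C\l(\tfrac{|h|}{\sqrt{t}}\r)^{\dz} t^{-n/2}\exp\l(-\tfrac{c|x-y|^2}{t}\r).
\eeq
This follows by writing $T_t=\wt{T}_t+(T_t-\wt{T}_t)$: the first summand is smooth and its difference enjoys the Lipschitz estimate $|\nabla_x\wt{T}_t(x,y)|\ls t^{-1/2}t^{-n/2}\exp(-c|x-y|^2/t)$, which yields the $(|h|/\sqrt{t})$-Hölder bound by the mean value theorem (and one interpolates with the trivial $L^{\fz}$ bound on $\wt{T}_t$ to obtain the exponent $\dz$). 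For the remainder $T_t-\wt{T}_t$, Lemma \ref{l7.2} gives a pointwise Gaussian-type bound with prefactor $(\sqrt{t}/\rho(x))^{\dz}$; a two-scale argument (bound the increment directly by Lemma \ref{l7.2} when $|h|\ge \sqrt{t}$ and use the smoothness of the underlying Gaussian otherwise) delivers the analogous Hölder increment.

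Next, for the polynomial gain in $\sqrt{t}/\rho$ I would iterate the perturbation formula
\beq
T_t=\wt{T}_t-\int_0^t \wt{T}_{t-s}\,V\,T_s\,ds,
\eeq
replacing the last $T_s$ on the right by the same identity $k$ times, so that $T_t$ becomes a sum of terms of the form $\wt{T}_{t-s_1}V\wt{T}_{s_1-s_2}V\cdots V \wt{T}_{s_{j}}$ plus one remainder still containing $T$. Each appearance of $V$ integrated against a Gaussian at scale $\sqrt{t}$ contributes, via the $RH_s$ hypothesis $V\in RH_s$ with $s>n/2$ and the defining inequality $\rho(x)^{2-n}\int_{B(x,\rho(x))}V\le 1$ (Fefferman--Phong), a factor bounded by $\min\{1,\sqrt{t}/\rho(x)\}^{\vez}$ for some $\vez>0$. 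Applying the Hölder estimate of the first paragraph to exactly one of the Gaussian factors in each iterated term and ordinary Gaussian bounds to the others, and using the standard semigroup convolution identity to re-collect the decay $\exp(-c|x-y|^2/t)$ (with slightly smaller $c$ at each convolution), gives after $k$ iterations
\beq
|T_t(x+h,y)-T_t(x,y)|\ls_N \l(\tfrac{|h|}{\sqrt{t}}\r)^{\dz}t^{-n/2}
\l(1+\tfrac{\sqrt{t}}{\rho(x)}+\tfrac{\sqrt{t}}{\rho(y)}\r)^{-k\vez}\exp\l(-\tfrac{c|x-y|^2}{t}\r),
\eeq
and choosing $k$ so that $k\vez\ge N$ completes the proof. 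For the regime $\sqrt{t}\le\min\{\rho(x),\rho(y)\}$ the extra factor is harmless and the baseline estimate already suffices; the iteration is only needed to beat the polynomial factor in the complementary regime.

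The principal obstacle I anticipate is keeping the Gaussian factor $\exp(-c|x-y|^2/t)$ intact through the iterations: each convolution $\int \wt{T}_{t-s_i}(x,z_i)V(z_i)\wt{T}_{s_i-s_{i+1}}(z_i,z_{i+1})\,dz_i$ must produce a Gaussian in $|x-z_{i+1}|$ with only a slightly degraded constant, which forces a careful choice of how much of the exponential in each factor is spent and a use of the elementary inequality $|x-y|^2/(t-s)+|y-z|^2/s\ge |x-z|^2/t$. In addition, localising the weight $V$ by the critical radius (so that $\int V$ can be estimated using the $RH_s$ condition on the correct ball, where the Gaussian bounds $\wt{T}_{s}$) requires a dyadic decomposition in annuli of radius $\rho(x)$, and the interplay between the scales $|h|,\sqrt{t},\rho(x),\rho(y)$ is what forces the Hölder exponent $\dz$ in the conclusion to depend on $s$ (through the $RH_s$ self-improvement of Gehring type recalled in Section \ref{s2}).
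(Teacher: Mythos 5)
The paper does not prove Lemma \ref{l7.3}; it is cited from \cite{DZ3}. Your reconstruction has a genuine gap in the step that produces the polynomial decay. You correctly observe that each $V$-integral against a Gaussian at scale $\sqrt{t}$ in the iterated Duhamel series contributes a factor bounded by $\min\{1,\sqrt{t}/\rho(x)\}^{\vez}$ (the Shen/Fefferman--Phong self-improvement $r^{2-n}\int_{B(x,r)}V\lesssim(r/\rho(x))^{\vez}$ for $r\le\rho(x)$). But that factor equals $1$ whenever $\sqrt{t}\ge\rho(x)$, so raising it to the $k$th power gives no gain at all in the regime $\sqrt{t}\gg\rho(x)$, which is exactly the regime where $(1+\sqrt{t}/\rho(x))^{-N}$ is small and where the lemma has content. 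The displayed inequality you claim after $k$ iterations silently reverses the direction of decay: $\min\{1,u\}^{k\vez}$ is not bounded by $(1+u)^{-N}$ for any $N>0$, since for $u\ge1$ the former is $1$ and the latter tends to $0$. The actual source of the polynomial decay in $\sqrt{t}/\rho$ is not a perturbation-series term count (the alternating series has essential cancellation there) but a Feynman--Kac/semigroup bootstrap: when $\sqrt{t}\gg\rho(x)$ the potential is large on average on balls of radius $\rho(x)$, one proves a one-step quantitative gain $T_{c\rho(x)^2}(x,y)\le(1-\ez_0)\wt T_{c\rho(x)^2}(x,y)$, and then iterates through $T_t=T_{t/m}\cdots T_{t/m}$ to obtain exponential, hence arbitrary polynomial, decay. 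That ingredient is missing from your plan.

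There is also a secondary problem with the baseline H\"older estimate. Lemma \ref{l7.2} is a size bound on $T_t-\wt T_t$, not a regularity bound, and your two-scale case $|h|\ge\sqrt{t}$ does not arise since the hypothesis is $|h|<\sqrt{t}$. If instead one differentiates the Duhamel integral $\int_0^t\wt T_{t-s}VT_s\,ds$ in $x$, the gradient of $\wt T_{t-s}$ carries $(t-s)^{-1/2}$, and the $s$-integral near $s=t$ behaves like $\int(t-s)^{-1}\min\{1,\sqrt{t-s}/\rho(x)\}^{\vez}\,ds\sim(|h|/\rho(x))^{\vez}$ after cutting at $t-s=|h|^2$, which is not comparable to $(|h|/\sqrt{t})^{\dz}$ once $\rho(x)\ll\sqrt{t}$. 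The clean route is to obtain the local H\"older exponent $\dz=\dz(s)$ from parabolic De Giorgi--Nash--Moser/Schauder regularity for $\partial_t u=\Delta u-Vu$ with $V\in L^s_{\loc}$, $s>n/2$, and then splice it to the polynomial decay of the kernel via the semigroup identity $T_t=T_{t/2}T_{t/2}$, applying the H\"older bound to the first factor and the decay bound to the second, and using Lemma \ref{l2.1} to convert $\rho(z)$ into $\rho(x)$ and $\rho(y)$ at the cost of a lower power of $N$.
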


\begin{lem}\label{l7.4}{\rm (see \cite{BHS1})}
For $1<p<\fz$ the operator $\T^*$ is bounded on $L^p(\wz)$
when $\wz\in A_p^{\rz,\,\fz}(\rn)$, and of weak type (1,1) when
$\wz\in A_1^{\rz,\,\fz}(\rn)$.
\end{lem}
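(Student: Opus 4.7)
The plan is to reduce Lemma 7.4 to pointwise domination by a maximal operator known to be bounded in the weighted setting. Specifically, I would show that there exist $\theta' \ge 0$ (depending on the $\theta$ for which $\wz\in A_p^{\rz,\tz}(\rn)$) and a constant $C>0$ such that for all $f\in L^p_\wz(\rn)$,
\beq
\T^* f(x) \le C\bigl( M^{\loc} f(x) + M_{V,\tz'} f(x)\bigr),\qquad \text{a.e. } x\in\rn. \label{key}
\eeq
Once \eqref{key} is in hand, the strong $L^p_\wz$-boundedness for $p\in(1,\fz)$ follows from Lemma \ref{l2.3} applied to $M_{V,p'\tz'}$ combined with Lemma \ref{l2.4}(vii) for $M^{\loc}$; the weak type $(1,1)$ bound in the $A_1^{\rz,\tz}$ case follows from Lemma \ref{l2.4}(viii) together with a parallel weak-type $(1,1)$ estimate for $M_{V,\tz'}$ (a direct consequence of the definition of $A_1^{\rz,\tz}$, just as in the classical Muckenhoupt argument).

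To prove \eqref{key}, I would fix $t>0$, write $T_t f(x) = \int T_t(x,y) f(y)\,dy$, and split the integration according to the critical radius:
\beqs
T_t f(x) = \int_{|x-y|<\rz(x)} T_t(x,y)\,f(y)\,dy + \sum_{k\ge 0} \int_{2^k\rz(x)\le |x-y|<2^{k+1}\rz(x)} T_t(x,y)\,f(y)\,dy.
\eeqs
For the local integral I would use the Gaussian majorization in (7.2), decompose further into dyadic annuli $\{|x-y|\sim 2^{-j}\rz(x)\}$ for $j\ge 0$, and estimate each piece by an average over a sub-ball of $B(x,\rz(x))$; summing the geometric series in $j$ produces the bound $C\,M^{\loc}f(x)$, uniformly in $t$. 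For each far annulus I would invoke the kernel estimate in Lemma \ref{l7.1} with a sufficiently large exponent $l$:
\beqs
T_t(x,y) \le C_l (1+|x-y|/\rz(x))^{-l}\,|x-y|^{-n},
\eeqs
giving for the $k$-th annulus a contribution of order $2^{-kl}(2^k\rz(x))^{-n}\!\int_{B(x,2^{k+1}\rz(x))}|f|$. Multiplying and dividing by $\Psi_{\tz'}(B(x,2^{k+1}\rz(x)))\sim 2^{k\tz'}$ converts this into $C\,2^{-k(l-n-\tz')}\,M_{V,\tz'}f(x)$. Choosing $l>n+\tz'$ makes the series in $k$ summable, yielding the global contribution $C\,M_{V,\tz'}f(x)$.

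The main obstacle I anticipate is the coordination between the decay index $l$ and the weight parameters: the larger $\tz'$ is, the more decay of $T_t(x,y)$ in $|x-y|/\rz(x)$ is required in order to absorb the factor $\Psi_{\tz'}$ appearing in the weighted maximal operator; fortunately Lemma \ref{l7.1} provides arbitrarily large $l$. A secondary technical point is verifying that \eqref{key} is uniform in $t>0$, which requires handling both the small-$t$ regime (where the Gaussian bound dominates and the local term carries the action) and the large-$t$ regime (where the global decay factor $(1+|x-y|/\rz(x))^{-l}$ is essential). Once uniformity is established, taking the supremum in $t$ on the left-hand side of \eqref{key} concludes the proof.
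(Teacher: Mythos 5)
The paper itself offers no proof of Lemma 7.4: it is quoted verbatim from \cite{BHS1} and used as a black box. So the only question is whether your blind sketch is a viable proof, and whether it matches what \cite{BHS1} actually does. Your pointwise-domination strategy --- split $T_t(x,y)$ at the scale $\rz(x)$, absorb the local piece into $M^{\loc}$ via the Gaussian majorant \eqref{2.2}/(7.2), and absorb the global piece into $M_{V,\tz'}$ via Lemma \ref{l7.1} after multiplying and dividing by $\Psi_{\tz'}$ on dyadic annuli --- is indeed the standard and correct route, and is essentially the argument of \cite{BHS1}. Two small remarks on the bookkeeping: the geometric sum over annuli closes already for $l>\tz'$ (your $l>n+\tz'$ is a harmless overshoot, since the $|x-y|^{-n}$ factor cancels against the volume of the annulus), and the exponent in Lemma \ref{l2.3} means you should take $\tz'=p'\tz$ where $\tz$ is the exponent for which $\wz\in A_p^{\rz,\tz}$, rather than applying Lemma \ref{l2.3} to $M_{V,p'\tz'}$.

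The one genuine gap is your final parenthetical, ``a parallel weak-type $(1,1)$ estimate for $M_{V,\tz'}$ \dots a direct consequence of the definition of $A_1^{\rho,\tz}$, just as in the classical Muckenhoupt argument.'' This is glossed over too quickly. The classical argument uses a Vitali covering plus the doubling of $\wz$, but weights in $A_1^{\rz,\fz}(\rn)$ need not be doubling (the paper's own example $\wz(x)=(1+|x|)^{-(n+\gz)}$ is not), and the $\Psi_{\tz'}$ factor in the definition of $M_{V,\tz'}$ depends on the center and radius of each ball, which complicates the dilation step. The paper does not list a weak $(1,1)$ bound for $M_{V,\tz}$ among the properties in Lemma \ref{l2.4}; only $M^{\loc}$ appears there. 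So this step would need an honest Besicovitch-type covering argument, or else should be sidestepped. A cleaner repair, consistent with the techniques already used in Section 7: do not replace the global piece by a maximal operator at all. Instead note that the supremum over $t$ commutes trivially with the $t$-independent kernel bound of Lemma \ref{l7.1}, giving
$$
\sup_{t>0}\int_{|x-y|\ge\rz(x)}T_t(x,y)|f(y)|\,dy \;\ls\; \int_{\rn}\frac{|f(y)|}{\rz(x)^n}\left(1+\frac{|x-y|}{\rz(x)}\r)^{n-l}dy,
$$
a \emph{fixed} integral operator. By Fubini, its $L^1_{\wz}$-to-$L^1_{\wz}$ (hence a fortiori weak $(1,1)$) bound reduces to the estimate $\int_{\rn}\rz(x)^{-n}(1+|x-y|/\rz(x))^{n-l}\wz(x)\,dx \le C\wz(y)$ for $\wz\in A_1^{\rz,\fz}(\rn)$ and $l$ large, which is precisely the computation carried out inside the proof of Lemma \ref{l7.5} of this paper. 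With that substitution, your outline becomes a complete proof.
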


In order to achieve the desired conclusions,  we need the following estimates.

\begin{lem}\label{l7.5}
Let $\wz\in A_1^{\rz,\fz}(\rn)$, then
there exists a positive constant $C$ such that
for all $f\in h^1_{\rho}(\wz)$,
\beq
\|f\|_{h^1_{\rho}(\wz)}\le C \|\wt T^+_{\rz}(f)\|_{L^1_{\wz}(\rn)},
\eeq
where
$$\wt T^+_{\rz}(f)(x)\equiv \sup_{0<t<\rz(x)}|\wt T_{t^2}(f)(x)|$$
and
$$\wt T_{t}(f)(x)\equiv \int_{\rn}\wt T_{t}(x,y)f(y)\,dy.$$
\end{lem}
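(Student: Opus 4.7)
The plan is to apply Theorem \ref{t3.2} to reduce the claim to showing $\|\psi_0^+(f)\|_{L^1_\wz(\rn)}\le C\|\wt T^+_\rz(f)\|_{L^1_\wz(\rn)}$ for some fixed $\psi_0\in\cdd(\rn)$ with $\int\psi_0=1$; I will choose such a $\psi_0$ with $\supp\psi_0\subset B(0,\dz)$ for some small $\dz>0$ (to be fixed depending on the constants $C_0,k_0$ of Lemma \ref{l2.1}). The strategy is to dominate $\psi_0^+(f)$ pointwise by a local Peetre-type tangential maximal function of the heat kernel, and then control that maximal function in $L^1_\wz(\rn)$ by $\wt T^+_\rz(f)$ via an adaptation of the techniques of Section \ref{s3}.

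The pointwise bound comes from the heat-semigroup identity $\pz_t\wt T_{t^2}=2t\,\bdz\,\wt T_{t^2}$, which for $0<t_0<\rz(x)$ gives $f=\wt T_{t_0^2}f-\int_0^{t_0}2s\,\bdz\,\wt T_{s^2}f\,ds$. Convolving with $(\psi_0)_{t_0}$ and using $\bdz(\psi_0)_{t_0}=t_0^{-2}(\bdz\psi_0)_{t_0}$ yields
$$(\psi_0)_{t_0}\ast f(x)=(\psi_0)_{t_0}\ast\wt T_{t_0^2}f(x)-\int_0^{t_0}2s\,t_0^{-2}(\bdz\psi_0)_{t_0}\ast\wt T_{s^2}f(x)\,ds.$$
Introducing the local Peetre-type tangential maximal function of the heat kernel,
$$\wt T^{\ast\ast}_{\rz,A,B}(f)(x)\equiv\sup_{0<t<\rz(x),\,y\in\rn}\frac{|\wt T_{t^2}f(x-y)|}{(1+|y|/t)^A\,2^{B|y|/\rz(x)}},$$
and exploiting that $\supp\psi_0,\supp\bdz\psi_0\subset B(0,\dz)$ to bound $\int|\psi_0(z)|(1+|z|)^A 2^{B\dz|z|}\,dz$ and the analogous integral with $\bdz\psi_0$ by a constant, one arrives at $\psi_0^+(f)(x)\le C\,\wt T^{\ast\ast}_{\rz,A,B}(f)(x)$ for every $A,B>0$.

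Next, in the spirit of Lemmas \ref{l3.2} and \ref{l3.4}, the key technical step is to establish the Peetre-type estimate, for every $A,B,r>0$ with $Ar>n$,
$$\bigl[\wt T^{\ast\ast}_{\rz,A,B}(f)(x)\bigr]^r\le C\bigl[M^{\loc}\bigl((\wt T^+_\rz f)^r\bigr)(x)+K_{Br}\bigl((\wt T^+_\rz f)^r\bigr)(x)\bigr].$$
The derivation follows the scheme of Lemma \ref{l3.4}, with the semigroup identity $\wt T_{t^2}\wt T_{s^2}=\wt T_{t^2+s^2}$ (equivalently, $\vz_t\ast\vz_s=\vz_{\sqrt{t^2+s^2}}$ with $\vz(x)=(4\pi)^{-n/2}e^{-|x|^2/4}$) playing the role of the local Calder\'on reproducing formula from Lemma \ref{l3.1}, allowing one to express $\wt T_{t^2}f$ at scale $t$ in terms of itself at finer scales. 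Lemma \ref{l2.1} ensures $\rz(x-y)\sim\rz(x)$ on $|y|\le\rz(x)$ so that $|\wt T_{s^2}f(x-y)|\le\wt T^+_\rz(f)(x-y)$ (after a fixed dilation of $\rz$, permitted by the remark following Lemma \ref{l2.1}); the tail $|y|>\rz(x)$ is absorbed into the $K_{Br}$ term using the Gaussian decay of $\vz$.

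Finally, fix $r\in(0,1)$ with $Ar>n$ and $Br>B_0$ (where $B_0$ is as in Lemma \ref{l3.3}). Since $\wz\in A_1^{\rz,\fz}(\rn)\subset A_{1/r}^{\rz,\fz}(\rn)$ by Lemma \ref{l2.4}(i), Lemma \ref{l2.4}(vii) yields that $M^{\loc}$ is bounded on $L^{1/r}_\wz(\rn)$, and Lemma \ref{l3.3} yields the same for $K_{Br}$. Combining the two displayed estimates above with the elementary inequality $(a+b)^{1/r}\le 2^{1/r}(a^{1/r}+b^{1/r})$ (valid since $1/r>1$) and integrating against $\wz$ gives $\|\psi_0^+(f)\|_{L^1_\wz(\rn)}\ls\|\wt T^{\ast\ast}_{\rz,A,B}(f)\|_{L^1_\wz(\rn)}\ls\|\wt T^+_\rz(f)\|_{L^1_\wz(\rn)}$, and Theorem \ref{t3.2} then delivers $\|f\|_{h^1_\rz(\wz)}\ls\|\wt T^+_\rz(f)\|_{L^1_\wz(\rn)}$. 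The main obstacle is the Peetre-type estimate in the third paragraph: because the Gaussian is not compactly supported, the reproducing formula from Lemma \ref{l3.1} does not apply verbatim, and the large-$|y|$ contribution must be treated by combining the rapid decay of $\vz$ with the semigroup identity and the weighted boundedness of $K_{Br}$.
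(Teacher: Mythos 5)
Your approach is genuinely different from the paper's, and as written it has gaps that the paper's argument avoids. The paper does not try to run the Peetre-type/reproducing-formula machinery of Section \ref{s3} with the heat kernel in place of a compactly supported $\psi_0$. Instead it sets $h(x)=(4\pi)^{-n/2}e^{-|x|^2/4}$ (so $h_t(x-y)=\wt T_{t^2}(x,y)$), chooses a nonnegative $\vz\in\cdd(\rn)$ that \emph{coincides with $h$ on $B(0,2)$}, and notes $\vz^+(f)\le\vz^+_\rz(f)\equiv\sup_{0<t<\rz(x)}|\vz_t*f(x)|$. Since $\vz-h$ vanishes near the origin and decays rapidly, the pointwise difference $\vz^+_\rz(f)-\wt T^+_\rz(f)$ is controlled by an integral operator whose $L^1_\wz(\rn)$ norm is bounded using only the $A_1^{\rz,\fz}$ condition and Lemma \ref{l2.1}; together with the trivial bound $\|f\|_{L^1_\wz(\rn)}\le\|\wt T^+_\rz(f)\|_{L^1_\wz(\rn)}$ and Theorem \ref{t3.2}, the lemma follows. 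This is a short perturbation argument that needs no new reproducing formula and no new Peetre estimate.

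Your second paragraph already breaks down. After differentiating the semigroup to get
$$(\psi_0)_{t_0}*f(x)=(\psi_0)_{t_0}*\wt T_{t_0^2}f(x)-\int_0^{t_0}\frac{2s}{t_0^2}\,(\bdz\psi_0)_{t_0}*\wt T_{s^2}f(x)\,ds,$$
you estimate $(\bdz\psi_0)_{t_0}*\wt T_{s^2}f(x)=\int(\bdz\psi_0)(z)\,\wt T_{s^2}f(x-t_0z)\,dz$ using the compact support of $\bdz\psi_0$ and the maximal function $\wt T^{**}_{\rz,A,B}$. But $\wt T^{**}_{\rz,A,B}$ controls oscillation at the semigroup scale, so the correct pointwise bound is $|\wt T_{s^2}f(x-t_0z)|\le\wt T^{**}_{\rz,A,B}(f)(x)\,(1+t_0|z|/s)^A\,2^{Bt_0|z|/\rz(x)}$, not one involving $(1+|z|)^A$. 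The factor $(1+t_0|z|/s)^A$ blows up as $s\to0$, and $\int_0^{t_0}\frac{2s}{t_0^2}(1+t_0\dz/s)^A\,ds$ diverges for $A\ge 2$; since your weighted step forces $Ar>n$ with $r<1$, hence $A>n$, the claimed pointwise domination $\psi_0^+(f)\le C\wt T^{**}_{\rz,A,B}(f)$ is not established. Repairing it would mean exploiting the vanishing moments of $\bdz\psi_0$ via a Taylor-remainder argument in $z$ to gain the missing power of $s/t_0$, a substantial extra computation. The Peetre-type estimate you flag as the ``main obstacle'' is a second real gap: the key bound (3.10) in Lemma \ref{l3.2} uses both the moment conditions \emph{and} the compact support of the $\vz$ from Lemma \ref{l3.1}, and neither is available when the role of $\vz$ is played by the Gaussian, so the argument of Section \ref{s3} does not transfer verbatim. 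Both difficulties are bypassed entirely by the paper's perturbation argument.
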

\begin{proof}
Let $h(x)=(4\pi)^{-n/2}e^{-|x|^2/4}$, then it is easy to find that
$h_t(x-y)=\wt T_{t^2}(x,y)$. Now we take a nonnegative
function $\varphi\in \cdd(\rn)$ such that $\vz(x)=h(x)$
on $B(0,2)$, and we define $\vz^+_{\rz}(f)(x)$ as follows:
$$\vz^+_{\rz}(f)(x)\equiv \sup_{0<t<\rz(x)}|\vz_t*f(x)|.$$
Clearly, for any $x\in\rn$, we have
\beq
\vz^+(f)(x)\le\vz^+_{\rz}(f)(x),
\eeq
see (3.4) for the definition of $\vz^+(f)(x)$.

Let $f\in h^1_{\rz}(\wz)$, for every $N>0$ we have:
\beqs
\begin{aligned}
&\l\|\vz^+_{\rz}(f)-\wt T^+_{\rz}(f)\r\|_{L^1_{\wz}(\rn)}\\
&\le \int_{\rn} \sup_{0<t<\rz(x)}
|\vz_t*f(x)-h_t*f(x)|\,\wz(x)\,dx\\
\end{aligned}
\eeqs
\beqs
\begin{aligned}
&\le \int_{\rn} \l(\sup_{0<t<\rz(x)}
t^{-n} \int_{\rn}
|f(y)|\l|\vz\l(\frac{x-y}{t}\r)-h\l(\frac{x-y}{t}\r)\r|\,dy\r)\wz(x)\,dx\\
&\le \int_{\rn} \l(\sup_{0<t<\rz(x)}
t^{-n} \int_{\rn}
|f(y)|\l|\vz\l(\frac{x-y}{t}\r)-h\l(\frac{x-y}{t}\r)\r| \chi_{\{|y-x|>t\}}(y)\,dy\r)\wz(x)\,dx\\
&\le C \int_{\rn}\l( \int_{\rn}|f(y)|
\sup_{0<t<\rz(x)} t^{-n}  \l(1+\frac{|x-y|}{t}\r)^{-N}\chi_{\{|y-x|>t\}}(y)\,dy\r)\wz(x)\,dx\\
&\le C\int_{\rn}|f(y)|\l( \int_{\rn}
(\rz(x))^{-n} \l(1+\frac{|x-y|}{\rz(x)}\r)^{-N}
 \wz(x)\,dx\r)dy.
\end{aligned}
\eeqs
In the last inequality, we used the following facts that
$$\sup_{0<t<\rz(x)} t^{-n}  \l(1+\frac{|x-y|}{t}\r)^{-N}\le (\rz(x))^{-n} \l(1+\frac{|x-y|}{\rz(x)}\r)^{-N},$$  provided that
$|x-y|>t$ and $N>2n$.

We now estimate the inner integral in the last inequality. In fact,
\beqs
\begin{aligned}
&\int_{\rn}
(\rz(x))^{-n} \l(1+\frac{|x-y|}{\rz(x)}\r)^{-N}  \wz(x)\,dx\\
&=\int_{|x-y|<\rz(y)} (\rz(x))^{-n} \l(1+\frac{|x-y|}{\rz(x)}\r)^{-N} \wz(x)\,dx\\
&\qquad+\int_{|x-y|\ge \rz(y)} (\rz(x))^{-n} \l(1+\frac{|x-y|}{\rz(x)}\r)^{-N} \wz(x)\,dx\\
&\equiv I+II.
\end{aligned}
\eeqs
For $I$,
since $N$ is large enough and (2.2), we have
\beqs
\begin{aligned}
I&\le \frac{C}{(\rz(y))^n}
\int_{|x-y|<\rz(y)} \wz(x)\,dx
\le C  \Psi_\tz(\wt B_0) M_{V,\tz}(\wz)(y)\le C \wz(y),
\end{aligned}
\eeqs
where $\wt B_0=B(y,\rz(y))$.

\noi For $II$, by the same reason as above, we have
\beqs
\begin{aligned}
II&\le C\sum_{i=1}^{\fz}
\int_{|x-y|\sim 2^i\rz(y)} (\rz(x))^{N-n}|x-y|^{-N}\wz(x)\,dx\\
&\le C \sum_{i=1}^{\fz}
\int_{|x-y|\sim 2^i\rz(y)} (\rz(y))^{N-n}
\l(1+ \frac{|x-y|}{\rz(y)}\r)^{\frac{k_0(N-n)}{k_0+1}} |x-y|^{-N}\wz(x)\,dx\\
&\le C \sum_{i=1}^{\fz}
\int_{|x-y|\sim 2^i\rz(y)} (\rz(y))^{N-n}
\l(1+ 2^i\r)^{\frac{k_0(N-n)}{k_0+1}} (2^i\rz(y))^{-N}\wz(x)\,dx\\
\end{aligned}
\eeqs
\beqs
\begin{aligned}
&\le C \sum_{i=1}^{\fz}
(2^{-i})^{\frac{N+nk_0}{k_0+1}}
\frac{1}{(\rz(y))^n} \int_{|x-y|<2^i\rz(y)} \wz(x)\,dx\\
&\le C \sum_{i=1}^{\fz} (2^{-i})^{\frac{N+nk_0}{k_0+1}}
\l(1+2^i\r)^{\tz} M_{V,\tz}(\wz)(y)\\
&\le C \sum_{i=1}^{\fz} (2^{-i})^{\frac{N+nk_0}{k_0+1}-\tz} \wz(y)
\le C\wz(y),
\end{aligned}
\eeqs
and the last inequality holds because the real number $N$ is large enough.

\noi Combining the above two estimates, we get
\beq
\l\|\vz^+_{\rz}(f)-\wt T^+_{\rz}(f)\r\|_{L^1_{\wz}(\rn)}
\le C\int_{\rn}|f(y)|\wz(y)\,dy
=C\|f\|_{L^1_{\wz}(\rn)}.
\eeq
In addition, it is easy to get
$\|f\|_{L^1_{\wz}(\rn)}\le\|\wt T^+_{\rz}f\|_{L^1_{\wz}(\rn)}$.
Therefore, we obtain
\beq
\l\|\vz^+_{\rz}(f)\r\|_{L^1_{\wz}(\rn)}
\le \l\|\wt T^+_{\rz}(f)\r\|_{L^1_{\wz}(\rn)}+C \|f\|_{L^1_{\wz}(\rn)}
\le C\l\|\wt T^+_{\rz}(f)\r\|_{L^1_{\wz}(\rn)}.
\eeq

Finally, from Theorem 3.2, (7.10) and (7.12), it follows that
$$\|f\|_{h^1_{\rz}(\wz)}
\le C \l\|\vz^+(f)\r\|_{L^1_{\wz}(\rn)}
\le C \l\|\vz^+_{\rz}(f)\r\|_{L^1_{\wz}(\rn)}
\le C \|\wt T^+_{\rz}(f)\|_{L^1_{\wz}(\rn)},$$
which finishes the proof.
\end{proof}

For $x,y\in\rn$, set $E_t(x,y)=T_{t^2}(x,y)-\wt T_{t^2}(x,y)$,
$$T^+_{\rz}(f)(x)\equiv \sup_{0<t<\rz(x)}| T_{t^2}(f)(x)|
\quad\text{and} \quad E^+_{\rz}(f)(x)\equiv \sup_{0<t<\rz(x)}| E_{t}(f)(x)|.$$

\begin{lem}\label{l7.6}
Let $\wz\in A_1^{\rz,\fz}(\rn)$, then
there exists a positive constant $C$ such that
for all $f\in L^1_{\wz}(\rn)$,
\beqs
\|E^+_{\rz}(f)\|_{L^1_{\wz}(\rn)}\le C \|f\|_{L^1_{\wz}(\rn)}.
\eeqs
\end{lem}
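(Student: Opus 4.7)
The plan is to reduce the statement to a kernel bound and then exploit the $A_1^{\rz,\tz}$-structure of $\wz$. By Lemma \ref{l7.2}, there exist $\dz>0$ and a rapidly decaying $w\ge 0$ such that
\beqs
|E_t(x,y)|=|T_{t^2}(x,y)-\wt T_{t^2}(x,y)|\le\l(\frac{t}{\rz(x)}\r)^{\dz}w_t(x-y).
\eeqs
Since the right-hand side is nonnegative,
\beqs
E^+_\rz(f)(x)\le\int_\rn K(x,y)|f(y)|\,dy,\quad K(x,y)\equiv\sup_{0<t<\rz(x)}\l(\frac{t}{\rz(x)}\r)^{\dz}w_t(x-y),
\eeqs
and by Fubini it suffices to show $\int_\rn K(x,y)\wz(x)\,dx\le C\wz(y)$ for a.e.\ $y\in\rn$.

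Next I would compute the pointwise size of $K(x,y)$. Writing $w_t(z)\le C_M t^{-n}(1+|z|/t)^{-M}$ for arbitrarily large $M$ and doing elementary calculus in $t\in(0,\rz(x))$ (the sup is attained near $t\sim|x-y|$ in the near regime and at $t=\rz(x)$ in the far regime) gives
\beqs
K(x,y)\ls \begin{cases} \rz(x)^{-\dz}|x-y|^{\dz-n}, & |x-y|\le\rz(x), \\ \rz(x)^{M-n}|x-y|^{-M}, & |x-y|>\rz(x). \end{cases}
\eeqs

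Finally I would bound $\int_\rn K(x,y)\wz(x)\,dx$ separately on each regime. In the near regime, Lemma \ref{l2.1} yields $\rz(x)\sim\rz(y)$ and $|x-y|\le C\rz(y)$, and an annular decomposition $|x-y|\sim 2^{-k}\rz(y)$, $k\ge 0$, combined with the $A_1^{\rz,\tz}$-consequence $\wz(B(y,r))\ls|B(y,r)|(1+r/\rz(y))^{\tz}\wz(y)$, collapses the integral into $\wz(y)\sum_{k\ge 0}2^{-k\dz}\ls\wz(y)$. In the far regime, $|x-y|>\rz(x)$ together with \eqref{2.2} forces $|x-y|\ge c\rz(y)$, and the upper bound $\rz(x)\le C\rz(y)(1+|x-y|/\rz(y))^{k_0/(k_0+1)}$ from Lemma \ref{l2.1} turns the far-regime kernel into $\rz(y)^{(M-n)/(k_0+1)}|x-y|^{-(M+nk_0)/(k_0+1)}$ up to a constant. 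A further annular decomposition $|x-y|\sim 2^k\rz(y)$, $k\ge 0$, and the same $A_1^{\rz,\tz}$-bound yield $\wz(y)\sum_{k\ge 0}2^{-k[(M-n)/(k_0+1)-\tz]}\ls\wz(y)$ provided $M>n+(k_0+1)\tz$.

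The main obstacle is the far-regime bookkeeping: the upper bound on $\rz(x)$ in terms of $\rz(y)$ from \eqref{2.2} generates a polynomial growth factor $(1+|x-y|/\rz(y))^{k_0(M-n)/(k_0+1)}$ that has to be balanced simultaneously against the built-in decay $|x-y|^{-M}$ from $w$ and the growth of $\wz(B(y,r))$ in $r$ coming from the $A_1^{\rz,\tz}$-condition; one must choose $M$ large enough relative to $n$, $k_0$, and $\tz$ to ensure the resulting geometric series converges.
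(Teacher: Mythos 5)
Your argument is correct, but it takes a genuinely different route from the paper's. The paper first invokes the covering by critical balls from Lemma \ref{l2.2} to reduce to the localized estimate $\|E^+_{\rz}(\chi_{B^*_j}f)\|_{L^1_{\wz}(\rn)}\ls \|\chi_{B^*_j}f\|_{L^1_{\wz}(\rn)}$, and then splits $x$ into $B_j^{**}$ (where it uses the uniform-in-$t$ bound \eqref{7.5} from Lemma \ref{l7.1}) and $(B_j^{**})^\complement$ (where it uses the rapidly decaying bound from Lemma \ref{l7.2}). You skip the localization entirely and work with the single kernel $K(x,y)=\sup_{0<t<\rz(x)}(t/\rz(x))^\dz w_t(x-y)$, reducing by Tonelli to a dual pointwise bound $\int K(x,y)\wz(x)\,dx\ls\wz(y)$ and then verifying that bound in the two regimes $|x-y|\le\rz(x)$ and $|x-y|>\rz(x)$. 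Your kernel computations are right: the sup is achieved at $t\sim|x-y|$ in the near regime giving $\rz(x)^{-\dz}|x-y|^{\dz-n}$, and at $t\sim\rz(x)$ in the far regime giving $\rz(x)^{M-n}|x-y|^{-M}$; Lemma \ref{l2.1} correctly converts $\rz(x)$ to $\rz(y)$ in each regime, and the $A_1^{\rz,\tz}$-bound $\wz(B(y,r))\ls|B(y,r)|(1+r/\rz(y))^{\tz}\wz(y)$ then makes both annular sums geometric; the constraint $M>n+(k_0+1)\tz$ is exactly what is needed for the far-regime series. The trade-off is that your route needs only Lemma \ref{l7.2} (not Lemma \ref{l7.1}) and avoids the ball-covering bookkeeping, at the cost of explicitly optimizing the sup over $t$ in the kernel bound, whereas the paper's near-regime estimate \eqref{7.5} is already $t$-free; both are correct and roughly comparable in length.
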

\begin{proof}
By Lemma 2.2, it suffices to prove that for all $j$,
\beq
\|E^+_{\rz}(\chi_{B^*_j}f)\|_{L^1_{\wz}(\rn)}\le C \|\chi_{B^*_j}f\|_{L^1_{\wz}(\rn)},
\eeq
in which $B_j=B(x_j,\rz(x_j))$.
For any $x\in B^{**}_j$ and $y\in B^{*}_j$, since
$\rz(y)\sim\rz(x_j)\sim\rz(x)$ via Lemma 2.1,
by (7.5) we have
$$|E_t(x,y)|\le C \frac{(|x-y|/\rz(x))^{\vez}}{|x-y|^n}
\le \frac{C}{|x-y|^{n-\vez}(\rz(x_j))^{\vez}},$$
which implies that
\beqs
\begin{aligned}
\int_{B_j^{**}}&\sup_{0<t<\rz(x)} |E_{t}(\chi_{B^*_j}f)|\wz(x)\,dx\\
&\le C \int_{B_j^{**}}
\l(\int_{B_j^{*}} \frac{|f(y)|}{|x-y|^{n-\vez}(\rz(x_j))^{\vez}}\,dy\r)\wz(x)\,dx\\
&\le C \int_{B_j^{*}}
\l(\int_{B_j^{**}}  \frac{\wz(x)}{|x-y|^{n-\vez}(\rz(x_j))^{\vez}}\,dx\r)|f(y)|\,dy\\
& \le C \int_{B_j^{*}}
\l(\sum_{k=-2}^{\fz} \int_{|x-y|\sim 2^{-k}\rz(x_j)}
\frac{\wz(x)}{|x-y|^{n-\vez}(\rz(x_j))^{\vez}}\,dx\r)|f(y)|\,dy\\
& \le C \int_{B_j^{*}}
\l(\sum_{k=-2}^{\fz}
\frac{\wz(B(y,2^{-k}\rz(x_j)))}{(2^{-k}\rz(x_j))^{n-\vez}(\rz(x_j))^{\vez}}\,dx\r)
|f(y)|\,dy\\
& \le C \int_{B_j^{*}}
\l(\sum_{k=-2}^{\fz} \frac{1}{2^{k\vez}}\l(1+C_02^{k_0-k}\r)^{\tz}
\wz(y)\r) |f(y)|\,dy\\
& \le C \int_{B_j^{*}} |f(y)| \wz(y) \,dy
= C \|\chi_{B^*_j}f\|_{L^1_{\wz}(\rn)}.
\end{aligned}
\eeqs
For any $x\in (B^{**}_j)^{\complement}$ and $y\in {B^*_j}$,
it is easy to see that $\rz(x_j)\lesssim|x-x_j|\sim|x-y|$;
in addition,  by (2.2) and (7.7), we have
$0<t<\rz(x)\lesssim |x-x_j|^{k_0/(k_0+1)}(\rz(x_j))^{1/(k_0+1)}$
and $E_t(x,y)\lesssim t^N/|x-y|^{N+n}\sim t^N/|x-x_j|^{N+n}$
for any $N>0$. Therefore, taking $N>(k_0+1)\tz$, we have
\beqs
\begin{aligned}
\int_{(B_j^{**})^{\complement}}
&\sup_{0<t<\rz(x)} |E_{t}(\chi_{B^*_j}f)|\,\wz(x)\,dx\\
&\le C \int_{(B_j^{**})^{\complement}}
\l(\int_{B_j^{*}} \frac{(\rz(x_j))^{\frac{N}{k_0+1}}|f(y)|}
{|x-x_j|^{n+\frac{N}{k_0+1}}}\,dy\r)
\wz(x)\,dx\\
&\le C \int_{B_j^{*}}
\l(\int_{(B_j^{**})^{\complement}} \frac{(\rz(x_j))^{\frac{N}{k_0+1}}\wz(x)}
{|x-x_j|^{n+\frac{N}{k_0+1}}}\,dx\r)
|f(y)|\,dy\\
&\le C \int_{B_j^{*}}
\l(\sum_{i=2}^{\fz}\int_{|x-x_j|\sim 2^i\rz(x_j)}
\frac{(\rz(x_j))^{\frac{N}{k_0+1}}\wz(x)}
{|x-x_j|^{n+\frac{N}{k_0+1}}}\,dx\r)
|f(y)|\,dy\\
&\le C \int_{B_j^{*}}
\l(\sum_{i=2}^{\fz}
\frac{(\rz(x_j))^{\frac{N}{k_0+1}}\wz(B(x_j,2^i\rz(x_j)))}
{(2^i\rz(x_j))^{n+\frac{N}{k_0+1}}}\,dx\r)
|f(y)|\,dy\\
&\le C \int_{B_j^{*}}
\l(\sum_{i=2}^{\fz}
\frac{(1+2^i)^{\tz}}{(2^i)^{\frac{N}{k_0+1}}}\wz(y)\r)
|f(y)|\,dy\\
&\le C \int_{B_j^{*}} |f(y)| \wz(y)\,dy
= C \|\chi_{B^*_j}f\|_{L^1_{\wz}(\rn)},
\end{aligned}
\eeqs
which completes the proof of (7.13) and hence the proof of lemma.
\end{proof}

Next we give several estimates about
$(p,\,q,\,s)_{\wz}$-atoms and
$(p,\,q)_{\wz}$-single-atom,
which are important for our conclusion.

\begin{lem}\label{l7.7}
Let $a$ be a $(p,q,s)_\wz$-atom, and $\supp \, a\subset Q(x_0,r)$,
then for any $x\in(4Q)^{\complement}$, we have following estimates:
\begin{enumerate}
\item[\rm(i)] If $L_2\rz(x_0)\le r\le L_1\rz(x_0)$, then for any $M>0$,
$$\T^{*}a(x)\lesssim \|a\|_{L^1(\rn)}\frac{r^M}{|x-x_0|^{n+M}},$$
\item[\rm(ii)] If $r< L_2\rz(x_0)$ and $|x-x_0|\le 2\rz(x_0)$, then
there exists $\dz>0$  such that
$$\T^{*}a(x)\lesssim \|a\|_{L^1(\rn)}\frac{r^{\dz}}{|x-x_0|^{n+\dz}},$$
\item[\rm(iii)] If $r< L_2\rz(x_0)$ and $|x-x_0|\ge \rz(x_0)/\sqrt{n}$,
then there exists $\dz>0$  such that for any $M>0$,
$$\T^{*}a(x)\lesssim \|a\|_{L^1(\rn)}\frac{r^{\dz}}{|x-x_0|^{n+\dz}}
\l(\frac{\rz(x_0)}{|x-x_0|}\r)^M.$$
\end{enumerate}
\end{lem}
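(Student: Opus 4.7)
The three parts all rest on the same mechanism: bound $|T_t a(x)|$ by splitting the range of $t \in (0,\fz)$ according to the size of $s \equiv \sqrt{t}$ relative to $r$ and to $\rz(x_0)$, then take the supremum in $t$. Throughout, for $y \in \supp a \subset Q(x_0,r)$ and $x \in (4Q)^{\complement}$ one has $|x-y| \sim |x-x_0|$, which I will use without further comment.

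For part (i), since $r \sim \rz(x_0) \sim \rz(y)$ by Lemma \ref{l2.1}, the symmetric form (7.6) of Lemma \ref{l7.1}, applied with $l = M$, already gives, uniformly in $t > 0$,
$$|T_t a(x)| \le \int_{Q} T_t(x,y)|a(y)|\,dy \ls \frac{r^M}{|x-x_0|^{n+M}}\|a\|_{L^1(\rn)},$$
which yields (i). No cancellation is needed, in accord with Definition \ref{d3.3}(iii), which does not require moment conditions in this range of sidelengths.

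For parts (ii) and (iii), Definition \ref{d3.3}(iii) supplies $\int a = 0$, so one may freely replace $T_t(x,y)$ by $T_t(x,y) - T_t(x,x_0)$ inside the integral. The plan is to split into $s \le r$ and $s > r$. In the first range I use the bare Gaussian bound (1.2), yielding $|T_t a(x)| \ls s^{-n} e^{-c|x-x_0|^2/s^2}\|a\|_{L^1(\rn)}$, and exploit the elementary inequality $u^{k} e^{-cu^2} \ls 1$ with $u = |x-x_0|/s$ and $k$ chosen appropriately large. In the second range, after checking that $|y-x_0| \le r\sqrt{n}/2 < s$ (up to a harmless constant) so that the hypothesis of Lemma \ref{l7.3} is met, I invoke the kernel's symmetry and apply Lemma \ref{l7.3} in the second variable to obtain
$$|T_{s^2}(x,y) - T_{s^2}(x,x_0)| \ls \l(\frac{|y-x_0|}{s}\r)^{\dz} s^{-n}\l(1 + \frac{s}{\rz(x_0)}\r)^{-N} e^{-c|x-x_0|^2/s^2}.$$
For part (ii), where Lemma \ref{l2.1} forces $\rz(x) \sim \rz(y) \sim \rz(x_0)$ so the $(1+\cdot)^{-N}$ factor is harmless, integrating against $|a|$ and reusing the polynomial-times-Gaussian estimate delivers the bound $r^{\dz}/|x-x_0|^{n+\dz}$, as required.

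For part (iii), the extra factor $(\rz(x_0)/|x-x_0|)^M$ must also be extracted. I keep the same $s\le r$ / $s > r$ split and further subdivide the latter into $s \le \rz(x_0)$ and $s > \rz(x_0)$. In the first two sub-ranges the Gaussian $e^{-c|x-x_0|^2/s^2}$ alone supplies the additional decay, because the standing hypothesis $|x-x_0|\ge \rz(x_0)/\sqrt{n}$ forces $|x-x_0|/s \gtrsim 1$, so raising the exponent $k$ in $u^k e^{-cu^2}\ls 1$ to $n+\dz+M$ produces the missing $\rz(x_0)^M$. In the regime $s > \rz(x_0)$ the decay instead comes from $(1+s/\rz(x_0))^{-N}\ls (\rz(x_0)/s)^{N}$ in Lemma \ref{l7.3}, and choosing $N = M$ closes the argument. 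The main book-keeping obstacle I anticipate is aligning the split threshold with the precise $|h|<\sqrt{t}$ hypothesis of Lemma \ref{l7.3} (possibly replacing $r$ by $c_n r$) and verifying that all implied constants are independent of $t$ before passing to the supremum defining $\T^{*}a(x)$; these are routine but easy to miscount.
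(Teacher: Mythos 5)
Your approach is essentially the paper's: part (i) uses the pointwise bound of Lemma \ref{l7.1} together with $r\sim\rho(x_0)\sim\rho(y)$, and parts (ii)--(iii) exploit the moment condition to replace $T_t(x,y)$ by a difference, then invoke the H\"older continuity of Lemma \ref{l7.3} plus the Gaussian factor, with the final bound obtained by balancing powers of $t$. The paper does this in a single unified estimate (choosing the Gaussian-decay exponent $K$ so that all $t$-powers cancel exactly and using $(1+\sqrt{t}/\rho(x_0))^{-M}\le(\rho(x_0)/\sqrt{t})^M$ throughout), while you carry out the split in $\sqrt{t}$ explicitly; your explicit split is in fact slightly more scrupulous, since it makes clear that the moment/H\"older trick only applies once $\sqrt{t}$ exceeds a multiple of $r$, a point the paper glosses over. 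The one place your write-up could mislead is the regime $s>\rho(x_0)$ in part (iii): the phrase ``the decay instead comes from $(1+s/\rho(x_0))^{-N}\lesssim(\rho(x_0)/s)^N$'' suggests the Gaussian can be retired there, but $(\rho(x_0)/s)^M$ is a ratio against $s$, not against $|x-x_0|$, and is in no way comparable to $(\rho(x_0)/|x-x_0|)^M$ without further input. You still need the Gaussian raised to the exponent $n+\delta+M$ to convert the resulting $s$-powers into $|x-x_0|^{-(n+\delta+M)}$; only then does the factor $(\rho(x_0)/s)^M$, through $s^{-M}$ cancelling the excess $s^{+M}$, furnish the clean $\rho(x_0)^M$ numerator. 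With that correction made, the bookkeeping closes and your argument matches the paper's.
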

\begin{proof}
If $L_2\rz(x_0)\le r\le L_1\rz(x_0)$,
since $|x-y|\sim|x-x_0|$ and $\rz(y)\sim\rz(x_0)$
for  $x\in(4Q)^{\complement}$ and $y\in Q$,
by Lemma 7.1, for any $M>0$, we have
\beqs
\begin{aligned}
T_ta(x)&\le \int_{\rn}|T_t(x,y)||a(y)|\,dy\\
&\lesssim \int_{Q} \l(1+\frac{|x-y|}{\rz(y)}\r)^{-M}
|x-y|^{-n} |a(y)|\,dy\\
&\lesssim \int_{Q} \l(1+\frac{|x-x_0|}{\rz(x_0)}\r)^{-M}
|x-x_0|^{-n} |a(y)|\,dy\\
&\lesssim \|a\|_{L^1(\rn)}\frac{\rz(x_0)^{M}}{|x-x_0|^{n+M}}
\lesssim \|a\|_{L^1(\rn)}\frac{r^{M}}{|x-x_0|^{n+M}},
\end{aligned}
\eeqs
and then we obtain (i).

If $r< L_2\rz(x_0)$,
by the moment condition of $a$ and Lemma 7.3,
for any $M>0$ and $y'\in Q$ which satisfies $|y-y'|<\sqrt{t}$,
we have
\beqs
\begin{aligned}
T_ta(x)&= \int_{\rn}T_t(x,y)a(y)\,dy\\
&= \int_{Q} \l(T_t(x,y)-T_t(x,y')\r) a(y)\,dy\\
&\lesssim \int_{Q} \l(\frac{|y-y'|}{\sqrt{t}}\r)^{\dz} t^{-\frac{n}{2}}
 \l(1+\frac{\sqrt{t}}{\rz(y)}\r)^{-M}
  \exp\l(-\frac{c|x-y|^2}{t}\r) |a(y)|\,dy\\
&\lesssim \int_{Q} \l(\frac{r}{\sqrt{t}}\r)^{\dz} t^{-\frac{n}{2}}
\l(1+\frac{\sqrt{t}}{\rz(x_0)}\r)^{-M}
\l(\frac{t}{|x-x_0|^2}\r)^K |a(y)|\,dy,
\end{aligned}
\eeqs
where $K>0$ is any real number.

\noi For $|x-x_0|\le 2\rz(x_0)$, taking $K=(n+\dz)/2$,
we obtain
\beqs
\begin{aligned}
T_ta(x)
&\lesssim \int_{Q} \l(\frac{r}{\sqrt{t}}\r)^{\dz} t^{-\frac{n}{2}}
\l(1+\frac{\sqrt{t}}{\rz(x_0)}\r)^{-M}
\l(\frac{t}{|x-x_0|^2}\r)^K |a(y)|\,dy,\\
&\lesssim \|a\|_{L^1(\rn)} \l(\frac{r}{\sqrt{t}}\r)^{\dz} t^{-\frac{n}{2}}
\l(\frac{t}{|x-x_0|^2}\r)^K\\
&= \|a\|_{L^1(\rn)}\frac{r^{\dz}}{|x-x_0|^{n+\dz}},
\end{aligned}
\eeqs
which implies (ii).

\noi For $|x-x_0|\ge \rz(x_0)/\sqrt{n}$,
taking $K=(n+M+\dz)/2$, we obtain
\beqs
\begin{aligned}
T_ta(x)
&\lesssim \int_{Q} \l(\frac{r}{\sqrt{t}}\r)^{\dz} t^{-\frac{n}{2}}
\l(1+\frac{\sqrt{t}}{\rz(x_0)}\r)^{-M}
\l(\frac{t}{|x-x_0|^2}\r)^K |a(y)|\,dy,\\
&\lesssim \|a\|_{L^1(\rn)} \l(\frac{r}{\sqrt{t}}\r)^{\dz} t^{-\frac{n}{2}}
\l(\frac{\rz(x_0)}{\sqrt{t}}\r)^M
\l(\frac{t}{|x-x_0|^2}\r)^K\\
&= \|a\|_{L^1(\rn)}\frac{r^{\dz}}{|x-x_0|^{n+\dz}}\l(\frac{\rz(x_0)}{|x-x_0|}\r)^M,
\end{aligned}
\eeqs
which finishes the proof of lemma.
\end{proof}

\begin{lem}\label{l7.8}
Let $\wz\in A_q^{\rz,\,\tz}(\rn)$ and $a$ be a $(p,q,s)_\wz$-atom,
which satisfies $\supp\, a\subset Q(x_0,r)$,
then there exists a constant $C$ such that:
$$\|a\|_{L^1(\rn)}\le C |Q| \wz(Q)^{-1/p}\Psi_{\tz}(Q).$$
\end{lem}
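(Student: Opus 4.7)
The plan is to prove this estimate by Hölder's inequality together with the defining $A_q^{\rz,\tz}$ condition on $\wz$. Write
\[
\|a\|_{L^1(\rn)} = \int_Q |a(x)|\,dx = \int_Q |a(x)|\wz(x)^{1/q}\wz(x)^{-1/q}\,dx,
\]
and apply Hölder's inequality with exponents $q$ and $q'=q/(q-1)$ to get
\[
\|a\|_{L^1(\rn)} \le \|a\|_{L^q_\wz(\rn)}\left(\int_Q \wz(y)^{-1/(q-1)}\,dy\right)^{1/q'}.
\]
The first factor is controlled by $\wz(Q)^{1/q - 1/p}$ from the atom condition (ii) of Definition \ref{d3.3}.

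Next I would invoke the $A_q^{\rz,\tz}(\rn)$ condition (with cubes rather than balls, which is permissible as noted before Lemma \ref{l2.3}): since $\wz\in A_q^{\rz,\tz}(\rn)$,
\[
\left(\int_Q \wz(y)^{-1/(q-1)}\,dy\right)^{q-1} \le C\,(\Psi_\tz(Q))^q |Q|^q \wz(Q)^{-1}.
\]
Raising this to the $1/q$ power yields
\[
\left(\int_Q \wz(y)^{-1/(q-1)}\,dy\right)^{1/q'} \le C\,\Psi_\tz(Q) |Q| \wz(Q)^{-1/q}.
\]
Multiplying the two bounds gives
\[
\|a\|_{L^1(\rn)} \le C\,\wz(Q)^{1/q-1/p}\cdot \Psi_\tz(Q) |Q| \wz(Q)^{-1/q} = C |Q| \wz(Q)^{-1/p}\Psi_\tz(Q),
\]
which is the claim for $q\in(q_\wz,\fz)$.

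The remaining case $q=\fz$ is essentially immediate: since $L^\fz_\wz(\rn)=L^\fz(\rn)$, the atom condition gives $\|a\|_{L^\fz(\rn)} \le \wz(Q)^{-1/p}$, so
\[
\|a\|_{L^1(\rn)} \le |Q| \wz(Q)^{-1/p} \le |Q| \wz(Q)^{-1/p} \Psi_\tz(Q)
\]
because $\Psi_\tz(Q)\ge 1$. There is no substantive obstacle here; the only point requiring care is to recognize that the local $A_q^{\rz,\tz}$ condition is the natural replacement for the classical $A_q$ condition and that it introduces precisely the factor $\Psi_\tz(Q)$ that appears on the right-hand side of the desired inequality.
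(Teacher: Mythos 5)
Your proof is correct and takes essentially the same approach as the paper: H\"older's inequality together with the $A_q^{\rz,\tz}$ condition written for cubes, applied to the dual exponent integral $\int_Q \wz^{-1/(q-1)}$. The only cosmetic difference is in the edge case treated separately --- the paper handles $q=1$ (which is actually vacuous here, since admissibility forces $q>q_\wz\ge 1$) while you handle $q=\fz$; for $1<q<\fz$ the two calculations coincide.
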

\begin{proof}
If $q>1$, by H\"older inequality and the definition of
$A_q^{\rz,\,\tz}(\rn)$ weights,
we have
\beqs
\begin{aligned}
\|a\|_{L^1(\rn)}
&=\int_{Q}|a(x)| \wz(x)^{1/q} \wz(x)^{-1/q}\,dx\\
&\le \|a\|_{L^q_{\wz}(\rn)}
\l(\int_{Q}\wz(x)^{-q'/q}\,dx\r)^{1/q'}\\
&\le \wz(Q)^{1/q-1/p} \l(\int_{Q}\wz(x)^{-q'/q}\,dx\r)^{1/q'}
\l(\int_{Q}\wz(x)\,dx\r)^{1/q} \wz(Q)^{-1/q}\\
&\le  C |Q| \wz(Q)^{-1/p}\Psi_{\tz}(Q).
\end{aligned}
\eeqs
If $q=1$, we have
$$\wz(Q)\le C |Q| \Psi_{\tz}(Q) \inf_{x\in Q} \wz(x),$$
which implies
$$\|\wz^{-1}\|_{L^{\fz}(Q)}\le C |Q| \wz(Q)^{-1}\Psi_{\tz}(Q).$$
Therefore, we get
$$\|a\|_{L^1(\rn)}\le \|a\|_{L^1_{\wz}(\rn)}\|\wz^{-1}\|_{L^{\fz}(Q)}
\le C |Q| \wz(Q)^{-1/p}\Psi_{\tz}(Q),$$
which finishes the proof.
\end{proof}

Combining above two lemmas with $\Psi_{\tz}(Q)\lesssim 1$,
we can get the following corollary.

\begin{cor}\label{c7.1}
Let $a$ be a $(p,q,s)_\wz$-atom, and $\supp \, a\subset Q(x_0,r)$,
then for any $x\in(4Q)^{\complement}$, we have following estimates:
\begin{enumerate}
\item[\rm(i)] If $L_2\rz(x_0)\le r\le L_1\rz(x_0)$, then for any $M>0$,
$$\T^{*}a(x)\lesssim \wz(Q)^{-1/p} \l(\frac{r}{|x-x_0|}\r)^{n+M},$$
\item[\rm(ii)] If $r< L_2\rz(x_0)$ and $|x-x_0|\le 2\rz(x_0)$, then
there exists $\dz>0$  such that
$$\T^{*}a(x)\lesssim \wz(Q)^{-1/p} \l(\frac{r}{|x-x_0|}\r)^{n+\dz},$$
\item[\rm(iii)] If $r< L_2\rz(x_0)$ and $|x-x_0|\ge \rz(x_0)/\sqrt{n}$,
then there exists $\dz>0$  such that for any $M>0$,
$$\T^{*}a(x)\lesssim \wz(Q)^{-1/p} \l(\frac{r}{|x-x_0|}\r)^{n+\dz}
\l(\frac{\rz(x_0)}{|x-x_0|}\r)^M.$$
\end{enumerate}
\end{cor}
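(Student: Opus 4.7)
The plan is to derive Corollary \ref{c7.1} by a direct substitution of Lemma \ref{l7.8} into Lemma \ref{l7.7}, using the size constraint on the atom's supporting cube to control the auxiliary factor $\Psi_\tz(Q)$.

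First, for each of the three cases I would invoke the corresponding pointwise bound from Lemma \ref{l7.7} to write
$$
\T^{*}a(x) \lesssim \|a\|_{L^1(\rn)} \cdot F(x,x_0,r),
$$
where $F$ denotes the geometric factor appropriate to the case (namely $r^M/|x-x_0|^{n+M}$, $r^\dz/|x-x_0|^{n+\dz}$, or that last expression multiplied by $(\rz(x_0)/|x-x_0|)^M$). Then I would insert the bound $\|a\|_{L^1(\rn)} \le C\,|Q|\,\wz(Q)^{-1/p}\,\Psi_\tz(Q)$ from Lemma \ref{l7.8}, which is applicable since every $(p,q,s)_\wz$-atom is supported in a cube $Q=Q(x_0,r)$.

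The key observation that converts the raw substitution into the clean form stated in the corollary is that $\Psi_\tz(Q)\lesssim 1$ uniformly for atomic cubes. Indeed, by Definition \ref{d3.3}(i), any $(p,q,s)_\wz$-atom satisfies $r \le L_1\rz(x_0)$, so
$$
\Psi_\tz(Q) = \l(1+\tfrac{r}{\rz(x_0)}\r)^\tz \le (1+L_1)^\tz,
$$
which is a constant depending only on $\tz$, $C_0$, $k_0$. With this in hand, $|Q| = r^n$ (up to dimensional constants) absorbs the $r^M$ (resp.\ $r^\dz$) numerator into $(r/|x-x_0|)^{n+M}$ (resp.\ $(r/|x-x_0|)^{n+\dz}$), and the factor $\wz(Q)^{-1/p}$ survives intact. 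Cases (i), (ii), (iii) then read off directly from the three parts of Lemma \ref{l7.7}.

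Since everything reduces to substitution and the elementary observation above, there is no real obstacle. The only mild care needed is in case (iii), where I must check that the exponent of $\rz(x_0)/|x-x_0|$ is not disturbed by the substitution: the additional factor in Lemma \ref{l7.7}(iii) depends only on $x$, $x_0$, and $\rz(x_0)$ (not on $\|a\|_{L^1(\rn)}$), so it simply carries through the computation, yielding the stated form $\wz(Q)^{-1/p}(r/|x-x_0|)^{n+\dz}(\rz(x_0)/|x-x_0|)^M$ for arbitrary $M>0$.
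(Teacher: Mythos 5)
Your proposal is correct and is exactly the argument the paper intends: the paper's proof is the single sentence ``Combining above two lemmas with $\Psi_{\tz}(Q)\lesssim 1$, we can get the following corollary,'' and you have simply supplied the (straightforward) details — substituting Lemma \ref{l7.8} into Lemma \ref{l7.7}, bounding $\Psi_\tz(Q)=(1+r/\rz(x_0))^\tz\le(1+L_1)^\tz$ via Definition \ref{d3.3}(i), and absorbing $|Q|=r^n$ into the geometric factor.
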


Next we give the main theorem of this section.

\begin{thm}\label{t7.1}
Let $0\not\equiv V\in RH_{n/2}$ and $\wz\in A_1^{\rz,\,\fz}(\rn)$,
then $h^1_{\rho}(\wz)=H^1_{\L}(\wz)$ with equivalent norms,
that is
$$\|f\|_{h^1_{\rho}(\wz)}\sim\|f\|_{H^1_{\L}(\wz)}.$$
\end{thm}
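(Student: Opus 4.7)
The plan is to prove the two inclusions $H^{1}_{\L}(\wz)\subset h^{1}_{\rho}(\wz)$ and $h^{1}_{\rho}(\wz)\subset H^{1}_{\L}(\wz)$ separately, with comparable quasi-norms. The first direction will be handled via the local vertical maximal function characterization of $h^{1}_{\rho}(\wz)$ (Theorem \ref{t3.2}), taking as test function the Gaussian $h(x)=(4\pi)^{-n/2}e^{-|x|^{2}/4}$, whose dilations are precisely the kernels $\wt T_{t^{2}}(x,\cdot)$ appearing in Lemma \ref{l7.5}. The second direction will follow from the finite atomic decomposition (Theorem \ref{t6.1}) combined with the $\B_{\bz}$-sublinear extension criterion of Theorem \ref{t6.2}.

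For $H^{1}_{\L}(\wz)\hookrightarrow h^{1}_{\rho}(\wz)$, fix $f\in H^{1}_{\L}(\wz)$. Lemma \ref{l7.5} already bounds $\|f\|_{h^{1}_{\rho}(\wz)}$ by $\|\wt T^{+}_{\rz}(f)\|_{L^{1}_{\wz}(\rn)}$, so it suffices to control the right-hand side by $\|f\|_{H^{1}_{\L}(\wz)}$. I would split
\begin{equation*}
\wt T^{+}_{\rz}(f)(x)\le T^{+}_{\rz}(f)(x)+E^{+}_{\rz}(f)(x)\le \T^{*}f(x)+E^{+}_{\rz}(f)(x).
\end{equation*}
The first summand has $L^{1}_{\wz}$-norm at most $\|f\|_{H^{1}_{\L}(\wz)}$ by definition, and Lemma \ref{l7.6} bounds the second by $C\|f\|_{L^{1}_{\wz}(\rn)}$. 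The standard pointwise bound $|f(x)|\le \T^{*}f(x)$ a.e.\ (using $\T_{t}f\to f$ as $t\to 0^{+}$) then gives $\|f\|_{L^{1}_{\wz}}\le \|f\|_{H^{1}_{\L}(\wz)}$, closing this half.

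For $h^{1}_{\rho}(\wz)\hookrightarrow H^{1}_{\L}(\wz)$, the operator $\T^{*}$ is nonnegative and classically sublinear, hence $\B_{1}$-sublinear into the Banach space $\B_{1}=L^{1}_{\wz}(\rn)$. Applying Theorem \ref{t6.2} with $p=1$, $\bz=1$, and admissible triplet $(1,q,s)_{\wz}$ for some $q\in(1,\fz)$ (permissible because $\wz\in A_{1}^{\rz,\fz}\subset A_{q}^{\rz,\fz}$), the task reduces to verifying $\|\T^{*}a\|_{L^{1}_{\wz}(\rn)}\lesssim 1$ uniformly over all $(1,q,s)_{\wz}$-atoms and $(1,q)_{\wz}$-single-atoms $a$. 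For a single atom $a_{0}$ (only possible when $\wz(\rn)<\fz$), H\"older's inequality with the $L^{q}_{\wz}$-boundedness of $\T^{*}$ (Lemma \ref{l7.4}) and the size condition $\|a_{0}\|_{L^{q}_{\wz}}\le \wz(\rn)^{1/q-1}$ yield $\|\T^{*}a_{0}\|_{L^{1}_{\wz}}\le \wz(\rn)^{1/q'}\|\T^{*}a_{0}\|_{L^{q}_{\wz}}\lesssim 1$. For a $(1,q,s)_{\wz}$-atom $a$ supported in $Q=Q(x_{0},r)$, split the integral into $4Q$ and $(4Q)^{\complement}$: the local piece is treated as for the single atom, now using $\wz(4Q)\lesssim \wz(Q)$ from Lemma \ref{l2.4}(vi); the remote piece is controlled pointwise by Corollary \ref{c7.1}, separating the critical-size case $L_{2}\rz(x_{0})\le r\le L_{1}\rz(x_{0})$ from the subcritical case with vanishing moments.

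The main technical obstacle will be the remote integral in the subcritical sub-case $|x-x_{0}|\ge \rz(x_{0})/\sqrt{n}$: summing the bound $\wz(Q)^{-1}(r/|x-x_{0}|)^{n+\dz}$ over dyadic annuli $2^{k}Q$ and invoking Lemma \ref{l2.4}(vi) introduces a factor $(1+2^{k}r/\rz(x_{0}))^{\tz}$ that can grow arbitrarily large with $\tz$ and would destroy convergence. The key point is that Corollary \ref{c7.1}(iii) supplies the additional localization factor $(\rz(x_{0})/|x-x_{0}|)^{M}$ with $M$ as large as we wish; choosing $M>\tz$ dominates the $\Psi_{\tz}$-growth of the weight and restores summability of the geometric series. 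This is precisely why the argument works for local $A_{1}^{\rz,\fz}$-weights in the sense of Bongioanni--Harboure--Salinas, and not merely for classical Muckenhoupt $A_{1}$-weights, explaining the need for the weighted local Hardy space theory developed in Sections \ref{s3}--\ref{s6}.
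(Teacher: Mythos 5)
Your proposal follows the same route as the paper: for $H^{1}_{\L}(\wz)\hookrightarrow h^{1}_{\rho}(\wz)$ you combine Lemmas \ref{l7.5} and \ref{l7.6} with $T^{+}_{\rz}f\le\T^{*}f$, and for the converse you reduce via the atomic decomposition to the uniform bound $\|\T^{*}a\|_{L^{1}_{\wz}(\rn)}\lesssim 1$ handled with Lemma \ref{l7.4} on $4Q$ and Corollary \ref{c7.1} off $4Q$; the paper justifies the reduction by appealing to Theorem \ref{t5.1}, while you invoke Theorem \ref{t6.2}, which is the same mechanism stated more carefully. One detail you should make explicit, as the paper does: Lemma \ref{l2.4}(vi) contributes not only the factor $(\Psi_{\tz}(2^{j}Q))^{q}$ but also the doubling factor $2^{jnq}$, so one must choose $q$ close to $1$ (the paper takes $1<q\le 1+\dz/n$) so that $n+\dz-nq>0$ makes the dyadic sum converge; the factor $(\rz(x_{0})/|x-x_{0}|)^{M}$ only absorbs the $\Psi_{\tz}$-growth (and the paper takes $M=q\tz$, not merely $M>\tz$), not the $2^{jnq}$ growth.
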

\begin{proof}
Assume that $f\in H^1_{\L}(\wz)$, by (7.7), we have
\beq
|f(x)|=\lim_{t<\rz(x),\,t\to 0}|\wt T_t(f)(x)|
\le T^+_{\rz}(f)(x)+C\lim_{t\to 0}\l(\frac{t}{\rz(x)}\r)^{\dz}M(f)(x)
\le T^+_{\rz}(f)(x).
\eeq
Then according to (7.14), Lemma 7.5 and Lemma 7.6, we get
$f\in h^1_{\rho}(\wz)$ and
\beqs
\begin{aligned}
\|f\|_{h^1_{\rho}(\wz)}
&\lesssim \|\wt T^+_{\rz}(f)\|_{L^1_{\wz}(\rn)}
\lesssim \| T^+_{\rz}(f)\|_{L^1_{\wz}(\rn)} +\|E^+_{\rz}(f)\|_{L^1_{\wz}(\rn)}\\
& \lesssim \| T^+_{\rz}(f)\|_{L^1_{\wz}(\rn)} +\|f\|_{L^1_{\wz}(\rn)}
\lesssim \| T^+_{\rz}(f)\|_{L^1_{\wz}(\rn)}\\
& \lesssim \|\T^* (f)\|_{L^1_{\wz}(\rn)}=\|f\|_{H^1_{\L}(\wz)}.
\end{aligned}
\eeqs
Conversely, we need to prove that $\T^*$ is bounded from $h^1_{\rho}(\wz)$
to $L^1_{\wz}(\rn)$. To end this, by Lemma 2.4 and Theorem 5.1, it suffices to prove that
for any $(1,q,s)_{\wz}$-atom or $(1,q)_{\wz}$-single-atom $a$,
\beq
\|\T^*(a)\|_{L^1_{\wz}(\rn)}\lesssim 1,
\eeq
where $1< q\le 1+\dz/n$.

\noi If $a$ is a $(1,q)_{\wz}$-single-atom, by H\"older inequality and Lemma 7.4, we have
$$\|\T^*(a)\|_{L^1_{\wz}(\rn)}\le \|\T^*(a)\|_{L^q_{\wz}(\rn)}\,\wz(\rn)^{1-1/q}
\le C \|a\|_{L^q_{\wz}(\rn)}\,\wz(\rn)^{1-1/q}\lesssim 1.$$

\noi If $a$ is a $(1,q,s)_{\wz}$-atom and $\supp \, a\subset Q(x_0,r)$ with
$r\le L_{1}\rz(x_0)$,
then we have
$$\|\T^*(a)\|_{L^1_{\wz}(\rn)}\le \|\T^*(a)\|_{L^1_{\wz}(4Q)}
+\|\T^*(a)\|_{L^1_{\wz}((4Q)^{\complement})}\equiv I+II.$$
For $I$, by H\"older inequality, Lemma 2.4 and Lemma 7.4, we get
\beqs
\begin{aligned}
\|\T^*(a)\|_{L^1_{\wz}(4Q)}
&\le \|\T^*(a)\|_{L^q_{\wz}(4Q)}\,\wz(4Q)^{1-1/q}
\le C \|a\|_{L^q_{\wz}(\rn)}\,\wz(4Q)^{1-1/q}\\
&\le C \l(\wz(4Q)/\wz(Q)\r)^{1-1/q}
\lesssim 1.
\end{aligned}
\eeqs
For $II$, if $L_2\rz(x_0)\le r\le L_1\rz(x_0)$, by Lemma 2.4 and Corollary 7.1,
taking  $M>q(n+\tz)-n$, we have
\beqs
\begin{aligned}
\|\T^*(a)\|_{L^1_{\wz}((4Q)^{\complement})}
&=\sum_{j=3}^{\fz} \int_{2^j Q \backslash 2^{j-1}Q} \T^*(a)(x) \wz(x)\,dx\\
&\lesssim \frac{1}{\wz(Q)}
\sum_{j=3}^{\fz} \int_{2^j Q \backslash 2^{j-1}Q}
\l(\frac{r}{|x-x_0|}\r)^{n+M}  \wz(x)\,dx\\
&\lesssim \frac{1}{\wz(Q)}
\sum_{j=3}^{\fz} 2^{-j(n+M)}\wz(2^j Q)\\
&\lesssim \sum_{j=3}^{\fz}
2^{-j(n+M)}2^{jnq}\l(1+\frac{2^jr}{\rz(x_0)}\r)^{q\tz}\\
&\lesssim \sum_{j=3}^{\fz}
2^{-j[n+M-nq-q\tz]}\lesssim 1;
\end{aligned}
\eeqs
if $r< L_2\rz(x_0)$, then there exists $N_0\in\zz$ such that
$2^{N_0-1}\sqrt{n}r\le \rz(x_0)<2^{N_0}\sqrt{n}r$.
Let us assume that $N_0\ge 3$, otherwise, we just need to consider
the  $I_2$ in the following decomposition:
$$\|\T^*(a)\|_{L^1_{\wz}((4Q)^{\complement})}=
\l(\sum_{j=3}^{N_0}+\sum_{j=N_0+1}^{\fz}\r)
\int_{2^j Q \backslash 2^{j-1}Q} \T^*(a)(x) \wz(x)\,dx \equiv I_1+I_2,$$
for $I_1$, since $|x-x_0|<2^j\sqrt{n}r\le 2^{N_0}\sqrt{n}r\le 2\rz(x_0)$,
$\Psi_{\tz}(2^jQ)\le 3^{\tz}$ and $q<1+\dz/n$,
by Lemma 2.4 and Corollary 7.1, we get
\beqs
\begin{aligned}
I_1
&=\sum_{j=3}^{N_0} \int_{2^j Q \backslash 2^{j-1}Q} \T^*(a)(x) \wz(x)\,dx\\
&\lesssim \frac{1}{\wz(Q)}
\sum_{j=3}^{N_0} \int_{2^j Q \backslash 2^{j-1}Q}
\l(\frac{r}{|x-x_0|}\r)^{n+\dz}  \wz(x)\,dx\\
&\lesssim \frac{1}{\wz(Q)}
\sum_{j=3}^{N_0} 2^{-j(n+\dz)}\wz(2^j Q)\\
&\lesssim \sum_{j=3}^{N_0}
2^{-j[n+\dz-nq]}\lesssim 1,
\end{aligned}
\eeqs
for $I_2$, since $|x-x_0|\ge 2^{j-1}r\ge 2^{N_0}r\ge \rz(x_0)/\sqrt{n}$,
then $\Psi_{\tz}(2^jQ)\le (2^{j+1}\sqrt{n}r/\rz(x_0))^{\tz}$,
thus, taking $M=q\tz$, by $q<1+\dz/n$, Lemma 2.4 and Corollary 7.1, we obtain
\beqs
\begin{aligned}
I_2
&=\sum_{j=N_0+1}^{\fz} \int_{2^j Q \backslash 2^{j-1}Q} \T^*(a)(x) \wz(x)\,dx\\
&\lesssim \frac{1}{\wz(Q)}
\sum_{j=N_0+1}^{\fz} \int_{2^j Q \backslash 2^{j-1}Q}
\l(\frac{r}{|x-x_0|}\r)^{n+\dz}
\l(\frac{\rz(x_0)}{|x-x_0|}\r)^M
\wz(x)\,dx\\
&\lesssim \frac{1}{\wz(Q)}
\sum_{j=N_0+1}^{\fz} 2^{-j(n+\dz)}\wz(2^j Q)
\l(\frac{\rz(x_0)}{2^jr}\r)^M\\
&\lesssim \sum_{j=N_0+1}^{\fz}
2^{-j[n+\dz-nq]}
(\Psi_{\tz}(2^jQ))^{q} \l(\frac{\rz(x_0)}{2^jr}\r)^M
\lesssim 1,
\end{aligned}
\eeqs
which finally implies the (7.15)
and finishes the proof.
\end{proof}

\bigskip

\noi{\bfseries{Acknowledgements}}\quad
The research is supported by National Natural Science Foundation of China under Grant
\#11271024.

\bigskip
\bigskip

\indent Beijing International Studies University,\\
\indent Beijing, 100024, \\
\indent People¡¯s Republic of China

\textsl{E-mail address: zhuhua@pku.edu.cn}

\bigskip
\bigskip

 LMAM, School of Mathematical  Science,\\
\indent Peking University,\\
\indent Beijing, 100871,\\
\indent People¡¯s Republic of China

\textsl{E-mail address:  tanglin@math.pku.edu.cn}

\end{document}